\numberwithin{section}{chapter}
\def\@secnumfont{\mdseries}
\def\section{\@startsection{section}{1}%
  \z@{.7\linespacing\@plus\linespacing}{.5\linespacing}%
  {\normalfont\scshape\centering}}
\def\subsection{\@startsection{subsection}{2}%
  \z@{.5\linespacing\@plus.7\linespacing}{-.5em}%
  {\normalfont\bfseries}}
\patchcmd{\@thm}{\let\thm@indent\indent}{\let\thm@indent\noindent}{}{}
\patchcmd{\@thm}{\thm@headfont{\scshape}}{\thm@headfont{\bfseries}}{}{}
\newtheorem{theorem}{Theorem} 
\newtheorem{lemma}{Lemma}     
\theoremstyle{definition}
\newtheorem{definition}[]{Definition}
\newtheorem{remark}[]{Remark}
\newtheorem{property}{Property}
\newcommand{\ol}[1]{\mkern 1.5mu\overline{\mkern-1.5mu#1\mkern-1.5mu}\mkern 1.5mu}
\newcommand{\len}[1]{{[#1}^\partial}
\newcommand{\gre}{\eqcirc}
\newcommand{\fA}{\mathfrak{A}}
\newcommand{\fB}{\mathfrak{B}}
\newcommand{\fC}{\mathfrak{C}}
\newcommand{\fD}{\mathfrak{D}}
\newcommand{\fE}{\mathfrak{E}}
\newcommand{\fF}{\mathfrak{F}}
\newcommand{\fG}{\mathfrak{G}}
\newcommand{\fH}{\mathfrak{H}}
\newcommand{\fT}{\mathfrak{T}}
\newcommand{\cJ}{\mathcal{J}}
\newcommand{\tm}[1]{\tikz[overlay,remember picture] \node (#1) {};}
\newcommand{\dl}[3]{%
  \begin{tikzpicture}[overlay,remember picture]
    \draw[-,shorten >=7.7pt,shorten <=7.7pt,out=-60,in=-120,distance=#1cm,Black!50] (#2.north) to (#3.north);
  \end{tikzpicture}
}
\newcommand{\dla}[4]{%
  \begin{tikzpicture}[overlay,remember picture]
    \draw[-,shorten >=7.7pt,shorten <=7.7pt,out=-60,in=-120,distance=#1cm,Black!50] (#3.north) to node[Black]{\tiny $\mathbf{#2}$} (#4.north);
  \end{tikzpicture}
}
\title[The Identity Problem For Finitely Presented Groups and Semigroups]
 {\Large} 
\author[G. S. Makanin]{\vspace{-2cm} \normalsize \textsc{USSR Academy of Sciences} \\
Steklov Mathematical Institute \\
\vspace{2cm}
\large \textsc{G. S. Makanin}\\
    \vspace{0.5cm}
    \LARGE\textsc{On The Identity Problem For Finitely Presented Groups and Semigroups} \\
    \vspace{2cm}
 
\normalsize Dissertation for the degree of Candidate of Physical and Mathematical Sciences. \\
\vspace{7cm}
Supervisors \\
Corresponding Member of the USSR Academy of Sciences \textsc{A. A. Markov} \\
Doctor of Physical and Mathematical Sciences \textsc{S. I. Adian}

\vfill
\vspace{5cm}

Moscow (1966)
    }
\begin{document}
\maketitle

\tableofcontents


\thispagestyle{empty}
\begin{center}
\textsc{Notes on the Translation}
\end{center}

\vspace{1cm}

The many fascinating ideas in this thesis, worked out in full detail and with close attention paid to ensuring a high standard of rigour, have made it a delight to translate. Nearly sixty years later, G. S. Makanin's insights into the structure of special monoids and small cancellation theory remain exceptional, and that without any of the modern machinery to approach these topics. 

In transcribing and translating this thesis from the original Russian, I have attempted to capture its original minimal style, in which the mathematics itself is in focus. Perhaps the largest change made to this original style is that proof environments, which were originally omitted, have been added, to ensure proper cut-off between statements, their proofs, and remarks following them. Any Cyrillic letters used for indexing, especially the convoluted nested cases in Chapter~2, have been replaced by Latin or Greek counterparts. While the page numbering is different from the original (with the original thesis being 98 pages), the numbering of all sections, lemmas, equations, etc. remains the same. Particular attention was given to accurately typesetting the numerous involved, and originally hand-drawn, small cancellation diagrams in Chapter~2.

As for linguistic choices, a direct translation of some of the definitions has not always been possible or feasible, as, for instance, some use terminology that would be best translated by an already overloaded term in English. In these cases, I have instead focused on capturing the mathematical sense of the word to be translated. Finally, some typos have been fixed -- but there were very few to be found indeed, as could be expected of a student working under the watchful gaze of S. I. Adian. Any typos still present in this translation are entirely my own. 

I wish to thank Prof L. D. Beklemishev of the Steklov Mathematical Institute for his support related to the project.

\vspace{2cm}

\noindent\textbf{C. F. Nyberg-Brodda}\\
29 January 2021

\clearpage

\thispagestyle{empty}

\begin{center}
\fbox{\includegraphics[scale=1.5]{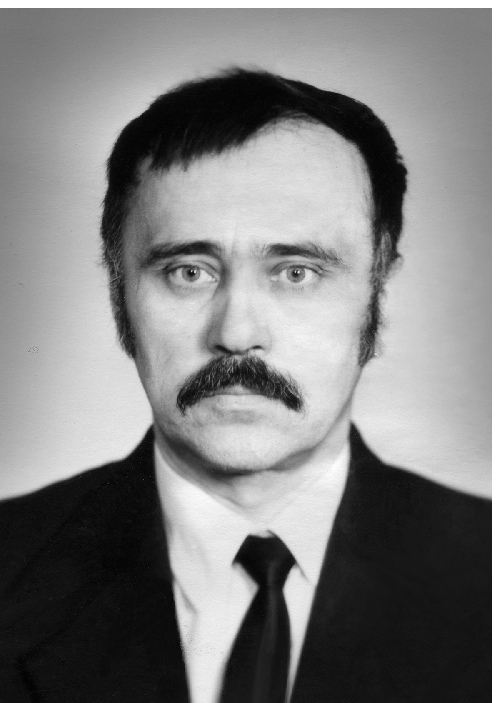}}
\end{center}
\vspace{0.5cm}
\begin{center}
Gennadij Semenovich Makanin (1938--2017).
\end{center}

\clearpage

\thispagestyle{empty}
\vspace*{5cm}
\begin{center}
\textit{This page intentionally left blank.}
\end{center}

\clearpage

%



\thispagestyle{empty}
\chapter*{Introduction}

Let $\Pi$ be a semigroup given by a set of generators
\begin{equation}
a_1, a_2, \dots, a_n
\end{equation}
and defining relations
\begin{equation}
A_i = B_i \quad (i =1, 2, \dots, m)
\end{equation}
The system of generators $(1)$ is called the alphabet of the semigroup $\Pi$. The set of words over the alphabet $(1)$ are called the words of the semigroup $\Pi$. The empty word will be denoted $1$, and the length of the word $X$ will be denoted $\len{X}$. Graphical equality of two words $X$ and $Y$ will be indicated by $X \gre Y$.

The rules of inference by which, based on the relations $(2)$, we determine the relation of equality for words of $\Pi$, generate a form of calculus (see \cite{5}, pp. 205, 206). Most of the auxiliary concepts and statements of the present work is associated with this calculus. By considering this calculus as a means of defining a semigroup, we will not use the term calculus in our work, but rather phrase all concepts in terms of semigroups instead.

In this work we will reason about certain \textit{normal decision problems}, and prove the reducibility of some problems to others. In order to accurately formulate the notion of a normal decision problem, it is necessary to fix a way of writing the input data over the alphabet, and then pose the problem of finding a normal algorithm over this alphabet. For simplicity, we will not do this, and use the term \textit{normal decision problem} without further specification, with the implicit understanding that it is based on this properly refined formulation. The principle of normalisation (see \cite{5} II.\S5) makes it possible to assert that such a refinement of a decision problem is uniquely defined up to equivalence. In the same way, the proofs of the reducibility of one normal decision problem to another, which would require reasoning about normal algorithms, will be replaced for simplicity by a description of the processes by which the solution of a given problem, corresponding to a normal decision problem, is reduced to solving single cases of another normal decision problem. We are not presented with any particular fundamental difficulty in carrying out these proofs in full detail, but it is very cumbersome. 

We will call an \textit{elementary transformation} of the semigroup $\Pi$ the transition of a word of the form $XA_iY$ to the word $XB_iY$, or vice versa, where $X$ and $Y$ are arbitrary words of the semigroup $\Pi$, and $A_i = B_i$ is one of the defining relations in $(2)$. An elementary transformation is denoted by one of the two forms 
\[
XA_iY \to XB_iY, \qquad XB_iY \to XA_iY.
\]
We also say that the tautological transition $X \to X$ is an elementary transformation.

The relations $(2)$ define the equality of words in the semigroup $\Pi$, in a way connected to the elementary transformations of the semigroup $\Pi$ in the following way: two words $X$ and $Y$ of the semigroup $\Pi$ are equal in $\Pi$ if and only if there exists some sequence of elementary transformations of the semigroup $\Pi$ 
\begin{equation}
X \gre X_0 \to X_1 \to \cdots \to X_i \to X_{i+1} \to \cdots \to X_k \gre Y,
\end{equation}
transforming the word $X$ into the word $Y$. 

We will call $G$ the \textit{group} defined by the generators $(1)$ and the defining relations $(2)$ if it is the semigroup given by the alphabet 
\[
a_1, a_2, \dots, a_n, a_1^{-1}, a_2^{-1}, \cdots, a_n^{-1}
\]
and the system of defining relations
\setcounter{equation}{1}
\begin{equation}
A_i = B_i \quad (i = 1, 2, \dots, m)
\end{equation}
\setcounter{equation}{3}
\begin{equation}
\left.\begin{array}{lr}
        a_ja_j^{-1} = 1\\
        a_j^{-1}a_j = 1
        \end{array}\right\} \qquad (j = 1, 2, \dots, n)
\end{equation}
The alphabet $(1)$ will be called the \textit{positive alphabet} of the group $G$. The letter $a_j^{-1}$ will be called the \textit{inverse} of the letter $a_j$. The defining relations $(4)$ will be called \textit{trivial} relations. These relations are uniquely determined by the alphabet of the group, and are therefore generally note written out when defining a group. 

A group will be called a \textit{free group}, if all defining relations are trivial. The equality of two words $X$ and $Y$ in the free group will be denoted by $X \equiv Y$. 

The \textit{identity problem} for words in a semigroup $\Pi$ (group $G$) is the following problem: provide an algorithm, by which one can determine for any two words $X$ and $Y$ whether or not $X$ and $Y$ are equal in the semigroup $\Pi$ (in the group $G$).

A semigroup given over the alphabet 
\setcounter{equation}{0}
\begin{equation}
a_1, a_2, \dots, a_n
\end{equation}
\setcounter{equation}{4}
and the finitely many defining relations 
\begin{equation}
A_i = 1 \quad (i = 1, 2, \dots, k)
\end{equation}
such that 
\begin{equation}
\len{A_i} \leq \ell \quad (i = 1, 2, \dots, k),
\end{equation}
will be called a \textit{$(k, \ell)$-semigroup.} If the $(k, \ell)$-semigroup $\Pi_0$ is such that we have $\len{A_i} = \ell \:\: (i = 1, 2, \dots, k)$, then we say that $\Pi_0$ is a \textit{homogeneous} $(k, \ell)$-semigroup.

We will say, that a word $X$ in some semigroup $\Pi$ is \textit{left (right) invertible}, if there exists some word $Y$ in the semigroup such that $YX = 1$ (resp. $XY = 1$) in the semigroup $\Pi$. A word $X$ is \textit{two-sided invertible} in the semigroup $\Pi$ if $X$ is both left and right invertible.

If in the $(k, \ell)$-semigroup $\Gamma$ every word is two-side invertible, then $\Gamma$ will be called a \textit{$(k, \ell)$-group}. 

We will say that the word $X$ in some semigroup $\Pi$ is \textit{left (right) divisible} by the word $Y$, if there exists some word $Z$ such that in $\Pi$ we have the equality $X = YZ$ (resp. $X = ZY$). 

The \textit{left (right) divisibility problem} for a given semigroup $\Pi$ is defined as follows: provide an algorithm, which decides for any two words $X$ and $Y$ whether or not $X$ is left (right) divisible by $Y$ in the semigroup $\Pi$. 

S. I. Adian, in the third chapter of \cite{1}, has proved: if the natural numbers $k$ and $\ell$ are such that there exists an algorithm for solving the identity problem in every $(k, \ell)$-group, then there is an algorithm, which solves the identity problem in any homogeneous $(k, \ell)$-semigroup.

In Sections~1.1, 1.2, and 1.3 of the present work, it is proved that under the same assumptions as those made by S. I. Adian, there is an algorithm which solves the identity problem in any $(k, \ell)$-semigroup. 

We will call the \textit{maximal subgroup} of a semigroup the subgroup generated by the collection of two-sided invertible words of the semigroup. 

S. I. Adian, in the third chapter of \cite{1}, has proved: if the natural numbers $k$ and $\ell$ are such that there exists an algorithm for solving the identity problem in every $(k, \ell)$-group, then there is an algorithm which for any homogeneous $(k, \ell)$-semigroup $\Pi$ produces a $(k, \ell)$-group, isomorphic to the maximal subgroup of $\Pi$. 

In Section~1.4 of the present work, it is proved that  under the same assumptions as those made by S. I. Adian, there is an algorithm which for any $(k, \ell)$-semigroup $\Pi$ produces a $(k, \ell)$-group, isomorphic to the maximal subgroup of $\Pi$. 

Two systems of defining relations $\Omega_1$ and $\Omega_2$ over some alphabet $A$ will be called \textit{equivalent}, if for every choice of two words over $A$, the words are equal in the semigroup with the system of defining relations $\Omega_1$ if and only if the words are equal in the semigroup with system of defining relations $\Omega_2$. 

If two semigroups $\Pi_1$ and $\Pi_2$ are given by equivalent systems of defining relations, then the identity problem in $\Pi_1$, either of the divisibility problems in $\Pi_1$, and the problem of computing a group isomorphic to the maximal subgroup of $\Pi$, can each be reduced to the respective problem for the semigroup $\Pi_2$. 

Let the group $G$ be defined by the generators 
\setcounter{equation}{0}
\begin{equation}
a_1, a_2, \dots, a_n
\end{equation}
and a finite system of defining relations 
\setcounter{equation}{6}
\begin{equation}
R_i = 1 \quad (i = 1, 2, \dots, m)
\end{equation}
The left-hand sides of the defining relations $R_i = 1$  ($i = 1, 2, \dots, m$) are words written over the alphabet $(a_1^{\pm 1}, \dots, a_n^{\pm 1})$, which we will call the \textit{defining words} of the group $G$. We denote by $M$ the set of all defining words. Deleting a word of the form $a_j a_j^{-1}$ or $a_j^{-1} a_j$ from some word $W$ will be called \textit{reducing} $W$. A word, which cannot be reduced, will be called \textit{reduced}. If 
\[
Z \gre a_{i_1}^{\varepsilon_1} a_{i_2}^{\varepsilon_2} \cdots a_{i_{v-1}}^{\varepsilon_{v-1}} a_{i_v}^{\varepsilon_v}
\]
where $\varepsilon_i = \pm 1$ ($i = 1, 2, \dots, v$), then we will call the word
\[
(a_{i_{v}}^{\varepsilon_{v}})^{-1} (a_{i_{v-1}}^{\varepsilon_{v-1}})^{-1} \cdots (a_{i_{2}}^{\varepsilon_{2}})^{-1} (a_{i_{1}}^{\varepsilon_{1}})^{-1}
\]
the \textit{inverse} of $Z$, and denote it by $Z^{-1}$ or $\ol{Z}$. 

Without loss of generality, we may assume that every defining word $R_i$ of the group $G$ is reduced, and that the set $M$ is closed under the operations of taking inverses of and cyclic permutations of the words $R_i$ (this implies that each $R_i$ is cyclically reduced).  Such a set of defining words will be called \textit{symmetrized}.  

V. A. Tartakovskij \cite{6, 7}, and subsequently A. V. Gladkij \cite{2, 3}, M. D. Greendlinger \cite{4, 11} and others, have studied finitely presented groups given by a symmetrized set of defining relations. In some classes of these groups, it is possible to distinguish large parts of the defining words in every non-empty, reduced word equal to the identity of the group. In many cases, this permits a positive solution to the identity and conjugacy problems, and to describe some of the properties of these groups.

A group $G$, with generating set $(1)$ and defining relations $(7)$, is said to \textit{belong to the class }$K_\alpha$, where $0 < \alpha < 1$, if the set of $M$ of the group $G$ satisfies the following property: if $R_i$ and $R_j$ are arbitrary, not mutually inverse words in the set $M$, then reducing the word $R_iR_j$ removes less than $\alpha$ times the number of letters in $R_i$.

In the cited works, and also in the work by Britton \cite{9} and Sheik \cite{12}, the identity problem is solved for various generalisations of the class of groups $K_{1/6}$. 

In Chapter~2 of the present work, using a generalisation of Dehn's algorithm \cite{10}, the identity problem is solved for the class of groups $K_{2/11}$. 

The definitions and notation found in the introduction (which are borrowed from the works of S. I. Adian \cite{1}, M. D. Greendlinger \cite{4}, A. A. Markov \cite{5}, and G. S. Tsejtin \cite{8}) will be considered as known, and will not be re-introduced.

The results of Chapter~1 of the present work are published in the note \cite{13}. The results of the present work were presented at a seminar on algorithmic problems in algebra at Moscow State University. 

I wish to express my gratitude to A. A. Markov and S. I. Adian for their attention and help related to the work. 

\clearpage

\chapter{Algorithmic problems for $(k, \ell)$-semigroups}
\section{Construction of elementary words}

For any list of words $X_1, X_2, \dots, X_s$ on some alphabet $A$, we associate a \textit{reduced} list of words $\mathcal{T}_1(X_1, X_2, \dots, X_s)$, which is obtained from the list $X_1, X_2, \dots, X_s$, by repeatedly applying the following two operations until no longer possible:
\begin{enumerate}[label=(\greek*)]
\item if $X_j \gre 1$, then remove $X_j$ from the list;
\item if $X_k \gre X_\ell$ and $k < \ell$, then remove $X_\ell$ from the list.
\end{enumerate}
Starting with $X_1, X_2, \dots, X_s$, the above will result in a \textit{unique} list $X_{i_1}, X_{i_2}, \dots, X_{i_t}$, which we will denote by $\mathcal{T}_1(X_1, X_2, \dots, X_s)$. 

Order in some way the set of all pairs of natural numbers and, with $Y_1, Y_2, \dots, Y_t$ some list of words, say that an ordered pair of words $(Y_{j_1}, Y_{j_2})$ \textit{precedes} the ordered pair $(Y_{k_1}, Y_{k_2})$ if in our chosen ordering the pair of numbers $(j_1, j_2)$ precedes the pair $(k_1, k_2)$. 

\begin{definition}
We say that a pair of words $X$ and $Y$ on an alphabet $A$ \textit{overlap} if $X \gre MN$ and $Y \gre NL$, where $N$ and $ML$ are non-empty words. 
\end{definition}

Suppose we are given a list of words
\begin{equation}
Y_1, Y_2, \dots, Y_t, Z_1, Z_2, \dots, Z_s.
\end{equation}
If in the list of words $Y_1, Y_2, \dots, Y_t$ we delete the words $Y_{v_1}, Y_{v_2}, \cdots, Y_{v_m}$, we will denote the remaining words of $(1)$ by 
\[
Y_1, Y_2, \dots, Y_t, \overline{Y}_{v_1}, \overline{Y}_{v_2}, \dots, \overline{Y}_{v_m}, Z_1, Z_2, \dots, Z_s.
\]

\

\noindent\underline{The division algorithm $\Delta$.}

\

Consider an arbitrary reduced list of words $Y_1, Y_2, \dots, Y_t$ on some alphabet $A$. The algorithm $\Delta$, when applied to such a list, will act in one of two ways:

\begin{enumerate}[label =(\roman*)]
\item If no pair of words from the list $Y_1, Y_2, \dots, Y_t$ overlap, then 
\[
\Delta(Y_1, Y_2, \dots, Y_t) \rightleftharpoons Y_1, Y_2, \dots, Y_t.
\]
\item Suppose $Y_{v_1} \gre MN, Y_{v_2} \gre NL$, where $N$ and $ML$ are non-empty, where $(Y_{v_1}, Y_{v_2})$ is a pair of words from $Y_1, Y_2, \dots, Y_t$, such that this pair precedes all other overlapping pairs of words from this list; then 
\[
\Delta(Y_1, Y_2, \dots, Y_t) \rightleftharpoons \Delta(\mathcal{T}_1(Y_1, Y_2, \dots, Y_t, \overline{Y}_{v_1}, \overline{Y}_{v_2}, M, N, L)). 
\]
\end{enumerate}

\begin{definition}
Consider a list of words $Y_1, Y_2, \dots, Y_t$ such that $\len{Y_1} = \ell_i$, where $\ell_i$ is some positive integer for $i = 1, 2, \dots, t$. The \textit{parameter} of the list $Y_1, Y_2, \dots, Y_t$ is defined as the number $\sum_{i=1}^t (\ell_i -1)$, and is denoted by $\omega(Y_1, Y_2, \dots, Y_t)$. 
\end{definition}

\begin{lemma}
If the pair of words $(Y_{v_1}, Y_{v_2})$ overlap, where the words are taken from some list $Y_1, Y_2, \dots, Y_t$, and $Y_{v_1} \gre MN$, $Y_{v_2} \gre NL$, where $N$ and $ML$ are non-empty, then 
\[
\omega(Y_1, Y_2, \dots, Y_t) > \omega(\mathcal{T}_1(Y_1, Y_2, \dots, Y_t, \overline{Y}_{v_1}, \overline{Y}_{v_2}, M, N, L)).
\]
\end{lemma}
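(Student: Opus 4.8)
The plan is to exploit the fact that the parameter is nothing more than total length minus number of words: for a list $W_1, \dots, W_p$ of non-empty words one has $\omega(W_1, \dots, W_p) = \sum_{i=1}^p \len{W_i} - p$. I would track how this quantity changes when the overlapping pair is replaced by the three words $M$, $N$, $L$, and then verify that the clean-up performed by $\mathcal{T}_1$ cannot undo the resulting decrease. Note that for $\omega(Y_1, \dots, Y_t)$ to be defined at all, the words $Y_1, \dots, Y_t$ must already be non-empty; this is the only structural fact about the input list that I will need.

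First I would compute the parameter of the raw list $Y_1, \dots, Y_t, \overline{Y}_{v_1}, \overline{Y}_{v_2}, M, N, L$ obtained \emph{before} applying $\mathcal{T}_1$, allowing it to contain empty words and reading $\len{W} - 1 = -1$ for an empty $W$. Since $Y_{v_1} \gre MN$ and $Y_{v_2} \gre NL$, removing these two words removes total length $\len{M} + 2\len{N} + \len{L}$, while adjoining $M, N, L$ restores total length $\len{M} + \len{N} + \len{L}$; the net effect is that the total length drops by $\len{N}$ (the common block $N$ is counted twice on the left but once on the right) and the word count rises by one. Hence the raw parameter equals $\omega(Y_1, \dots, Y_t) - \len{N} - 1$, and because $N$ is non-empty this is at most $\omega(Y_1, \dots, Y_t) - 2$.

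Next I would analyse the effect of $\mathcal{T}_1$, which only deletes words. Deleting a non-empty duplicate via operation $(\beta)$ removes a word of length at least one while lowering the count by one, so it cannot raise the raw parameter. The only operation that can raise it is $(\alpha)$, the deletion of an empty word, which raises it by exactly one. The crucial point, and the only place the hypothesis that $ML$ is non-empty enters, is that at most one of $M$ and $L$ can be empty; since the $Y_i$ are non-empty and $N$ is non-empty, the raw list contains at most one empty word, so $(\alpha)$ is applied at most once. Combining the two steps yields
\[
\omega(\mathcal{T}_1(Y_1, \dots, Y_t, \overline{Y}_{v_1}, \overline{Y}_{v_2}, M, N, L)) \le \omega(Y_1, \dots, Y_t) - \len{N} - 1 + 1 = \omega(Y_1, \dots, Y_t) - \len{N} \le \omega(Y_1, \dots, Y_t) - 1,
\]
which is the desired strict inequality.

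I expect the only genuine subtlety to be this bookkeeping around empty words: one must confirm that the $+1$ possibly contributed by a single empty block cannot outweigh the $\len{N} + 1 \ge 2$ saved by splitting off the shared block $N$. This is exactly what the joint hypotheses — $N$ non-empty and $M, L$ not both empty — guarantee, so the inequality is in fact slack by at least one. Everything else is routine length accounting.
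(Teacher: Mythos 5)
Your proof is correct and follows essentially the same route as the paper's (one-line) proof: the identity $\len{Y_{v_1}} + \len{Y_{v_2}} = \len{M} + 2\len{N} + \len{L}$ yields a drop of $\len{N}+1 \geq 2$ in the parameter before clean-up, and $\mathcal{T}_1$ cannot undo it. Your explicit bookkeeping of the single possible empty word among $M$ and $L$ (where the paper simply asserts that $\mathcal{T}_1$ never raises the parameter) is a welcome extra degree of care rather than a divergence in method.
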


The lemma follows immediately from the facts that $\len{Y_{v_1}} = \len{M} + \len{N}$ and $\len{Y_{v_2}} = \len{N} + \len{L}$, and that any arbitrary list of words $X_1, X_2, \dots, X_s$ has parameter no less than that of $\mathcal{T}_1(X_1, X_2, \dots, X_s)$. 

Suppose the algorithm $\Delta$ is applied to a reduced list $Y_1, Y_2, \dots, Y_t$. We introduce the notion of a \textit{step} of the algorithm. If the algorithm $\Delta$ immediately applies rule (i), then we say that the algorithm $\Delta$ produces a non-overlapping list of words in $0$ steps, or that it finishes in $0$ steps. If the algorithm $\Delta$ first applies rule (ii) $k$ times, and then applies rule (i), then we say that $\Delta$ produces a non-overlapping list of words in $k$ steps, or that it finishes in $k$ steps. 

\begin{lemma}
For every reduced list of words $Y_1, Y_2, \dots, Y_t$, there exists some non-negative integer $k$, such that the algorithm $\Delta$ when applied to  $Y_1, Y_2, \dots, Y_t$ finishes in $k$ steps. 
\end{lemma}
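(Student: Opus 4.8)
The plan is to establish termination by a strict-descent argument on the parameter $\omega$, leaning entirely on the immediately preceding lemma. The first step is to record that every list to which $\Delta$ is \emph{recursively} applied is reduced: rule (ii) passes control to $\Delta(\mathcal{T}_1(\dots))$, and by construction every output of $\mathcal{T}_1$ is a reduced list. In particular, such a list contains no empty word, so each of its word-lengths satisfies $\len{Y_i} \geq 1$, and hence its parameter $\omega = \sum_i (\len{Y_i} - 1)$ is a non-negative integer. This is the key structural point that makes $\omega$ a legitimate well-founded measure.

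With that in hand, I would induct on $\omega(Y_1, Y_2, \dots, Y_t)$. If the list has no overlapping pair, then $\Delta$ immediately applies rule (i) and finishes in $k = 0$ steps, so the claim holds. Otherwise $\Delta$ applies rule (ii) once: it selects the overlapping pair $(Y_{v_1}, Y_{v_2})$ with $Y_{v_1} \gre MN$, $Y_{v_2} \gre NL$ that precedes all other overlapping pairs in the chosen ordering, and recurses on the reduced list $\mathcal{T}_1(Y_1, Y_2, \dots, Y_t, \overline{Y}_{v_1}, \overline{Y}_{v_2}, M, N, L)$. By the preceding lemma this new list has strictly smaller parameter, so the induction hypothesis applies to it and $\Delta$ finishes on it in some $k' \geq 0$ steps; hence $\Delta$ finishes on the original list in $k = k' + 1$ steps.

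The main point needing care — and really the only obstacle — is guaranteeing that the measure cannot descend indefinitely, which is exactly why the reducedness observation of the first step is essential: it forces $\omega \in \mathbb{Z}_{\geq 0}$, so that the strict decrease supplied by the lemma can be iterated only finitely often. If one prefers to avoid the induction, the same conclusion follows directly: the parameters produced by successive applications of rule (ii) form a strictly decreasing sequence of non-negative integers, which therefore has length at most $\omega(Y_1, Y_2, \dots, Y_t)$; after the last such application, rule (i) must apply (since $\Delta$ invokes exactly one of the two rules on any reduced list), and the run terminates with the desired finite $k$.
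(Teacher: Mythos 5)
Your proof is correct and follows essentially the same route as the paper: both arguments invoke Lemma~1 to get a strict decrease of the parameter $\omega$ at each application of rule (ii), and conclude termination because a strictly decreasing sequence of non-negative integers must be finite. Your additional observation that the recursively produced lists are reduced (so that $\omega$ is indeed a well-founded measure) is a point the paper leaves implicit, but it does not change the substance of the argument.
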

\begin{proof}
By Lemma~1, at every step of the algorithm $\Delta$, we process a list with a given parameter $\omega_i$ into a list with parameter $\omega_{i+1}$, where $\omega_i > \omega_{i+1}$. As the parameter $\omega$ is defined as a positive integer, it follows that the algorithm $\Delta$ will eventually produce a list with no overlaps, at which point it will apply rule (i). 
\end{proof}

\begin{definition}
If the algorithm $\Delta$, when applied to a list of words $Y_1, Y_2, \dots, Y_t$, finishes with the list of non-overlapping words $V_1, V_2, \dots, V_k$, then we write
\begin{equation}
\Delta[Y_1, Y_2, \dots, Y_t] = V_1, V_2, \dots, V_k.
\end{equation}
\end{definition}

\begin{lemma}
If $\Delta[Y_1, Y_2, \dots, Y_t] = V_1, V_2, \dots, V_k$, then 
\begin{enumerate}
\item $V_1, V_2, \dots, V_k$ is a reduced list of non-overlapping words.
\item Every $Y_i \: (i = 1, 2, \dots, t)$ is graphically a product of some $V_{i_1}, V_{i_2}, \dots, V_{i_{s_i}}$, where $1 \leq i_1, i_2, \dots, i_{s_i} \leq t$. 
\item For every $V_j \: (j = 1, 2, \dots, k)$ there exists some $Y_i \gre V_{i_1} V_{i_2} \cdots V_{i_{s_i}}$ such that $V_j$ is graphically equal to at least one of $V_{i_1}, V_{i_2}, \dots, V_{i_{s_i}}$. 
\end{enumerate}
\end{lemma}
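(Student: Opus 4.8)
The plan is to induct on the number $k$ of steps in which $\Delta$ finishes when applied to $Y_1, Y_2, \dots, Y_t$; this number exists by Lemma~2. Property~1 is built into the algorithm, since $\Delta$ can only halt through rule~(i), which by definition applies to a reduced list having no overlapping pair; hence the output $V_1, V_2, \dots, V_k$ is automatically a reduced list of non-overlapping words. In the base case $k = 0$ the algorithm applies rule~(i) at once, so the output coincides with the input; each $Y_i$ is then one of the $V_j$, and each $V_j \gre Y_j$, so properties~2 and~3 hold with trivial one-term decompositions.

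For the inductive step, suppose $\Delta$ finishes in $k \geq 1$ steps. It first applies rule~(ii) to the precedence-least overlapping pair $(Y_{v_1}, Y_{v_2})$, with $Y_{v_1} \gre MN$ and $Y_{v_2} \gre NL$, and then runs in $k-1$ further steps on the list
\[
W_1, W_2, \dots, W_r \rightleftharpoons \mathcal{T}_1(Y_1, Y_2, \dots, Y_t, \overline{Y}_{v_1}, \overline{Y}_{v_2}, M, N, L),
\]
producing the same final output $V_1, V_2, \dots, V_k$. The inductive hypothesis thus supplies properties~2 and~3 for the list $W_1, W_2, \dots, W_r$. The bridge back to the $Y_i$ rests on one observation: before $\mathcal{T}_1$ is applied, every word of the list $Y_1, Y_2, \dots, Y_t, \overline{Y}_{v_1}, \overline{Y}_{v_2}, M, N, L$ occurs as a graphical factor in the decomposition of some input word --- a surviving $Y_j$ ($j \neq v_1, v_2$) is itself, while $M, N$ are factors of $Y_{v_1} \gre MN$ and $N, L$ are factors of $Y_{v_2} \gre NL$. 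Since $\mathcal{T}_1$ only discards empty words and later duplicates, each $W_i$ is graphically equal to at least one such factor-word.

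Given this, I would deduce property~2 by cases: for $i \neq v_1, v_2$ the word $Y_i$ equals some $W_j$ and hence, by the hypothesis, a product of the $V$'s; and $Y_{v_1} \gre MN$, $Y_{v_2} \gre NL$ become products of $V$'s upon concatenating the decompositions of their non-empty factors among $M, N, L$, each of which equals some $W_j$. For property~3, given $V_j$ the hypothesis yields a $W_{j_0}$ whose decomposition contains $V_j$; tracing $W_{j_0}$ back to its factor-word and then to the containing input word ($Y_{v_1}$, $Y_{v_2}$, or some $Y_i$) exhibits the required decomposition of an input word containing $V_j$.

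The main obstacle I anticipate is the bookkeeping forced by $\mathcal{T}_1$: one must check that deleting empty words and later duplicates never severs the correspondence between the $W_i$ and the factor-words $M, N, L, Y_j$, and one must separately handle the degenerate cases permitted by rule~(ii), in which $M$ or $L$ may be empty while only $N$ and $ML$ are guaranteed non-empty. These verifications are routine but must be carried out so that the claimed decompositions of $Y_{v_1}$ and $Y_{v_2}$ remain valid.
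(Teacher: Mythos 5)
Your proposal is correct and follows exactly the route the paper takes: the paper's entire proof is the single remark that ``all parts of the lemma are easily proved by induction on the number of steps taken by the algorithm $\Delta$,'' and your argument is precisely that induction, carried out in detail with the key observation that each word of $\mathcal{T}_1(Y_1, \dots, Y_t, \overline{Y}_{v_1}, \overline{Y}_{v_2}, M, N, L)$ is a graphical factor of some input word. The bookkeeping caveats you flag at the end are the right ones and are handled correctly in your sketch.
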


All parts of the lemma are easily proved by induction on the number of steps taken by the algorithm $\Delta$ when applied to the list $Y_1, Y_2, \dots, Y_t$. 

\begin{lemma}
If the words $XY$ and $YZ$ are invertible in some semigroup $\Pi$, then $X, Y$, and $Z$ are invertible in $\Pi$. 
\end{lemma}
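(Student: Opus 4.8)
The plan is to exploit the fact that the two-sided invertible words of a monoid are closed under products and under taking inverses (they form the group of units), so that it suffices to establish the invertibility of $Y$ first; the invertibility of $X$ and $Z$ will then drop out by a one-line cancellation. Here $\Pi$ is a monoid with identity the empty word $1$, and ``invertible'' is read as ``two-sided invertible''.

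First I would record a standard observation to be used throughout: in a monoid, a word that is both left and right invertible has a single two-sided inverse, since if $b X = 1$ and $X c = 1$ then $b = b(Xc) = (bX)c = c$. Applying this to the hypotheses, I may choose words $P$ and $Q$ with
\[
P\,(XY) = (XY)\,P = 1, \qquad Q\,(YZ) = (YZ)\,Q = 1.
\]

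Next I would extract a left inverse and a right inverse for $Y$ from these two relations \emph{separately}. Reassociating $P\,(XY) = 1$ as $(PX)\,Y = 1$ shows that $Y$ is left invertible, while reassociating $(YZ)\,Q = 1$ as $Y\,(ZQ) = 1$ shows that $Y$ is right invertible. Hence $Y$ is two-sided invertible; write $Y'$ for its two-sided inverse, so $Y'Y = YY' = 1$. With $Y$ now known to be invertible, I would finish by writing $X = X(YY') = (XY)Y'$ and $Z = (Y'Y)Z = Y'(YZ)$. Each right-hand side is a product of two two-sided invertible words---namely $XY$ and $Y'$ in the first case, $Y'$ and $YZ$ in the second---and a product of two-sided invertible words is two-sided invertible, so $X$ and $Z$ are invertible.

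The step I would be most careful about, and the essential content of the lemma, is that one cannot conclude the invertibility of $X$ or $Z$ directly by cancellation without first knowing $Y$ to be invertible. The crux is therefore the symmetric argument that $Y$ acquires its left inverse from the invertibility of $XY$ and its right inverse from the invertibility of $YZ$; everything else is routine monoid bookkeeping.
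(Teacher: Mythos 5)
Your proof is correct and follows essentially the same route as the paper's: both extract $Y$'s left invertibility from $P(XY)=1$ and its right invertibility from $(YZ)Q=1$, and then the identities $X=(XY)Y'$ and $Z=Y'(YZ)$ are just a repackaging of the paper's explicit computations $YU_1X=1$ and $ZV_1Y=1$. No gap.
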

\begin{proof}
As $XY$ and $YZ$ are invertible, we can find some words $U_1, U_2, V_1$, and $V_2$,  such that in $\Pi$ we have the equalities
\begin{equation}
U_1 XY = 1, \: XY U_2 = 1, \: YZ V_1 = 1, \: V_2 YZ = 1.
\end{equation}
From the first and third equalities in the above $(3)$ it follows that $Y$ is invertible in $\Pi$. Observe that $ZV_1Y = U_1 XYZV_1Y = U_1XY = 1$, and $YU_1X = YU_1XYZV_1 = YZV_1 = 1$ in the semigroup $\Pi$, and hence it follows that $X$ and $Z$ are also invertible in $\Pi$. 
\end{proof}

\begin{lemma}
If $\Delta[Y_1, Y_2, \dots, Y_t] = V_1, V_2, \dots, V_k$ and every $Y_i \: (i = 1, 2, \dots, t)$ is invertible in the semigroup $\Pi$, then all the $V_j \: (j = 1, 2, \dots, k)$ are invertible in $\Pi$. 
\end{lemma}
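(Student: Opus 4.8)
The plan is to argue by induction on the number of steps $p$ that the algorithm $\Delta$ takes when applied to the list $Y_1, Y_2, \dots, Y_t$; by Lemma~2 this $p$ is a well-defined non-negative integer. The base case $p = 0$ is immediate: here $\Delta$ applies rule (i) at once, so the output list $V_1, \dots, V_k$ is graphically the input list $Y_1, \dots, Y_t$, every word of which is invertible in $\Pi$ by hypothesis.

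For the inductive step, suppose $p \geq 1$. Then $\Delta$ first applies rule (ii): there is an overlapping pair $(Y_{v_1}, Y_{v_2})$ from the list with $Y_{v_1} \gre MN$ and $Y_{v_2} \gre NL$, where $N$ and $ML$ are non-empty, and
\[
\Delta[Y_1, \dots, Y_t] = \Delta[\mathcal{T}_1(Y_1, \dots, Y_t, \overline{Y}_{v_1}, \overline{Y}_{v_2}, M, N, L)].
\]
The crucial observation is that $Y_{v_1} \gre MN$ and $Y_{v_2} \gre NL$ are themselves among the $Y_i$, and hence invertible in $\Pi$. I would then apply Lemma~4 with $X \gre M$, $Y \gre N$, and $Z \gre L$ to conclude that $M$, $N$, and $L$ are each invertible in $\Pi$.

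With this in hand, every word of the list $Y_1, \dots, Y_t, \overline{Y}_{v_1}, \overline{Y}_{v_2}, M, N, L$ is invertible in $\Pi$: the surviving $Y_i$ (those with $i \neq v_1, v_2$) by hypothesis, and $M, N, L$ by the previous paragraph. Passing to $\mathcal{T}_1$ only deletes empty words and repetitions, neither of which can introduce a non-invertible word (the empty word being trivially invertible), so every word of the resulting reduced list is invertible. Since $\Delta$ finishes on this new list in $p - 1$ steps and produces the very same output $V_1, \dots, V_k$, the induction hypothesis yields that all the $V_j$ are invertible, completing the argument.

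The work here is almost entirely structural bookkeeping; the one genuine point is the invocation of Lemma~4, which is precisely the tool that transfers invertibility from the overlapping words $MN$ and $NL$ down to the three factors $M$, $N$, $L$ into which the algorithm splits them. I expect no real obstacle beyond stating correctly the accounting of which words survive into the new list and confirming the harmless role of $\mathcal{T}_1$.
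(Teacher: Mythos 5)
Your proposal is correct and follows precisely the route the paper indicates: induction on the number of steps of the algorithm $\Delta$, with Lemma~4 supplying the invertibility of $M$, $N$, and $L$ from that of the overlapping words $MN$ and $NL$. You have simply written out the details that the paper leaves to the reader.
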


This lemma is easily proved by induction on the number of steps taken by the algorithm $\Delta$ when applied to the list $Y_1, Y_2, \dots, Y_t$, and using Lemma~4. 

Let $\Pi$ be a $(k, \ell)$-semigroup with generating set 
\begin{equation}
a_1, a_2, \dots, a_n
\end{equation}
and defining relations
\begin{equation}
A_i = 1 \quad (i = 1, 2, \dots, k),
\end{equation}
satisfying 
\begin{equation}
\len{A_i} \leq \ell \quad (i = 1, 2, \dots, k).
\end{equation}

\begin{definition}
A finite list of words $B_1, B_2, \dots, B_p$ over the alphabet $(4)$ is called a list of \textit{$B$-words of the $(k, \ell)$-semigroup $\Pi$} if
\begin{enumerate}
\item $B_1, B_2, \dots, B_p$ is a reduced list of non-overlapping words.
\item Every left-hand side $A_i \: (i = 1, 2, \dots, k)$ of the $(k, \ell)$-semigroup $\Pi$ is graphically equal to a product of some $B_{i_1}, B_{i_2}, \dots, B_{i_{s_i}} \: (1 \leq i_1, i_2, \dots, i_{s_i} \leq p)$.
\item For every $B_j \: (j = 1, 2, \dots, p)$ there exists some $A_i \gre B_{i_1} B_{i_2} \cdots B_{i_{s_i}}$ such that $B_j$ is graphically equal to one of the $B_{i_1}, B_{i_2}, \dots, B_{i_{s_i}}$. 
\end{enumerate}
\end{definition}

\begin{lemma}
There exists an algorithm which for every $(k, \ell)$-semigroup $\Pi$ constructs a list of $B$-words for this semigroup.
\end{lemma}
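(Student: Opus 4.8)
The plan is to feed the list of defining words directly into the division algorithm $\Delta$ and read off its output as the desired $B$-words. Concretely, I would first pass to the reduced list $\mathcal{T}_1(A_1, A_2, \dots, A_k)$ — this step is needed because $\Delta$ is defined only on reduced lists — and then set
\[
B_1, B_2, \dots, B_p = \Delta[\mathcal{T}_1(A_1, A_2, \dots, A_k)].
\]
By Lemma~2 the algorithm $\Delta$ finishes in finitely many steps on any reduced input, so this prescription halts and produces a well-defined list; since $\mathcal{T}_1$ is itself computable, the whole construction is effective and uniform in $\Pi$, which is what ``there exists an algorithm'' requires.

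It then remains to check that $B_1, \dots, B_p$ satisfies the three defining conditions of a list of $B$-words, and for this I would transport the three clauses of Lemma~4. Clause~(1) of Lemma~4 says immediately that $B_1, \dots, B_p$ is a reduced list of non-overlapping words, which is condition~1. Clause~(3) of Lemma~4 says that every $B_j$ is graphically equal to one of the factors in the graphical factorisation of some member of the input list $\mathcal{T}_1(A_1, \dots, A_k)$; since every member of that list is one of the $A_i$, this is exactly condition~3.

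The one point needing care is condition~2, because Lemma~4(2) asserts the factorisation property only for the words actually appearing in the reduced input list, whereas condition~2 demands it for every $A_i$. Here I would unwind the definition of $\mathcal{T}_1$: a word $A_i$ is deleted only when $A_i \gre 1$ or when $A_i \gre A_j$ for some retained $A_j$. In the first case $A_i$ is the empty product of $B$-words; in the second, $A_i$ inherits verbatim the factorisation of $A_j$ supplied by Lemma~4(2). Hence every $A_i$, retained or not, is graphically a product of the $B_j$, which gives condition~2 and completes the proof.

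I do not anticipate a serious obstacle, since the substantive work — termination and the structural properties of the output — is already packaged into Lemmas~2 and~4. The only genuinely new content is the bookkeeping needed to pass from the reduced list $\mathcal{T}_1(A_1, \dots, A_k)$ back to the full list of defining words, and this is routine once the behaviour of $\mathcal{T}_1$ is spelled out.
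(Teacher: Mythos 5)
Your proposal is correct and takes exactly the same route as the paper's own proof: apply $\Delta$ to $\mathcal{T}_1(A_1, A_2, \dots, A_k)$ and transfer the structural properties of the output (these are the paper's Lemma~3, which you cite as Lemma~4). Your additional bookkeeping for the words discarded by $\mathcal{T}_1$ -- empty words and graphical duplicates -- is a sound filling-in of a step the paper leaves implicit.
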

\begin{proof}
Let $A_1, A_2, \dots, A_k$ be the left-hand sides of the defining relations of some $(k, \ell)$-semigroup $\Pi$. Apply the algorithm $\Delta$ to the list $\mathcal{T}_1(A_1, A_2, \dots, A_k)$. Suppose that 
\[
\Delta[\mathcal{T}_1(A_1, A_2, \dots, A_k)] = B_1, B_2, \dots, B_p.
\]
Then by Lemma~3, the reduced list $B_1, B_2, \dots, B_p$ is a list of $B$-words of the $(k, \ell)$-semigroup $\Pi$.
\end{proof}

\begin{lemma}
If $A \gre B_{j_1} B_{j_2} \cdots B_{j_p} \gre B'_{m_1} B'_{m_2} \cdots B'_{m_s}$, where $B_{j_i}$ and $B'_{m_i}$ are $B$-words of the $(k, \ell)$-semigroup $\Pi$, then $p = s$, and $B_{j_i} \gre B'_{m_i}$ for all $i$.
\end{lemma}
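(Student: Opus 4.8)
The plan is to prove this uniqueness-of-factorisation statement by appealing to the defining property of $B$-words — specifically, that $B_1, B_2, \dots, B_p$ is a \emph{non-overlapping} list (part (1) of Definition~2). First I would set up the two factorisations $A \gre B_{j_1} B_{j_2} \cdots B_{j_p}$ and $A \gre B'_{m_1} B'_{m_2} \cdots B'_{m_s}$ as graphical equalities, and consider the two initial factors $B_{j_1}$ and $B'_{m_1}$. Since both are prefixes of the same word $A$, one must be a prefix of the other; say $\len{B_{j_1}} \leq \len{B'_{m_1}}$, so that $B'_{m_1} \gre B_{j_1} C$ for some (possibly empty) word $C$. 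The key observation is that if $C$ is non-empty, then $C$ is a non-empty prefix of the suffix $B_{j_2} \cdots B_{j_p}$, hence overlaps with the word $B_{j_1}$ at the junction — more precisely, I would argue that $B_{j_1}$ and $B'_{m_1} \gre B_{j_1}C$ would force an overlap among the $B$-words, contradicting non-overlapping.

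The cleanest way to make this precise is by induction on the length $\len{A}$, or equivalently on $\min(p,s)$. The inductive step reduces to showing $B_{j_1} \gre B'_{m_1}$; once that graphical equality is established, I can cancel the common prefix (graphical equality permits left-cancellation), leaving $B_{j_2} \cdots B_{j_p} \gre B'_{m_2} \cdots B'_{m_s}$, a shorter instance to which the induction hypothesis applies, yielding $p-1 = s-1$ and $B_{j_i} \gre B'_{m_i}$ for $i \geq 2$. The base case is when $A$ is empty (both factorisations are empty, so $p = s = 0$ vacuously) or when one side has a single factor.

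The main obstacle — and the heart of the argument — is establishing $B_{j_1} \gre B'_{m_1}$ from non-overlapping. Suppose for contradiction that $B_{j_1} \not\gre B'_{m_1}$; then without loss of generality $B'_{m_1} \gre B_{j_1} C$ with $C$ non-empty. Now $C$ is a non-empty prefix of $B_{j_2} \cdots B_{j_p}$, so writing $B_{j_2} \cdots B_{j_p} \gre C D$, I want to produce two distinct $B$-words that overlap. The subtlety is that $C$ need not itself be a single $B$-word: it is a prefix of the block $B_{j_2} \cdots B_{j_p}$, so it ends partway through some $B_{j_r}$. I would decompose $B'_{m_1} \gre B_{j_1} \cdots B_{j_{r-1}} E$ where $E$ is a non-empty proper prefix of $B_{j_r}$ (the case where $C$ ends exactly at a $B$-word boundary is easier and handled similarly). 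Then $B'_{m_1} \gre (\cdots) E$ and $B_{j_r} \gre E F$ exhibit a genuine overlap in the sense of Definition~1, since $E$ is the shared non-empty middle and the flanking parts are non-empty; this contradicts the fact that all $B$-words in the combined list form a non-overlapping list. Making the overlap rigorously match Definition~1 — identifying the words $M, N, L$ with $N$ the common part and $ML$ non-empty — is where the care is needed, and I expect this verification to be the genuinely delicate point of the proof.
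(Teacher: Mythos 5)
Your proof is correct and follows essentially the same route as the paper's: a minimal-counterexample argument on $\len{A}$, forcing $B_{j_1} \gre B'_{m_1}$ from the non-overlapping property and then cancelling the common first factor to reduce to a shorter instance. The only difference is that you manufacture an overlap between $B'_{m_1}$ and some later $B_{j_r}$ at the junction, whereas the paper observes directly that a proper-prefix relation between $B_{j_1}$ and $B'_{m_1}$ is already itself an overlap in the sense of Definition~1 (take $M \gre 1$, $N \gre B_{j_1}$, and $L$ the non-empty remainder, so that $ML$ is non-empty), which disposes of the point you flag as delicate in one line.
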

\begin{proof}
Let $A$ be the shortest word such that some of the conclusions of Lemma~7 do not hold.  Neither of the words $B_{j_1}$ and $B'_{m_1}$ can be a proper prefix of the other, as $B$-words do not overlap. Hence $B_{j_1} \gre B'_{m_1}$, and $B_{j_2} \cdots B_{j_p} \gre B'_{m_2} \cdots B'_{m_s}$. Call this word $A'$. As $\len{A'} < \len{A}$, and as $A$ is the shortest word such that the conclusions of Lemma~7 do not hold, we have that $p-1 = s-1$, and $B_{j_i} \gre B'_{m_i}$ for $i = 1, 2, \dots, p$. But then the conclusions of Lemma~7 hold for $A$. This is a contradiction. Hence, Lemma~7 is true for all words $A$. 
\end{proof}

\

\noindent\underline{Specifying the $(k, \ell)$-semigroup $\Gamma$ from a $(k, \ell)$-semigroup $\Pi$ and its $B$-words.}

\

Establish a one-to-one connection between the $B$-words $B_1, B_2, \dots, B_p$ of the $(k, \ell)$-semigroup $\Pi$ and a set of symbols $\beta_1, \beta_2, \dots, \beta_p$. As $B_i \not\gre B_j$ for $i \neq j$, this can be done.

By Lemma~7 every $A_i$ can be uniquely written as $B_{i_1} B_{i_2} \cdots B_{i_{s_i}}$, where $(i = 1, 2, \dots, k)$ and $(1 \leq i_1, i_2, \dots, i_{s_i} \leq p)$. 

Let $\Gamma$ be the $(k, \ell)$-semigroup with generating set
\begin{equation}
\beta_1, \beta_2, \dots, \beta_p
\end{equation}
and defining relations 
\begin{equation}
M_i = 1 \quad (i = 1, 2, \dots, k)
\end{equation}
where $M_i \gre \beta_{i_1} \beta_{i_2} \cdots \beta_{i_{s_i}} \: (i = 1, 2, \dots, k), \: (1 \leq i_1, i_2, \dots, i_{s_i} \leq p)$. That is, $M_i$ is obtained from $A_i \gre B_{i_1} B_{i_2} \cdots B_{i_{s_i}}$ by replacing the $B$-words by their corresponding letters $\beta_{i_1}, \beta_{i_2}, \dots, \beta_{i_{s_i}}$. 

As $\len{M_i} \leq \len{A_i} \: (i = 1, 2, \dots, k)$, it follows that $\Gamma$ really is a $(k, \ell)$-semigroup. From Lemma~7, it follows that the $(k, \ell)$-semigroup $\Gamma$ is uniquely defined from the $(k, \ell)$-semigroup $\Pi$ and its $B$-words. 

\begin{lemma}
$\Delta[\mathcal{T}_1(M_1, M_2, \dots, M_k)] = \beta_1, \beta_2, \dots, \beta_p$.
\end{lemma}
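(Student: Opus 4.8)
The plan is to run the division algorithm $\Delta$ on $\mathcal{T}_1(M_1, \dots, M_k)$ and on $\mathcal{T}_1(A_1, \dots, A_k)$ simultaneously, and to show that the two computations proceed in perfect lockstep under the substitution $\phi$ that sends each letter $\beta_m$ to its $B$-word $B_m$, and a $\beta$-word to the product of the associated $B$-words. Since $M_i$ is by construction $A_i$ with each $B$-word replaced by its letter, we have $\phi(M_i) \gre A_i$; moreover $\phi$ is injective, sends non-empty words to non-empty words, and never identifies two distinct words, so it commutes with the two operations defining $\mathcal{T}_1$. In particular $\mathcal{T}_1(M_1, \dots, M_k)$ is obtained from $\mathcal{T}_1(A_1, \dots, A_k)$ by applying $\phi^{-1}$ position by position, so the two lists have equal length and corresponding entries. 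The goal is to propagate this correspondence through every step of $\Delta$ and read off the final lists.

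The key device is an \emph{overlap correspondence} for products of $B$-words. I will prove: if $P$ and $Q$ are both products of $B$-words and overlap, say $P \gre MN$, $Q \gre NL$ with $N$ and $ML$ non-empty, then the overlap word $N$ is a concatenation of whole $B$-words aligned with the $B$-decomposition of $P$ (as a suffix) and of $Q$ (as a prefix); consequently $M$, $N$, $L$ are themselves products of $B$-words, and $\phi^{-1}(P), \phi^{-1}(Q)$ overlap with the three pieces $\phi^{-1}(M), \phi^{-1}(N), \phi^{-1}(L)$. The converse (an overlap of $\phi^{-1}(P), \phi^{-1}(Q)$ pushes forward to one of $P, Q$) is immediate from injectivity of $\phi$. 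Thus overlapping pairs of the $A$-list correspond exactly, and with matching overlap data, to overlapping pairs of the $M$-list.

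To prove the alignment statement I will superimpose $P$ and $Q$ along their common factor $N$ to form a single word carrying two $B$-tilings, and argue that if some boundary of the two tilings failed to match inside $N$, I could exhibit two $B$-words of the list that overlap, contradicting the fact (part (1) of Lemma~3, applied to the very construction of the $B$-words) that $B_1, \dots, B_p$ is non-overlapping. The main obstacle is the delicate case in which one would-be $B$-word sits strictly \emph{inside} another rather than sharing a prefix or a suffix with it; this is not literally an overlap in the sense of Definition~1, and it must be resolved by following the tiles of one decomposition rightward until the first tile that crosses the right-hand boundary of the enclosing $B$-word, which then does overlap that $B$-word in the required sense. Here it is essential that the $B$-words neither overlap one another nor self-overlap, which is precisely what $\Delta$ guarantees for its output; this is the heart of the argument and where the non-overlap property is truly used.

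With the overlap correspondence in hand, the lockstep induction is routine. Assume the current $A$-list and $M$-list are in positional $\phi$-correspondence. They then have identical sets of overlapping index-pairs, so the single fixed ordering of pairs of natural numbers selects the \emph{same} preceding pair $(Y_{v_1}, Y_{v_2})$ in both lists; the alignment statement shows the induced splits into $M, N, L$ correspond under $\phi$; and $\phi$ commuting with $\mathcal{T}_1$ shows the two resulting lists are again in positional $\phi$-correspondence. By Lemma~2 both runs terminate, necessarily after the same number of steps and by invoking rule~(i) simultaneously. Since by construction the $A$-run finishes with $\Delta[\mathcal{T}_1(A_1, \dots, A_k)] = B_1, \dots, B_p$, the $M$-run finishes with $\phi^{-1}(B_1), \dots, \phi^{-1}(B_p)$, and as $\phi^{-1}(B_m) \gre \beta_m$ this is exactly $\beta_1, \beta_2, \dots, \beta_p$, as required.
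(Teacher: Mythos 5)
Your proposal is correct and takes essentially the same route as the paper, which disposes of the lemma in two sentences: induction on the number of steps of $\Delta$, the observation that every word arising during the run on $\mathcal{T}_1(M_1,\dots,M_k)$ is empty or corresponds to a product of $B$-words, and the fact that $\Delta[\mathcal{T}_1(A_1,\dots,A_k)] = B_1,\dots,B_p$. Your lockstep simulation under $\phi$ and the overlap-correspondence argument (including the ``tile strictly inside a tile'' case, resolved via the non-overlapping property of the $B$-words) are precisely the details the paper leaves to the reader.
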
 
The lemma follows easily by induction on the number of steps taken by the algorithm $\Delta$. For this, one must note that any word appearing in the process of applying the algorithm $\Delta$ to $\mathcal{T}_1(M_1, M_2, \dots, M_k)$ is either empty, or else comes from some $B$-word; and then note that $\Delta[\mathcal{T}_1(A_1, A_2, \dots, A_k)] = B_1, B_2, \dots, B_p$. 

\begin{definition}
A finite list of words $C_1, C_2, \dots, C_s$ on the alphabet $(4)$ is called a list of \textit{$C$-words of the $(k, \ell)$-semigroup $\Pi$}, if $C_1, C_2, \dots, C_s$ is a list of $B$-words of $\Pi$ and 
\begin{enumerate}
\setcounter{enumi}{3}
\item The $(k, \ell)$-semigroup $\Gamma$, defined from the $(k, \ell)$-semigroup $\Pi$ and the $B$-words $C_1, C_2, \dots, C_s$, is a $(k, \ell)$-group.
\end{enumerate}
\end{definition}

\begin{lemma}
There exists an algorithm which for any $(k, \ell)$-semigroup $\Pi$ produces a list of $C$-words for this semigroup. 
\end{lemma}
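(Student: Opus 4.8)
The plan is to show that no new construction is needed beyond Lemma~6: the list of $B$-words it already produces is automatically a list of $C$-words. So the algorithm is simply to run the algorithm of Lemma~6, and the whole content of the proof is to verify that the associated $(k,\ell)$-semigroup $\Gamma$ is in fact a $(k,\ell)$-group, i.e.\ that condition~4 of the definition of $C$-words holds of its own accord. In particular I expect this statement to be unconditional (consistent with its having no standing hypothesis), so the solvability assumptions used elsewhere in the chapter will play no role here.

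First I would run Lemma~6 on $\Pi$ to obtain $B$-words $B_1, \dots, B_p$ and form $\Gamma$, with generators $\beta_1, \dots, \beta_p$ and defining relations $M_i = 1$, where $M_i$ is the rewriting of $A_i \gre B_{i_1} \cdots B_{i_{s_i}}$. The key observation is that every relator is trivial in $\Gamma$: since $M_i = 1$ in $\Gamma$, each $M_i$ is (trivially) two-sided invertible in $\Gamma$, being equal to the identity. I would then invoke the idempotency already recorded in Lemma~8, namely that $\Delta[\mathcal{T}_1(M_1, \dots, M_k)] = \beta_1, \dots, \beta_p$, so that the generators of $\Gamma$ are exactly the output of the division algorithm applied to the (reduced) list of relators of $\Gamma$ itself.

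At this point the result falls out of Lemma~5, read inside $\Gamma$ rather than inside the ambient $\Pi$. Passing from $M_1, \dots, M_k$ to $\mathcal{T}_1(M_1, \dots, M_k)$ only deletes empty words and repetitions, so it preserves the property that every entry of the list is invertible; and Lemma~5 (whose proof rests only on the fully general Lemma~4, and is therefore valid in any semigroup) then yields that every $\beta_j$, being an entry of $\Delta[\mathcal{T}_1(M_1, \dots, M_k)]$, is two-sided invertible in $\Gamma$. Since every word of $\Gamma$ is a product of the $\beta_j$, and a product of two-sided invertible elements is again two-sided invertible, every word of $\Gamma$ is two-sided invertible. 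Hence $\Gamma$ is a $(k,\ell)$-group, condition~4 is satisfied, and $B_1, \dots, B_p$ is a list of $C$-words.

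The conceptual heart of the argument is why invertibility propagates to all the pieces, and this is the only point I would take care to articulate: it is precisely the content of Lemma~4, since each time $\Delta$ resolves an overlap $Y_{v_1} \gre MN$, $Y_{v_2} \gre NL$ it presents exactly the configuration ``$XY$ and $YZ$ invertible'' with $X \gre M$, $Y \gre N$, $Z \gre L$, from which $M$, $N$, $L$ inherit invertibility. I do not anticipate a genuine obstacle; the only things to get right are the bookkeeping that legitimises applying Lemma~5 with $\Gamma$ in place of the generic semigroup, and the routine remark that two-sided invertibility is closed under products, which is what upgrades invertibility of the generators to invertibility of every word of $\Gamma$.
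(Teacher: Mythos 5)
Your proposal is correct and follows the paper's own proof exactly: Lemma~6 supplies the $B$-words, Lemma~8 identifies the $\beta_j$ as the output of $\Delta$ applied to the relators of $\Gamma$, and Lemma~5 (via Lemma~4) propagates the invertibility of the relators $M_i = 1$ down to the generators $\beta_j$, whence $\Gamma$ is a $(k,\ell)$-group. The only additions are explicit bookkeeping (closure of invertibility under products, the unconditional nature of the statement) that the paper leaves implicit.
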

\begin{proof}
Suppose that $\Delta[\mathcal{T}_1(A_1, A_2, \dots, A_k)] = B_1, B_2, \dots, B_p$. By Lemma~6, we have that $B_1, B_2, \dots, B_p$ are $B$-words of $\Pi$. By Lemmas~5 and 8 all generators $\beta_1, \beta_2, \dots, \beta_p$ of the $(k, \ell)$-semigroup $\Gamma$, corresponding to the $(k, \ell)$-semigroup $\Pi$ and the $B$-words $B_1, B_2, \dots, B_p$, are invertible. Thus the $(k, \ell)$-semigroup $\Gamma$ is a $(k, \ell)$-group. 
\end{proof}

\begin{remark}
If the $B$-words are $C$-words, then we may speak of specifying from the $(k, \ell)$-semigroup $\Pi$ and its $C$-words the $(k, \ell)$-group $\Gamma$. 
\end{remark}

\begin{remark}
When there is no risk of misunderstanding, we will sometimes simply speak of a semigroup rather than a $(k, \ell)$-semigroup.
\end{remark}

\

\noindent\underline{Generating $c_i$-words of the $(k, \ell)$-semigroup $\Pi$}

\

The generation of $c_i$-words from a given $(k, \ell)$-semigroup $\Pi$, some $C$-words $C_1, C_2, \dots, C_s$ of this semigroup, and a specified $C_i$ from the list $C_1, C_2, \dots, C_s$, will be achieved by constructing a set of words, which we will call \textit{$c_i$-words} of the $(k, \ell)$-semigroup $\Pi$. For this, we will assume that there exists an algorithm for solving the identity problem in the $(k, \ell)$-group $\Gamma$ associated to the semigroup $\Pi$ and its $C$-words $C_1, C_2, \dots, C_s$. 

We will define $c_i$-words inductively for a given $(k, \ell)$-semigroup $\Pi$, given a specified $C$-word $C_i$. 

\begin{enumerate}
\item Suppose that $C_i \gre h_1 C_{i_1} C_{i_2} \cdots C_{i_p} h_2$, where $p \geq 0$ and $h_1, h_2$ are non-empty; and suppose that $U \gre h_1 C_{j_1} C_{j_2} \cdots C_{j_t} h_2$, with $t \geq 0$; and that $\len{U} \leq \len{C_i}$ and $\beta_{i_1} \beta_{i_2} \cdots \beta_{i_p} = \beta_{j_1} \beta_{j_2} \cdots \beta_{j_t}$ in the group $\Gamma$. Then $U$ is a $c_i$-word of the $(k, \ell)$-semigroup $\Pi$. 
\item Suppose that $c_i^{(k)}$ is some $c_i$-word of the semigroup $\Pi$; suppose further that $c_{i}^{(k)} \gre g_1 C_{k_1} C_{k_2} \cdots C_{k_v} g_2$, where $v \geq 0$ and $g_1, g_2$ are non-empty; and suppose that $V \gre g_1 C_{m_1} C_{m_2} \cdots C_{m_w} g_2$ with $w \geq 0$; and that $\len{V} \leq \len{{c_i^{(k)}}}$ and $\beta_{k_1} \beta_{k_2} \cdots \beta_{k_v} = \beta_{m_1} \beta_{m_2} \cdots \beta_{m_w}$ in the group $\Gamma$. Then $V$ is a $c_i$-word of the $(k, \ell)$-semigroup $\Pi$. 
\end{enumerate}

Note that in the above construction that we may replace $C_{e_1} C_{e_2} \cdots C_{e_m}$ by the empty word, if $\beta_{e_1} \beta_{e_2} \cdots \beta_{e_m} = 1$ in the group $\Gamma$; and that $C_i$ is a $c_i$-word, as we may replace the empty word inside $C_i$ by the empty word. 

\begin{lemma}
There are only finitely many $c_i$-words.
\end{lemma}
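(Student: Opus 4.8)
We must show that the set of $c_i$-words of the $(k,\ell)$-semigroup $\Pi$ is finite. The $c_i$-words are defined inductively, starting from $C_i$ itself, by the two clauses: clause (1) produces words $U$ of the form $h_1 C_{j_1}\cdots C_{j_t} h_2$ from the decomposition $C_i \gre h_1 C_{i_1}\cdots C_{i_p} h_2$, and clause (2) iterates this, producing new $c_i$-words from existing ones. The key observation that I would exploit is the length constraint built into both clauses: in clause (1) we require $\len{U} \leq \len{C_i}$, and in clause (2) we require $\len{V} \leq \len{c_i^{(k)}}$. Since $c_i^{(k)}$ is itself a $c_i$-word, a trivial induction shows that every $c_i$-word $W$ satisfies $\len{W} \leq \len{C_i}$.

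**The plan.** First I would prove by induction on the inductive construction that every $c_i$-word has length at most $\len{C_i}$. The base case is $C_i$ itself (length exactly $\len{C_i}$), and words arising from clause (1) satisfy $\len{U} \leq \len{C_i}$ by hypothesis. For the inductive step via clause (2), if $c_i^{(k)}$ is a $c_i$-word then by the induction hypothesis $\len{c_i^{(k)}} \leq \len{C_i}$, and clause (2) requires $\len{V} \leq \len{c_i^{(k)}} \leq \len{C_i}$, so $V$ also satisfies the bound. Hence every $c_i$-word is a word over the finite alphabet $(4)$ of length at most the fixed integer $\len{C_i}$.

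**Concluding finiteness.** Once the uniform length bound $\len{W} \leq \len{C_i}$ is established, finiteness is immediate: over a finite alphabet $a_1, a_2, \dots, a_n$ there are only finitely many words of each bounded length, namely at most $\sum_{j=0}^{\len{C_i}} n^j$ words of length at most $\len{C_i}$. The set of $c_i$-words is a subset of this finite set, and is therefore finite.

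**Main obstacle.** There is no genuine obstacle here; the entire content of the lemma is packaged into the length inequalities that were deliberately imposed in the definition of $c_i$-words. The only point requiring any care is to confirm that the length bound propagates correctly through the recursion in clause (2)---that is, to verify that the bound $\len{c_i^{(k)}} \leq \len{C_i}$ on the "input" $c_i$-word, combined with $\len{V} \leq \len{c_i^{(k)}}$, yields $\len{V} \leq \len{C_i}$ for the "output". This is the transitivity of $\leq$ and poses no difficulty. I would therefore present the argument as a short induction followed by the counting observation.
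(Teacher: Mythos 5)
Your proof is correct and follows the same route as the paper: the paper's one-line argument likewise rests on the observation that every $c_i$-word has length at most $\len{C_i}$, so there are at most $n^{\len{C_i}}$ of them over the $n$-letter alphabet. You have simply made explicit the induction through clauses (1) and (2) that the paper leaves implicit.
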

\begin{proof}
For any $C$-word $C_i$ the number of graphically distinct $c_i$-words cannot exceed $n^{\len{C_i}}$, where $n$ is the number of generators of $\Pi$, which proves the lemma.
\end{proof}

Thus, if there is an algorithm for solving the identity problem in the $(k, \ell)$-group $\Gamma$, we may $c_1$-words, which we will denote by $c_1^{(1)}, c_1^{(2)}, \dots, c_1^{(v_1)}$; and all $c_2$-words, which we denote by $c_2^{(1)}, c_2^{(2)}, \dots, c_2^{(v_2)}$; $\dots$ ; and all $c_s$-words, which we will denote by $c_s^{(1)}, c_s^{(2)}, \dots, c_s^{(v_s)}$. Sometimes we will enumerate all these words in a single list $c_1, c_2, \dots, c_t$ and refer to them as the $c$-words of the $(k, \ell)$-semigroup $\Pi$. Furthermore, the set of all $c_i$-words $c_i^{(1)}, c_i^{(2)}, \dots, c_i^{(v_i)}$ will sometimes be denoted by $\{ c_{i}^{(j)} \}$ without specifying the set $1, 2, \dots, v_i$ through which $j$ runs. 

Hence we have proved the following theorem.

\begin{theorem}
If the natural numbers $k$ and $\ell$ are such that there exists an algorithm for solving the identity problem in every $(k, \ell)$-group, then there exists an algorithm which for any $(k, \ell)$-semigroup produces all $C$-words $C_1, C_2, \dots, C_s$ of this semigroup, and all sets $\{ c_1^{(j)} \}, \{ c_2^{(j)} \}, \dots, \{ c_s^{(j)} \}$ of this semigroup.
\end{theorem}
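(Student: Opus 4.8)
The plan is to assemble the statement from the constructions already in place, reading the hypothesis as a supply of a decision procedure for the identity problem in whatever $(k,\ell)$-group is produced along the way. First I would apply Lemma~9 to the given $(k,\ell)$-semigroup $\Pi$: the algorithm of Lemma~6, run on $\mathcal{T}_1(A_1, A_2, \dots, A_k)$, outputs a reduced non-overlapping list $B_1, B_2, \dots, B_p$, and by Lemmas~5 and~8 the associated semigroup $\Gamma$ is in fact a $(k,\ell)$-group, so these $B$-words are already $C$-words. Writing $s = p$ and $C_i \gre B_i$, this yields the required $C$-words $C_1, C_2, \dots, C_s$ together with the $(k,\ell)$-group $\Gamma$ on generators $\beta_1, \beta_2, \dots, \beta_s$. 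Note that this step does not yet invoke the hypothesis.

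Since $\Gamma$ is a $(k,\ell)$-group, the hypothesis now furnishes an algorithm deciding equalities of the form $\beta_{i_1} \cdots \beta_{i_p} = \beta_{j_1} \cdots \beta_{j_t}$ in $\Gamma$. With this procedure available I would make the inductive definition of the $c_i$-words effective, one index $i$ at a time. For the base clause~(1) there are only finitely many decompositions $C_i \gre h_1 C_{i_1} \cdots C_{i_p} h_2$ with $h_1, h_2$ non-empty, and for each such frame only finitely many candidate words $U$ with $\len{U} \leq \len{C_i}$; for each candidate one tests whether it factors as $h_1 C_{j_1} \cdots C_{j_t} h_2$ and whether the corresponding equality holds in $\Gamma$, the latter by the decision procedure just obtained. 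I would then close this finite seed under clause~(2), applying the same bounded search to each $c_i$-word already found.

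The point that makes the closure terminate is Lemma~10: there are at most $n^{\len{C_i}}$ graphically distinct $c_i$-words, so only finitely many words can ever be added, and the process halts as soon as a full sweep of clause~(2) produces nothing new. Carrying this out for $i = 1, 2, \dots, s$ delivers all the sets $\{c_1^{(j)}\}, \{c_2^{(j)}\}, \dots, \{c_s^{(j)}\}$, completing the construction.

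The step I expect to require the most care, though it remains routine, is confirming that this bounded-search closure captures exactly the inductively defined family and does not stop too early. Here Lemma~10 must be used not merely as a cardinality bound but to guarantee that once no new $c_i$-word appears in a complete pass of clause~(2), none can ever appear thereafter; together with the decidability of the defining equalities in $\Gamma$, this justifies treating the halting condition as a genuine test for completeness and closes the argument.
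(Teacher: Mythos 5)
Your proposal is correct and follows the paper's own route exactly: the paper likewise obtains the $C$-words from Lemma~9 (via the algorithm $\Delta$ and Lemmas~5, 6, and 8), then uses the hypothesised decision procedure for the $(k,\ell)$-group $\Gamma$ to carry out the inductive generation of the $c_i$-words, with termination guaranteed by the finiteness bound of Lemma~10. The paper simply records this as ``hence we have proved the following theorem'' without spelling out the bounded-search closure you describe.
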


For the following arguments we will focus on those sets $\{ c_1^{(j)} \}, \{ c_2^{(j)} \}, \dots, \{ c_s^{(j)} \}$, of some $(k, \ell)$-semigroup, which satisfy the following  three properties.

\begin{enumerate}[label=\Roman*.]
\item From the construction of $c_i$-words, we will always have $\len{C_i} \geq \len{{c_i^{(j)}}}$, for all $i = 1, 2, \dots, s$ and $j = 1, 2, \dots, v_i$; for our purposes, we will need the stronger $\len{C_i} = \len{{c_i^{(j)}}}$ for all $i = 1, 2, \dots, s$ and $j = 1, 2, \dots, v_i$. 
\item Generally speaking, we may have that some word $f$ is an element of the set $\{ c_{i}^{(j)} \}$ as well as of the set $\{ c_{k}^{(j)} \}$, where $i \neq k$, but for our purposes this is not permitted.
\item Some pairs of words $c_{p}^{(t)}, c_q^{(\rho)}$ may overlap. For our purposes, we need that no pair of $c$-words overlap. 
\end{enumerate}

If a set of words $\{ c_1^{(j)} \}, \{ c_2^{(j)} \}, \dots, \{ c_s^{(j)} \}$ of some $(k, \ell)$-semigroup $\Pi$ does not satisfy property I, II, or III, then we will say that the set lacks I, II, or III, respectively. 

\begin{remark}
We will make use of the definitions of \textit{affected} and \textit{unaffected} letters introduced by P. S. Novikov. These definitions can be found in the work by S. I. Adian \cite[pp. 7-8]{1}. Furthermore, given a list of words $X_1, X_2, \dots, X_m$ such that each $X_{j+1}$ is obtained from $X_j \: (j = 1, 2, \dots, m-1)$ by a single step of the above generating operation. Then this sequence can be represented as a sequence of elementary transformations $X_1 \to X_2 \to \dots \to X_m$ in the semigroup, in which all substitutions allowed by the generating operation are given by the defining relations. In this way, we naturally extend the notion of affected and unaffected letters to a list of words $X_1, X_2, \dots, X_m$.
\end{remark}

\begin{definition}
A $(k, \ell)$-\textit{tuple} is any pair, the first element of which is some $(k, \ell)$-semigroup $T$, and the second of which is a reduced list of $C$-words of this semigroup $T$. Such a tuple will often be denoted by $\{ T ; C_1, C_2, \dots, C_s \}$.
\end{definition}

If the conditions of Theorem~1 are satisfied, then for any given $(k, \ell)$-tuple $\{ T ; C_1, C_2, \dots, C_s \}$ we can uniquely obtain the sets $\{ c_1^{(j)} \}, \{ c_2^{(j)} \}, \dots, \{ c_s^{(j)} \}$, where $s$ depends only on the tuple $\{ T ; C_1, C_2, \dots, C_s \}$, and unambiguously (from the unambiguity of the generating operation) obtain, from the $(k, \ell)$-semigroup $T$ and its $C$-words $C_1, C_2, \dots, C_s$, a $(k, \ell)$-group $\Gamma$. Thus in all the following reasoning we will assume that the conditions of Theorem~1 are satisfied, and so we can speak of the \textit{sets $\{ c_1^{(j)} \}, \{ c_2^{(j)} \}, \dots, \{ c_s^{(j)} \}$ of the tuple $\{ T ; C_1, C_2, \dots, C_s \}$}, and of the $(k, \ell)$-group $\Gamma$ of the tuple $\{ T ; C_1, C_2, \dots, C_s \}$. 

\begin{definition}
A $(k, \ell)$-tuple $V$ will be said to be \textit{distinguished}, if the sets $\{ c_1^{(j)} \}$, $\{ c_2^{(j)} \}, \dots, \{ c_s^{(j)} \}$ of this tuple does not lack a single one of the three properties I, II, or III. 
\end{definition}

\begin{lemma}
Let $\Gamma$ be a $(k, \ell)$-group with generating alphabet 
\begin{equation}
\gamma_1, \gamma_2, \dots, \gamma_s
\end{equation}
and defining relations
\begin{equation}
N_i = 1 \quad (i = 1, 2, \dots, k)
\end{equation}
and suppose that in this $(k, \ell)$-group there is some equality 
\begin{equation}
\gamma_{p_1} \gamma_{p_2} \cdots \gamma_{p_v} = \gamma_{q_1} \gamma_{q_2} \cdots \gamma_{q_w}.
\end{equation}
Let $F_1, F_2, \dots F_s$ be a list of words over some alphabet $A$. Let $F$ denote the semigroup with generating alphabet $A$ and defining relations obtained from $(10)$ by replacing each letter $\gamma_i$ with $F_i$ for $i = 1, 2, \dots, s$. Then in the semigroup $F$ we have the equality $F_{p_1} F_{p_2} \cdots F_{p_v} = F_{q_1} F_{q_2} \cdots F_{q_w}$.
\end{lemma}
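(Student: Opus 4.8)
The plan is to unfold the equality in $\Gamma$ into a concrete witnessing sequence of elementary transformations, and then to push this entire sequence through the letter-by-letter substitution $\gamma_i \mapsto F_i$. Write $\varphi$ for the map sending a word over $\{\gamma_1, \dots, \gamma_s\}$ to the word over $A$ obtained by replacing each occurrence of $\gamma_i$ by $F_i$; since this replacement is performed occurrence by occurrence, $\varphi$ respects concatenation, i.e.\ $\varphi(XY) \gre \varphi(X)\varphi(Y)$ for all words $X, Y$. Note also that the defining relations of the semigroup $F$ are, by the hypothesis on how $F$ is built from $(10)$, exactly $\varphi(N_i) = 1$ for $i = 1, 2, \dots, k$.

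First I would invoke the characterization of equality recalled in the introduction: since $\gamma_{p_1} \cdots \gamma_{p_v} = \gamma_{q_1} \cdots \gamma_{q_w}$ holds in the semigroup $\Gamma$ (a $(k,\ell)$-group is, after all, still presented as a semigroup by the relations $N_i = 1$), there is a chain of elementary transformations $W_0 \to W_1 \to \cdots \to W_r$ of $\Gamma$ with $W_0 \gre \gamma_{p_1} \cdots \gamma_{p_v}$ and $W_r \gre \gamma_{q_1} \cdots \gamma_{q_w}$, each step inserting or deleting one of the relator words $N_i$.

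The central step is to check that $\varphi$ carries each individual step $W_j \to W_{j+1}$ to an elementary transformation of $F$. Suppose $W_{j+1}$ arises from $W_j$ by inserting $N_i$, so that $W_j \gre X Y$ and $W_{j+1} \gre X N_i Y$ for some words $X, Y$. Applying $\varphi$ and using that it respects concatenation gives $\varphi(W_j) \gre \varphi(X)\varphi(Y)$ and $\varphi(W_{j+1}) \gre \varphi(X)\varphi(N_i)\varphi(Y)$, which is precisely an insertion of the relator word $\varphi(N_i)$ of $F$; the deletion case and the tautological step $W_j \to W_j$ are handled identically. Stringing these together, $\varphi(W_0) \to \varphi(W_1) \to \cdots \to \varphi(W_r)$ is a sequence of elementary transformations of $F$ joining $F_{p_1}\cdots F_{p_v}$ to $F_{q_1}\cdots F_{q_w}$, whence these two words are equal in $F$, as required.

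The argument is in essence the functoriality of semigroup presentations under substitution, and I expect no serious obstacle; the only points requiring care are bookkeeping ones. One must ensure the substitution is applied occurrence-by-occurrence, so that concatenation, and hence the locus of each insertion or deletion, is preserved; and one should note the harmless degenerate case in which some $\varphi(N_i)$ is the empty word, so that the corresponding move simply becomes tautological after substitution. It is worth observing that the hypothesis that $\Gamma$ is a group, rather than merely a semigroup, is never used: the statement holds verbatim for any semigroup presentation.
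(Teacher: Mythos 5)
Your proposal is correct and is precisely the argument the paper has in mind: the paper dispenses with Lemma~11 by remarking that it ``follows easily from the definition of equality in the semigroup,'' and your unfolding of the witnessing chain of elementary transformations through the substitution $\gamma_i \mapsto F_i$ is exactly that argument written out in full. Your closing observation that the group hypothesis is never used is also accurate.
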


The lemma follows easily from the definition of equality in the semigroup. 

\begin{definition}
Let $\{ \Pi ; C_1, C_2, \dots, C_s \}$ be a $(k, \ell)$-tuple, and let $\Gamma$ be the $(k, \ell)$-group associated to this tuple. Let $f$ be some word in the set $\{ c_m^{(j)} \}$ associated to this tuple. If we (like in Lemma~11) replace the generators $\gamma_1, \gamma_2, \dots, \gamma_s$ of $\Gamma$ by $C_1, C_2, \dots C_s$, then we obviously obtain the semigroup $\Pi$. If we replace the generators 
\[
\gamma_1, \gamma_2, \dots, \gamma_{m-1}, \gamma_m, \gamma_{m+1}, \dots, \gamma_s
\]
by the list 
\[
C_1, C_2, \dots, C_{m-1}, f, C_{m+1}, \dots, C_s
\]
then we obtain a $(k, \ell)$-semigroup which, as $\len{C_m} \geq \len{f}$, we will call  \textit{derived} from the $(k, \ell)$-semigroup of the tuple $\{ \Pi ; C_1, C_2, \dots, C_s \}$.
\end{definition}

\begin{lemma}
Let  $\{ \Pi ; C_1, C_2, \dots, C_s \}$ be a $(k, \ell)$-tuple and let $T$ be some semigroup derived from the $(k, \ell)$-semigroup of this tuple. Then the system of relations of the semigroup $\Pi$ is equivalent to that of $T$. 
\end{lemma}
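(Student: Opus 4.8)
The plan is to reduce the claimed equivalence to a single graphical fact, namely that the equality $f = C_m$ holds both in $\Pi$ and in $T$. Indeed, $\Pi$ and $T$ are semigroups over the very same generating alphabet, so by the characterisation of equality through elementary transformations recalled in the Introduction, two systems of defining relations over this alphabet are equivalent as soon as every defining relation of each system is a valid equality in the semigroup presented by the other. Writing the defining relations of $T$ as $D_i = 1$ $(i = 1,\dots,k)$, it therefore suffices to verify that each $D_i = 1$ holds in $\Pi$ and that each $A_i = 1$ holds in $T$; both of these will drop out once $f = C_m$ is known in the respective semigroup.

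First I would record the substitution picture. Let $\gamma_1,\dots,\gamma_s$ be the generators of $\Gamma$ (the letters written $\beta_j$ in the construction of $c$-words), and let $\sigma_\Pi$, $\sigma_T$ be the substitutions $\gamma_j \mapsto C_j$, and respectively $\gamma_j \mapsto C_j$ for $j \neq m$ together with $\gamma_m \mapsto f$. Since each $M_i \gre \gamma_{i_1}\cdots\gamma_{i_{s_i}}$ satisfies $\sigma_\Pi(M_i) \gre A_i$, the semigroup obtained from $\Gamma$ through $\sigma_\Pi$ is exactly $\Pi$; by the definition of a derived semigroup the one obtained through $\sigma_T$ is exactly $T$, with defining words $D_i \gre \sigma_T(M_i)$. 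Thus $A_i$ and $D_i$ are graphically identical except that each factor $C_m$ of $A_i$ is replaced by $f$ in $D_i$, so $A_i = D_i$ holds in any semigroup in which $C_m = f$.

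The core step is to prove $C_m = f$. In $\Pi$ this is immediate: by the defining condition of a $c_m$-word we have $C_m \gre h_1 C_{i_1}\cdots C_{i_p} h_2$ and $f \gre h_1 C_{j_1}\cdots C_{j_t} h_2$ with $\gamma_{i_1}\cdots\gamma_{i_p} = \gamma_{j_1}\cdots\gamma_{j_t}$ in $\Gamma$, so Lemma~11 applied with $\sigma_\Pi$ gives $C_{i_1}\cdots C_{i_p} = C_{j_1}\cdots C_{j_t}$ in $\Pi$, and pre- and post-multiplying by $h_1, h_2$ yields $C_m = f$ in $\Pi$. The delicate point, which I expect to be the main obstacle, is the same equality in $T$: applying Lemma~11 with $\sigma_T$ would threaten to turn any factor $C_m$ inside the two middle segments into $f$, corrupting the identity. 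The observation that rescues the argument is a length bound: because $h_1$ and $h_2$ are non-empty, $\len{C_{i_1}\cdots C_{i_p}} < \len{C_m}$, and using $\len{f} \leq \len{C_m}$ also $\len{C_{j_1}\cdots C_{j_t}} < \len{C_m}$; hence every $C$-word factor of either middle segment is strictly shorter than $C_m$ and so cannot be $C_m$. Therefore $\sigma_T$ fixes each middle segment, Lemma~11 with $\sigma_T$ gives $C_{i_1}\cdots C_{i_p} = C_{j_1}\cdots C_{j_t}$ in $T$, and again $C_m = f$ in $T$. The same estimate, together with the construction bound $\len{c_m^{(k)}} \leq \len{C_m}$, handles the inductive clause of the $c_m$-word definition (where $c_m^{(k)} = C_m$ is obtained by induction), with $C_m$ itself serving as the base case.

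With $C_m = f$ established in both $\Pi$ and $T$, the conclusion is routine. In $\Pi$ I would replace each occurrence of $f$ in $D_i$ by $C_m$ to obtain $D_i = A_i = 1$, so every defining relation of $T$ holds in $\Pi$; symmetrically, in $T$ I would replace each occurrence of $C_m$ in $A_i$ by $f$ to obtain $A_i = D_i = 1$, so every defining relation of $\Pi$ holds in $T$. By the equivalence criterion of the first paragraph, the systems of defining relations of $\Pi$ and of $T$ are equivalent.
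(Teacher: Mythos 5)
Your proposal is correct and follows essentially the same route as the paper: the easy direction uses $f = C_m$ in $\Pi$, and the hard direction hinges on exactly the paper's key observation that the non-emptiness of the flanking words $h_1, h_2$ (resp.\ $m_k, n_k$) forces every $C$-word factor of the middle segments to be strictly shorter than $C_m$, so that $\gamma_m$ is absent from the defining equalities used in generating $f$ and these equalities therefore transfer to $T$ via Lemma~11. The only cosmetic difference is that you establish $C_m = f$ in $T$ and then substitute, whereas the paper reverses the generating-operation steps directly inside $G_j$ to recover $A_j$; these are the same manipulation.
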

\begin{proof}
Let $\Pi$ be a $(k, \ell)$-semigroup with generating set $(4)$ and defining relations $A_i = 1 \: (i = 1, 2, \dots, k)$, and let $T$ be a $(k, \ell)$-semigroup with generating set $(4)$ and defining relations $G_i = 1 \: (i = 1, 2, \dots, k)$. Because $f$ is a word from $\{ c_m^{(j)} \}$, we have that $f = C_m$ in the semigroup $\Pi$. 

\begin{enumerate}[label=(\alph*), mode = unboxed]
\item We will show that $G_i = 1 \: (i = 1, 2, \dots, k)$ in the semigroup $\Pi$. In fact, $G_i$ is obtained from $A_i$ by replacing $C_m$ by $f$, and as $f = C_m$ and $A_i = 1$ in $\Pi$, we have shown (a).
\item We will show that $A_i = 1 \: (i = 1, 2, \dots, k)$ in the semigroup $T$. We divide this into two separate cases.
\begin{enumerate}[label = (\roman*), mode = unboxed]
\item Consider the $p$-th defining relation of the group $\Gamma$. Suppose it does not include $\gamma_m$. Then $A_p \gre G_p$, and hence $A_p = 1$ in $T$. 
\item Consider the $j$-th defining relation of the group $\Gamma$. Suppose that $\gamma_m$ appears $t$ times on the left side of this relation. Then 
\begin{align*}
A_j &\gre h_1 C_m h_2 C_m \cdots C_m h_{t+1} \\
G_j &\gre h_1 \:f \:\:\: h_2 \: f \:\:\:\cdots \: f \:\:\:h_{t+1},
\end{align*}
where each $h_v \: (v = 1, 2, \dots, t+1)$ is a product of some $C$-words. Let us repeat the process by which we obtained the word $f$ from $C_m$. The word $C_m$ could be written as 
\[
C_m \gre m_1 C_{i(1,1)} C_{i(1,2)} \cdots C_{i(1,k_1)} n_1 \gre p_0,
\]
where $m_1, n_1$ are non-empty words and $1 \leq i(1, 1), \dots, i(1, k_1) \leq s$. The word
\[
p_1 \gre m_1 C_{j(1,1)} C_{j(1,2)} \cdots C_{j(1,t_1)} n_1
\]
is obtained from $C_m$ by a single application of the generating operation. 

The graphical equalities in $(12)$ record the $\mu$ steps of the generating operation by which we obtain $f$ from $C_m$. 

\begin{align}
\qquad\qquad p_0 &\gre m_1 C_{i(1,1)} C_{i(1,2)} \cdots C_{i(1,k_1)} n_1 \gre C_m \nonumber \\
\qquad\qquad p_1 &\gre m_1 C_{j(1,1)} C_{j(1,2)} \cdots C_{j(1,t_1)} n_1 \gre m_2 C_{i(2,1)} C_{i(2,2)} \cdots C_{i(2,k_2)} n_2 \nonumber \\ 
\qquad\qquad p_2 &\gre m_2 C_{j(2,1)} C_{j(2,2)} \cdots C_{j(2,t_2)} n_2 \gre m_3 C_{i(3,1)} C_{i(3,2)} \cdots C_{i(3,k_3)} n_3 \\
\qquad\qquad &\qquad\qquad\qquad\qquad\qquad\vdots \nonumber \\
\qquad\qquad p_\mu &\gre m_\mu C_{j(\mu,1)} C_{j(\mu,2)} \cdots C_{j(\mu,t_\mu)} n_\mu \gre f \nonumber
\end{align}
\\
In $(12)$ all $p_k \: (k = 1, 2, \dots, \mu)$ are $c_m$-words, so $\len{p_k} \leq \len{C_m}$ for all $k = 1, 2, \dots, \mu$, and because all $m_k$ and $n_k$ $(k = 1, 2, \dots, \mu)$ are non-empty words, $C_m$ does not appear among the $C$-words written out in (12). By the definition of the generating operation, in obtaining the word $f$ from $C_m$ we used the fact that in the group $\Gamma$ the following equalities hold.

\begin{align}
\qquad\qquad \gamma_{i(1,1)} \gamma_{i(1,2)} \cdots \gamma_{i(1,k_1)} &= \gamma_{j(1,1)} \gamma_{j(1,2)} \cdots \gamma_{j(1,t_1)} \nonumber \\
\qquad\qquad \gamma_{i(2,1)} \gamma_{i(2,2)} \cdots \gamma_{i(2,k_2)} &= \gamma_{j(2,1)} \gamma_{j(2,2)} \cdots \gamma_{j(2,t_2)} \nonumber \\ 
\qquad\qquad &\quad\vdots \\
\qquad\qquad \gamma_{i(\mu,1)} \gamma_{i(\mu,2)} \cdots \gamma_{i(\mu,k_\mu)} &= \gamma_{j(\mu,1)} \gamma_{j(\mu,2)} \cdots \gamma_{j(\mu,t_\mu)} \nonumber
\end{align}
\\
As we do not have $\gamma_m$ among the $\gamma$ appearing in the equalities in (13), it follows that in the semigroup $T$ we have the equalities

\begin{align}
\qquad\qquad C_{i(1,1)} C_{i(1,2)} \cdots C_{i(1,k_1)} &= C_{j(1,1)} C_{j(1,2)} \cdots C_{j(1,t_1)} \nonumber \\
\qquad\qquad C_{i(2,1)} C_{i(2,2)} \cdots C_{i(2,k_2)} &= C_{j(2,1)} C_{j(2,2)} \cdots C_{j(2,t_2)} \nonumber \\ 
\qquad\qquad &\quad\vdots \\
\qquad\qquad C_{i(\mu,1)} C_{i(\mu,2)} \cdots C_{i(\mu,k_\mu)} &= C_{j(\mu,1)} C_{j(\mu,2)} \cdots C_{j(\mu,t_\mu)}. \nonumber
\end{align}
\\

In the semigroup $T$ the equality $G_j = 1$ is true, as $G_j = 1$ is a defining relation of $T$. Those words $f$ which were substituted for $\gamma_m$ in the construction of $G_j$ (but not for all $f$ belonging to $G_j$) we will, using the equalities in (14), which are performed inside the semigroup $T$, perform the inverses of the substitutions which transform $C_m$ into $f$. As a result, the word $G_j$ will be transformed into $A_j$. Hence, $G_j = A_j$ in $T$, and hence $A_j = 1$ in the semigroup $T$, completing the proof of Lemma~12. 
\end{enumerate}
\end{enumerate}
\end{proof}

\begin{definition}
Two tuples will be said to be \textit{equivalent} if the system of relations of the semigroup associated to the first tuple is equivalent to the system of relations of the semigroup associated to the second tuple.
\end{definition}

\begin{definition}
The \textit{index} of the tuple $R$ is an ordered pair of natural numbers $(\alpha, \beta)$, where $\alpha$ is the sum of the lengths of the left-hand sides of the defining relations of the tuple $R$; and $\beta$ is the parameter $\omega$ of the reduced list of $C$-words of the tuple $R$.
\end{definition}

The above indices will be ordered lexicographically, i.e. $(\alpha_1, \beta_1) < (\alpha_2, \beta_2)$ if and only if one of the following hold: either $\alpha_1 < \alpha_2$, or $\alpha_1 = \alpha_2$ and $\beta_1 < \beta_2$. The index of a tuple $R$ will be denoted as $\mathcal{J}(R)$. 

\begin{lemma}
If the tuple $R$ lacks property \textnormal{I}, then we can construct a tuple $R'$, which is equivalent to $R$, and such that $\mathcal{J}(R') < \mathcal{J}(R)$. 
\end{lemma}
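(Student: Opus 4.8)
The plan is to exploit the failure of property~I to replace one $C$-word by a strictly shorter $c$-word, pass to the associated derived semigroup, and observe that its defining relations become strictly shorter in total; the lexicographic ordering of the indices then does the rest. Concretely, since $R = \{\Pi; C_1, C_2, \dots, C_s\}$ lacks property~I, there is an index $m$ and a $c_m$-word $f = c_m^{(j)}$ with $\len{C_m} > \len{f}$. (Recall that the construction always yields $\len{C_i} \geq \len{c_i^{(j)}}$, so lacking~I means at least one of these inequalities is strict.) I would then form the $(k,\ell)$-semigroup $T$ derived from the $(k,\ell)$-semigroup of the tuple by substituting $f$ for $C_m$, exactly as in the Definition preceding Lemma~12, and take $R'$ to be the tuple consisting of $T$ together with a reduced list of $C$-words of $T$, whose existence is guaranteed by Lemma~10. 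Checking that $T$ is again a genuine $(k,\ell)$-semigroup is immediate, since each new relation has length at most $\len{A_i} \leq \ell$.

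For equivalence I would invoke Lemma~12 directly: because $T$ is derived from the semigroup of $R$, the system of relations of $\Pi$ is equivalent to that of $T$, and hence $R$ and $R'$ are equivalent tuples by the definition of equivalent tuples. This step is essentially free once the derived semigroup is set up correctly.

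It then remains to compare indices. Writing $A_1, \dots, A_k$ and $G_1, \dots, G_k$ for the left-hand sides of the relations of $\Pi$ and of $T$, each $G_i$ is obtained from $A_i$ by replacing every occurrence of $C_m$ — in the unique factorization of $A_i$ into $C$-words supplied by Lemma~7 — with the shorter word $f$, so that $\len{G_i} = \len{A_i} - n_i(\len{C_m} - \len{f})$, where $n_i \geq 0$ counts the occurrences of $C_m$ in $A_i$. The step I expect to carry the real weight is establishing that the substitution is non-vacuous, i.e.\ that $C_m$ genuinely occurs in at least one relation: since $C_m$ is a $C$-word, hence a $B$-word, the third defining property of $B$-words furnishes some $A_i$ whose factorization contains $C_m$, whence $\sum_i n_i \geq 1$. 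As $\len{C_m} - \len{f} \geq 1$, this forces $\alpha' = \sum_i \len{G_i} < \sum_i \len{A_i} = \alpha$ on the first index coordinate.

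Finally, because indices are ordered lexicographically with the $\alpha$-coordinate dominant, the strict inequality $\alpha' < \alpha$ yields $\mathcal{J}(R') < \mathcal{J}(R)$ regardless of the second coordinate $\beta'$ of $R'$; in particular I need not track how the parameter $\omega$ of the new $C$-word list behaves. Thus the only genuinely delicate point is the non-vacuousness of the substitution, which is exactly what the third property of $B$-words guarantees, together with the well-definedness of "replacing $C_m$ by $f$" coming from the uniqueness of the $C$-word factorization in Lemma~7.
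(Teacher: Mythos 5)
Your proof follows the paper's argument exactly: pick a $c_m^{(t)}$ strictly shorter than $C_m$, pass to the derived semigroup obtained by substituting it for $C_m$ in the relations of the group of the tuple, construct $C$-words for the new semigroup, cite Lemma~12 for equivalence, and observe that $\alpha$ strictly decreases so the lexicographic index drops. Your one addition --- that the third defining property of $B$-words guarantees $C_m$ actually occurs in some relation, so the substitution is non-vacuous --- is a correct detail the paper leaves implicit; the only blemish is a citation slip (existence of $C$-words is Lemma~9, not Lemma~10).
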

\begin{proof}
Let $R = \{ \Pi ; C_1, C_2, \dots, C_s \}$. As $R$ lacks property I, there exists some positive natural number $m \leq s$ such that the length of the word $c_m^{(t)}$ from the set $\{ c_m^{(j)} \}$ is strictly less than that of the length of $C_m$. 

In the defining relations of the group of the tuple $R$, we will replace the generators $\gamma_1, \gamma_2, \dots, \gamma_{m-1}, \gamma_m, \gamma_{m+1}, \dots, \gamma_s$ by the words $C_1, C_2, \dots, C, c_m^{(t)}, C_{m+1}, \dots, C_s$, and call the resulting semigroup $\Pi'$. This semigroup is derived from the semigroup of $R$, and is given by defining relations over the same generating alphabet as $\Pi$. Construct some $C$-words $C_1', C_2', \dots, C_{s'}'$ of the semigroup $\Pi'$, and construct the associated tuple $R' = \{ \Pi' ; C_1', C_2', \dots, C_{s'}' \}$. By Lemma~12 the tuples $R$ and $R'$ are equivalent; and as $\len{{c_m^{(t)}}} < \len{C_m}$, also $\alpha(R') < \alpha(R)$, and hence $\mathcal{J}(R') < \mathcal{J}(R)$. 
\end{proof}

\begin{lemma}
For every tuple $R$ one may construct a tuple $R_1$, equivalent to the tuple $R$, and which does not lack property I. 
\end{lemma}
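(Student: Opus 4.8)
The plan is to apply Lemma~13 repeatedly until we reach a tuple that no longer lacks property~I, and to argue that this process must terminate after finitely many steps because the index $\mathcal{J}$ strictly decreases at each stage and takes values in a well-ordered set.

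Concretely, I would build a sequence of tuples $R = R^{(0)}, R^{(1)}, R^{(2)}, \dots$ as follows. Given $R^{(i)}$, if it does not lack property~I, then I stop and set $R_1 = R^{(i)}$. Otherwise $R^{(i)}$ lacks property~I, so Lemma~13 applies and produces a tuple $R^{(i+1)}$ that is equivalent to $R^{(i)}$ and satisfies $\mathcal{J}(R^{(i+1)}) < \mathcal{J}(R^{(i)})$; I then continue the construction with $R^{(i+1)}$. In the special case $i = 0$ where $R$ already does not lack property~I, this simply returns $R_1 = R$.

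The key point is that this sequence cannot be infinite. The indices $\mathcal{J}(R^{(i)})$ form a strictly descending sequence of ordered pairs of natural numbers under the lexicographic order. Since the lexicographic order on pairs of natural numbers is well-founded -- there is no infinite strictly decreasing chain, as a strict decrease in the first coordinate can occur only finitely often, and between two such decreases the second coordinate can strictly decrease only finitely often -- the construction must halt after some finite number $N$ of steps, yielding a tuple $R^{(N)}$ that does not lack property~I. This termination argument is the crux of the proof; everything else is routine bookkeeping, since Lemma~13 already packages the hard work of each individual reduction step.

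Finally I would set $R_1 = R^{(N)}$ and verify equivalence with $R$. Each step of the construction gives $R^{(i)}$ equivalent to $R^{(i+1)}$, and equivalence of tuples is transitive, being defined through the equivalence of the associated systems of defining relations, which is itself an equivalence relation (the relevant condition is phrased as an ``if and only if'' on equality of words, and is therefore reflexive, symmetric, and transitive). Chaining these equivalences from $R^{(0)}$ to $R^{(N)}$ shows that $R_1$ is equivalent to $R$ and does not lack property~I, as required. I do not anticipate any genuine difficulty beyond the well-foundedness observation.
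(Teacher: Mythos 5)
Your proposal is correct and is essentially the paper's argument in different clothing: the paper proves the lemma by induction on the index $\mathcal{J}(R)$ (with base case $\beta = 0$, where every $C$-word is a single letter), while you iterate Lemma~13 and invoke well-foundedness of the lexicographic order to guarantee termination; these are the same descent on the same well-ordered set. No gap.
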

\begin{proof}
This lemma is proved by induction on the index of the tuple $R$. 

Consider an arbitrary tuple $R_0 = \{ \Pi^0 ; C_1^0, C_2^0, \dots, C_s^0\}$, the index of which equals $(\alpha, 0)$, where $\alpha$ is some arbitrary positive number. The parameter $\omega$ of the list $C_1^0, C_2^0, \dots, C_s^0$ equals $0$, from which it follows, that every word $C_1^0, C_2^0, \dots, C_s^0$ is a single letter. But this means that every set $\{ c_i^{(j)} \}$, for $i = 1, 2, \dots, s$, of the tuple $R_0$ consists of a single letter, from which it follows, that the tuple $R_0$ does not lack property I. 

Suppose that for every tuple $R'$ with index less than $(\alpha_1, \beta_1)$ we can construct some tuple $R''$, which is equivalent to $R'$ and which does not lack property I. 

Consider a tuple $R$ with index $(\alpha_1, \beta_1)$. If this does not lack property I, then we may take this tuple itself as our desired tuple. If instead $R$ lacks property I, then by Lemma~13 we may construct a tuple $R'$, equivalent to $R$, and which satisfies $\mathcal{J}(R') < (\alpha_1, \beta_1)$. By the inductive hypothesis, from $R'$ we may construct a tuple $R''$, equivalent to $R'$ (and hence to $R$) and which does not lack property I. 
\end{proof}

\begin{lemma}
If the tuple $R$ does not lack property \textnormal{I}, but lacks property \textnormal{II}, then there exists some tuple $R''$, equivalent to $R$, with $\mathcal{J}(R'') < \mathcal{J}(R)$. 
\end{lemma}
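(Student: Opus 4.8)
The plan is to produce $R''$ by \emph{merging} two of the $C$-words of $R$ that the failure of property II forces to coincide, so that the new tuple keeps the same first index coordinate but has one fewer $C$-word, and hence a strictly smaller second coordinate. Write $R = \{\Pi; C_1, C_2, \dots, C_s\}$ and let $\Gamma$ be its $(k,\ell)$-group with generators $\gamma_1, \dots, \gamma_s$. Since $R$ lacks property II, there is a word $f$ and indices $i \neq k$ with $f \in \{c_i^{(j)}\}$ and $f \in \{c_k^{(j)}\}$. As $f$ is a $c_i$-word we have $f = C_i$ in $\Pi$, and as $f$ is a $c_k$-word we have $f = C_k$ in $\Pi$; moreover, because $R$ does not lack property I, $\len f = \len{C_i} = \len{C_k}$. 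I first note that $\len{C_i} = \len{C_k} \ge 2$: if these lengths were $1$, then the length restriction in the construction of $c$-words would force $\{c_i^{(j)}\} = \{C_i\}$ and $\{c_k^{(j)}\} = \{C_k\}$, so that $f \gre C_i$ and $f \gre C_k$, contradicting $C_i \not\gre C_k$.

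The crucial step is to observe that $C_i$ is itself a $c_k$-word. Under property I every $c_m$-word has length exactly $\len{C_m}$, so in each application of the generating operation the length inequality holds with equality; since the governing equality in $\Gamma$ is symmetric and the outer factors $g_1, g_2$ are shared, every generating operation can then be run backwards. Consequently, being reachable by generating operations is an equivalence relation on the $c$-words of a fixed $C$-word. As $f$ is reachable both from $C_i$ and from $C_k$, the words $C_i$ and $C_k$ are reachable from one another; in particular $C_i \in \{c_k^{(j)}\}$.

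I may therefore form the semigroup $\Pi''$ derived from the $(k,\ell)$-semigroup of $R$ by replacing the generator $\gamma_k$ of $\Gamma$ with the $c_k$-word $C_i$ (exactly as in the use of the derived-semigroup construction in Lemma~13), and then construct $C$-words of $\Pi''$ to obtain the tuple $R''$. By Lemma~12 the tuples $R$ and $R''$ are equivalent. Because $\len{C_i} = \len{C_k}$, the substitution preserves the length of every defining relation, so the first index coordinate is unchanged, $\alpha(R'') = \alpha(R)$. On the other hand, after the substitution every defining relation of $\Pi''$ is a concatenation of the words $\{C_j : j \neq k\}$, which, being a sublist of the $C$-words of $R$, are reduced and non-overlapping; so by the reasoning of Lemma~9 applying $\Delta$ and $\mathcal{T}_1$ to these relations returns exactly the atoms that occur, all lying in $\{C_j : j \neq k\}$. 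The parameter of this list is at most $\beta - (\len{C_k} - 1) < \beta$ since $\len{C_k} \ge 2$, whence $\mathcal{J}(R'') = (\alpha, \beta'') < (\alpha, \beta) = \mathcal{J}(R)$.

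The main obstacle is the second paragraph: establishing that $C_i$ is a $c_k$-word. This is precisely where property I is used in an essential way, for it forces each generating operation to be length-preserving and hence reversible, turning reachability into an equivalence relation and allowing the two shared $c$-words $C_i, C_k$ to be identified. Once this is in place, the equivalence of the tuples is delivered by Lemma~12 and the index bookkeeping is routine.
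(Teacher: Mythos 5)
Your proposal is correct and follows the paper's own argument essentially step for step: both use property I to see that the shared word forces $\len{C_i} = \len{C_k} > 1$ and that the generating operation becomes reversible, so that one of the two $C$-words lies in the other's set of $c$-words; both then pass to the derived semigroup via Lemma~12 and observe that $\alpha$ is preserved while $\beta$ drops because the list of $C$-words loses one word of length greater than $1$. The only (immaterial) difference is that you substitute $C_i$ for $\gamma_k$ where the paper substitutes $C_\mu$ for $\gamma_i$, and you spell out the reversibility argument that the paper leaves implicit.
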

\begin{proof}
Let $R = \{ \Pi ; C_1, C_2, \dots, C_2 \}$. As $R$ lacks property II, it follows that there exist natural numbers $i$ and $\mu$ $(i \neq \mu)$ such that there is some word $f$ in both $\{ c_i^{(j)} \}$ and $\{ c_\mu^{(j)} \}$, i.e. $c_i^{(m)} = c_\mu^{(n)}$. As $R$ does not lack property I, we have that $\len{C_i} =  \len{ c_i^{(m)} }$ and $\len{C_\mu} = \len{c_\mu^{(n)}}$, and hence $\len{C_i} = \len{C_{\mu}} = g$ (where $g > 1$, as the list $C_1, C_2, \dots, C_s$ is reduced, and $C_i \not\gre C_\mu$ for $i \neq \mu$), so $\{ c_i^{(j)} \}$ and $\{ c_\mu^{(j)} \}$ have length $g$. From the definition of the generating operation it follows that the sets of words $\{ c_i^{(j)} \}$ and $\{ c_\mu^{(j)} \}$ coincide, and hence there is some $t$ such that $C_\mu \gre c_i^{(t)}$. 

Define the semigroup $\Pi_2$ to have the same generating alphabet as the semigroup $\Pi$ and subject to the defining relations of the group of the tuple $R$, in which the generators 
\[
\gamma_1, \gamma_2, \dots, \gamma_{i-1}, \gamma_i, \gamma_{i+1}, \dots, \gamma_s
\] have been replaced by the words 
\[
C_1, C_2, \dots, C_{i-1}, c_i^{(t)}, C_{i+1}, \dots, C_s.
\]
It is easy to verify (as $c_i^{(t)} \gre C_\mu$) that $C_1, C_2, \dots, C_{i-1}, C_{i+1}, \dots, C_s$ are $C$-words of the semigroup $\Pi_2$. Define $R'' = \{ \Pi_2 ; C_1, C_2, \dots, C_{i-1}, C_{i+1}, \dots, C_s \}$. From the equality $\len{ c_i^{(t)} } = \len{C_\mu}$ it follows that $\alpha(R) < \alpha(R'')$. As $\len{C_i} > 1$, we have $\beta(R'') < \beta(R)$. This proves the lemma. 
\end{proof}

\begin{lemma}
If the tuple $R = \{ \Pi ; C_1, C_2, \dots, C_s \}$ does not lack property \textnormal{I} but lacks property \textnormal{III}, then there exists some $C$-word $C_p$ and some $c$-word $c_\mu^{(v)}$ which overlap.
\end{lemma}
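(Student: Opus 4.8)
The plan is to argue from an overlapping pair of $c$-words and to track, via P. S. Novikov's \emph{affected}/\emph{unaffected} letters (see the Remark), exactly how a $c$-word differs from its $C$-word. First I would record the structural shape of a $c$-word: by induction on the number of applications of the generating operation, using that each application replaces a product of $C$-words by another product of $C$-words while keeping the nonempty boundary pieces $h_1,h_2$ (resp.\ $g_1,g_2$) fixed, every $c_i$-word has the form $c_i^{(j)} \gre \eta_1\, C_{d_1} C_{d_2}\cdots C_{d_r}\, \eta_2$, where $\eta_1$ is a nonempty prefix and $\eta_2$ a nonempty suffix of $C_i$ (the maximal unaffected segments) and the middle is a product of $C$-words; that the affected region begins and ends at $C$-word boundaries follows from the uniqueness of factorisation into $C$-words (Lemma~7). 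Since $R$ does not lack property~I we have $\len{{c_i^{(j)}}} = \len{C_i}$, so no substituted block can be empty and $r \ge 1$ whenever $c_i^{(j)} \not\gre C_i$.

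Now, since $R$ lacks property~III, fix an overlapping pair of $c$-words, say $c_p^{(t)} \gre MN$ and $c_q^{(\rho)} \gre NL$ with $N$ and $ML$ nonempty. If $c_p^{(t)} \gre C_p$ or $c_q^{(\rho)} \gre C_q$ we are already done, so assume both are properly derived and write them in the structural form above. I would then locate the left endpoint of the suffix $N$ inside $c_p^{(t)} \gre \eta_1 C_{m_1}\cdots C_{m_w}\eta_2$. If that endpoint lies inside the unaffected suffix $\eta_2$, then $N$ is a suffix of $C_p$ and the pair $(C_p, c_q^{(\rho)})$ overlaps. If it lies in one of the substituted blocks $C_{m_j}$ — at its left boundary or strictly inside it — then the corresponding nonempty suffix $N_j$ of $C_{m_j}$ is a prefix of $c_q^{(\rho)}$; here it is essential that the definition of overlap counts a proper prefix as an overlap (take $M$ empty and $N \gre C_{m_j}$ when the endpoint sits on a block boundary), while the factor $C_{m_{j+1}}\cdots C_{m_w}\eta_2$ lying to the right of $N_j$ guarantees $N_j$ is a \emph{proper} prefix, so the $C$-word $C_{m_j}$ overlaps $c_q^{(\rho)}$. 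Performing the mirror analysis on the right endpoint of the prefix $N$ inside $c_q^{(\rho)} \gre \eta_1' C_{n_1}\cdots C_{n_u}\eta_2'$ disposes, symmetrically, of the case where it meets $\eta_1'$ or any block $C_{n_i}$.

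This leaves the single \emph{engulfing} case: the left end of $N$ lies in the unaffected prefix $\eta_1$ of $c_p^{(t)}$ and the right end of $N$ lies in the unaffected suffix $\eta_2'$ of $c_q^{(\rho)}$, so that $N$ contains \emph{all} affected letters of both words. I expect this to be the crux. The plan for it is an induction on the total number of generating operations producing $c_p^{(t)}$ and $c_q^{(\rho)}$: in the engulfing case the last operation on either word acts strictly inside $N$, and I would undo it, using Lemma~7 and the fact that $C$-words themselves do not overlap to show that the overlap descends to a pair of $c$-words built from strictly fewer operations — or else is already witnessed by one of the substituted $C$-words lying inside $N$. The base case, in which both words are $C$-words, cannot support an overlap precisely because $C$-words do not overlap, which both terminates the induction and confirms that some genuine $C$-word must eventually be exposed. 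The main difficulty is exactly in making this descent preserve an overlap: undoing a substitution alters the letters of $N$ at the substitution site, so the particular border $N$ need not survive, and the bookkeeping of affected and unaffected letters together with the uniqueness in Lemma~7 is what I would rely on to recover an overlap at the lower level.
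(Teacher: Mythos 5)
Your overall strategy---induct on the generating steps, undo the last substitution, and split on where the substituted block sits relative to the common border $N$---is the same in spirit as the paper's, and your treatment of the cases where an endpoint of $N$ falls inside a substituted block or inside an unaffected segment is fine. But the case you yourself single out as the crux, the engulfing case, is exactly where the proof lives, and you have not closed it. The missing idea is the one essential use of the hypothesis that $R$ does not lack property~I: every step of the generating operation is then \emph{length-preserving}, so when the last substituted block of $c_p^{(w)}$ lands inside $N$, one performs the \emph{same} reverse substitution on the corresponding letters of $c_\mu^{(t)}$ and concludes that the result is again a legitimate member $c_\mu^{(\mu_1)}$ of the set $\{ c_\mu^{(j)} \}$; the overlap then persists between $c_p^{(w-1)}$ and $c_\mu^{(\mu_1)}$, and the induction applies. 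You invoke property~I only to say the substituted blocks are nonempty, which is not where it is needed. Moreover your proposed induction measure---the total number of generating operations producing \emph{both} words---does not decrease under this descent: repairing the overlap costs one extra substitution on the second word, so the total need not drop. The paper inducts only on the number of steps producing the first word of the pair, quantifying existentially over the second, which is what makes the descent terminate.

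A secondary problem is the structural normal form you open with: $c_i^{(j)} \gre \eta_1 C_{d_1}\cdots C_{d_r}\eta_2$ with $\eta_1,\eta_2$ unaffected prefix and suffix of $C_i$ and the middle a product of $C$-words. The boundary piece $g_1$ of a later generating step need not lie inside the unaffected prefix left by earlier steps---it may cut into a previously substituted block---and $C$-words, while non-overlapping in the sense of Definition~1, may strictly contain one another; so the affected region of the final word need not decompose as a product of $C$-words, and Lemma~7 (which concerns words that already are products of $B$-words) does not rescue the claim. The paper avoids it entirely by working only with the single factorisation $c_p^{(w)} \gre h_1 C_{j_1}\cdots C_{j_u} h_2$ supplied by the \emph{last} step, together with the observation that if the block $C_{j_1}\cdots C_{j_u}$ straddles the border of $M$ and $N$ then one of its constituent $C$-words already overlaps $c_\mu^{(t)}$.
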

\begin{proof}
As $R$ lacks property III, it follows that there exists some $c_p^{(w)}$  and $c_\mu^{(t)}$, such that $c_p^{(w)} \gre MN$, and $c_\mu^{(t)} \gre NL$, where $ML$ and $N$ are non-empty words. Suppose without loss of generality that $M$ is non-empty (if $M \gre 1$, then $L$ is non-empty, and all the reasoning below is symmetrical). Suppose that the generating operation obtains $c_p^{(w)}$ from $C_p$ by the following sequence of $c_p$-words:
\[
C_p \to c_p^{(1)} \to c_p^{(2)} \to \dots \to c_p^{(w-1)} \to c_p^{(w)},
\]
where each transition from $c_p^{(i)}$ to $c_p^{(i+1)}$ is a single step of the generating operation. 

If $w=0$, then $c_p^{(w)} \gre C_p$ and the lemma follows. 

Suppose that $w>0$, and that the lemma is true for all $c_p^{(k)}$, which can be obtained from $C_p$ using fewer than $w$ steps. 

Consider the step $c_p^{(w-1)} \to c_p^{(w)}$, and write 
\begin{align*}
c_p^{(w-1)} &\gre h_1 C_{i_1} \cdots C_{i_t} h_2 \\
c_p^{(w)} &\gre h_1 C_{j_1} \cdots C_{j_u} h_2
\end{align*}
where $h_1, h_2$ are non-empty and $\gamma_{i_1} \cdots \gamma_{i_t} = \gamma_{j_1} \cdots \gamma_{j_u}$ in the group $\Gamma$ of $R$. 

If some $C_{j_t}$ overlaps with $c_\mu^{(t)}$, then the lemma follows.   In the other case, $C_{j_1} \cdots C_{j_u}$ lies entirely within $M$ or entirely within $N$. If $C_{j_1} \cdots C_{j_u}$ lies in $M$, then doing the reverse substitution, we see that $c_p^{(w-1)}$ overlaps with $c_t^{(\mu)}$, from which by the inductive hypothesis the lemma follows. If $C_{j_1} \cdots C_{j_u}$ lies in $N$, then doing the reverse substitution, we see that $c_p^{(w-1)}$ overlaps with a word obtained from $c_t^{(\mu)}$ by a single substitution, which is the reverse of a single substitution in the generating operation. As $R$ does not lack property I, all substitutions of the operation preserve the lengths of the words to which they are applied, from which it follows that by applying this substitution to $c_t^{(\mu)}$ we get some word $c_t^{(\mu_1)}$. That is, $c_p^{(w-1)}$ overlap with $c_t^{(\mu_1)}$, and by the inductive hypothesis we have shown that some $C$-word overlaps with some $c$-word. 
\end{proof}

\begin{lemma}
If the tuple $R = \{ \Pi ; C_1, C_2, \dots, C_s \}$ does not lack property \textnormal{I}; and there exists some $i$ such that $C_i \gre MN$, $c_i^{(p)} \gre NL$ with $N$ and $ML$ non-empty; and no $C_j$ $(j = 1, 2, \dots, s)$ does not overlap with any $c_\mu^{(v)}$ for $j \neq \mu$; then there is some $c_i$-word $c_i^{(e)}$ which overlaps with itself.
\end{lemma}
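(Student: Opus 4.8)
The plan is to induct on the number $w$ of steps of the generating operation needed to produce $c_i^{(p)}$ from $C_i$, following the pattern of the proof of Lemma~16. Throughout I keep $C_i$ (or a length-preserving modification of it) as the first member of the overlap and reduce the derivation of the second member. The base case $w = 0$ is immediate: here $c_i^{(p)} \gre C_i$, so the hypotheses $C_i \gre MN$, $c_i^{(p)} \gre NL$ say exactly that $C_i$ overlaps itself, and since $C_i$ is itself a $c_i$-word we may take $c_i^{(e)} \gre C_i$.

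For the inductive step I write the last step as $c_i^{(w-1)} \gre h_1 C_{a_1} \cdots C_{a_t} h_2$ and $c_i^{(w)} \gre h_1 C_{b_1} \cdots C_{b_u} h_2$ with $h_1, h_2$ non-empty. A first useful observation is that no block letter $C_{b_r}$ can be $C_i$ itself: otherwise $\len{C_{b_r}} = \len{C_i} = \len{c_i^{(w)}}$ (using that $R$ does not lack property I) would force $h_1$ to be empty. I then locate the block $C_{b_1} \cdots C_{b_u}$ relative to the decomposition $c_i^{(w)} \gre NL$:
\begin{enumerate}[label=(\roman*)]
\item If the block lies entirely inside the tail $L$, the prefix $N$ is untouched, so reversing the step gives $c_i^{(w-1)} \gre N L'$, which still overlaps $C_i \gre MN$, and the inductive hypothesis applies.
\item If the block straddles the boundary between $N$ and $L$, then the block letter containing that boundary has as a non-empty prefix a suffix of $N$, hence a suffix of $C_i$, and extends past it, so that letter overlaps $C_i$. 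Since it is some $C_{b_r}$ with $b_r \neq i$, this is an overlap of a $C$-word with the $c_i$-word $C_i$ for distinct indices, contradicting the hypothesis; so this case cannot occur.
\item If the block lies entirely inside $N$, I apply the reverse substitution simultaneously to the prefix $N$ of $c_i^{(w)}$ (recovering $c_i^{(w-1)}$) and to the suffix $N$ of $C_i$; by property I this second substitution is a legitimate step of the generating operation, so it produces a $c_i$-word $\tilde C$ of the same length, and $\tilde C$ overlaps $c_i^{(w-1)}$ in the modified region.
\end{enumerate}

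The delicate point, and the one I expect to be the real obstacle, is case (iii). Because the substitution happens inside the region $N$ shared by the two overlapping words, removing it from the derivation of one word forces inserting it into the derivation of the other, so the total number of steps is unchanged and a naive induction on step count stalls. To make the descent genuine I would measure progress not by the total step count but by a quantity sensitive to where the substitutions sit, for instance by always reducing the member of the overlap whose shared region is a prefix and tracking the summed positions of the blocks in its derivation, which strictly drops under both (i) and (iii). Driving this member down to $C_i$ puts us in the mirror-image situation, with $C_i$ now carrying the prefix, to which the symmetric reduction applies; the recursion bottoms out only when both members are $C_i$, that is at $C_i \gre MN$, $C_i \gre NL$, a self-overlap. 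Verifying that this measure really is well-founded, i.e.\ that the transfer of substitutions between the two derivations inside $N$ cannot cycle, is the crux of the argument and is where I would spend the most care, leaning on property I, the finiteness of the set of $c_i$-words, and the fact that no block letter equals $C_i$.
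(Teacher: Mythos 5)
Your cases (i) and (ii) are sound, and the preliminary observation that no $C_{b_r}$ in the substituted block can equal $C_i$ itself is correct. The proof nevertheless has a genuine gap, and it sits exactly where you place it: case (iii). When the substituted block lies inside the shared region $N$, undoing the step in the derivation of $c_i^{(w)}$ forces you to insert a step into the derivation of the other member of the overlap, so no measure you have actually defined decreases; the ``summed positions of the blocks'' is only gestured at, and no argument is given that the back-and-forth transfer of substitutions through $N$ cannot cycle. Worse, the terminal configuration you aim for --- both members reduced to $C_i$, i.e.\ $C_i \gre MN$ and $C_i \gre NL$ --- is impossible: the $C$-words form a non-overlapping list, so $C_i$ cannot overlap with itself (this is precisely why the paper's proof can assert $x \not\gre y$ for the first and last letters of $C_i$). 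Hence your descent, even if it were well-founded, could never bottom out where you say it does, and the proposal never explains how a pair of two \emph{distinct} overlapping $c_i$-words is to be converted into a \emph{single} word overlapping itself, which is what the lemma asserts.

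That conversion is the real content of the paper's argument, which is not a descent but a direct periodicity construction. Writing $C_i \gre x\mathcal{E}y$, one first shows (using the hypothesis that no $C_j$ overlaps any $c_\mu^{(v)}$ with $j \neq \mu$) that the outer letters $x$ and $y$, and then a whole arithmetic progression of interior occurrences of $x$ and $y$, are unaffected by the generating operation; the overlap then forces $C_i \gre xM_1xN_1y$ and $c_i^{(\alpha_u)} \gre xN_1yL_1y$ with $N_1 \gre M_{\delta_1}xM_{\delta_2}x\cdots M_{\delta_e}xV$, where the blocks $M_{\delta_j}$ all have the same length, are mutually interchangeable by the generating operation, and satisfy $M_{\delta_e} \gre VyW$, $L_1 \gre WxV$. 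Replacing every $M_{\delta_j}$ by $M_{\delta_e}$ then yields a single $c_i$-word of the form $xVyWxVyW\cdots xVy$, which visibly overlaps itself. If you want to salvage your plan, the periodic structure of $N$ is exactly what you would need to extract from the ``cycling'' of substitutions inside $N$ that you identify as the obstacle: that cycling is not a nuisance to be engineered away by a cleverer termination measure, it is the phenomenon that produces the self-overlapping word.
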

\begin{proof}
As the tuple $R$ does not lack property I, we have $\len{C_i} = \len{c_i^{(p)}}$, from which it follows that $M$ and $L$ are non-empty, and furthermore that all substitutions made by the generating operation does not change the length of the words involved. 

Consider a sequence of $c_i$-words starting with the word $C_i$, where $c_i^{(\alpha_m)}$ is obtained from $c_i^{(\alpha_{m-1})}$ by a single step of the generating operation, and such that every word from $ \{ c_i^{(j)} \}$ is somewhere in this sequence. 
\begin{equation}
C_i \to c_i^{(\alpha_0)} \to c_i^{(\alpha_1)} \to \dots \to c_i^{(\alpha_{\mu -1} )} \to c_i^{(\alpha_\mu)}
\end{equation}
All words in the sequence (15) have the same length. We do not exclude the possibility that some $c_i$-word appears more than once in the sequence (15). 

We must have that $\len{C_i} > 2$, for otherwise the set $\{ c_i^{(j)} \}$ consists only of the single word $C_i$, and the conditions of the lemma cannot be met. 

Suppose that $C_i \gre x \mathcal{E} y$, where $x$ and $y$ are single letters. Necessarily, $x \not\gre y$, for otherwise $C_i$ overlaps with itself, contradicting the definition of a $C$-word. These letters $x$ and $y$ are not affected during the generation of the $c_i$-words in the sequence (15), and hence $c_i^{(k)} \gre x e_k y$ for $k = \alpha_0, \alpha_1, \alpha_2, \dots, \alpha_\mu$. 

The word $c_i^{(p)}$ appears in the sequence (15) as some $c_i^{(\alpha_u)}$, and $c_i^{(\alpha_u)} \gre x e_{\alpha_u} y$. 

As $C_i$ and $c_i^{(\alpha_u)}$ overlap, we have that $M \gre x M_1, N \gre x N_1 y$, and $L \gre L_1 y$, from which it follows that 
\begin{equation}
C_i \gre x M_1 x N_1 y \qquad \textnormal{and} \qquad c_i^{(\alpha_u)} \gre x N_1 y L_1 y.
\end{equation}

We will prove that the letter $x$ emphasised in the middle of $C_i$ in (16) is unaffected by the steps of the generating operation in (15). 

Suppose that $x$ is not affected in the part of the sequence (15) given by
\[
C_i \to c_i^{(\alpha_0)} \to c_i^{(\alpha_1)} \to \dots \to c_i^{(\alpha_w)}.
\]
Then $c_i^{(\alpha_w)} \gre x M_w x N_w y$, and $\len{M_w} = \len{M_1}$. Suppose that the $x$ under consideration is affected in the step $c_i^{(\alpha_w)} \to c_i^{(\alpha_{w+1})}$. Then some $C$-word $C_h$, the length of which is less than the length of the $C$-word $C_i$, overlaps with the word $xN_w y$. But $xN_w y L_1 x$ is some $c_i$-word $c_i^{(\beta)}$, and hence this word is obtained from $c_i^{(\alpha_u)}$ by length-preserving substitutions, and hence $C_h$ overlaps with $c_i^{(\beta)}$ where $h \neq i$, which contradicts the assumptions of the lemma. 

Analogously, one may show that the $y$ which appears in the middle of $c_i^{(\alpha_u)}$ in $(16)$ is unaffected by the steps in the operation in (15). 

Consider the word $M_1$ in $(16)$ and the word $\varepsilon M_1 \varepsilon$, where $\varepsilon$ is any arbitrary letter; we will apply the generating operation as many times as possible to this word. From this, we obtain the following list of words, all of the same length:
\[
\varepsilon M_1 \varepsilon, \varepsilon M_2 \varepsilon, \dots, \varepsilon M_f \varepsilon.
\]

We write $N_1$ in the following form 
\begin{equation}
N_1 \gre M_{\delta_1} x M_{\delta_2} x M_{\delta_3} x \cdots M_{\delta_e} x V \qquad \textnormal{and} \qquad V \not\gre M_t x V_1,
\end{equation}
where $1 \leq \delta_1, \delta_2, \dots, \delta_e$, and $t \leq f, e \geq 0$. This factorisation is unique, as all $M_{\delta_i}$ have the same length. Thus
\begin{equation}
\begin{aligned}
C_i &\gre x M_{1} x M_{\delta_2} x M_{\delta_3} x \cdots M_{\delta_{e-1}} x M_{\delta_e} x V y \\
c_i^{(\alpha_u)} &\gre x M_{\delta_1} x M_{\delta_2} x M_{\delta_3} x \cdots M_{\delta_{e}} x V y L_1 y
\end{aligned}
\end{equation}

We now show that all the $x$-letters marked inside $C_i$ in $(18)$ are unaffected in the steps of the sequence (15). We have already shown that the first two $x$-letters are unaffected in (15). Suppose that the $\lambda$ leftmost $x$-letters are unaffected in (15), and that the $(\lambda+1)$th is affected. By analogous reasoning to the above, this leads to a contradiction.

In the same way, we can show that the penultimate $y$ distinguished in $(18)$ is not affected in (15). In the subword $M_{\delta_e} x V$ of the word $C_i$ lies $y$, which is not affected by (15), and in the subword $V y L_1$ of the word $c_i^{\alpha_u}$ lies $x$, which is not affected by (15), and $M_{\delta_e} x V$ is transformed into $V y L_1$ by the generating operation. If $x$ would precede $y$, then $V \gre M_{\delta_e} x V_1$, which is impossible by virtue of (17). Consequently, $y$ precedes $x$. Hence, $M_{\delta_e} \gre V y W$ and $L_1 \gre WxV$. 

Hence we have 
\begin{equation*}
\begin{aligned}
C_i &\gre x M_{1} x M_{\delta_2} x M_{\delta_3} x \cdots x M_{\delta_{e-1}} x M_{\delta_e} x V y, \\
c_i^{(\alpha_u)} &\gre x M_{\delta_1} x M_{\delta_2} x M_{\delta_3} x \cdots x M_{\delta_{e}} x V y W x V y.
\end{aligned}
\end{equation*}
But the generating operation allows the replacement of any $M_{\delta_k}$ by any $M_{\delta_s}$ in the words $C_i$ and $c_i^{(\alpha_u)}$. By replacing inside all $C_i$ and $c_{i}^{\alpha_u}$ all occurrences of $M_{\delta_i}$ by $M_{\delta_e}$ (and noting that $M_{\delta_e} \gre V y W$), we obtain a $c_i$-word $c_i^{(e)}$ which overlaps with itself, proving the lemma. 
\end{proof}

\begin{lemma}
If the tuple $R = \{ \Pi ; C_1, C_2, \dots, C_s \}$ does not lack property \textnormal{I}, but lacks property \textnormal{III}, then there exists a tuple $R'''$, equivalent to the tuple $R$, and such that $\mathcal{J}(R''') < \mathcal{J}(R)$. 
\end{lemma}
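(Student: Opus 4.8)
The plan is to reduce $\mathcal{J}(R)$ by passing to a \emph{derived} semigroup in the sense of Lemma~12. Write $R = \{\Pi; C_1, \dots, C_s\}$. Since $R$ does not lack property I but lacks property III, Lemma~16 produces a $C$-word $C_p$ and a $c$-word $c_\mu^{(v)}$ that overlap. Because $R$ does not lack I, every $c$-word has the same length as the $C$-word from which it is generated, so replacing a $C$-word by one of its $c$-words inside the defining relations of the group $\Gamma$ of $R$ yields a derived semigroup whose defining words have exactly the same lengths as those of $\Pi$; by Lemma~12 this derived semigroup is equivalent to $\Pi$, and its sum-of-lengths coordinate $\alpha$ is unchanged. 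The whole game is therefore to arrange, after such a length-preserving substitution, that the new list of $C$-words contains an overlap, so that recomputing the $C$-words by the algorithm $\Delta$ strictly lowers the parameter $\omega$, i.e. the coordinate $\beta$, by Lemma~1.

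I would first treat the case $p \neq \mu$, a genuine overlap between two different blocks. Replace $C_\mu$ by $c_\mu^{(v)}$, call the derived semigroup $\Pi'$, construct $C$-words for $\Pi'$, and let $R'''$ be the resulting tuple. The block list of $\Pi'$ is $C_1, \dots, C_{\mu-1}, c_\mu^{(v)}, C_{\mu+1}, \dots, C_s$, which has the same parameter $\omega = \beta(R)$ as the original list since $\len{c_\mu^{(v)}} = \len{C_\mu}$. As every defining word of $\Pi'$ is a product of these blocks, the argument of Lemma~9 shows that $\Delta$ applied to the reduced list of defining words of $\Pi'$ produces the same list as $\Delta$ applied to the block list itself. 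But $c_\mu^{(v)}$ and $C_p$ are distinct (a $C$-word cannot overlap itself) and overlap, so $\Delta$ must execute at least one application of rule (ii); by Lemma~1 this strictly decreases $\omega$. Hence $\beta(R''') < \beta(R)$ while $\alpha(R''') = \alpha(R)$, giving $\mathcal{J}(R''') < \mathcal{J}(R)$.

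The remaining case is when every overlap furnished by Lemma~16 has $p = \mu$, i.e. each $C_i$ overlaps only its own $c$-words and no block overlaps a foreign $c$-word. Then the hypotheses of Lemma~17 are met, and it yields a $c_i$-word $c_i^{(e)}$ that overlaps \emph{itself}. I would again replace $C_i$ by $c_i^{(e)}$ to obtain a derived, equivalent semigroup with $\alpha$ unchanged, and recompute its $C$-words. This self-overlapping subcase is where the main difficulty lies: a single application of rule (ii) to a self-overlapping word $c_i^{(e)} \gre MN \gre NL$ replaces it by $M, N, L$ and, because $N$ is counted in both halves, need \emph{not} lower $\omega$. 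The point to exploit is that the word produced by Lemma~17 is not merely self-overlapping but periodic --- by construction all of its inner blocks are equal (to $M_{\delta_e}$) --- so the border $N$ is long compared with the period $\len{M}$. Consequently $\Delta$ peels off one full period after another, and each repeated period is discarded by the deduplication built into $\mathcal{T}_1$; the net effect is to collapse the single long block $c_i^{(e)}$ into a few distinct short blocks whose total parameter is far below $\len{c_i^{(e)}} - 1$. This forces $\beta(R''') < \beta(R)$.

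In both cases $R'''$ is equivalent to $R$ by Lemma~12, satisfies $\alpha(R''') = \alpha(R)$, and satisfies $\beta(R''') < \beta(R)$, so $\mathcal{J}(R''') < \mathcal{J}(R)$, as required. The step I expect to need the most care is the self-overlap subcase: one must verify that the periodicity guaranteed by Lemma~17 really does make the \emph{cumulative} effect of $\Delta$ --- all of its rule-(ii) steps together with the removals performed by $\mathcal{T}_1$ --- strictly decrease the parameter, rather than only its first step, which on its own can fail to help.
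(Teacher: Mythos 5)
Your proposal is correct and follows the paper's own proof essentially step for step: the same case split (a foreign overlap $\mu \neq p$ handled via Lemma~16, and the remaining case reduced via Lemma~17 to a $c$-word overlapping itself), the same length-preserving replacement of a $C$-word by the overlapping $c$-word with equivalence supplied by Lemma~12 and $\alpha$ preserved by property I, and the same conclusion that the overlap forces $\beta$ to drop. The difficulty you flag in the self-overlap subcase is genuine --- the paper asserts the decrease of $\omega$ there without further argument --- but it can be closed more simply than by your periodicity analysis: a self-overlapping word cannot survive into the non-overlapping output of $\Delta$, so by Lemma~3 it factors into at least two of the new $C$-words, and since the new words assigned to an old block of length $\ell_j$ that factors into $m_j$ pieces contribute at most $\ell_j - m_j$ to the new parameter, summing over the blocks gives $\beta(R''') \leq \beta(R) - 1$.
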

\begin{proof}
By Lemma~16, there exists some $C$-word $C_p$ and some $c$-word $c_\mu^{(v)}$ which overlap. We break the proof of the lemma into two parts, with the first case being when $\mu \neq p$, and the second case being when $\mu = p$, but no $C_j$ and $c_{\mu}^{(v)}$, $j \neq \mu$, overlap. 

\

\noindent\underline{Case $a$) $\mu \neq p$}. 

\

In the process described in Definition~8, we replace the generators of the group $\Gamma$ of the tuple $R$ by the words $C_1, C_2, \dots, C_{\mu-1}, c_\mu^{(v)}, C_{\mu+1}, \dots, C_s$ and obtain a $(k, \ell)$-semigroup $T_1$ with a system of relations equivalent to the system of relations of the $(k, \ell)$-semigroup $\Pi$ (by Lemma~12). Consider the words
\[
\Delta[\mathcal{T}_1(C_1, C_2, \dots, C_{\mu-1}, c_\mu^{(v)}, C_{\mu+1}, \dots, C_s)] = C_1^{T_1}, C_2^{T_1}, \dots, C_{s(T')}^{T_1}.
\]
It is easy to see that $C_1^{T_1}, C_2^{T_1}, \dots, C_{s(T')}^{T_1}$ are $C$-words of the semigroup $T_1$, and hence the $(k, \ell)$-semigroup associated to the tuple $R''' = \{ T_1 ; C_1^{T_1}, C_2^{T_1}, \dots, C_{s(T')}^{T_1}\}$ is a $(k, \ell)$-group. 

Furthermore, $\alpha(R) = \alpha(R''')$, as $\len{C_\mu} = \len{c_\mu^{(v)}}$. However, $\beta(R''') < \beta(R)$, as $c_\mu^{(v)}$ and $C_p$ overlap, and hence by Lemma~1 we have 
\[
\omega(C_1^{T_1}, C_2^{T_1}, \dots, C_{s(T')}^{T_1}) < \omega(C_1, C_2, \dots, C_{\mu-1}, c_\mu^{(v)}, C_{\mu+1}, \dots, C_s).
\]

\

\noindent\underline{Case $b)$ $C_p$ and $c_p^{(v)}$ overlap, but none of the $C_j$ and $c_\mu$, for $j \neq \mu$, overlap.} 

\

In this case, by Lemma~17 there exists some $c_p$-word $c_p^{(e)}$ which overlaps with itself. In the process described in Definition~8, we replace the generators of the group $\Gamma$ of the tuple $R$ by the words $C_1, C_2, \dots, C_{p-1}, c_p^{(e)}, C_{p+1}, \dots, C_s$ and obtain a $(k, \ell)$-semigroup $T_2$, with a system of relations equivalent to the system of relations of the $(k, \ell)$-semigroup $\Pi$. The argument then continues in the same way as in case $a)$ of the lemma. Here we use the fact that $c_p^{(e)}$ overlaps with itself, and therefore the parameter $\omega$ for the $C$-words of the semigroup $T_2$ is therefore strictly less than the parameter $\omega$ for the semigroup $\Pi$. 
\end{proof}

\begin{theorem}
If the natural numbers $k$ and $\ell$ are such that there exists an algorithm for solving the identity problem in every $(k, \ell)$-group, then there exists an algorithm which for every $(k, \ell)$-tuple $R$ constructs a distinguished $(k, \ell)$-tuple $V$, equivalent to $R$. 
\end{theorem}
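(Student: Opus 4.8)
The plan is to assemble Lemmas~14, 15, and 18 into a single cleanup loop, whose termination is forced by the well-ordering of the index $\mathcal{J}$. A distinguished tuple is by Definition~10 exactly one that lacks none of the properties I, II, III, so it suffices to remove each defect in turn; the genuine difficulty is that removing one defect can reintroduce another, so the naive ``fix each once'' strategy does not obviously close.

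First I would check that, under the standing hypothesis, every operation in the loop is effective. By Theorem~1 we can construct all the sets $\{c_1^{(j)}\}, \dots, \{c_s^{(j)}\}$ of any given $(k,\ell)$-tuple, and by Lemma~10 each of these sets is finite. Consequently we can decide whether a tuple lacks property~I (by comparing $\len{C_i}$ with the lengths of its $c_i$-words), whether it lacks property~II (by testing whether two distinct sets share a word), and whether it lacks property~III (by testing all pairs of $c$-words for overlap). In particular, deciding whether a tuple is distinguished is effective, and the constructions in Lemmas~13--18 are algorithmic.

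The algorithm is then the following loop, initialised on the input tuple $R$. Given the current tuple, apply Lemma~14 to produce an equivalent tuple $R'$ not lacking property~I; since the construction of Lemma~14 strictly lowers the index whenever it acts and otherwise leaves the tuple unchanged, we have $\mathcal{J}(R') \le \mathcal{J}(R)$. Now examine $R'$. If $R'$ lacks neither II nor III, then as it also does not lack I it is distinguished, and we halt and output $R'$. Otherwise $R'$ lacks II or III (but not I), and we apply Lemma~15 in the former case or Lemma~18 in the latter to obtain an equivalent tuple $R''$ with $\mathcal{J}(R'') < \mathcal{J}(R')$. We then repeat the loop with $R''$.

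The main obstacle, and the crux of the argument, is termination, since fixing II or III via Lemma~15 or~18 may well spoil property~I again. Let $R_0 = R, R_1, R_2, \dots$ denote the tuples fed into the successive passes of the loop. Within each pass, the invocation of Lemma~14 does not increase the index, while the subsequent invocation of Lemma~15 or~18 strictly decreases it, so $\mathcal{J}(R_{n+1}) < \mathcal{J}(R_n)$ at every pass that does not halt. Because indices are ordered pairs of natural numbers under the lexicographic order, which is a well-order, there is no infinite strictly decreasing chain of indices; hence the loop halts after finitely many passes. Each tuple produced is equivalent to its predecessor by Lemmas~14, 15, and~18 (all of which ultimately rest on Lemma~12), and equivalence of tuples is transitive, so the output $V$ is equivalent to $R$. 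By construction $V$ lacks none of I, II, III, hence is distinguished, completing the proof.
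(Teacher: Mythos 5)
Your proof is correct and is essentially the paper's argument: the paper proves the theorem by strong induction on the index $\mathcal{J}$, applying whichever of Lemmas~13, 15, or 18 matches the defect present, which is exactly your terminating loop recast as induction on the same well-ordered quantity. Your explicit use of Lemma~14 to restore property~I before testing II and III, and your observation that its construction does not increase the index, only make explicit what the paper leaves implicit in its case analysis.
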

\begin{proof}
The theorem is true, when the index of the $(k, \ell)$-tuple $R$ has $\beta = 0$, since in this case every $C$-word of the tuple $R$ is a single letter, and consequently the tuple $R$ is distinguished.

Assume that for all tuples with index $\mathcal{J} < \mathcal{J}'$ the theorem is true. Suppose that the tuple $R$ has index $\mathcal{J}'$, and is lacking one of the properties (otherwise $R$ is already distinguished). By one of Lemmas~13, 15, or 18 we may find a tuple $R_1$ with index strictly less than $\mathcal{J}'$ and equivalent to the tuple $R$. By the inductive hypothesis, we may find a distinguished tuple $V$, equivalent to the tuple $R'$, as $\mathcal{J}(R_1) < \mathcal{J}'$. To complete the proof, we note that the tuples $V$ and $R$ are equivalent.
\end{proof}

\section{The identity problem for $(k, \ell)$-semigroups}

We introduce some notation and assumptions which will be limited to this section. Let the $(k, \ell)$-semigroup $V$ be given by the generating alphabet 
\setcounter{equation}{0}
\begin{equation}
a_1, a_2, \dots, a_n
\end{equation}
and the defining relations 
\setcounter{equation}{18}
\begin{equation}
\{ V_i = 1 \quad (i = 1, 2, \dots, k).
\end{equation}
Let the list $C_1, C_2, \dots, C_u$ be a list of $C$-words of the semigroup $V$. Let the $(k, \ell)$-group $\Gamma$, associated to the semigroup $V$ and its $C$-words $C_1, C_2, \dots, C_u$ be given by the generating alphabet 
\begin{equation}
\delta_1, \delta_2, \dots, \delta_u
\end{equation}
and the defining relations 
\begin{equation}
\{ \Psi_i = 1 \quad (i = 1, 2, \dots, k).
\end{equation}
Assume there exists an algorithm which solves the identity problem in the $(k, \ell)$-group $\Gamma$. 

Let the tuple $R = \{ V ; C_1, C_2, \dots, C_u \}$ be distinguished, i.e. the set of $c$-words $\{ c_1^{(j)} \}, \{ c_2^{(j)} \}, \dots, \{ c_u^{(j)} \}$ does not lack any of the properties from the previous sections.

\begin{definition}
The word $A$ over the alphabet $(1)$ is called \textit{integral}, if $A \gre c_{i_1}^{(j_1)} c_{i_2}^{(j_2)} \cdots c_{i_m}^{(j_m)}$, where $c_{i_1}^{(j_1)},  c_{i_2}^{(j_2)}, \dots, c_{i_m}^{(j_m)}$ are $c$-words of the tuple $R$. The empty word $1$ is always an integral word. 
\end{definition}

\begin{definition}
On the set of integral words we define the \textit{function $f$}, which maps an integral word to a word from the group $\Gamma$. Let $A \gre c_{i_1}^{(j_1)} c_{i_2}^{(j_2)} \cdots c_{i_m}^{(j_m)}$. Then $f(A) \rightleftharpoons \delta_{i_1} \delta_{i_2} \cdots \delta_{i_m}$. This function is well-defined, as in the distinguished tuple of $c$-words no two words overlap, and because no word belongs to $\{ c_i^{(j)}\}$ and $\{ c_p^{(j)} \}$ for $i \neq p$.
\end{definition}

We notice that $f(AB) \gre f(A)f(B)$; and that if $A \gre c_{i_1}^{(j_1)} c_{i_2}^{(j_2)} \cdots c_{i_m}^{(j_m)}$ and $B = c_{i_1}^{(p_1)} c_{i_2}^{(p_2)} \cdots c_{i_m}^{(p_m)}$, then $f(A) \gre f(B)$. 

\begin{definition}
Every elementary transformation of the $(k, \ell)$-semigroup $V$ is of one of the two forms 
\begin{equation}
XY \to XAY
\end{equation}
\begin{equation}
XAY \to XY
\end{equation}
where $A$ is one of the $V_i$ of the defining relations $(19)$, and $X$ and $Y$ are arbitrary words over the alphabet $(1)$. We say that a transformation of type $(22)$ is an \textit{insertion}, and that a transformation of type $(23)$ is a \textit{deletion}. Two integral words $A$ and $B$ are called \textit{equivalent} with respect to the tuple $R$, if $f(A) = f(B)$ in the group $\Gamma$ of the tuple $R$. If $S \gre XAY$ and $T \gre XBY$ are words over the alphabet $(1)$, with $A$ and $B$ equivalent, then we say that $S$ and $T$ are equivalent with respect to a \textit{replacement} in the tuple $R$. 
\end{definition}

It is clear that every elementary transformation is a replacement, and that every replacement can be carried out using some finite number of elementary transformations. If $\len{A} \leq \len{B}$, then the replacement $XAY \to XBY$ is called \textit{non-decreasing}, and the replacement $XBY \to XAY$ is called \textit{non-increasing}. It is sometimes convenient to consider an insertion as a non-decreasing replacement, and a deletion as a non-increasing replacement. We extend the notion of \textit{affected} and \textit{unaffected} individual letters in a natural way (see Remark~3) to sequences of elementary transformations and replacements. 

\begin{definition}
If $A$ is a word over the alphabet $(1)$, then by $\fT(A)$ we denote the set of all such words $B$ over the alphabet $(1)$, such that there exists a finite sequence of words $X_1, X_2, \dots, X_\alpha$, where $X_1 \gre A, X_\alpha \gre B$, and  $X_{i+1}$ is obtained from $X_i$ by a non-increasing replacement for all $i = 1, 2, \dots, \alpha-1$.
\end{definition}

As $\len{B} \leq \len{A}$ for every word in the set $\fT(A)$, there exists an algorithm which takes as input a word $A$ over the alphabet $(1)$, and produces the set of words $\fT(A)$. The word $A$ is \textit{final} if for every $B \in \fT(A)$ we have $\len{B} = \len{A}$. It is clear that any subword of a final word is final, and that every $c$-word of a distinguished tuple $R$ is final (as $R$ does not lack property I and III). 

\begin{definition}
Let $A$ be a final word over the alphabet $(1)$, and 
\begin{align}
A &\gre y_1 y_2 \cdots y_{s_1} c_{i(1,1)} c_{i(1,2)} \cdots c_{i(1,m_1) y_{s_1+1}} y_{s_1+2} \cdots y_{s_2} c_{i(2,1)} c_{i(2,2)} \cdots \nonumber\\ &\cdots c_{i(2,m_2)} y_{s_2+1} y_{s_2+2} \cdots y_{s_3} \cdots c_{i(v-1,1)} c_{i(v-1,2)} \cdots \\ &\cdots c_{i(v-1,m_{v-1})} y_{s_{v-1}+1} y_{s_{v-1}+2} \cdots y_{s_v} \nonumber
\end{align}
where every $y_i \: (i = 1, 2, \dots, s_v)$ is a letter; and every $c$-word $c_{i(t,\mu)}$ appearing in $(24)$ has the following property: 

Denote the prefix of the word $A$ appearing before the word $c_{i(t,\mu)}$ by $X_1$, and the suffix of the word $A$ appearing after the word $c_{i(t,\mu)}$ by $Y_1$, i.e. $A \gre X_1 c_{i(t, \mu)} Y_1$. Then there does not exist a $c$-word $c_p$ such that $A \gre X_2 c_p Y_2$, where $X_2 Y_2$ is non-empty, and $c_p \gre X_3 c_{i(t,\mu} Y_3$, where $X_3 Y_3$ is non-empty, and $A \gre X_2 X_3 c_{i(t,\mu)} Y_3 Y_2$, where $X_1 \gre X_2 X_3, Y_1 \gre Y_3 Y_2$. Furthermore, no subword $y_{s_i+1} y_{s_i+2} \cdots y_{s_{i+1}}$ of $A$ is a $c$-word. 

We then say that the right-hand side of the equality in $(24)$ is the \textit{representation} of the final word $A$. 
\end{definition}

There is an obvious algorithm which for any final word $A$ computes its representation. It is also easy to see that the representation of a final word is unique. Consider the word $Y \gre y_1 y_2 \cdots y_{s_v}$, which we will call the \textit{stable} word of the final word $A$, and every letter $y_m$ of the word $Y$ will be called the $m$\textit{th} \textit{stable letter} of the final word $A$. The letters $y_m$ and $y_{m+1}$ are called \textit{adjacent stable letters}. It is clear that between two adjacent stable letters $y_m$ and $y_{m+1}$ of a final word, there is an integral word, which will be called the $m$\textit{th main integral word} of the word $A$. 

\begin{definition}
Suppose we are given a final word $T_0$ and a finite sequence of a non-decreasing replacements 
\begin{equation}
T_0 \to T_1 \to T_2 \to \cdots \to T_{\mu-1} \to T_\mu \quad (\mu \geq 0).
\end{equation}
The word $T_\mu$ will be called a $T$\textit{-word}, if a final word $T_0$ and a finite sequence of non-decreasing replacements indicating how it was obtained from $T_0$ is specified for it. The \textit{rank} of a $T$-word is the number of non-decreasing replacements by which it was derived from the final word. 
\end{definition}

Let
\begin{align}
T_0 &\gre y_1 y_2 \cdots y_{s_1} c_{i(1,1)} c_{i(1,2)} \cdots c_{i(1,m_1) y_{s_1+1}} y_{s_1+2} \cdots y_{s_2} c_{i(2,1)} c_{i(2,2)} \cdots \nonumber\\ &\cdots c_{i(2,m_2)} y_{s_2+1} y_{s_2+2} \cdots y_{s_3} \cdots c_{i(v-1,1)} c_{i(v-1,2)} \cdots \\ &\cdots c_{i(v-1,m_{v-1})} y_{s_{v-1}+1} y_{s_{v-1}+2} \cdots y_{s_v} \nonumber
\end{align}
be the representation of the word $T_0$, and suppose the $\mu$ non-decreasing replacements of $(25)$ are applied to $T_0$. 

Suppose that the stable letters of the final word $T_0$ are all unaffected by the application of the replacements in $(25)$. Then 
\begin{equation}
T_\mu \gre M_1 y_1 M_2 y_2 M_3 \cdots M_{s_v} y_{s_v} M_{s_v+1},
\end{equation}
and the letters $y_1, y_2, \dots, y_{s_v}$ are called the \textit{stable} letters of the $T$-word $T_\mu$; the letters $y_m$ and $y_{m+1}$ are called \textit{adjacent stable letters} of the $T$-word $T_\mu$. The right-hand side of the equality $(27)$ is called the \textit{representation} of the $T$-word $T_\mu$, and the word $M_i$ is called the $i$\textit{th changing word} of the $T$-word $T_\mu$. Notice that every $M_i$ is obtained from some (possibly empty) integral word by a finite number of non-decreasing replacements. 

\begin{definition}
To every $T$-word $T_i$ with rank $i$ we assign the index $\lambda = (\gamma, \delta)$, where $\gamma = i$, and $\delta = \len{T_i}$. We order the indices $\lambda$ lexicographically (see Definition~10). 
\end{definition}

\begin{definition}
In the distinguished tuple $R$ every $c$-word is a final word. If in Definition~16 the final word $T_0$ is a $c$-word; if the stable letters of this stable $c$-word are unaffected by the sequence $(25)$; and if in the representation $(27)$ of the $T$-word $T_\mu$ the words $M_1$ and $M_{s_v+1}$ are empty; then the word $T_\mu$ is called a \textit{$d$-word} of rank $\mu$. 
\end{definition}

The following lemma is proved by induction on the index $\lambda$. 

\begin{lemma}
For any $\lambda$ the following three statements are true: 
\textnormal{\begin{enumerate}[label=\Roman*.]
\item \textit{The stable letters of the final word $T_0$ are not affected by the sequence $(25)$ of replacements, if $(\mu, \len{T_\mu}) \leq \lambda$. }
\item \textit{Every changing word $M_j$ of the $T$-word $T_\mu$, with $(\mu, \len{T_\mu}) \leq \lambda$, is graphically equal to some product of $d$-words $M_j \gre d_{j_1}d_{j_2} \cdots d_{j_{t_j}}$, where the index of every $d_{j_i} \: (i = 1, 2, \dots, t_j)$ is strictly less than $\lambda$.}
\item \textit{If the final word $T_0$ does not overlap with any $c$-word, then $T_\mu$ does not overlap with any $c$-word, when $(\mu, \len{T_\mu}) \leq \lambda$.}
\end{enumerate}}
\end{lemma}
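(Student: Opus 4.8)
The plan is to carry out a simultaneous induction on the index $\lambda=(\gamma,\delta)$, proving statements I, II, and III together rather than in isolation, since at smaller indices each of the three is needed to establish the others. In the base case $\gamma=0$ the $T$-word $T_\mu$ coincides with the final word $T_0$: statement I is vacuous, statement III is immediate because $T_\mu\gre T_0$, and for statement II the changing words are exactly the main integral words of $T_0$, each a product of $c$-words; every $c$-word is a $d$-word of rank $0$, and, provided $T_0$ carries at least one stable letter so that each main integral word is a proper subword, these $d$-words are strictly shorter than $T_0$ and hence of index $<(0,\delta)$. The degenerate case in which $T_0$ is a single $c$-word (no stable letters) must be treated on its own, since then the only natural $d$-word has index exactly $\lambda$.

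For the inductive step, assume I--III hold at every index $<\lambda$ and let $T_\mu$ have index $\lambda$ with $\gamma=\mu\geq 1$. First I would isolate the last replacement $T_{\mu-1}\to T_\mu$, a non-decreasing replacement $XAY\to XBY$ with $A$ and $B$ equivalent integral words and $\len{A}\leq\len{B}$. Since $T_{\mu-1}$ has index $(\mu-1,\len{T_{\mu-1}})<\lambda$, the inductive hypothesis furnishes a representation $T_{\mu-1}\gre M_1 y_1 M_2\cdots M_{s_v} y_{s_v} M_{s_v+1}$ in which the stable letters are unaffected (I), each changing word $M_j$ is a product of $d$-words of index $<\lambda$ (II), and $T_{\mu-1}$ is free of overlaps with $c$-words whenever $T_0$ is (III).

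The heart of statement I, and the step I expect to be the main obstacle, is to show that the replaced integral word $A$ lies entirely inside a single changing word $M_j$, so that no stable letter $y_m$ is affected. If instead $A$ straddled some $y_m$, then in the $c$-word factorisation of $A$ one $c$-word would cover $y_m$ with non-empty material on both sides; I would derive a contradiction by combining the maximality built into the representation of $T_0$ (no $c$-word of the representation is properly contained in a larger $c$-word, and the stable letters themselves form no $c$-word) with statement III applied to the $d$-words flanking $y_m$, which by the distinguishedness of $R$ (no two $c$-words overlap) cannot themselves overlap any $c$-word. Granting this, the replacement alters exactly one changing word, $M_j\to M_j'$, the stable letters persist unaffected, and statement I follows for $T_\mu$.

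With $A$ confined to $M_j$, statement II is handled by re-expressing $M_j'$ as a product of $d$-words: the replacement $A\to B$ prolongs the relevant $d$-word(s) in the decomposition $M_j\gre d_{j_1}\cdots d_{j_{t_j}}$ by one further non-decreasing replacement, keeping the rank at most $\mu$; and since each resulting $d$-word is a proper subword of $T_\mu$ (using $s_v\geq 1$), its length is $<\len{T_\mu}=\delta$, so that rank $\leq\mu$ together with length $<\delta$ forces its index to be $<\lambda$. Finally, for statement III I would observe that each $d$-word arises from a $c$-word by non-decreasing replacements, and, that starting $c$-word overlapping no $c$-word by distinguishedness, the inductive hypothesis (statement III at a smaller index) gives that no constituent $d$-word overlaps a $c$-word; it then remains to verify that no new overlap is created at the junctions between the stable letters and the $d$-word products making up $T_\mu$, which again rests on the stability structure secured in statement I. The case $s_v=0$, where $T_\mu$ is a single changing word, is then dispatched separately, reducing essentially to the behaviour of a single growing $d$-word.
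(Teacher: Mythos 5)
Your proposal follows essentially the same route as the paper's proof: a simultaneous induction on the index $\lambda$, peeling off the last replacement $T_{\mu-1}\to T_\mu$, invoking the inductive hypotheses of all three parts on $T_{\mu-1}$, showing via the non-overlapping of $d$-words with $c$-words (and the maximality in the representation of $T_0$) that the replaced integral word cannot touch a stable letter, and then tracking ranks and lengths to keep every $d$-word's index below $\lambda$. Your extra attention to the degenerate case of a $T$-word with no stable letters (where the sole changing word has index exactly $\lambda$) is a reasonable refinement of an edge case the paper passes over silently, but it does not alter the argument.
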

\begin{proof}
If the index $\lambda$ equals $(0,1)$, then all three statements of the lemma are obviously true. 

Let $\lambda' = (\mu, \len{T_\mu})$, and suppose that for all $\lambda < \lambda'$ the lemma is true. We will prove the lemma for $\lambda = \lambda'$. 

\

\noindent\underline{Part I.} The sequence $(25)$ begins with 
\begin{equation}
T_0 \to T_1 \to \cdots \to T_{\mu-1}
\end{equation}
By the inductive hypothesis of I, the stable letters of the final word $T_0$ are not affected by the sequence $(25)$ of replacements, as $(\mu-1, \len{T_{\mu-1}}) < \lambda'$. By the inductive hypothesis of II, we can write 
\begin{equation}
\begin{split}
T_{\mu-1} &\gre d_{\mu-1(1,1)} d_{\mu-1(1,2)} \cdots d_{\mu-1(1,m_1)} y_1 d_{\mu-1(2,1)} d_{\mu-1(2,2)} \cdots\\
& \cdots d_{\mu-1(2,m_2)} y_2 \cdots \cdots y_{s_v} d_{\mu-1(s_v+1,1)} d_{\mu-1(s_v+1,2)} \cdots d_{\mu-1(s_v+1,m_{s_v+1})}
\end{split}
\end{equation}
where $y_1 y_2 \cdots y_{s_v}$ is the stable word of the final word $T_0$; and every $d_{\mu-1(i,j)}$ appearing is a $d$-word, and furthermore the index of every $d_{\mu-1(i,j)}$ is less than $\lambda'$. 

By the inductive hypothesis of III, none of the $d_{\mu-1(i,j)}$ appearing in $(29)$ overlaps with any $c$-word. 

Consider the non-decreasing replacement
\begin{equation}
T_{\mu-1} \to T_\mu.
\end{equation}
\begin{enumerate}[label=\arabic*)]
\item Suppose that $(30)$ is an insertion. In this case no letter of the word $T_{\mu-1}$ is affected, and thus property I holds. 
\item Suppose that $(30)$ is a deletion. That is, $T_{\mu-1} \gre XAY$ and $T_\mu \gre XBY$, where $A$ and $B$ are non-empty, integral, and equivalent words. The word $A$ cannot be contained in any subword of $T_{\mu-1}$ consisting only of stable letters. This would contradict the definition of the representation of the final word $T_0$. The word $A$ does not overlap with the $d$-words appearing in $(29)$. Thus none of the stable letters of $T_{\mu-1}$ are affected by the replacement $(30)$. 
\end{enumerate}
\

\noindent\underline{Part II.} Consider the non-decreasing replacement
\setcounter{equation}{29}
\begin{equation}
T_{\mu-1} \to T_\mu.
\end{equation}
\setcounter{equation}{30}
\begin{enumerate}[label = \arabic*)]
\item Suppose that $(30)$ is an insertion $XY \to XV_iY$, and suppose that none of the $d$-words appearing in $(29)$ is divided when dividing $T_{\mu-1}$ into the words $X$ and $Y$. Then the representation of the word $T_{\mu-1}$ will differ from the representation of $T_\mu$ only in that some $M_j \gre d_{j_1} d_{j_2} \cdots d_{j_{t_j}}$ will be changed into 
\[
M_j' \gre d_{j_1} d_{j_2} \cdots d_{j_s} c_{i_1} c_{i_2} \cdots c_{i_m} d_{j_{s+1}} \cdots d_{j_{t_j}}.
\]
That is, the resulting word will be graphically equal to a product of $d$-words, and moreover (as $c$-words are $d$-words of rank zero) the index of every $d$-word of $M_j'$ is strictly less than $(\mu-1, \len{T_{\mu-1}})$, which in turn is less than $\lambda$. 
\item Suppose that $(30)$ is an insertion $XY \to XV_iY$, and that at least one of the $d$-words $d_{j_m}$ appearing in $(29)$ is divided when dividing $T_{\mu-1}$ into the words $X$ and $Y$. Then the representation of the word $T_{\mu-1}$ differs from the representation of $T_\mu$ only in that $d_{j_m}$, with index $\lambda_1 < (\mu-1, \len{T_{\mu-1}})$, will be replaced by some $d$-word with index $\lambda_1+1 < (\mu, \len{T_\mu})$. 
\item Suppose that $(30)$ is not an insertion, i.e. $T_{\mu-1} \gre XAY$ and $T_\mu \gre XBY$, where $A$ and $B$ are non-empty, integral, and equivalent words, and suppose that $A$ is included in some $d_{\mu-1(i, j}$ appearing in $(29)$, and that it is not equal to this $d_{\mu-1(i, j)}$. Then this $d_{\mu-1(i, j)}$, with index $\lambda_1 <  (\mu-1, \len{T_{\mu-1}})$ is replaced in the representation of $T_\mu$ by some $d$-word with index $\lambda_1+1 < (\mu, \len{T_\mu})$. 
\item Suppose that $(30)$ is a non-decreasing replacement which is not an insertion, and let $A \gre d_{\mu-1(i, j)} \cdots d_{\mu-1(i, j+t)}$. But $A$ is an integral word, and $d_{\mu-1(i, j)} \cdots d_{\mu-1(i, j+t)}$ does not, by the inductive hypothesis of III, overlap with any $c$-word. Therefore, each $d_{\mu-1(i, j)}, \dots, d_{\mu-1(i, j+t)}$ is a $c$-word, and hence the replacement $(30)$ replaces one changing word, graphically equal to a product of $d$-words of zero rank, by another changing word with the same property.  
\end{enumerate}
\

\noindent\underline{Part III.} We prove this part by contradiction. Suppose that the word $T_\mu$ from the sequence $(25)$ overlaps with some $c$-word $c_s$, i.e. $T_\mu \gre MN$ and $c_s \gre NL$, where $N$ and $ML$ are non-empty words. 

By parts I and II we can write the $T$-word $T_\mu$ as its representation $(27)$. As $c_s$ is non-empty, we have that $c_s \gre eK$, where $e$ is some letter. 
\begin{enumerate}[label = \arabic*)]
\item Suppose that $e \gre y_i$, where $y_i$ is some stable letter. Then every $d$-word appearing in $(27)$ to the right of $e$ is a $c$-word, as the distinguished tuple $R$ does not lack property I. From here, the end of the word $T_\mu$ which starts with the $e$ in question, coincides with the end of the word $T_0$ which starts with the same $e$, which contradicts III. 
\item Suppose that $e$ belongs to some $M_j$ in the representation $(27)$. Now $M_j \gre d_{j_1} d_{j_2} \cdots d_{j_{t_j}}$ and so $e$ appears in some $d_{j_m}$. But as the index of $d_{j_m}$ is strictly less than the index of $T_\mu$ by II, it follows by the inductive hypothesis of III that $d_{j_m}$ does not overlap with any $c$-word, a contradiction.
\end{enumerate}
This completes the proof of Lemma~19.
\end{proof}

\begin{definition}
We place every $T$-word $T_\mu$, obtained from the final word $T_0$ by $\mu$ specified non-decreasing replacements, in correspondence with a word $O_\mu$ with the following properties:

Let the representation of $T_0$ be as in $(26)$. Suppose we are given the finite sequence of non-decreasing replacements $(25)$. Let the representation of the $T$-word $T_\mu$ be as in $(27)$ (by Lemma~19.I, we can speak of the representation of the $T$-word $T_\mu$ without the assumptions in Definition~16). By Lemma~19.II, the $s$th changing word of the $T$-word $T_\mu$ is 
\begin{equation}
M_s \gre d_{j(s,1)} d_{j(s,2)} \cdots d_{j(s,k_s)},
\end{equation}
where every $d_{j(s,t)}$ is a $d$-word; and for every $d_{j(s,t)}$ there is a $c$-word $c_{j(s,t)}$, from which it is obtained by the indicated non-decreasing replacements, and the index of every $d_{j(s,t)}$ is strictly less than the index of $T_\mu$. 

Then
\begin{equation}
\begin{split}
U_\mu &\gre c_{j(1,1)} c_{j(1,2)} \cdots c_{j(1,k_1)} y_1 c_{j(2,1)} c_{j(2,2)} \cdots c_{j(2,k_2)} y_2 \cdots \cdots \\ &\cdots y_{s_v} c_{j(s_v+1,1)} c_{j(s_v+1,2)} \cdots c_{j(s_v+1,k_{s_v+1})}
\end{split}
\end{equation}
and moreover 
\begin{enumerate}[label = \arabic*)]
\item Every $c_{j(s,t)}$ (for $s = 1,2, \dots, s_v+1$ and $t = 1, 2\dots, k_s$) which appears in $(32)$ is precisely the $c$-word from which one obtains the $d$-word $d_{j(s,t)}$ in the representation $(27), (31)$ of the $T$-word $T_\mu$. 
\item Every integral word 
\begin{equation}
c_{j(m,1)} c_{j(m,2)} \cdots c_{j(m,k_m)}
\end{equation}
in the word $O_\mu$ can be obtained by a single non-decreasing replacement from the whole word appearing between the stable letters $y_{m-1}$ and $y_m$ in the final word $T_0$. 
\end{enumerate}
\end{definition}

\begin{lemma}
Suppose we are given a finite sequence of non-decreasing replacements 
\begin{equation}
T_0 \to T_1 \to T_2 \to \cdots \to T_\mu \to T_{\mu+1}
\end{equation}
where $T_0$ is a final word. Then for the $T$-word $T_{\mu+1}$ we can construct the corresponding $O_{\mu+1}$. 
\end{lemma}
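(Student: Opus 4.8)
The plan is to prove this by induction on the rank, constructing $O_{\mu+1}$ from the word $O_\mu$ that Definition~21 already associates to the truncated sequence $T_0 \to \cdots \to T_\mu$, by inspecting only the single final replacement $T_\mu \to T_{\mu+1}$ of $(34)$. The base case $\mu = 0$ is immediate: in the representation $(26)$ of the final word $T_0$ every changing word is already a product of $c$-words (i.e. of $d$-words of rank zero, each being its own generator), so $O_0 \gre T_0$, each source block $Q_m \gre P_m$ equals the $m$th main integral word, and property~2) holds via the tautological replacement $P_m \to P_m$.

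First I would record what Lemma~19 provides. By Lemma~19.I the stable letters $y_1, \dots, y_{s_v}$ of $T_0$ are unaffected throughout $(34)$, so both $T_\mu$ and $T_{\mu+1}$ admit representations of the form $(27)$ with the same stable letters; by Lemma~19.II each changing word factors as $M_s \gre d_{j(s,1)} \cdots d_{j(s,k_s)}$ with every $d$-word of index strictly below that of $T_\mu$, and $O_\mu$ arises by replacing each such $d$-word with the $c$-word $c_{j(s,t)}$ it was generated from. Writing $O_\mu \gre Q_1 y_1 Q_2 y_2 \cdots y_{s_v} Q_{s_v+1}$ with $Q_s \gre c_{j(s,1)} \cdots c_{j(s,k_s)}$, the inductive hypothesis supplies, for each $m$, that $Q_m$ is equivalent to the main integral word $P_m$ lying between $y_{m-1}$ and $y_m$ in $T_0$ (so $f(Q_m) = f(P_m)$ in $\Gamma$) and that $\len{P_m} \le \len{Q_m}$.

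Next I would split the last replacement $T_\mu \to T_{\mu+1}$ into exactly the four cases of the proof of Lemma~19.II and, in each, define the edit turning $O_\mu$ into $O_{\mu+1}$: (i) an insertion $XY \to XV_iY$ splitting no $d$-word, where I insert the $c$-decomposition of $V_i$ into the affected block $Q_j$; (ii) an insertion splitting one $d$-word and (iii) a non-insertion replacement lying properly inside one $d$-word, where the generating $c$-word is untouched, so the corresponding block of $O_\mu$ is carried over verbatim while only the index of that single $d$-word rises by one; and (iv) a top-level replacement $A \to B$ in which, by Lemma~19.III, $A$ is a product of whole rank-zero $d$-words, i.e. of $c$-words, which I replace by the $c$-decomposition of $B$. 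Property~1) is then immediate: in (ii)--(iii) the source $c$-word is unchanged, and in (i), (iv) the letters newly introduced are genuine $c$-words, matching exactly the $d$-words newly appearing in the representation $(27)$, $(31)$ of $T_{\mu+1}$.

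The main work, and the step I expect to be most delicate, is checking property~2) for the one block $Q_j$ that changes, say $Q_j \to Q_j'$, which amounts to exhibiting a single non-decreasing replacement $P_j \to Q_j'$. For equivalence I would use that an ordinary replacement preserves the $f$-image of the integral word it acts on, and that inserting a defining word $V_i$ changes nothing, since $f(V_i)$ is the relation $\Psi_i = 1$ of $\Gamma$ and so is trivial; combined with $f(Q_j) = f(P_j)$ this gives $f(Q_j') = f(P_j)$, so $P_j$ and $Q_j'$ are equivalent integral words. For the length I would verify that the edit to $Q_j$ is non-decreasing at the source level — an insertion strictly lengthens it, and a top-level replacement $A \to B$ has $\len{A} \le \len{B}$ — whence $\len{P_j} \le \len{Q_j} \le \len{Q_j'}$. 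Since $Q_j'$ is integral, equivalent to $P_j$, and no shorter, the whole accumulated history collapses into the single non-decreasing replacement $P_j \to Q_j'$, establishing property~2) and completing the inductive step. The subtle points to watch are the trivial $f$-value of inserted defining words and the fact that splitting a $d$-word in cases (ii)--(iii) leaves its generating $c$-word, and hence the relevant block of $O_\mu$, entirely fixed.
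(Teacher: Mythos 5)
Your proposal is correct and follows essentially the same route as the paper: induction on $\mu$ with $O_0 \gre T_0$, then a case split on the final replacement according to whether it is an insertion or not and whether it lies inside a single $d$-word (block of $O_\mu$ carried over unchanged) or sits at the level of whole rank-zero $d$-words/between $d$-words and stable letters (perform the corresponding edit on $O_\mu$). The only difference is that you spell out the verification of property~2) — triviality of $f(V_i)$ in $\Gamma$ and the length comparison collapsing the history into one non-decreasing replacement — which the paper dismisses as ``easy to verify''; this is a welcome elaboration, not a divergence.
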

\begin{proof}
For $T_0$, the corresponding word $O_0$ is just $T_0$ itself. 

Suppose that for the $T$-word $T_\mu$ we already know the corresponding word $O_\mu$. We represent $T_\mu$ of the form given in $(27), (31)$, and the the corresponding $O_\mu$ of the form $(32)$. 

Consider the the non-decreasing replacement 
\begin{equation}
T_\mu \to T_{\mu+1}.
\end{equation} 

\begin{enumerate}[label =\arabic*)]
\item Suppose that $(35)$ is an insertion. If the insertion is inside some $d$-word $d_{j(\alpha, \beta)}$ appearing in $(27), (31)$, then we set $O_\mu \gre O_{\mu+1}$. Indeed, from the properties of $O_\mu$, it is easy to see that $O_\mu$ will be the word $O_{\mu+1}$ corresponding to $T_{\mu+1}$. If instead the insertion is between two $d$-words, or between two stable letters, or between a stable letter and a $d$-word, then $O_{\mu+1}$ is obtained from $O_\mu$ by an insertion between the same words (where we consider $c_{i(\alpha, \beta)}$ and $d_{i(\alpha, \beta)}$ as the same for these purposes). It is easy to verify from the definition of $O_\mu$ that this choice of $O_{\mu+1}$ is correct.
\item Suppose that $(35)$ is a non-decreasing replacement, but not an insertion. By Lemma~19, it follows that the replaced word either completely lies inside some $d$-word, or else is equal to a product of $d$-words of rank zero (i.e. $d$-words which are $c$-words). In the first case, we set $O_\mu \gre O_{\mu+1}$. In the second case, the replaced $d$-words are $c$-words, and $O_{\mu+1}$ is obtained from $O_\mu$ by the same replacement as the one by which $T_{\mu+1}$ is obtained from $T_\mu$. Furthermore, if in $T_{\mu+1}$ the replaced word is $d_{j(\alpha, \beta)} \cdots d_{j(\alpha, \delta)}$, then in $O_{\mu+1}$ the corresponding replaced word is $c_{j(\alpha, \beta)} \cdots c_{j(\alpha, \delta)}$. 
\end{enumerate}
This completes the proof of Lemma~20.
\end{proof}
\begin{remark}
By saying that we are given a $T$-word $T_\mu$, we are really saying that we are given a final word $T_0$ and a sequence of $\mu$ non-decreasing replacements. If we wish to prove that some word $X$ is a $T$-word $T_\mu$, then we must prove that there exists some final word $T_0$, to which the application of $\mu$ non-decreasing replacements produces the word $X$. 
\end{remark}

\begin{lemma}
If one performs on a $T$-word $T_\mu$, obtained from a final word $T_0$, a deletion
\begin{equation}
T_\mu \gre XV_SY \to XY,
\end{equation}
where $V_s$ is one of the left-hand sides of the defining relations in $(19)$, then $XY$ is a $T$-word, obtainable from the same $T_0$. 
\end{lemma}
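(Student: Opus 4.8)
The plan is to induct on the rank $\mu$ of the $T$-word $T_\mu$, the base case $\mu = 0$ being vacuous. Indeed, if $\mu = 0$ then $T_\mu \gre T_0$ is a final word; since $V_s$ is a left-hand side of a defining relation we have $f(V_s) = 1$ in $\Gamma$, so $V_s$ is equivalent to the empty word and the deletion $T_0 \gre XV_sY \to XY$ is a non-increasing replacement. Hence $XY \in \fT(T_0)$ while $\len{XY} < \len{T_0}$, contradicting the finality of $T_0$. Thus no defining word occurs as a subword of a final word, and we may assume $\mu \geq 1$ with the lemma already known for every $T$-word of smaller rank obtained from $T_0$.

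For the inductive step I would examine the last replacement of the defining sequence $(25)$, written as $T_{\mu-1} \gre PCQ \to PDQ \gre T_\mu$, where $C$ and $D$ are equivalent integral words with $\len{C} \leq \len{D}$ (an insertion being the degenerate case $C \gre 1$), and compare the position of the rewritten block $D$ in $T_\mu$ with that of the deleted word $V_s$. If the two occurrences are disjoint, then $V_s$ lies entirely inside $P$ or inside $Q$ and so already occurs in $T_{\mu-1}$; deleting it there gives, by the inductive hypothesis, a $T$-word $T'$ obtainable from $T_0$, and re-applying the untouched replacement $C \to D$ to $T'$ reproduces exactly $XY$. This disjoint case is routine, resting only on the fact that a deletion and a non-decreasing replacement at disjoint positions commute.

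The substance lies in the interacting case, where the occurrences of $V_s$ and $D$ meet. Here I would first use that both words are integral together with the distinguished hypothesis: since no two $c$-words of $R$ overlap and, by Lemma~19.III, the surrounding $d$-words do not overlap $c$-words either, the common region of $V_s$ and $D$ must be tiled by whole $c$-words in both, so only nested or boundary-flush configurations survive. In each such configuration I would fold the deletion into the last replacement: writing, say, $D \gre D_1 V_s D_2$ in the nested case, the identity $f(V_s) = 1$ gives $f(D_1 D_2) = f(D) = f(C)$, so that $C \to D_1 D_2$ is again a replacement between equivalent integral words, and when $\len{D_1 D_2} \geq \len{C}$ it is non-decreasing and may simply be substituted for the last step of $(25)$.

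The main obstacle is exactly the subcase in which this combined replacement turns out to be length-decreasing, that is, when deleting $V_s$ shortens the block below $\len{C}$ (and, symmetrically, the case $D \subseteq V_s$, where one is left having to remove an integral subword that is $\Gamma$-equivalent to the empty word but need not itself be a defining word). In these situations $XY$ cannot be obtained by appending a single non-decreasing replacement to the sequence for $T_{\mu-1}$, and the sequence must instead be rebuilt from $T_0$. For this I would pass to the associated word $O_\mu$ of Lemma~20, whose blocks between consecutive stable letters are each reached from the corresponding block of $T_0$ by a single non-decreasing replacement; the deletion changes $O_\mu$ into a word $O'$ whose blocks still enjoy this property precisely because $f(V_s) = 1$ preserves all the governing $\Gamma$-equalities, and expanding $O'$ back out exhibits $XY$ as a $T$-word built from $T_0$ by non-decreasing replacements, with every changing word a product of $d$-words throughout. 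Verifying that the representation and the block-wise equalities survive the deletion in each of these reduced configurations is where the real work of the proof will be concentrated.
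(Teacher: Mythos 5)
Your outline diverges from the paper's argument and, as written, has a genuine gap: the case you yourself flag as ``where the real work of the proof will be concentrated'' is exactly the case the lemma is about, and you do not carry it out. Concretely, when the occurrence of $V_s$ cannot be absorbed into the last replacement as a still non-decreasing step (your length-decreasing subcase, the subcase $D \subseteq V_s$, and also the proper-overlap configuration where $V_s$ and $D$ each protrude past the other on one side, which your ``nested or boundary-flush'' dichotomy passes over), you propose to rebuild the sequence from $T_0$ through $O_\mu$ and assert that the blocks ``still enjoy this property precisely because $f(V_s)=1$ preserves all the governing $\Gamma$-equalities.'' But preserving the $\Gamma$-equalities is not the issue: what must be checked is that the modified block is still obtainable from the corresponding main integral word $Y$ of $T_0$ by a \emph{non-decreasing} replacement, i.e.\ that deleting $\len{V_s}$ letters does not push the block below $\len{Y}$. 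That step requires invoking the finality of $Y$ (an integral word equivalent to $Y$ but shorter than $Y$ would contradict $Y$ being final), and it appears nowhere in your argument. Without it, the rebuilt sequence need not consist of non-decreasing replacements and $XY$ has not been exhibited as a $T$-word.

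The paper avoids this case fragmentation entirely by inducting on $\len{T_\mu}$ rather than on the rank, and by using the structure already established in Lemmas~19 and 20: since $V_s$ is integral and no $d$-word overlaps a $c$-word, the occurrence of $V_s$ in $T_\mu$ either lies inside a single $d$-word $d_{j(\alpha,\beta)}$ --- which is itself a shorter $T$-word over $c_{j(\alpha,\beta)}$, so the length induction applies to it directly --- or coincides with a product of $d$-words, which are then forced to be $c$-words of rank zero, so the deletion descends to $O_\mu$ and is closed by the finality argument above. Your base case ($\mu=0$ is vacuous because a final word contains no defining word) and your disjoint-position commutation are correct, and your instinct to pass to $O_\mu$ in the hard case is the right one; but to complete the proof you need either the paper's length induction on the enclosing $d$-word or, at minimum, the explicit finality argument for the main integral words of $T_0$.
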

\begin{proof}
Suppose the lemma is true for all $T$-words whose length is less than $\lambda$, and let $\len{T_\mu} = \lambda$. From the definition of $(19)$ and Lemma~20 it follows that $T_\mu$ can be obtained from $T_0$ in the following way. From the final word $T_0$ with the presentation $(26)$ we construct via non-decreasing replacements the word $O_\mu$ with the presentation $(32)$, and then via non-decreasing replacements the word $T_\mu$ with the presentation $(27), (31)$; furthermore, every $d_{j(\alpha, \beta)}$ in $(27), (31)$ is replaced by the corresponding $c_{j(\alpha, \beta)}$ in the non-decreasing replacements of $(32)$.

When reducing $(36)$, there are two cases.
\begin{enumerate}[label = \arabic*)]
\item $d_{j(\alpha, \beta)} \gre X_1 V_s Y_1$. But $d_{j(\alpha, \beta)}$ is a $T$-word, obtained from $c_{j(\alpha, \beta)}$ by non-decreasing replacements, and $\len{d_{j(\alpha, \beta)}} < \len{T_\mu}$ (if $\len{d_{j(\alpha, \beta)}} = \len{T_\mu}$, then $XY \gre 1$, and $1$ is a $T$-word). By the inductive hypothesis, $X_1Y_1$ can be obtained from $c_{j(\alpha, \beta)}$ by non-decreasing replacements, from which it is easy to see that $X_1Y_1$ is a $d$-word. 
\item $d_{j(\alpha_1, \beta_1)} d_{j(\alpha_2, \beta_2)} \cdots d_{j(\alpha_s, \beta_s)} \gre V_S$. As no $d$-word overlaps with any $c$-word, and $V_s$ is an integral word, it follows that any $d_{j(\alpha_t, \beta_t)} \: (t = 1,2,\dots, s)$ is a $c$-word. From here, it follows that when $T_\mu$ is obtained from $O_\mu$, that then in the $c$-word $c_{j(\alpha_1, \beta_1)} c_{j(\alpha_2, \beta_2)} \cdots c_{j(\alpha_s, \beta_s)}$ no replacements were made. Suppose that the word
\[
X \gre c_{j(\alpha_1, \beta_1)} c_{j(\alpha_2, \beta_2)} \cdots c_{j(\alpha_s, \beta_s)}
\]
is enclosed in $O_\mu$ in the word 
\[
y_w c_{j(\gamma_1, \delta_1)} \cdots c_{j(\gamma_n, \delta_n)} X c_{j(\gamma_{n+1}, \delta_{n+1})} \cdots c_{j(\gamma_m, \delta_m)} y_{w+1}.
\]
The word between $y_w$ and $y_{w+1}$ is derived from some integral final word $Y$. It is clear that 
\[
c_{j(\gamma_1, \delta_1)} \cdots c_{j(\gamma_n, \delta_n)} c_{j(\gamma_{n+1}, \delta_{n+1})} \cdots c_{j(\gamma_m, \delta_m)}
\]
can be obtained from $Y$ by a non-decreasing replacement.
\end{enumerate}
This completes the proof of the lemma.
\end{proof}

\begin{lemma}
If some finite number $s$ of deletions are carried out on the $T$-word $T_\mu$, then the resulting word is some $T$-word obtained from the same final word $T_0$ as $T_\mu$ is obtained from.
\end{lemma}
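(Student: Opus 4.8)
The plan is to proceed by induction on $s$, the number of deletions performed, using the single-deletion result of Lemma~21 as the engine. The base case $s = 0$ is vacuous, since the word obtained is then $T_\mu$ itself, which is already a $T$-word obtained from $T_0$. The case $s = 1$ is precisely the content of Lemma~21, so the induction is seeded with no extra work.

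For the inductive step I would assume the claim for some $s$: that after carrying out $s$ deletions on $T_\mu$, the resulting word $W$ is a $T$-word obtained from the same final word $T_0$. I then perform an $(s+1)$th deletion on $W$, say $W \gre XV_jY \to XY$, where $V_j$ is one of the left-hand sides of the defining relations in $(19)$. Since $W$ is itself a $T$-word obtained from $T_0$ by the inductive hypothesis, the hypotheses of Lemma~21 are met with $W$ in the role of $T_\mu$; applying that lemma to $W$ yields that $XY$ is again a $T$-word obtained from the same final word $T_0$, closing the induction.

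The single point requiring care — and the reason the argument goes through cleanly — is that Lemma~21 is formulated for an arbitrary $T$-word, not merely for the particular word standing at the head of the deletion sequence. This is exactly what permits reapplying the lemma to each intermediate word produced along the chain of deletions, and at every stage it is Lemma~21 that guarantees the underlying final word $T_0$ is preserved rather than replaced by some other final word. Consequently I do not expect any genuine obstacle here: all the substantive work is already carried out in Lemma~21, and the present lemma merely iterates it a finite number of times.
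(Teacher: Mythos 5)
Your proof is correct and matches the paper's intent exactly: the paper simply states that the lemma is an immediate consequence of Lemma~21, and your induction on $s$ is precisely the routine iteration of that single-deletion result which the paper leaves implicit.
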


The proof is an immediate consequence of Lemma~21. 

\begin{lemma}
If $X$ and $Y$ are final words, and $X$ is obtained from $Y$ by a sequence of elementary transformations, then $X \in \fT(Y)$ and $Y \in \fT(X)$. 
\end{lemma}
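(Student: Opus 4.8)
The plan is to show that the entire given chain of elementary transformations from $Y$ to $X$ stays inside the class of $T$-words built on the single final word $T_0 \gre Y$, and then to use finality of $X$ to collapse the resulting connecting chain into non-increasing replacements.

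First I would recall from the discussion following Definition~13 that every elementary transformation is either an insertion, which is a non-decreasing replacement, or a deletion, which is a non-increasing replacement, and that a final word is a $T$-word of rank $0$ whose underlying final word is itself. Writing the hypothesised chain as $Y \gre W_0 \to W_1 \to \cdots \to W_n \gre X$, I would prove by induction on $i$ that every $W_i$ is a $T$-word obtained from the final word $T_0 \gre Y$. The base case is immediate since $W_0 \gre Y$ is final. For the inductive step, if $W_i \to W_{i+1}$ is an insertion then $W_{i+1}$ is obtained from $Y$ by appending one further non-decreasing replacement, hence is again a $T$-word from $T_0 \gre Y$; if instead $W_i \to W_{i+1}$ is a deletion, then Lemma~21 yields directly that $W_{i+1}$ is a $T$-word obtained from the same $T_0 \gre Y$. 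Consequently $X \gre W_n$ is a $T$-word from $Y$, so there is a sequence of non-decreasing replacements $Y \gre T_0 \to T_1 \to \cdots \to T_\mu \gre X$.

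Having this, I would finish as follows. Reversing each replacement turns a non-decreasing replacement into a non-increasing one, so the reversed chain $X \gre T_\mu \to \cdots \to T_0 \gre Y$ is a sequence of non-increasing replacements, which by Definition~14 gives $Y \in \fT(X)$. Since $X$ is final, every word of $\fT(X)$ has the same length as $X$, and therefore $\len{Y} = \len{X}$. But then in the non-decreasing chain $Y \gre T_0 \to \cdots \to T_\mu \gre X$ we have $\len{Y} = \len{T_0} \le \len{T_1} \le \cdots \le \len{T_\mu} = \len{X} = \len{Y}$, forcing all the $\len{T_j}$ to be equal; hence each replacement $T_j \to T_{j+1}$ is length-preserving and in particular non-increasing, so reading this chain forward gives $X \in \fT(Y)$. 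This establishes both claims.

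The main obstacle is the induction of the second paragraph: a deletion is a downward move, so a priori it could destroy the bookkeeping that exhibits $W_i$ as something built up from $Y$ by non-decreasing replacements. This is exactly the difficulty that Lemma~21 (iterated as Lemma~22) was designed to remove, by guaranteeing that a deletion applied to a $T$-word again returns a $T$-word over the same underlying final word, and the whole argument rests on invoking it at each downward step. Once this invariant is in place, the finality of $X$ does the remaining work, collapsing the otherwise merely non-decreasing connecting chain into length-preserving replacements that are usable in both directions.
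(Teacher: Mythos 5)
Your proof is correct and follows essentially the same route as the paper: both convert the arbitrary chain of elementary transformations into a chain of non-decreasing replacements by invoking Lemma~21 at each deletion (the paper applies its iterated form, Lemma~22, to maximal blocks of insertions and deletions rather than inducting step by step). The only cosmetic difference is that the paper obtains the second inclusion by ``reasoning symmetrically'' from the other end of the chain, whereas you derive it from the first via the length argument using finality of $X$; both are valid.
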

\begin{proof}
Let 
\begin{equation}
X \gre Z_1 \to Z_2 \to \cdots \to Z_n \gre Y
\end{equation}
be the sequence of elementary transformations in the statement of the lemma. As $X$ is final, we must have that $Z_1 \to Z_2$ is an insertion. Consider the sequence of elementary transformations 
\[
Z_1 \to Z_2 \to \cdots \to Z_{k'}
\]
consisting only of insertions, such that 
\[
Z_{k'} \to Z_{k'+1} \to \cdots \to Z_{k''}
\]
consists only of deletions, such that 
\[
Z_{k''} \to Z_{k''+1} \to \cdots \to Z_{k'''}
\]
consists only of insertions, etc. By Lemma~22, the sequence
\[
Z_1 \to Z_2 \to \cdots \to Z_{k''}
\]
can be reproduced by a sequence of non-decreasing substitutions. After a finite number of steps, repeating this reasoning, we find that $X$ can be obtained from $Y$ by a sequence of non-decreasing replacements; from this, we have that $X \in \fT(Y)$. Reasoning symmetrically, we also have that $Y \in \fT(X)$. 
\end{proof}

\begin{lemma}
Two words $X$ and $Y$ are equal in the $(k, \ell)$-semigroup $V$ if and only if the sets $\fT(X)$ and $\fT(Y)$ have a common element.
\end{lemma}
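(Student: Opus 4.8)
The plan is to prove the two implications separately. The reverse (``if'') direction is essentially immediate from the replacement machinery already set up, while the forward (``only if'') direction is where Lemma~23 does the heavy lifting, after an initial reduction to final words.

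For the ``if'' direction, suppose $Z$ is a common element of $\fT(X)$ and $\fT(Y)$. Unwinding the definition of $\fT$, the word $Z$ is obtained from $X$ by a chain of non-increasing replacements, and likewise from $Y$. As observed right after the definition of a replacement, every replacement can be realised by a finite number of elementary transformations, and elementary transformations never change the equality class of a word in $V$. Hence $X = Z$ and $Y = Z$ in $V$, so $X = Y$ in $V$.

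For the ``only if'' direction, I would first reduce $X$ and $Y$ to final words lying inside their respective $\fT$-sets. Since every element of $\fT(X)$ has length at most $\len{X}$, the set $\fT(X)$ is finite and therefore contains a word $X^{\ast}$ of minimal length; I claim $X^{\ast}$ is final. Indeed, non-increasing replacements compose, so $\fT(X^{\ast}) \subseteq \fT(X)$, and every word of $\fT(X^{\ast})$ has length at most $\len{X^{\ast}}$ and at least the minimum $\len{X^{\ast}}$, hence exactly $\len{X^{\ast}}$. Pick such a final $X^{\ast} \in \fT(X)$ and, symmetrically, a final $Y^{\ast} \in \fT(Y)$. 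Because each replacement preserves equality in $V$ and we are assuming $X = Y$ in $V$, we obtain $X^{\ast} = X = Y = Y^{\ast}$ in $V$, so in particular $X^{\ast}$ is obtained from $Y^{\ast}$ by a sequence of elementary transformations. As both words are final, Lemma~23 yields $X^{\ast} \in \fT(Y^{\ast})$; and since $Y^{\ast} \in \fT(Y)$, composing the two chains of non-increasing replacements gives $\fT(Y^{\ast}) \subseteq \fT(Y)$, whence $X^{\ast} \in \fT(Y)$. Together with $X^{\ast} \in \fT(X)$, this exhibits $X^{\ast}$ as the desired common element.

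The real content of the statement is concentrated in Lemma~23, which already handles the delicate final-word case, so I expect the only obstacle here to be organisational: isolating the reduction to final words, verifying that a minimal-length element of $\fT(X)$ is genuinely final, and using the transitivity observation $\fT(Y^{\ast}) \subseteq \fT(Y)$ to relocate $X^{\ast}$ inside $\fT(Y)$. Beyond these routine checks I anticipate no difficulty.
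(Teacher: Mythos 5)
Your proposal is correct and follows essentially the same route as the paper's own proof: the forward direction reduces $X$ and $Y$ to minimal-length (hence final) elements of $\fT(X)$ and $\fT(Y)$ and then applies Lemma~23 together with the transitivity of $\fT$, while the reverse direction just unwinds replacements into elementary transformations. Your extra verification that a minimal-length element of $\fT(X)$ is final is a detail the paper merely asserts, but it is the same argument.
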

\begin{proof}
Suppose $Z \in \fT(X)$ and $Z \in \fT(Y)$. Then $Z = X$ and $Z = Y$ in the semigroup $V$, from which we have $X = Y$ in the semigroup $V$. 

Now suppose that $X = Y$ in the semigroup $V$. We find some final words $X_1$ and $Y_1$ such that $X_1 \in \fT(X)$ and $Y_1 \in \fT(Y)$. This is always possible, as every word of minimal length in $\fT(X)$ is a final word. This means that $X_1 = Y_1$ in $V$, from which it follows that $X_1$ is obtained from $Y_1$ by a sequence of elementary transformations. By Lemma~23, $X_1 \in \fT(Y_1)$ and as $Y_1 \in \fT(Y)$, we have $X_1 \in \fT(Y)$. Thus, we have found a word $X_1$ such that $X_1 \in \fT(X)$ and $X_1 \in \fT(Y)$. 
\end{proof}

\begin{theorem}
If the natural numbers $k$ and $\ell$ are such that there exists an algorithm for solving the identity problem in every $(k, \ell)$-group, then there exists an algorithm for solving the identity problem in the semigroup of any distinguished $(k, \ell)$-tuple. 
\end{theorem}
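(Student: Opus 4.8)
The plan is to reduce the identity problem in $V$ to finitely many instances of the identity problem in the associated $(k,\ell)$-group $\Gamma$, with Lemma~24 supplying the decisive structural step. Given two words $X$ and $Y$ over the alphabet $(1)$, Lemma~24 states that $X = Y$ in $V$ if and only if $\fT(X)$ and $\fT(Y)$ possess a common element. Since every word of $\fT(A)$ has length at most $\len{A}$, each set $\fT(A)$ is finite; hence, once we know that $\fT(X)$ and $\fT(Y)$ can be effectively produced, the algorithm is simply to generate both finite sets and test whether they intersect, answering ``$X=Y$ in $V$'' exactly when they do.

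It therefore remains to exhibit an algorithm that computes $\fT(A)$ from $A$. By Definition~14, $\fT(A)$ is the closure of $\{A\}$ under non-increasing replacements $XBY \to XA'Y$, where $A'$ and $B$ are equivalent integral words with $\len{A'} \le \len{B}$. First I would observe that, because the tuple $R$ is distinguished, its $c$-words neither overlap nor lie in two distinct sets $\{c_i^{(j)}\}$; consequently every integral word factors uniquely into $c$-words, so by Definition~12 the group element $f(\cdot)$ of any integral word is well-defined and can be computed. Using the hypothesised identity algorithm for the $(k,\ell)$-group $\Gamma$, one can then decide equivalence of two integral words by comparing their images under $f$ in $\Gamma$.

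With equivalence thus decidable, $\fT(A)$ is produced by a terminating search: keep a list of words reachable from $A$; for each such word, locate every integral subword $B$, run through the finitely many words $A'$ with $\len{A'} \le \len{B}$, test each for being integral and equivalent to $B$ in $\Gamma$, and adjoin the results. Because each replacement cannot increase length and lengths are bounded below, only finitely many words ever arise and the search halts with $\fT(A)$ in hand. This is precisely the effectiveness of $\fT$ already recorded in the discussion following Definition~14.

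Assembling these observations proves the theorem. I expect no serious obstacle at this stage: the genuine subtlety --- deciding when a non-increasing replacement is legal --- is settled by the well-definedness of $f$ together with the assumed identity algorithm for $\Gamma$, and the termination of the generation of $\fT(A)$ follows from the length bound. The substantive content has already been discharged in Lemma~24, so what remains here is chiefly the bookkeeping of combining Lemma~24 with the computability of $\fT$.
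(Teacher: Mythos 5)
Your proposal is correct and follows essentially the same route as the paper: compute the finite sets $\fT(X)$ and $\fT(Y)$, test them for a common element, and invoke Lemma~24. The extra detail you give on the effective computability of $\fT(A)$ (deciding equivalence of integral words via $f$ and the assumed identity algorithm for $\Gamma$) is exactly the justification the paper leaves implicit in the remark following Definition~14.
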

\begin{proof}
If we are given two words $X$ and $Y$ over the alphabet $(1)$, then we can find the sets of words $\fT(X)$ and $\fT(Y)$, which are finite, as for any word $A$ over the alphabet $(1)$ the set of words $\fT(A)$ consists of at most $n^{\len{A}}$ elements, where $n$ is the number of letters in the alphabet $(1)$. We then check whether $\fT(X)$ and $\fT(Y)$ have any element in common or not. By Lemma~24, the answer to this question is the same as whether $X$ and $Y$ are equal in the semigroup $V$ of the tuple $R$. 
\end{proof}

\begin{theorem}
If the natural numbers $k$ and $\ell$ are such that there exists an algorithm for solving the identity problem in every $(k, \ell)$-group, then there exists an algorithm for solving the identity problem in any $(k, \ell)$-semigroup.
\end{theorem}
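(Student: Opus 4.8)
The plan is to assemble the two principal theorems of this section, Theorem~2 and Theorem~3, together with the $C$-word construction of Lemma~9 and the reduction between semigroups with equivalent systems of relations recorded in the introduction. The guiding observation is that the identity problem for an arbitrary $(k,\ell)$-semigroup can be routed, through an equivalent distinguished tuple, into a semigroup for which Theorem~3 already supplies a decision procedure.

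First I would take an arbitrary $(k,\ell)$-semigroup $\Pi$ and apply Lemma~9 to construct a list $C_1, C_2, \dots, C_s$ of $C$-words of $\Pi$; note that this construction is effective and requires no hypothesis on $k$ and $\ell$. This yields a $(k,\ell)$-tuple $R = \{\Pi ; C_1, C_2, \dots, C_s\}$, whose associated semigroup is $\Pi$ itself. Since the hypothesis on $k$ and $\ell$ is in force, I would then invoke Theorem~2 to algorithmically produce a distinguished $(k,\ell)$-tuple $V$ equivalent to $R$; write $W$ for the semigroup associated to $V$.

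Next I would apply Theorem~3 to the distinguished tuple $V$, obtaining an algorithm that solves the identity problem in $W$. It then remains to transfer this procedure back to $\Pi$. By the definition of equivalence of tuples, the system of defining relations of $W$ is equivalent to the system of defining relations of the semigroup associated to $R$, which is $\Pi$. By the reduction recorded in the introduction---that for two semigroups given by equivalent systems of defining relations, the identity problem in one reduces to the identity problem in the other---the identity problem in $\Pi$ reduces to the identity problem in $W$. Composing this reduction with the algorithm furnished by Theorem~3 gives the desired decision procedure for $\Pi$, and since $\Pi$ was an arbitrary $(k,\ell)$-semigroup, the theorem follows.

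I do not anticipate a genuine obstacle at this final stage: the substantive work has already been discharged in the inductive tuple-reduction arguments culminating in Theorem~2 and in the $\fT$-set analysis behind Theorem~3. The only points requiring care are that each passage---$\Pi \mapsto R$, then $R \mapsto V$, and finally the reduction $\Pi \leadsto W$---is effective, and that the equivalence of the tuples $R$ and $V$ is correctly unwound into equivalence of the underlying semigroup presentations, so that the introduction's reduction genuinely applies. Both points are immediate from the relevant definitions, so the theorem is essentially a corollary that welds together the machinery developed above.
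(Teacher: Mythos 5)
Your proposal is correct and follows essentially the same route as the paper: form a tuple from $\Pi$ and its $C$-words, pass to an equivalent distinguished tuple by Theorem~2, decide equality there by Theorem~3, and transfer back via the equivalence of the systems of defining relations. The only cosmetic difference is that you invoke Lemma~9 for the $C$-word construction where the paper cites Theorem~1 (which subsumes it), and you spell out the final transfer step slightly more explicitly than the paper does.
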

\begin{proof}
By Theorem~1, for every $(k, \ell)$-semigroup $\Pi$ we can find a $(k, \ell)$-tuple $K$ such that $K = \{ \Pi ; C_1^k, C_2^k, \dots, C_s^k \}$. By Theorem~2 for every $(k, \ell)$-tuple we can find a reduced tuple $R = \{ V ; C_1, C_2, \dots, C_u \}$ such that the $(k, \ell)$-semigroups $\Pi$ and $V$ are equivalent. Thus to prove Theorem~4 we must only show that there exists an algorithm for solving the identity problem in the semigroup of any arbitrary distinguished $(k, \ell)$-tuple. But this is precisely what is proved in Theorem~3.
\end{proof}

\section{The divisibility problems for $(k, \ell)$-semigroups}

We retain the same notation and assumptions as in Section~1.2. 

\begin{lemma}
Suppose we are given a final word $T_0$ which does not have any end (beginning) which overlaps with any $c$-word of the tuple $R$. Suppose we are given the finite sequence of non-decreasing replacements 
\setcounter{equation}{24}
\begin{equation}
T_0 \to T_1 \to \cdots \to T_{\mu-1} \to T_\mu \quad (\mu \geq 0)
\end{equation}
Then the $T$-word $T_\mu$ does not have any end (beginning) overlapping with any $c$-word of the tuple $R$.
\setcounter{equation}{37}
\end{lemma}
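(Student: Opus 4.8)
The plan is to treat the ``end'' and ``beginning'' halves of the statement separately, and to observe that the end half has, in effect, already been proved in Section~1.2. Indeed, in Lemma~19 an overlap of $T_\mu$ with a $c$-word $c_s$ is recorded exactly as $T_\mu \gre MN$, $c_s \gre NL$ with $N$ and $ML$ non-empty, that is, as a suffix of $T_\mu$ coinciding with a prefix of $c_s$; so ``$T_0$ has no end overlapping a $c$-word'' is literally the hypothesis of Lemma~19.III, and ``$T_\mu$ has no end overlapping a $c$-word'' is literally its conclusion. Since Lemma~19 is established for every index $\lambda$, its part~III applies to our $T_\mu$ with no restriction and settles the end case at once.

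For the beginning case I would run the left--right mirror of the argument proving Lemma~19.III. Since $T_0$ is final, Lemma~19.I shows that the stable letters of $T_0$ are unaffected by the replacements $(25)$, so $T_\mu$ admits the representation $(27)$ and, by Lemma~19.II, each changing word is a product of $d$-words of index strictly smaller than that of $T_\mu$, each obtained from a $c$-word. I would then induct on the index $\lambda = (\mu, \len{T_\mu})$ (the base case being immediate, as there $T_\mu \gre T_0$). Suppose, for contradiction, that a prefix of $T_\mu$ is a suffix of some $c$-word $c_s$, i.e. $c_s \gre MN$ and $T_\mu \gre NL$ with $N$ and $ML$ non-empty, and let $e$ be the last letter of $N$, equivalently the last letter of $c_s$. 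I split on whether $e$ is a stable letter $y_i$ of $T_\mu$ or lies inside some changing word.

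In the changing-word case $e$ lies in some $d$-word $d_{j_m}$ of $(27)$; by Lemma~19.II its index is strictly below $\lambda$, and $d_{j_m}$ is obtained from a $c$-word, which---as $R$ is distinguished, so no two $c$-words overlap (property~III)---has no beginning overlapping a $c$-word. Since a prefix of $d_{j_m}$ is then a suffix of $c_s$, the inductive hypothesis applied to $d_{j_m}$ gives a contradiction. In the stable-letter case, property~I (which $R$ does not lack) forces every $d$-word of $(27)$ lying to the left of $e$ to be a $c$-word, whence the initial segment of $T_\mu$ up to $e$ coincides with the corresponding initial segment of $T_0$; this produces a beginning of $T_0$ overlapping $c_s$, contradicting the hypothesis on $T_0$. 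I expect this last point to be the main obstacle: one must check carefully---exactly as in Lemma~19.III, using the uniqueness of the representation and the length-preservation supplied by property~I---that the relevant prefix of $T_\mu$ is \emph{graphically} equal to the corresponding prefix of $T_0$, and not merely of the same length.
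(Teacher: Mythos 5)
Your proposal is correct and follows essentially the same route as the paper: the paper likewise proves the end case by the mechanism of Lemma~19.III (placing the first letter of the overlapping $c$-word at a stable position of $T_\mu$, since $d$-words do not overlap $c$-words, and then pushing the overlap back to $T_0$), and it dismisses the beginning case as analogous, exactly as you do by mirroring. The one small difference is that at the stable-letter step the paper does not claim the tail of $T_\mu$ coincides graphically with that of $T_0$; it instead undoes the replacements on that tail, carrying $c_i^{(j)}$ to a possibly different $c$-word which then overlaps an end of $T_0$ --- a formulation that sidesteps the delicate point you rightly flag as the main obstacle.
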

\begin{proof}
Suppose $T_0$, given by the representation $(26)$, does not have any end overlapping with any $c$-word, and $T_\mu$, given by the representation $(27)$, overlaps with some $c$-word $c_i^{(j)}$. The word $c_i^{(j)}$ is a non-empty word, so $c_i^{(j)} \gre aX$. The letter $a$ is a stable letter of the $T$-word $T_\mu$, as no $c$-word overlaps with any $d$-word by Lemma~19.III. The changing words of $T_\mu$, located to the right of the specified $a$, are final words (as $c_i^{(j)}$ is final), and are therefore integral words. Accordingly, the end of the word $T_\mu$ located to the right of the specified $a$ is obtained from the end of $T_0$ to the right of the same $a$ (as $a$ is a stable letter of the sequence $(25)$) by the same number of replacements. Carrying out these replacements in the reverse order, we find that $T_0$ has an end which overlaps with some $c$-word, obtained from $c_i^{(j)}$ by these replacements. This is a contradiction. We can carry out the analogous proof for the beginnings of the $T$-word $T_\mu$.
\end{proof}

\begin{lemma}
If the non-empty final word $T_0$ has no end which overlaps with some $c$-word of the tuple $R$, and the word $X$ is left divisible by $T_0$ in the $(k, \ell)$-semigroup $V$, then $X \gre X_1 X_2$, where $X_1 = T_0$ in the $(k, \ell)$-semigroup $V$.
\end{lemma}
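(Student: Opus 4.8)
The plan is to fix a word $Z$ with $X = T_0 Z$ in $V$, pass to a final word on the $T_0Z$ side, read off a factorisation there, and then transport that factorisation back to the literal word $X$ along a sequence of non-decreasing replacements, using Lemma~25 to keep the $T_0$-part rigid. First I would replace $Z$ by a final word $Z_0 \in \fT(Z)$, so that $X = T_0 Z_0$ in $V$ with both $T_0$ and $Z_0$ final. The first substantial point is that $T_0 Z_0$ is itself final: any non-increasing replacement applicable to it acts on an integral subword $A$, and I would argue that $A$ cannot straddle the cut between $T_0$ and $Z_0$. Indeed, if it did, the cut would fall inside the factorisation of $A$ into $c$-words, forcing either a $c$-word to be split by the cut, so that a suffix of $T_0$ overlaps that $c$-word against the hypothesis, or the cut to fall between whole $c$-words with $T_0$ properly ending in a $c$-word, which is again an overlap of an end of $T_0$ with a $c$-word. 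Hence every such $A$ lies inside $T_0$ or inside $Z_0$, where no length-decrease is possible since both are final; so $T_0 Z_0$ is final.

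Next I would produce a factorisation of a reduced word equal to $X$. Let $X^\flat \in \fT(X)$ be final. Then $X^\flat$ and $T_0 Z_0$ are final words equal in $V$, so by Lemma~23 they are joined by length-preserving replacements; by the same no-straddle analysis these confine themselves to the two blocks, giving $X^\flat \gre T_0' Z_0'$ with $T_0' = T_0$ in $V$ and, since the $T_0$-block evolves from the final word $T_0$ by length-preserving (hence non-decreasing) replacements, $T_0'$ having no end overlapping a $c$-word by Lemma~25. I would then pull this factorisation back along the reversal of a non-increasing reduction $X \gre W_0 \to \cdots \to W_m \gre X^\flat$: reading the steps as non-decreasing replacements $W_m \to \cdots \to W_0 \gre X$, I maintain a cut $W_k \gre P_k R_k$ with $P_k = T_0$ in $V$, where $P_m \gre T_0'$ and each $P_k$ is obtained from $T_0'$ by those replacements so far falling in the prefix block; by Lemma~25 every $P_k$ has no end overlapping a $c$-word. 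A replacement lying entirely to the right of the cut leaves $P_k$ unchanged, and one lying entirely inside $P_k$ merely extends the non-decreasing sequence out of $T_0'$, preserving both $P_k = T_0$ in $V$ and, via Lemma~25, the no-overlap property. At the last step $W_0 \gre X \gre P_0 R_0$ with $P_0 = T_0$ in $V$, which is the desired $X \gre X_1 X_2$.

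The crux, and the main obstacle, is the single no-straddle principle invoked twice above: a replacement crossing the cut, whether in the finality claim for $T_0 Z_0$ or in the pull-back, would force an end of the rigid prefix to overlap a $c$-word, contradicting the no-overlap property (supplied by the hypothesis and propagated by Lemma~25) together with the fact, \emph{property}~III of the distinguished tuple $R$, that no two $c$-words overlap. The one delicate residual case is when the prefix has collapsed to a single $c$-word, for then a deletion may swallow all of it together with leading $c$-words of the suffix without violating any overlap condition; I would dispatch this separately, using that a single $c$-word is final and, by \emph{property}~III, does not overlap itself, so that the cut can simply be relocated to just after the integral block produced by the offending replacement. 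Away from this edge case the cut can never be crossed, and the factorisation of $X$ follows.
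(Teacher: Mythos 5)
Your argument rests on exactly the mechanism the paper uses: the hypothesis that no end of $T_0$ overlaps a $c$-word is propagated along non-decreasing replacements by Lemma~25, and this forbids any inserted, deleted or replaced integral word from crossing the boundary at the end of the $T_0$-block, since an integral word is a product of $c$-words and a crossing would exhibit a non-empty suffix of the evolved prefix as a prefix of one of them. The packaging, however, is different and noticeably heavier. The paper works directly with the given chain of elementary transformations $T_0Z \to \cdots \to X$, writes $T_0 \gre Ya$, and shows that the distinguished letter $a$ is never affected: a deletion affecting $a$ would remove a left-hand side $V_s$ containing that $a$, and the $c$-word of $V_s$ containing $a$ would then overlap an end of the current prefix $Y_{w-1}a$, contradicting Lemma~25. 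Reading off the prefix of $X$ ending at the tracked $a$ gives $X_1$ at once. You instead normalise both sides to final words, prove a finality claim for $T_0Z_0$ that the paper never needs, connect $T_0Z_0$ to $X^\flat \in \fT(X)$ by length-preserving replacements, and then pull the cut back to $X$ -- three invocations of the no-straddle principle where one suffices. Both routes work; yours is correct but buys nothing for the extra machinery.

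The substantive point is your ``delicate residual case'', and there you have misread the hypothesis rather than found a real exception. If ``end overlapping a $c$-word'' is taken in the strict sense of Definition~1 (which demands that $ML$ be non-empty), then a word that \emph{is} a single $c$-word vacuously satisfies the hypothesis, and the lemma is then simply false: in $V = \pres{}{a,b}{abab=1}$ the unique $c$-word is $ab$, the empty word $X \gre 1$ is left divisible by the final word $T_0 \gre ab$ (since $ab\cdot ab = 1$ in $V$), yet its only prefix is empty and $1 \neq ab$ in $V$ because $f(ab) = \beta_1 \neq 1$ in $\Gamma \cong \mathbb{Z}/2\mathbb{Z}$. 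The reading the paper actually uses -- visible in the proof of Theorem~5, where ``some end of $Y_1$ overlaps with some $c$-word'' is unwound as $Y_1 \gre Y_2H_1$ with $H_1$ a beginning of some $c$-word, possibly all of $Y_1$ -- is that no non-empty suffix of $T_0$ is a prefix of any $c$-word. Under that reading your edge case cannot arise: neither $T_0$ nor any word obtained from it by the replacements you consider can be, or end in, a whole $c$-word, so the no-straddle argument has no exceptional configuration. That is fortunate, because your proposed dispatch does not work as stated: after a straddling replacement $c\,A_2 \to B$ the relocated prefix $B$ equals $c\,A_2$ in $V$, not $c$, so the invariant $P_k = T_0$ is destroyed. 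With the hypothesis read correctly, delete the last paragraph of your argument and the rest stands.
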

\begin{proof}
By definition of left divisibility, there exists some word $Z$ over the alphabet $(1)$, such that in the semigroup $V$ we have the equality $T_0 Z = X$. In other words, there exists some sequence of elementary transformations 
\begin{equation}
T_0 Z \to \cdots \to X.
\end{equation}
As $T_0$ is non-empty, we have that $T_0 \gre Ya$, where $a$ is some letter. We will consider
\begin{equation}
YaZ \to \cdots \to X \tag{$38'$}
\end{equation}
and prove, that the specified letter $a$ in the word $YaZ$ is unaffected by the sequence of elementary transformations $(38')$. 

Suppose that the letter $a$ becomes affected by the $w$th transformation 
\begin{equation}
YaZ \gre Y_0aZ_0 \to Y_1aZ_1 \to \cdots \to Y_{w-1} a Z_{w-1} \to X_w \to \cdots X. \tag{$38''$}
\end{equation} 
As in the sequence $(38'')$ the transformations are all elementary, the $w$th transformation, which affects $a$, is a deletion. The word $Y_0$ is a final word, as it is a subword of the final word $T_0$.  The word $Y_{w-1}$ can be obtained from $Y_0$ by a non-decreasing replacement (by Lemma~23). By Lemma~25, and as $Y_0 a$ has no end which overlaps with some $c$-word, it follows that $Y_{w-1}a$ has no end which overlaps with some $c$-word. This is a contradiction, and hence $a$ is not affected by the sequence $(38'')$, i.e. 
\[
Y_0 a Z_0 \to \cdots Y_{w-1} a Z_{w-1} \to Y_w a Z_w \to \cdots \to Y_s a Z_s \gre X
\]
where $T_0 \gre Y_0 a$. Thus we can take $X_1$ to be $Y_s a$, in which case $T_0 = X_1$ in the semigroup $V$. 
\end{proof}

\begin{lemma}
If $Y_1$ is the beginning of some $c$-word of the semigroup $V$ ($Y_1$ can itself be a $c$-word), and if the word $X$ is left divisible by $Y$, then $X$ is left divisible by $YY_1$.
\end{lemma}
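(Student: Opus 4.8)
The plan is to reduce the statement to one structural fact about $V$: \emph{every beginning of a $c$-word is right invertible in $V$}. Granting this, the lemma is almost immediate. Since $X$ is left divisible by $Y$, there is a word $Z$ with $X = YZ$ in $V$. Writing the $c$-word whose beginning is $Y_1$ as $c \gre Y_1 R$, right invertibility of $Y_1$ gives a word $W_0$ with $Y_1 W_0 = 1$ in $V$, and inserting this trivial factor yields
\[
X = YZ = Y(Y_1 W_0)Z = (YY_1)(W_0 Z)
\]
in $V$ (the middle equality using $Y_1 W_0 = 1$). Hence $X$ is left divisible by $YY_1$ with witness $W_0 Z$. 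Note that I route through invertibility rather than through Lemma~26, because a beginning of a $c$-word will in general overlap $c$-words, so the hypotheses of Lemma~26 are not available here.

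First I would show that every $c$-word is invertible in $V$. Each $c$-word is generated from some $C$-word $C_i$, and the generating operation only replaces a sub-product $C_{k_1}\cdots C_{k_v}$ by a word representing the same element of $\Gamma$; as in the proof of Lemma~12, this preserves equality in $V$, so the $c$-word equals $C_i$ in $V$. It therefore suffices to invert each $C$-word $C_i$ in $V$. Here I use that the $(k,\ell)$-group $\Gamma$ of the tuple $R$, with generators $\delta_1,\dots,\delta_u$ from $(20)$, really is a group: each $\delta_i$ is invertible, so there is a word $E_i$ over the $\delta$'s with $\delta_i E_i = 1$ in $\Gamma$. Applying Lemma~11 to $\Gamma$ under the substitution $\delta_j \mapsto C_j$ — under which the relations $(21)$ of $\Gamma$ become exactly the relations $(19)$ of $V$, so that the semigroup Lemma~11 produces is $V$ itself — the equality $\delta_i E_i = 1$ transfers to $C_i E_i' = 1$ in $V$, where $E_i'$ is $E_i$ with each $\delta_j$ replaced by $C_j$. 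Thus $C_i$, and hence every $c$-word, is right invertible in $V$. Finally, right invertibility passes to beginnings: from $c \gre Y_1 R$ and $cE = 1$ in $V$ one gets $Y_1(RE) = 1$ in $V$, which is the fact needed above.

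The only step requiring genuine care is the invertibility of the $C$-words in $V$; this is exactly where the construction of the distinguished tuple (which forces $\Gamma$ to be a $(k,\ell)$-group) is spent, and the transfer from $\Gamma$ back to $V$ must be carried out precisely through Lemma~11 rather than by any naive ``a beginning of an invertible word is invertible'' principle, which is false in a general semigroup — one only obtains \emph{right} invertibility of the beginning, which is nonetheless all the argument consumes. Everything else is the routine insertion of the trivial word $Y_1 W_0 = 1$ into the factorisation $X = YZ$.
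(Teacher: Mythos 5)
Your proposal is correct and follows essentially the same route as the paper: Makanin writes the $c$-word as $Y_1Z_1$, uses the invertibility of the corresponding generator $\delta_j$ in the group $\Gamma$ to obtain $Y_1Z_1C_{j_1}\cdots C_{j_m}=1$ in $V$, and inserts this trivial factor into $X=YZ$ to get $X=YY_1(Z_1C_{j_1}\cdots C_{j_m}Z)$ --- which is exactly your ``right invertibility of a beginning of a $c$-word'' argument with $W_0 \gre Z_1C_{j_1}\cdots C_{j_m}$. Your extra care in isolating the transfer from $\Gamma$ to $V$ and in noting that only right invertibility is obtained (and needed) is a faithful unpacking of what the paper does implicitly.
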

\begin{proof}
Let $Y_1Z_1 \gre c_j^{(i)}$, and let $f(c_j^{(i)}) = \delta_j$. For the letter $\delta_j$ in the group $\Gamma$ of the tuple $R$, there exists an inverse $\delta_{j_1} \delta_{j_2} \cdots \delta_{j_m}$, i.e. such that $\delta_j \delta_{j_1} \delta_{j_2} \cdots \delta_{j_m} = 1$ in the group $\Gamma$. From this it follows that in the semigroup $V$ we have the equality $c_j^{(i)} C_{j_1} C_{j_2} \cdots C_{j_m} = 1$.

We have $Y_1 Z_1 C_{j_1} C_{j_2} \cdots C_{j_m} = 1$ in the semigroup $V$, and also $X = YZ$ in the semigroup $V$, as $X$ is left divisible by $Y$. Thus 
\[
X = YZ = YY_1Z_1 C_{j_1} C_{j_2} \cdots C_{j_m} Z
\]
that is, $X$ is left divisible by $YY_1$. 
\end{proof}

\begin{theorem}
If the natural numbers $k$ and $\ell$ are such that there exists an algorithm for solving the identity problem in every $(k, \ell)$-group, then there exists an algorithm which solves the left (right) divisibility problem in the semigroup of any distinguished $(k, \ell)$-tuple.
\end{theorem}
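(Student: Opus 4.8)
Since the statement allows either left or right divisibility and Lemmas~25, 26, and~27 each come in a left and a right (``beginning'') form, the plan is to prove the left case and obtain the right case by the mirror argument. So fix words $X$ and $Y$ over the alphabet $(1)$; I want to decide whether $X$ is left divisible by $Y$ in $V$. First I would normalise the divisor: replacing $Y$ by a word of minimal length in $\fT(Y)$ produces a final word $\hat Y$ with $\hat Y = Y$ in $V$, and since divisibility depends only on the equality class of the divisor, $X$ is left divisible by $Y$ iff it is left divisible by $\hat Y$. If $\hat Y \gre 1$ the answer is ``yes'', so assume $\hat Y$ is nonempty.

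The heart of the argument is to trim $\hat Y$ down to a divisor satisfying the hypothesis of Lemma~26, namely a nonempty final word with no end overlapping any $c$-word. Here I would read Lemma~27 in reverse: if $\hat Y \gre MN$ where the suffix $N$ is a nonempty beginning of some $c$-word, then applying Lemma~27 with its $Y$ played by $M$ and its $Y_1$ by $N$ shows that $X$ is left divisible by $M$ iff it is left divisible by $MN \gre \hat Y$; the reverse implication is immediate from $X = MNZ = M(NZ)$. Thus a suffix which is a $c$-word prefix may be discarded without changing the divisibility relation. By Definition~1 an end of $\hat Y$ overlaps a $c$-word exactly when some nonempty suffix of $\hat Y$ is a beginning of a $c$-word, so while $\hat Y$ has such a bad end I would strip off such a suffix and re-normalise the result to a final word. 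The list of $c$-words is finite and computable (Theorem~1), so bad ends are detectable; each stripping removes at least one letter and re-normalisation never increases length, so $\len{\hat Y}$ strictly decreases and the loop halts, leaving either the empty word (answer ``yes'') or a nonempty final word $T_0$ with no end overlapping any $c$-word, for which $X$ is left divisible by $Y$ iff it is left divisible by $T_0$.

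Finally I would apply Lemma~26 to $T_0$: $X$ is left divisible by $T_0$ iff $X \gre X_1 X_2$ with $X_1 = T_0$ in $V$, i.e.\ iff some graphical prefix $X_1$ of $X$ equals $T_0$ in $V$ (the converse being immediate from $X = T_0 X_2$). There are only finitely many graphical prefixes of $X$, and since $R$ is a distinguished tuple, Theorem~3 supplies an algorithm deciding each equality $X_1 = T_0$ in $V$; testing every prefix settles the question, and the right-divisibility case follows by the symmetric argument.

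I expect the main obstacle to be the trimming step. One must verify that discarding $c$-word-prefix suffixes via Lemma~27 preserves divisibility in both directions and, crucially, that interleaving these deletions with re-normalisation terminates and cannot get stuck with a residual bad end. The monotone decrease of $\len{\hat Y}$ handles termination, while property~III of the distinguished tuple (no two $c$-words overlap) is what guarantees that a divisor equal to a single full $c$-word already has no bad end, so the procedure neither loops on such a word nor over-trims.
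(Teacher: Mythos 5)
Your proposal is correct and follows essentially the same route as the paper: normalise $Y$ to a final word, repeatedly strip suffixes that are beginnings of $c$-words (using Lemma~27 for the non-trivial direction of divisibility-preservation), and then test the remaining word against the prefixes of $X$ via Lemma~26 and the decidability of equality from Theorem~3. The only cosmetic difference is your re-normalisation after each stripping step, which is unnecessary since a prefix of a final word is already final — the paper uses this observation directly.
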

\begin{proof}
Suppose $X$ and $Y$ are words over the alphabet $(1)$, and we wish to decide whether or not $X$ is left divisible by $Y$ in the semigroup $V$ of the distinguished tuple $R$. Construct the set of words $\fT(Y)$. Choose from the set $\fT(Y)$ some final word $Y_1$. We have $Y = Y_1$ in the semigroup $V$, and hence our problem is equivalent to deciding whether $X$ is left divisible by $Y_1$. 

By direct verification, we check whether some end of $Y_1$ overlaps with some $c$-word of the tuple $R$. If there is some such overlap, then $Y_1 \gre Y_2 H_1$, where $H_1$ is a beginning of some $c$-word. We next check whether some end of $Y_2$ overlaps with some $c$-word of the tuple $R$. If there is some such overlap, then $Y_2 \gre Y_3 H_2$, where $H_2$ is the beginning of some $c$-word. This process continues until it ends with the construction of a (possibly empty) word $Y_k$, such that it has no end which overlaps with any $c$-word, and the words $H_1, H_2, \dots, H_{k-1}$, where $k \geq 0$, and every $H_i$ is the beginning of some $c$-word. We have $Y_k H_{k-1} H_{k-2} \cdots H_2 H_1 \gre Y_1$. Here $Y_k$ is a final word, as it is a subword of the final word $Y_1$. 

If $X$ is left divisible by $Y_1$, then $X$ is left divisible by $Y_k$. This is obvious from the equality
\[
X \gre Y_1 Z_1 \gre Y_k H_{k-1} H_{k-2} \cdots H_2 H_1 Z_1.
\]
On the other hand, if $X$ is left divisible by $Y_k$, then by Lemma~27 $X$ is left divisible $Y_k H_{k-1} H_{k-2} \cdots H_2 H_1 \gre Y_1$. 

Thus it remains to check whether $X$ is left divisible by $Y_k$. To do this, we can directly check (as the word problem is decidable in the semigroup $V$ by Theorem~3) whether or not $Y_k$ is equal to some beginning of $X$. If $Y_k = X_1$ in the semigroup $V$, then $X \gre X_1 X_2$, then by Lemma~25 we have that $X$ is left divisible by $Y_k$. If $X$ is left divisible by $Y_k$, then $X \gre X_1 X_2$, where $X_1 = Y_k$ in the semigroup $V$. 
\end{proof}

\begin{theorem}
If the natural numbers $k$ and $\ell$ are such that there exists an algorithm for solving the identity problem in every $(k, \ell)$-group, then there is also an algorithm for solving the left (right) divisibility problem in any $(k, \ell)$-semigroup. 
\end{theorem}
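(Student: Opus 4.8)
The plan is to proceed exactly as in the proof of Theorem~4, merely replacing the appeal to the identity‑problem result for distinguished tuples by the corresponding divisibility result, Theorem~5.

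First I would take an arbitrary $(k,\ell)$-semigroup $\Pi$ and apply Theorem~1 to obtain a $(k,\ell)$-tuple $K = \{ \Pi ; C_1, C_2, \dots, C_s \}$ whose associated semigroup is $\Pi$ itself. Applying Theorem~2 to $K$ then yields a distinguished $(k,\ell)$-tuple $R = \{ V ; C_1', C_2', \dots, C_u' \}$ equivalent to $K$. By the definition of equivalence of tuples this means that the system of defining relations of $\Pi$ is equivalent to that of $V$; and since each of the constructions used in Lemmas~13, 15, and 18 (and hence in Theorem~2) leaves the generating alphabet untouched, $\Pi$ and $V$ are semigroups over one and the same alphabet, governed by equivalent systems of defining relations.

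Because the two systems are equivalent and are taken over the same alphabet, they determine one and the same equality relation on words; in particular, for any words $X$, $Y$, $Z$ over this alphabet the equality $X = YZ$ holds in $\Pi$ if and only if it holds in $V$. Since left (right) divisibility of $X$ by $Y$ is defined solely through the existence of a word $Z$ with $X = YZ$ (resp. $X = ZY$), a word $X$ is left (right) divisible by $Y$ in $\Pi$ precisely when it is so in $V$; this is exactly the reduction of either divisibility problem between equivalent presentations already recorded in the Introduction. Hence it suffices to decide divisibility in the semigroup $V$ of the distinguished tuple $R$, and this is precisely what Theorem~5 supplies an algorithm for. Composing these algorithmic steps gives the required algorithm for $\Pi$, and since $\Pi$ was arbitrary, for every $(k,\ell)$-semigroup.

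The only point demanding genuine care is the observation that $\Pi$ and $V$ share a common alphabet, so that ``$X$ is left divisible by $Y$'' denotes literally the same relation in both semigroups; once this is granted the reduction is immediate, and no fresh analysis of the $c$-words, $T$-words, or of the sets $\fT$ is needed here, all of that work having already been discharged in the proof of Theorem~5.
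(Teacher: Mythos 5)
Your proposal is correct and follows exactly the same route as the paper: reduce $\Pi$ to a distinguished tuple via Theorems~1 and~2, note that equivalence of the defining systems over a common alphabet transfers the divisibility relation, and then invoke Theorem~5 for the distinguished case. The extra care you take in observing that the alphabet is preserved and that divisibility is defined purely through equality of words is exactly the justification the paper leaves implicit (it is recorded in the Introduction as the general reduction between equivalent presentations).
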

\begin{proof}
By Theorem~1, for any $(k, \ell)$-semigroup $\Pi$ we can construct a $(k, \ell)$-tuple $K$ such that $K = \{ \Pi ; C_1^k, C_2^k, \dots, C_s^k \}$. By Theorem~2, for every $(k, \ell)$-tuple $K$ we can construct some distinguished tuple $R = \{ V ; C_1, C_2, \dots, C_u \}$ such that the $(k, \ell)$-semigroups $\Pi$ and $V$ are equivalent. Thus, to prove Theorem~6 we only need to prove, that there exists an algorithm which solves the left divisibility problem in the semigroup of any distinguished $(k, \ell)$-tuple. But this is precisely what is proved in Theorem~5. 
\end{proof}

\section{The maximal subgroup of a $(k, \ell)$-semigroup}

We retain the same notation and assumptions as in Section~1.2. 

\begin{lemma}
No two $d$-words of a tuple $R$ overlap.
\end{lemma}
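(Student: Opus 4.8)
The plan is to argue by contradiction and induction, taking as complexity measure the pair $(\mu + \mu', \len{d} + \len{d'})$ ordered lexicographically, where $d, d'$ are two overlapping $d$-words and $\mu, \mu'$ are their ranks. Two facts will drive everything. First, by Definition~17 a $d$-word has empty first and last changing words, so in its representation $(27)$ it reads $y_1 M_2 \cdots M_{s_v} y_{s_v}$; hence every $d$-word begins and ends with a stable letter. Second, applying Lemma~19.III with $T_0$ a $c$-word — which overlaps no $c$-word by property~III — I get that every $d$-word overlaps no $c$-word. The base case $\mu = \mu' = 0$ is then immediate: a rank-$0$ $d$-word equals its underlying $c$-word $T_0$, and $c$-words do not overlap by property~III.

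For the inductive step, suppose $d \gre MN$ and $d' \gre NL$ with $N$ and $ML$ non-empty. Since $d$ ends in a stable letter and $d'$ begins in one, the overlap $N$ has its right end at the last letter of $d$ and its left end at the first letter of $d'$, and I locate each of these two letters inside the partner word. By Lemma~19.II each changing word of $d'$ is a product of $d$-words of index strictly smaller than that of $d'$. If the last letter of $d$ falls strictly inside a changing word of $d'$, it lies inside a constituent $d$-word $\hat d$ of strictly smaller index, and the block of $\hat d$ lying in $N$ is simultaneously a suffix of $d$ and a prefix of $\hat d$; this produces an overlap of the pair $(d, \hat d)$ whose complexity measure is strictly smaller (a constituent of strictly smaller index has either smaller rank, or equal rank and smaller length), so the inductive hypothesis applies. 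The symmetric argument handles the case where the first letter of $d'$ falls strictly inside a changing word of $d$. Should the relevant constituent be a rank-$0$ $d$-word, i.e. a $c$-word, the contradiction comes instead from the second fact above.

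The remaining case, which I expect to be the main obstacle, is when both endpoints of $N$ coincide with stable letters of the partner, so that $N$ is delimited by stable letters in both the $d$-decomposition $y_r M_{r+1} \cdots y_{s_v}$ and the $d'$-decomposition $y'_1 M'_2 \cdots y'_k$. Here I compare the two stable-letter patterns that the single word $N$ inherits from $d$ and from $d'$, reading left to right from the common first letter $y_r \gre y'_1$. At the first position where the patterns disagree, a stable letter of one word lies strictly inside a constituent $d$-word of the other, and extracting the overlap of that constituent with its neighbour in the partner again yields a pair of strictly smaller complexity (or a $d$-word/$c$-word overlap excluded by Lemma~19.III); if the patterns agree throughout, the shared block forces one of $d, d'$ to sit as a prefix of the other, which I close by the same constituent-level reduction together with a strict length decrease. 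The delicate point is precisely that the two decompositions of $N$ need not align, so the bookkeeping must be localised at the first point of disagreement and each reduction checked to lower the lexicographic measure; this is where the structural control afforded by Lemma~19 is indispensable.
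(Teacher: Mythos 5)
Your reduction steps for the cases where an endpoint of the overlap falls strictly inside a changing word are sound, and your two preliminary observations (every $d$-word begins and ends with a stable letter; by Lemma~19.III no $d$-word overlaps a $c$-word) are exactly the right tools. The gap is in your ``remaining case''. When both endpoints of $N$ land on stable letters and the two decompositions of $N$ agree throughout, your conclusion that the shared block forces one of $d, d'$ to sit as a prefix of the other does not follow: with $d \gre MN$ and $d' \gre NL$, agreement of the two stable-letter/constituent patterns on $N$ says nothing about $M$ and $L$, and the configuration is perfectly consistent with both being non-empty. Nor can you escape by lowering your rank measure: undoing the last non-decreasing replacement that built $d'$ (or $d$) destroys the graphical coincidence on $N$ whenever that replacement took place inside $N$, so the pair of smaller total rank need no longer overlap. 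This aligned case is precisely where the real work lies, and your induction has no mechanism to handle it.

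The paper closes this case with an idea your proposal lacks. Writing $d_\alpha \gre MN$, $d_\beta \gre NL$ and $N \gre Ya$, once the letter $a$ is known to be a stable letter of $d_\beta$ one rewinds the sequence $(39)$ of non-decreasing replacements that produced $d_\beta$ from its underlying $c$-word $c_\beta$, turning each changing word of $d_\beta$ back into an integral final word. Because $a$ is stable, the prefix $N$ of $d_\beta$ rewinds to a prefix of $c_\beta$; performing the corresponding non-increasing replacements on the suffix $N$ of $d_\alpha$ yields a word $d_\delta$, still a $T$-word obtained from the $c$-word $c_\alpha$ (via Lemma~23), whose end overlaps the beginning of the $c$-word $c_\beta$. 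Lemma~19.III then gives the contradiction. In short, instead of trying to match the two $d$-word decompositions of $N$ against each other, the paper transfers the overlap down to the $c$-word level on one side; without this, or an equivalent device, your induction does not close.
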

\begin{proof}
The proof is by induction on the sum of the lengths of the two considered $d$-words. If the sum of the two $d$-words is $2$, then these words cannot overlap by Definition~1.

Suppose that for any pair of $d$-words for which the sum of the lengths is less than $\lambda$ we have that they do not overlap. Suppose $d_\alpha \gre MN$, $d_\beta \gre NL$, where $N$ and $ML$ are non-empty words, and with $\len{d_\alpha} + \len{d_\beta} = \lambda$. 

Consider the $c$-word $c_\beta$, such that
\begin{equation}
c_\beta \gre X_0 \to X_1 \to \cdots \to X_m \gre d_\beta
\end{equation}
(see Remark~3) and the representation $(26)$ of the final word $c_\beta$. Let $N$ be a non-empty word such that $N \gre Ya$ and $d_\beta \gre YaL$. The letter $a$ highlighted in the $T$-word $d_\beta$ is stable, for otherwise $d_\alpha$ intersects with the $d$-word $d_\gamma$, which is a changing word of the $T$-word $d_\beta$, but $\len{d_\alpha} + \len{d_\gamma} < \lambda$. 

Every changing word of the $T$-word $d_\beta$ is changed into an integral, final word by non-increasing substitutions, by performing the sequence of elementary transformations $(39)$ in reverse. We obtain that $c_\beta$ has a beginning which intersects with some word, obtained from $d_\alpha$ by non-increasing replacements, i.e. some word $d_\delta$.

The proof that $d_\delta$ is a $d$-word follows from the statement that any replacement can be reproduced by a finite number of elementary transformations, that $c_\alpha$ is a final word, and Lemma~23. 
\end{proof}

\begin{lemma}
If the word $X$ is two-sided invertible in the semigroup $V$, then there exists some integral word $Z$ such that $X = Z$ in the semigroup $V$. 
\end{lemma}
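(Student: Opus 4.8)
The plan is to reduce to a final word and then to show that a two-sided invertible \emph{final} word has no stable letters in its representation, so that by Definition~15 it is graphically a product of $c$-words, i.e.\ integral. First I would pass to a final word: a word of minimal length in $\fT(X)$ is final and equal to $X$ in $V$, and calling it $X_1$, the invertibility of $X$ together with $X_1 = X$ in $V$ shows $X_1$ is two-sided invertible; so I may assume $X$ itself is final. Next recall that each $C_i$ is invertible in $V$ (as established when the $C$-words are constructed), hence each $c$-word $c_i^{(j)} = C_i$ is invertible and therefore every integral word is invertible in $V$. In particular, if the representation $(24)$ of the final word $X$ has no stable letters, then $X$ is a product of $c$-words and we are done; so suppose for contradiction that $X$ has a stable letter.

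The integral block to the left of the leftmost stable letter of $X$ is an invertible word $I_0$; multiplying $X$ on the left by a word inverse to $I_0$, and using that a word equal in $V$ to an invertible word is invertible, I may replace $X$ by the suffix following $I_0$, which is again final (a subword of a final word). Repeating this peeling finitely often, either $X$ reduces to the empty word --- and then the original $X$ was integral --- or I reach a final, two-sided invertible word beginning with a stable letter $y$, say $X \gre y R$. Since $X$ is right-invertible, $1$ is left-divisible by $X$. Following the stripping process in the proof of Theorem~5, I peel from the right of $X$ successive beginnings of $c$-words to obtain $X \gre X_k H_{k-1} \cdots H_1$, where $X_k$ is a prefix of $X$ no end of which overlaps a $c$-word. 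As only suffixes are removed, the letter $y$ survives, so $X_k$ is a nonempty final word and $1$ is left-divisible by $X_k$. Lemma~26 then gives $1 \gre W_1 W_2$ with $W_1 = X_k$ in $V$; since $1$ is empty this forces $X_k = 1$ in $V$, whence $1 \in \fT(X_k)$ by Lemma~24 while $\len{X_k} \geq 1$, contradicting the finality of $X_k$.

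The one place this argument can break is the main obstacle: the right-stripping may consume the whole word, i.e.\ $X_k$ may be empty. This occurs exactly when the leftmost stable letter $y$ is itself the beginning of some $c$-word and all of $R$ strips away as beginnings of $c$-words; and the mirror-image argument (stripping endings of $c$-words from the left and applying the right-handed form of Lemma~26 to the left-invertibility) can degenerate in the same way. The residual case is therefore that $X$ collapses to a single two-sided invertible stable letter, and the crux is to rule this out --- that is, to show a two-sided invertible letter must in fact be a $c$-word. I expect this to follow from the way the $C$-words are produced by the division algorithm $\Delta$, which atomises precisely the overlaps responsible for mutual invertibility, together with properties I and III of the distinguished tuple; alternatively, one may recast the whole proof as an induction on $\len{X}$ in which a single invertible letter is peeled off at each step (using Lemma~4 to locate it), the single-letter statement serving as the base case.
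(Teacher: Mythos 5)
Your argument has a genuine gap, and you have correctly located it yourself: the case where the right-stripping (and its mirror image) consumes the whole word, leaving you with a two-sided invertible final word every suffix of which is a prefix of a $c$-word --- in the extreme, a single invertible letter properly contained in some $c$-word. This is not a boundary case to be mopped up; it is where the entire content of the lemma lives. Your use of one-sided information (the left-divisibility of $1$ by $X$, fed into Lemma~26) only produces a contradiction when a non-empty residue $X_k$ survives the stripping, and nothing in your argument prevents $X_k \gre 1$. The closing sentence, that you ``expect this to follow from the way the $C$-words are produced by $\Delta$,'' is not a proof, and the alternative induction on $\len{X}$ with ``the single-letter statement serving as the base case'' is circular: the single-letter case is exactly the unresolved one.

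The paper closes this hole by never isolating $X$ at all: it uses \emph{both} equalities $XY = 1$ and $YX = 1$ simultaneously. Since $1$ is integral (and final), Lemmas~20 and~23 exhibit $XY$ as obtained from an integral word $c_{i_1} c_{i_2} \cdots c_{i_s}$ by non-decreasing replacements, each carried out inside a single $c_{i_j}$; hence $XY$ is graphically a concatenation of $d$-words. The same holds for $YX$. Lemma~28 (no two $d$-words overlap) then forces the cut between $X$ and $Y$ to fall on a boundary of this $d$-word decomposition, so $X \gre d_{i_1} \cdots d_{i_t}$ for some $t \leq s$, and one takes $Z \gre c_{i_1} \cdots c_{i_t}$. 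Note that this mechanism handles precisely your problematic case: a single invertible letter is forced to be a whole $d$-word of the decomposition of $XY$, hence equal in $V$ to a $c$-word. If you want to salvage your approach, you would need to import this two-sided comparison of the decompositions of $XY$ and $YX$; the one-sided divisibility argument alone cannot do it.
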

\begin{proof}
As $X$ is two-sided invertible in the semigroup $V$, there exists some word $Y$ such that $XY = 1$ and $YX = 1$ in $V$.

Consider the sequence of elementary transformations 
\begin{equation}
1 \gre U_0 \to U_1 \to \cdots \to U_{e-1} \to U_e \gre XY
\end{equation}
The word $1$ is an integral word. By Lemma~23, we know that $1$ can be converted to $XY$ by a sequence of non-decreasing replacements, i.e. $U_e$ is a $T$-word. Construct the word $O_e$ of the $T$-word $T_e$, and consider the sequence of non-decreasing replacements (Lemma~20)
\begin{equation}
1 \to O_e \gre c_{i_1} c_{i_2} \cdots c_{i_s} \to U_2' \to U_3' \to \cdots \to U_e' \gre XY,
\end{equation}
where, starting from the second replacement, every replacement happens inside some $c$-word $c_{i_j}$. In this way, $XY$ is obtained from a $d$-word. In exactly the same way, we can prove that $YX$ can be obtained from a $d$-word. But as (by Lemma~28) $d$-words do not intersect, it follows that $X$ can be obtained from a $d$-word, and is obtained by non-decreasing replacements from the integral word $c_{i_1} c_{i_2} \cdots c_{i_t}$, where $t \leq s$. It is now clear that we can take $Z$ to be $c_{i_1} c_{i_2} \cdots c_{i_t}$. 
\end{proof}

\begin{lemma}
The group $\Gamma$ of the $(k, \ell)$-tuple $R$ is isomorphic to the maximal subgroup $F$ of the $(k, \ell)$-semigroup $V$ of the $(k, \ell)$-tuple $R$.  
\end{lemma}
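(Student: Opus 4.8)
The plan is to exhibit mutually inverse homomorphisms between $\Gamma$ and the maximal subgroup $F$ of $V$. First I would define $\phi\colon \Gamma \to V$ on generators by $\delta_i \mapsto C_i$ and extend it multiplicatively. By Lemma~11, substituting $C_i$ for each $\delta_i$ turns every defining relation $\Psi_i = 1$ of $\Gamma$ into the relation $V_i = 1$ of $V$, which holds in $V$; hence $\phi$ is a well-defined homomorphism from $\Gamma$ into $V$. Since $\Gamma$ is a group, each $\delta_i$ has an inverse, say $\delta_i P_i = Q_i \delta_i = 1$ in $\Gamma$ for suitable positive words $P_i, Q_i$; applying $\phi$ gives $C_i\,\phi(P_i) = \phi(Q_i)\,C_i = 1$ in $V$, so every $C_i$ is two-sided invertible and $\phi(\Gamma)$ lies inside the maximal subgroup $F$.

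Surjectivity of $\phi$ onto $F$ is immediate from the lemma that every two-sided invertible word of $V$ equals an integral word in $V$ (Lemma~29): if $X \in F$, then $X = c_{i_1}^{(j_1)} \cdots c_{i_m}^{(j_m)}$ in $V$, and since each $c$-word equals its parent $C$-word in $V$ we get $X = C_{i_1}\cdots C_{i_m} = \phi(\delta_{i_1}\cdots\delta_{i_m})$. For the reverse map I would use the function $f$: define $\psi\colon F \to \Gamma$ by choosing (via Lemma~29) an integral representative $Z$ of a unit and setting $\psi([Z]) = f(Z)$. On generators $\psi\phi = \mathrm{id}$, because $C_i$ is a $c_i$-word so $f(C_i) = \delta_i$, and $\phi\psi = \mathrm{id}$, because $c$-words coincide with their $C$-words in $V$; thus, once $\psi$ is shown to be well defined, $\phi$ and $\psi$ are mutually inverse and $\Gamma \cong F$.

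The crux is the well-definedness of $\psi$, equivalently the injectivity of $\phi$: I must show that if $W$ is an integral word with $W = 1$ in $V$, then $f(W) = 1$ in $\Gamma$ (the general statement that $Z = Z'$ in $V$ implies $f(Z) = f(Z')$ reduces to this, and for injectivity only the case $W = 1$ is needed). By Lemma~24 the equality $W = 1$ gives $1 \in \fT(W)$, so reversing a non-increasing sequence exhibits $W$ as a $T$-word obtained from the final word $T_0 \gre 1$. I would then form the associated word $O_W$ of Lemma~20. As $T_0$ has no stable letters, $O_W \gre c_{p_1}\cdots c_{p_r}$ is a single integral word and, being obtained from the empty word by a single non-decreasing replacement, is equivalent to $1$, so $f(O_W) = f(1) = 1$. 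The representation of $W$ is the single changing word $d_1\cdots d_r$, where each $d_\ell$ is a $d$-word obtained from $c_{p_\ell}$; since $W$ is integral and no two $d$-words overlap (Lemma~28), the unique $d$-word factorisation of $W$ agrees with its $c$-word factorisation, so each $d_\ell$ is itself a single $c$-word.

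The remaining, and hardest, step is to show $f(d_\ell) = \delta_{p_\ell}$ for each $\ell$, which then yields $f(W) = f(O_W) = 1$. The difficulty is that $d_\ell$ is produced from $c_{p_\ell}$ by a sequence of replacements whose intermediate words are $T$-words and need not be integral, so one cannot simply chain the elementary observation that a replacement between two integral words preserves $f$. To circumvent this I would exploit that $R$ is distinguished: by property~I all replacements are length-preserving, so $d_\ell$ has the same length as $c_{p_\ell}$, and its derivation can be matched step-by-step with the (length- and $f$-preserving) generating operation for $c_{p_\ell}$-words. Consequently a $d$-word obtained from $c_{p_\ell}$ that happens to be integral is in fact a $c_{p_\ell}$-word, whence by property~II (type uniqueness of $c$-words) it belongs to $\{c_{p_\ell}^{(j)}\}$ and has $f$-image $\delta_{p_\ell}$. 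This identification of the integral $d$-words from $c_p$ with the $c_p$-words, which I would carry out by induction on the rank using the decomposition of Lemma~19 together with Lemma~28, is where the main work lies; granting it, the maps $\phi$ and $\psi$ are mutually inverse isomorphisms and the lemma follows.
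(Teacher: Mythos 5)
Your overall architecture is the paper's: the proof there likewise builds the correspondence from Lemma~29 and the function $f$, obtains the direction ``equal in $\Gamma$ implies equal in $V$'' from Lemma~11, and concentrates all the difficulty in showing that $f$ is constant on $V$-equality classes of integral words, which it attacks exactly as you do through the $T$-word/$O$-word machinery of Lemmas~20, 23, 24 and the non-overlapping of $d$-words (the paper packages this as Lemma~31, a biconditional for arbitrary pairs of units; your reduction to the single case $W=1$ by cancelling in the group $\Gamma$ is an equivalent repackaging). So the route is the same; the problem lies in your justification of the step you yourself identify as the crux.

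Property~I asserts only that every $c_i$-word has the length of $C_i$; it does \emph{not} make every non-decreasing replacement length-preserving. An insertion $XY \to XV_iY$ of a defining word is a non-decreasing replacement that strictly increases length, and Definition~18 does not forbid such insertions in the interior of a $d$-word (only the extremal changing words $M_1$ and $M_{s_v+1}$ must be empty). Hence your step-by-step matching of the derivation of $d_\ell$ with the generating operation, and the conclusion that an integral $d$-word derived from $c_{p_\ell}$ lies in $\{ c_{p_\ell}^{(j)} \}$, are not established: a priori $\len{d_\ell} > \len{c_{p_\ell}}$ is possible, in which case $d_\ell$ is not a $c_{p_\ell}$-word at all. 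Likewise ``the $d$-word factorisation agrees with the $c$-word factorisation'' does not follow from Lemma~28 alone. What actually does the work here is Lemma~19.III: a $d$-word does not overlap with any $c$-word, and since a proper initial (or terminal) $c$-factor of an integral word constitutes an overlap in the sense of Definition~1, an integral $d$-word is forced to be a \emph{single} $c$-word; this is also what excludes the interior insertions above. Only after that does your property~I/II argument become available (every replacement in the derivation is then genuinely length-preserving and, because the stable end-letters of $c_{p_\ell}$ survive, is a step of the generating operation), yielding $d_\ell \in \{ c_{p_\ell}^{(j)} \}$ and $f(d_\ell) \gre \delta_{p_\ell}$. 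The ingredients you cite are the right ones, but as written the decisive identification rests on a false premise and is therefore not proved.
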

\begin{proof}
For every two-sided invertible word $X$ of the semigroup $V$, we will define $\mu(X)$ as follows. By Lemma~29, for every two-sided invertible word $X$ there exists some integral word $Z$ such that $Z = X$ in $V$. If we choose for every $X_\alpha$ a $Z_\alpha$, then we set $\mu(X_\alpha) \rightleftharpoons f(Z_\alpha)$, see Definition~12, and, if $X_\alpha$ is itself integral, then $\mu(X_\alpha) \rightleftharpoons f(X_\alpha)$. 

Thus, we have an algorithm, which to every two-sided invertible word $X$ of the semigroup $V$ associates a word $f(Z_\alpha)$ of the group $\Gamma$ of the tuple $R$. Obviously, every word of the group $\Gamma$ can be obtained in this way from some word $X$. We will prove, that this mapping is an isomorphism between the group $\Gamma$ and the group $F$. For this, it is sufficient to prove the following lemma.

\begin{lemma} If $X$ and $Y$ are two-sided invertible words of the semigroup $V$, then $X = Y$ in $V$ if and only if $f(\mu(X)) = f(\mu(Y))$ in the group $\Gamma$. 
\end{lemma}
\begin{proof}
By Lemma~29 $\mu(X) = \mu(Y)$ in $V$ is equivalent to $X = Y$ in $V$. 

If $f(\mu(X)) = f(\mu(Y))$ in $\Gamma$, then $\mu(X) = \mu(Y)$ in $V$ by the definition of the group $\Gamma$ of the tuple $R$.

Suppose $\mu(X) = \mu(Y)$ in $V$. Without loss of generality, we can assume $\mu(X)$ is final. 

By Lemmas~20 and 23 there exists some sequence of non-decreasing replacements 
\begin{equation}
\mu(X) \gre Z_0 \to Z_1 \to \cdots \to Z_{m-1} \to Z_m \gre \mu(Y),
\end{equation}
and moreover $Z_1$ is an integral word, where $Z_0$ and $Z_1$ are equivalent words; $Z_1$ is an $O$-word of the $T$-word $\mu(Y)$; and every replacement in the sequence 
\begin{equation}
Z_1 \to Z_2 \to \cdots \to Z_{m-1} \to Z_m
\end{equation}
where $Z_1 \gre c_{i_1} c_{i_2} \cdots c_{i_n}$, replaces some $c$-word $c_{i_t}$, or some part of $c_{i_t}$, by the word $d_{i_t}$. In this way, $\mu(Y)$ is obtained from a $d$-word. But by definition, $\mu(Y)$ is an integral word, and hence $f(Z_1) = f(\mu(Y))$. As $Z_0 $ and $Z_1$ are equivalent words, we also have $f(\mu(X)) = f(\mu(Y))$ in $\Gamma$. This completes the proof of Lemma~31. 
\end{proof}
Using Lemma~31, this completes the proof of Lemma~30. 
\end{proof}

\begin{theorem}
If the natural numbers $k$ and $\ell$ are such that there exists an algorithm for solving the identity problem in every $(k, \ell)$-group, then there is an algorithm which for any $(k, \ell)$-semigroup $\Pi$ constructs a $(k, \ell)$-group, isomorphic to the maximal subgroup of $\Pi$. 
\end{theorem}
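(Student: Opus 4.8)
The plan is to mirror the reductions used in the proofs of Theorem~4 and Theorem~6, and then invoke Lemma~30 to identify the maximal subgroup explicitly. First I would apply Theorem~1 to the given $(k, \ell)$-semigroup $\Pi$: under the standing hypothesis this produces a list of $C$-words $C_1^k, C_2^k, \dots, C_s^k$ of $\Pi$, and hence a $(k, \ell)$-tuple $K = \{ \Pi ; C_1^k, C_2^k, \dots, C_s^k \}$. Next I would apply Theorem~2 to $K$, obtaining a distinguished $(k, \ell)$-tuple $R = \{ V ; C_1, C_2, \dots, C_u \}$ equivalent to $K$; by the definition of equivalence of tuples, the systems of defining relations of $\Pi$ and of $V$ are equivalent, so $\Pi$ and $V$ have the same words and the same equality relation.

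The key reduction step is the observation (already noted in the introduction) that equivalence of defining-relation systems is enough: if $\Pi$ and $V$ have equivalent systems of relations, then a word is two-sided invertible in $\Pi$ if and only if it is two-sided invertible in $V$, and two such words are equal in $\Pi$ if and only if they are equal in $V$. Consequently the collections of two-sided invertible words coincide and generate literally the same maximal subgroup, so the maximal subgroup of $\Pi$ is (canonically isomorphic to, indeed identical with) the maximal subgroup of $V$. Thus it suffices to exhibit, algorithmically, a $(k, \ell)$-group isomorphic to the maximal subgroup of the semigroup $V$ of the distinguished tuple $R$.

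Finally I would invoke Lemma~30, which asserts precisely that the group $\Gamma$ of the tuple $R$ is isomorphic to the maximal subgroup $F$ of the semigroup $V$ of $R$. The point is that $\Gamma$ is produced as part of the data of $R$: it is the $(k, \ell)$-semigroup associated to $V$ and its $C$-words, and it is a $(k, \ell)$-group by the very definition of $C$-words. Hence $\Gamma$ is explicitly computable, is a $(k, \ell)$-group, and is isomorphic to the maximal subgroup of $V$, which equals the maximal subgroup of $\Pi$. Outputting $\Gamma$ completes the construction.

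The genuine mathematical content — matching the maximal subgroup against the group $\Gamma$ of a distinguished tuple — is entirely discharged by Lemma~30 (which in turn rests on Lemmas~28 and~29, that $d$-words do not overlap and that every two-sided invertible word is equal to an integral word). I expect no real obstacle in the theorem itself: the remaining work is the purely clerical chaining of the algorithms of Theorems~1 and~2 together with the elementary remark that equivalence of relation systems preserves the set of two-sided invertible words and hence the maximal subgroup. The only point requiring a moment's care is verifying that the isomorphism supplied by Lemma~30 is transported correctly along the equivalence $\Pi \sim V$, which is immediate since that equivalence fixes the underlying alphabet and merely changes the defining relations to an equivalent set.
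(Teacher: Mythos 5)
Your proposal is correct and follows exactly the paper's own argument: chain Theorem~1 and Theorem~2 to pass to a distinguished tuple $R = \{ V ; C_1, \dots, C_u \}$ equivalent to $\Pi$, note that equivalence of the relation systems preserves the maximal subgroup, and conclude by Lemma~30 that the group $\Gamma$ of $R$ is the desired $(k,\ell)$-group. Nothing further is needed.
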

\begin{proof}
By Theorem~1, for every $(k, \ell)$-semigroup $\Pi$ we can construct a $(k, \ell)$-tuple $K$, such that $K = \{ \Pi ; C_1^k, C_2^k, \dots, C_s^k \}$. By Theorem~2, for every $(k, \ell)$-tuple $K$ we can construct a distinguished tuple $R = \{ V ; C_1, C_2, \dots, C_s \}$ such that the $(k, \ell)$-semigroup $\Pi$ and $V$ are equivalent. Thus, to prove Theorem~7, it suffices to prove that there exists an algorithm which for any semigroup of a distinguished $(k, \ell)$-tuple constructs some $(k, \ell)$-group, which is isomorphic to the maximal subgroup of the semigroup of this tuple. But this is precisely what is proved in Lemma~30. 
\end{proof}

\chapter{The Identity Problem In Finitely Presented Groups}
\section{Representing words equal to the identity in $K_{2/11}$-groups}\hfill\\

Let $G$ be a group with generating set
\begin{equation}
a_1, a_2, \dots, a_n
\end{equation}
and defining relations
\begin{equation}
R_i = 1 \quad (i = 1, 2, \dots, m).
\end{equation}

Let $M$ be the set of defining words of $G$ and, for the remainder of this chapter, suppose that $M$ is symmetrised. 

\textit{The class $K_{2/11}$}. The group $G$, given by the generators $(1)$ and the defining relations $(2)$, belongs to the class $K_{2/11}$ if the set $M$ of the group $G$ satisfies the following condition: if $R_i$ and $R_j$ are arbitrary, not mutually inverse words in $M$, then when reducing the product $R_iR_j$ we delete fewer than $2/11$ of the letters of the word $R_i$. 

If a non-empty reduced word $Q$ is equal to the identity of the group $G$, then by a theorem of Dyck there exists a natural number $w$, words $T_1, T_2, \dots, T_w$, and defining words $R_{i_1}, R_{i_2}, \dots, R_{i_w}$ such that $Q \equiv \prod_{j=1}^{w} \ol{T}_{j} R_{i_j}^{\varepsilon_j} T_j$, where $\varepsilon_j = \pm 1$ $(j = 1, 2, \dots, w)$. As the set $M$ is symmetrised, we may assume that this representation of $Q$ is of the form $Q \equiv \prod_{j=1}^{w} \ol{T}_{j} R_{i_j} T_j$.

We are not interested in any particular numbering of the defining words of $G$, and hence we will instead in the sequel write the product $\prod_{j=1}^{w} \ol{T}_{j} R_{i_j} T_j$ as $\prod_{i=1}^{w} \ol{T}_{i} R_i T_i$, where the two words $R_t$ and $R_z$ can be the same word, even if $t < z$. To simplify notation, the subword $\ol{T}_{p} R_p T_p \dots \ol{T}_{k} R_k T_k$ of the word $\prod_{i=1}^w \ol{T}_{i} R_i T_i$ will be denoted $[p, k]$. 

Consider the set of finite vectors $\{ k_1, k_2, \dots, k_w \}$, where $w >0$, and $k_1, k_2, \dots, k_w$ are non-negative integers. 

The \textit{parameter $\alpha$} of the vector $\omega = \{ k_1, k_2, \dots, k_w \}$ is defined as the natural number $\sum_{i=1}^w 6^{k_i}$ and will be denoted by $\alpha\{ k_1, k_2, \dots, k_w \}$, or simply $\alpha(\omega)$. 

The \textit{defect} of the number $k_j$ in the vector $\{ k_1, k_2, \dots, k_w \}$ is defined as the number $(k_{i_1} - k_j) + (k_{i_2} - k_j) + \dots + (k_{i_v} - k_j)$, where $k_{i_1}, k_{i_2}, \dots, k_{i_v}$ are all the numbers from the vector $\{ k_1, k_2, \dots, k_w \}$ such that $k_{i_z} > j$ and $i_z < j$, with $z = 1, 2, \dots, v$. 

The \textit{parameter $\beta$} of the vector $\omega = \{ k_1, k_2, \dots, k_w \}$, denoted $\beta(\omega)$, is defined as the sum of the defects of all numbers $k_1, k_2, \dots, k_w$ of the vector $\omega$. 

The \textit{index} of the vector $\omega = \{ k_1, k_2, \dots, k_w \}$ is defined as the ordered pair of natural numbers $(\alpha', \beta')$, where $\alpha' = \alpha(\omega)$ and $\beta' = \beta(\omega)$. The set of indices will be ordered lexicographically, i.e. $(\alpha_1, \beta_1) < (\alpha_2, \beta_2)$, where $\alpha_1, \alpha_2, \beta_1, \beta_2$ are non-negative natural numbers, if and only if one of the following two properties is true: \textbf{I.} $\alpha_1 < \alpha_2$, \textbf{II.} $\alpha_1 = \alpha_2$ and $\beta_1 < \beta_2$. We will denote the index of the vector $\omega$ as $\cJ(\omega)$. 

We will say that the vector $\omega_1 = \{ k_1, k_2, \dots, k_w \}$ is \textit{less} than the vector $\omega_2 = \{ k'_1, k'_2, \dots, k'_{w'} \}$ if $\cJ(\omega_1) < \cJ(\omega_2)$. 

The following three lemmas easily follow from the definition of our ordering on the vectors. 

\begin{lemma}
The vector $\omega_1 = \{ k_1, k_2, \dots, k_{i-1}, k_i, k_{i+1}, \dots, k_w \}$ is greater than the vector $\omega_2 = \{ k_1, k_2, \dots, k_{i-1}, s_1, s_2, \dots, s_v, k_{i+1}, \dots, k_w \}$, if we have $0 \leq v < 6$ and $s_z < k_i \: (z = 1, 2, \dots, v)$.
\end{lemma}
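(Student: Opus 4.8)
The plan is to compare the two vectors using only the first coordinate $\alpha$ of the index, so that property \textbf{I} of the lexicographic ordering settles the claim outright and the defect-based parameter $\beta$ never needs to be examined. The vectors $\omega_1$ and $\omega_2$ agree entrywise outside position $i$: the single entry $k_i$ of $\omega_1$ is the only difference, having been replaced in $\omega_2$ by the block $s_1, s_2, \dots, s_v$. Since the common entries contribute identically to $\alpha(\omega_1)$ and $\alpha(\omega_2)$, we obtain
\[
\alpha(\omega_1) - \alpha(\omega_2) = 6^{k_i} - \sum_{z=1}^{v} 6^{s_z}.
\]

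First I would dispose of the trivial case $v = 0$, in which the sum is empty and the difference equals $6^{k_i} > 0$. For $v \geq 1$ the hypothesis $s_z < k_i$, together with the fact that each $s_z$ is a non-negative integer, forces $k_i \geq 1$, so that $s_z \leq k_i - 1$ and hence $6^{s_z} \leq 6^{k_i - 1}$ for every $z$. Summing and invoking $v < 6$, i.e. $v \leq 5$, yields
\[
\sum_{z=1}^{v} 6^{s_z} \leq v \cdot 6^{k_i - 1} \leq 5 \cdot 6^{k_i - 1} < 6 \cdot 6^{k_i - 1} = 6^{k_i},
\]
so that $\alpha(\omega_1) - \alpha(\omega_2) > 0$ in every case.

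Finally, since $\alpha(\omega_1) > \alpha(\omega_2)$, property \textbf{I} of the ordering on indices immediately gives $\cJ(\omega_2) < \cJ(\omega_1)$, which by definition means exactly that $\omega_1$ is greater than $\omega_2$, as required. There is no real obstacle to overcome; the only point deserving care is the geometric-sum bound, where the base $6$ has evidently been chosen so that as many as five strictly smaller powers still cannot reach a single power $6^{k_i}$. This is precisely the purpose of the constraint $v < 6$, and it is what makes the $\alpha$-comparison alone sufficient, leaving the more delicate parameter $\beta$ untouched.
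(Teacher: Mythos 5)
Your proof is correct and is precisely the intended argument: the paper states this lemma without proof, remarking only that it "easily follows from the definition of the ordering," and the computation you give — that replacing $6^{k_i}$ by at most five strictly smaller powers of $6$ decreases the parameter $\alpha$, so property I of the lexicographic order decides the comparison — is exactly that easy verification.
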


\begin{lemma}
The vector $\omega_1 = \{ k_1, k_2, \dots, k_{i-1}, k_i, \dots, k_w \}$ is greater than the vector $\omega_2 = \{ k_1, k_2, \dots, k_i, k_{i-1}, \dots, k_w \}$, if $k_{i-1} > k_i$. 
\end{lemma}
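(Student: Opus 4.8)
The plan is to compare the two indices $\cJ(\omega_1) = (\alpha(\omega_1), \beta(\omega_1))$ and $\cJ(\omega_2) = (\alpha(\omega_2), \beta(\omega_2))$ directly, exploiting the fact that $\omega_2$ is obtained from $\omega_1$ merely by transposing the two adjacent entries $k_{i-1}$ and $k_i$. First I would note that $\alpha(\omega) = \sum_{j=1}^{w} 6^{k_j}$ is a symmetric function of the entries, so interchanging two of them leaves it unchanged, giving $\alpha(\omega_1) = \alpha(\omega_2)$. By the definition of the lexicographic order on indices, this reduces the problem to showing that $\beta(\omega_1) > \beta(\omega_2)$; that is, it suffices to track only how the parameter $\beta$ changes under the swap.

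Next I would isolate which defects actually change. The defect of an entry $k_j$ depends only on its own value and on the multiset of values occupying the positions strictly before $j$, since it sums $k_{i_z} - k_j$ over the earlier entries whose value exceeds $k_j$. For any position $j < i-1$ the entire prefix before it is untouched, so its defect is unchanged. For any position $j > i$, the positions preceding $j$ still hold exactly the same values — only the entries in slots $i-1$ and $i$ have been interchanged, which does not alter the preceding multiset — so its defect is unchanged as well. Consequently $\beta(\omega_1) - \beta(\omega_2)$ equals the change in the combined defects of the two transposed entries alone.

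Finally I would compute that change. Writing $a = k_{i-1}$ and $b = k_i$ with $a > b$, let $D$ (respectively $E$) denote the defect contributed by the common prefix $k_1, \dots, k_{i-2}$ relative to the value $a$ (respectively $b$). In $\omega_1$ the slot holding $a$ precedes the slot holding $b$, so the two defects are $D$ and $E + (a-b)$, the extra term $a-b$ arising because the entry $a > b$ sits immediately before $b$. In $\omega_2$ the order is reversed: the two defects become $E$ and $D$, since the slot holding $b$ now precedes $a$ and, as $b < a$, contributes nothing to the defect of $a$. Summing, $\beta(\omega_1) - \beta(\omega_2) = (a - b) = k_{i-1} - k_i > 0$. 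Hence $\beta(\omega_1) > \beta(\omega_2)$ while $\alpha(\omega_1) = \alpha(\omega_2)$, so $\cJ(\omega_1) > \cJ(\omega_2)$ and $\omega_1$ is greater than $\omega_2$, as claimed.

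The only delicate point is the bookkeeping in the middle step: making precise that the defect is invariant under reordering of the preceding entries, so that every entry other than the two transposed ones contributes identically to $\beta(\omega_1)$ and to $\beta(\omega_2)$. Once that symmetry is stated carefully, the remaining verification is the short arithmetic above.
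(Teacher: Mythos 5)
Your proof is correct. The paper offers no proof of this lemma (it simply states that Lemmas 1--3 ``easily follow from the definition of our ordering on the vectors''), and your computation --- $\alpha$ unchanged because it is a symmetric function of the entries, every defect except those of the two transposed entries unchanged because the multiset of preceding values is preserved, and the net change $\beta(\omega_1)-\beta(\omega_2)=k_{i-1}-k_i>0$ --- is exactly the verification the paper leaves to the reader, reading the defect condition $k_{i_z}>j$ as the evidently intended $k_{i_z}>k_j$.
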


\begin{lemma}
For every vector $\omega = \{ k_1, k_2, \dots, k_w \}$ there exist only finitely many vectors $\omega_1, \omega_2, \dots, \omega_t$ such that $\cJ(\omega_t) < \dots < \cJ(\omega_2) < \cJ(\omega_1) < \cJ(\omega)$. 
\end{lemma}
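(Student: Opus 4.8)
The plan is to deduce the statement from a single finiteness fact about the parameter $\alpha$, together with the structure of the lexicographic order on indices. First I would observe that if $\cJ(\omega') < \cJ(\omega)$, then by the definition of the lexicographic order we necessarily have $\alpha(\omega') \leq \alpha(\omega)$: indeed, either $\alpha(\omega') < \alpha(\omega)$, or else $\alpha(\omega') = \alpha(\omega)$ and $\beta(\omega') < \beta(\omega)$. Hence every vector whose index lies strictly below $\cJ(\omega)$ has its parameter $\alpha$ bounded by the fixed number $\alpha(\omega)$.

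The key step is then to show that for each natural number $N$ there are only finitely many vectors $\omega' = \{ k_1, k_2, \dots, k_{w'} \}$ with $\alpha(\omega') = N$. This is elementary: since $\alpha(\omega') = \sum_{i=1}^{w'} 6^{k_i}$ and each summand satisfies $6^{k_i} \geq 1$, the length obeys $w' \leq N$; and since each summand is at most $N$, each exponent obeys $6^{k_i} \leq N$, i.e.\ $k_i \leq \log_6 N$. Thus there are only finitely many admissible lengths $w'$, and for each such $w'$ only finitely many choices for the ordered tuple $(k_1, k_2, \dots, k_{w'})$, giving finitely many vectors in total.

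Combining these two observations, the set $S = \{ \omega' : \cJ(\omega') < \cJ(\omega) \}$ is contained in the finite union $\bigcup_{N=1}^{\alpha(\omega)} \{ \omega' : \alpha(\omega') = N \}$ and is therefore finite. In particular, only finitely many distinct indices occur strictly below $\cJ(\omega)$. Any chain $\cJ(\omega_t) < \dots < \cJ(\omega_1) < \cJ(\omega)$ consists of vectors with pairwise distinct indices, so its length $t$ is bounded by the cardinality of $S$; this yields the asserted finiteness. (This also makes the well-foundedness of the ordering on indices, needed for the inductions on $\cJ$ in the sequel, immediate.)

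I expect no genuine obstacle here: the only real content is the finiteness of the fibres $\{ \omega' : \alpha(\omega') = N \}$, and the single point deserving care is that the bound $w' \leq N$ on the length of the vector follows from each summand $6^{k_i}$ being at least $1$, so that arbitrarily long vectors cannot realise a bounded value of $\alpha$. Everything else is routine bookkeeping about the lexicographic order; in particular Lemmas~1 and~2 above (comparing vectors differing by a local modification) are not needed for this statement, which rests purely on the definition of $\alpha$ and of $\cJ$.
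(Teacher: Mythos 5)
Your proof is correct, and it supplies exactly the argument the paper leaves implicit (the paper merely asserts that this lemma ``easily follows from the definition'' of the ordering): the key observations that $\cJ(\omega') < \cJ(\omega)$ forces $\alpha(\omega') \leq \alpha(\omega)$, and that each value of $\alpha$ is attained by only finitely many vectors because every summand $6^{k_i}$ is at least $1$, are precisely what make the well-foundedness routine. No issues.
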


We will now prove a theorem regarding the form of reduced words which are equal to the identity $K_{2/11}$. The theorem will describe some properties of the word $\prod_{i=1}^{w} \ol{T}_{i} R_{i} T_i$. For brevity, we will not describe the \textit{dual} properties (the dual properties of the word $\prod_{i=1}^{w} \ol{T}_{i} R_{i} T_i$ being the properties of $\left( \prod_{i=1}^{w} \ol{T}_{i} R_{i} T_i \right)^{-1}$ ), but all the properties are implied, and the proof of these properties coincide with the ordinary properties). 

\begin{theorem}
Let $G$ be a group belonging to the class $K_{2/11}$. If a non-empty word $Q$ is equal to the identity in $G$, then there exists a natural number $w$, words $T_1, T_2, \dots, T_2$ and reduced words $R_1, R_2, \dots, R_w$ such that $Q \equiv \prod_{i=1}^{w} \ol{T}_{i} R_{i} T_i$, where the following properties hold: 

\begin{property}
For every $i = 1, 2, \dots, w$ the word $\ol{T}_{i} R_{i} T_i$ is reduced.
\end{property}

\begin{property}
Let $R_k \gre U\ol{V}$ and $\len{U} > \frac{1}{2} \len{R_k}$. Then neither $\ol{T}_i$ nor $T_i \: (i = 1, 2, \dots, w)$ contains the word $U$. 
\end{property}

\begin{property}
If $\ol{R}_k$ and $R_i$ are not mutually inverse, and $R_k \gre ZXY$, $R_i \gre ZK$, $\ol{T}_i \gre NY$, then $\len{X} \geq \len{Y}$. 
\end{property}

\begin{property}
If $R_k \gre BAX$ and $R_s \gre \ol{X}CD$ are not mutually inverse, then in $\ol{T}_i$ there is a word $AC$, such that $\len{B} + \len{D} \geq \len{A}+\len{C}$. 
\end{property}
\begin{property}
If $R_k \gre BAX$ and $R_s \gre \ol{X}CDE$ are not mutually inverse, if $\ol{C}X\ol{E}\ol{D}$ and $R_i \gre DK$ are not mutually inverse, and $\ol{T}_i \gre NAC$, then $\len{E} + \len{B} \geq \len{C} + \len{A}$. 
\end{property}

\begin{property}
Suppose $R_k \gre U\ol{V}$ and $\len{U} \geq \frac{13}{22} \len{R_k}$. Then the word $U$ can be found inside the word $\ol{T}_iR_iT_i \: (i = 1, 2, \dots, w)$ only as a subword of the word $R_i$. 
\end{property}

\begin{property}
Suppose that $R_s \gre A\ol{X}Y$ and $R_k \gre \ol{Y}\ol{Z}\ol{U}D$ are not mutually inverse. Suppose that $\len{A} < \frac{4}{11}\len{R_s}$, $\len{D} < \frac{4}{11} \len{R_k}$, and $\len{U} < \frac{2}{11}\len{R_k}$. Then $UZK$ is not a subword of any $\ol{T}_iR_iT_i \: (i = 1, 2, \dots, w)$ such that $T_i \gre ZXT_i'$, $R_i \gre R_i' U$, and $\len{U} < \frac{2}{11}\len{R_i}$. 
\end{property}

\begin{property}
If $R_p \gre XYZ$, $R_k \gre U\ol{Y}V$, $p < k$ and for some way of reducing $\prod_{i=1}^{w} \ol{T}_{i} R_{i} T_i$ the subword $Y$ of the word $R_p$ is reduced by the subword $\ol{Y}$ of the word $R_k$, then the word $ZXY$ and $\ol{Y}VU$ are not mutally inverse. 
\end{property}

\begin{property}
If $R_p \gre XYZ$, $\ol{T}_{k} \gre U\ol{Y}V$, $p< k$ and for some way of reducing $\prod_{i=1}^{w} \ol{T}_{i} R_{i} T_i$ the subword $Y$ of the word $R_p$ reduces with the subword $\ol{Y}$ of the word $\ol{T}_k$, then $\len{T_p} \leq \len{U}$, and $\len{T_p} < \len{T_k}$. 
\end{property}

\begin{property}
If $T_p \gre AB$, $\ol{T}_j \gre CD$, where $p<j$, and for some way of reducing $\prod_{i=1}^{w} \ol{T}_{i} R_{i} T_i$ the word $B[p+1, j-1]C$ is absorbed, without reducing anything, then either $[p+1, j-1] \equiv 1$, or $p+1 = j$, or there is some $k$, with $p < k < j$, such that $\len{T_k} \geq \len{T_p}$. 
\end{property}
\end{theorem}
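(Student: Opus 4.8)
The plan is to argue by extremal choice on the vector of conjugator lengths. By Dyck's theorem we already have a representation $Q \equiv \prod_{i=1}^{w} \ol{T}_i R_i T_i$ with each $R_i$ in $M$; to each such representation I would associate the vector $\omega = \{\len{T_1}, \len{T_2}, \dots, \len{T_w}\}$ of non-negative integers. Among all representations of $Q$ of this shape I would choose one whose index $\cJ(\omega) = (\alpha(\omega), \beta(\omega))$ is least. This choice is legitimate because, by the third of the three lemmas preceding the theorem, there are only finitely many vectors below any given index, so the lexicographic order on indices is a well-order. I claim this minimal representation satisfies Properties 1--10, and I would prove each one by contradiction: assuming a property fails, I would exhibit a surgery on the product $\prod \ol{T}_i R_i T_i$ producing a new representation of $Q$ of the same shape whose index is strictly smaller, contradicting minimality.

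The two combinatorial engines are exactly the first two lemmas before the theorem. The first lemma says that replacing a single entry $k_i$ by at most five entries each strictly smaller than $k_i$ lowers $\alpha$, hence the index: concretely, one conjugate $\ol{T}_i R_i T_i$ may be rewritten as a product of at most five conjugates with strictly shorter conjugating words. The second lemma says that transposing an adjacent descending pair $k_{i-1} > k_i$ leaves $\alpha$ fixed and lowers the defect sum $\beta$; thus whenever the analysis permits reordering two conjugates so that the shorter conjugator comes first, the index drops. The crucial point is that the hypothesis $K_{2/11}$ is precisely calibrated to this budget: since each nontrivial overlap consumes more than $\frac{2}{11}$ of a relator and $\frac{11}{2} < 6$, any relator can be cut by such overlaps into fewer than six subwords, which is exactly the ``at most five shorter pieces'' allowed by the first lemma.

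I would then proceed through the properties roughly in their stated order, since the later ones reuse the earlier ones. Property~1 is the base move: if some $\ol{T}_i R_i T_i$ is not reduced, the cancellation can be absorbed into $T_i$ (or $\ol{T}_i$), strictly shortening that conjugator and lowering $\alpha$, so minimality forces each $\ol{T}_i R_i T_i$ to be reduced. Properties~2 through~6 bound how much of a relator $R_k$, or of a half of it, can reappear inside a conjugating word $T_i$ or inside a neighbouring relator; in each case an illegal reappearance gives a cancellation pattern that either shortens some $T_i$ or splits one conjugate into at most five shorter ones, again decreasing the index. Properties~7 through~10 govern the finer interaction between two distinct conjugates $\ol{T}_p R_p T_p$ and $\ol{T}_k R_k T_k$ together with their intervening block $[p+1,k-1]$ during a chosen reduction of the whole product; here the violating configurations are turned either into length-reducing surgeries or into admissible transpositions handled by the second lemma.

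The main obstacle is not the extremal framework, which is routine once the index is set up, but the detailed small cancellation diagram analysis behind each individual reduction: one must verify, with the exact fraction $\frac{2}{11}$, that every violating configuration really does yield a surgery staying within the five-piece budget and does not secretly inflate the index at some other coordinate, for instance by lengthening a different conjugator or by creating new descending pairs. Keeping this bookkeeping consistent across the interdependent Properties~7--10, where several conjugates and the block $[p+1,k-1]$ interact simultaneously, is where the genuine difficulty lies.
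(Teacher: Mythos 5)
Your proposal follows the paper's proof exactly: the paper likewise starts from the Dyck representation, orders representations of $Q$ by the index $\cJ$ of the vector of conjugator lengths, and for each of Properties 1--10 establishes the disjunction that either the property holds or there is a representation of strictly smaller index --- the index being lowered either by splitting one conjugate $\ol{T}_iR_iT_i$ into at most five conjugates with shorter conjugating words (Lemma~1, e.g.\ Properties 4 and 5 each produce exactly five factors) or, in Property 10, by a transposition that preserves $\alpha$ and lowers $\beta$ (Lemma~2) --- and then concludes by the well-foundedness guaranteed by Lemma~3. The one slip is your heuristic that a nontrivial overlap consumes \emph{more} than $2/11$ of a relator: the $K_{2/11}$ condition asserts the opposite (reduction removes \emph{fewer} than $2/11$ of the letters), and the five-piece budget of Lemma~1 is calibrated to the particular surgeries rather than to that fraction, which enters this theorem only in deducing Properties 6 and 7 from Properties 3 and 5.
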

\begin{proof}
Any non-empty word $Q$ can be written of the form $Q \equiv \prod_{i=1}^{w} \ol{T}_{i} R_{i} T_i$ (by the theorem of Dyck). Let the vector $(\len{T_1}, \len{T_2}, \dots, \len{T_w})$ be the vector of the lengths of the words $T_1, T_2, \dots, T_w$, and suppose its index is $\cJ$. 

Property $k$ ($k = 1, 2, \dots, 10$) of Theorem~1 will follow from Lemma~3 and the proof of the following disjunction: either the representation $Q \equiv \prod_{i=1}^{w} \ol{T}_{i} R_{i} T_i$ satisfies Property~$k$; or there is some representation $Q \equiv \prod_{i=1}^{w} \ol{T}'_{i} R'_{i} T'_i$, such that the index of the lengths of the words $T_1', T_2', \dots, T_t'$ is less than the index $\cJ$. 

\setcounter{property}{0}
\begin{property}
For any $i = 1, 2, \dots, w$ the word $\ol{T}_{i} R_{i} T_i$ is reduced. 

The words $\ol{T}_{i}$, $R_{i}$, and $T_i$ are reduced. If $\ol{T}_{i} R_{i} T_i$ is a reducible word, then $\ol{T}_i \gre \ol{T}_i' a^\varepsilon$, $R_i \gre a^{-\varepsilon}X$, and $T_i \gre a^{-\varepsilon} T_i'$, from which we have $\ol{T}_{i} R_{i} T_i \equiv \ol{T}_i' X a^{-\varepsilon} T_i'$, where $Xa^{-\varepsilon}$ is a reduced word. In this way, we get in the free group the equality $Q \equiv [1, i-1]\ol{T}_i' X a^{-\varepsilon} T_i'[i+1, w]$, and
\[
\alpha\{ \len{T_1}, \len{T_2}, \dots, \len{T_w} \} > \alpha \{ \len{T_1}, \len{T_2}, \dots, \len{T_{i-1}}, \len{T_i'}, \len{T_{i+1}}, \dots, \len{T_w}\}. 
\]
In this way, we can write a new representation for $Q$ with a smaller parameter $\alpha$ for the vector of the lengths of the words $T_1, T_2, \dots, T_w$, and consequently, with a smaller index. 
\end{property}

\begin{property}
Let $R_k \gre U\ol{V}$ and $\len{U} > \frac{1}{2} \len{R_k}$. Then neither $\ol{T}_i$, nor $T_i$ $(i = 1, 2, \dots, w)$ contains the word $U$. 

Suppose $\ol{T_i} \gre XUY$ and $\len{U} > \len{V}$. Then 
\[
\ol{T}_i R_i T_i \gre XUYR_i\ol{Y}\ol{U}\ol{X} \equiv XU\ol{V}\ol{X} \cdot XVYR_i\ol{Y}\ol{V}\ol{X} \cdot XV\ol{U}\ol{X}.
\]
Notice that $V\ol{U}$ is a reduced word, and $\len{X} < \len{T_i}$, $\len{XVY} < \len{T_i}$. Hence, we can write a new presentation for $Q$ with a smaller parameter $\alpha$ for the vector of the lengths of the words $T_1, T_2, \dots, T_w$. 
\end{property}

\begin{property}
If $\ol{R}_k$ and $R_i$ are not mutually inverse, and $R_k \gre ZXY$, $R_i \gre ZK$, $\ol{T}_i \gre NY$, then $\len{X} \geq \len{Y}$. 

Suppose $\len{X} < \len{Y}$. Then 
\[
\ol{T}_i R_i T_i \gre NYZK\ol{Y}\ol{N} \equiv NYZX\ol{N} \cdot N\ol{X}KZX\ol{N} \cdot N\ol{X}\ol{Z}\ol{Y}N.
\]
Notice that $YZK$, $\ol{X}\ol{Z}\ol{Y}$ and $KZ$ are defining words, and $\len{N} < \len{T_i}$, $\len{NX} < \len{T_i}$. 
\end{property}

\begin{property}
If $R_k \gre BAX$ and $R_s \gre \ol{X}CD$ are not mutually inverse, then in $\ol{T}_i$ there is a word $AC$, such that $\len{B} + \len{D} \geq \len{A}+\len{C}$. 

Suppose $\len{B} + \len{D} < \len{A}+\len{C}$. Then 
\begin{align*}
\ol{T}_i R_i T_i &\gre UACVR_i \ol{V}\ol{C}\ol{A}\ol{U} \\
&\equiv UAXV\ol{U} \cdot U\ol{B}\ol{X}CDB\ol{U} \cdot U\ol{B}\ol{D}VR_i\ol{V}DB\ol{U} \cdot U\ol{B}\ol{D}\ol{C}XB\ol{U} \cdot U\ol{B}\ol{X}\ol{A}\ol{U}.
\end{align*}
Notice that $AXB$, $\ol{X}CD$, $\ol{D}\ol{C}X$, $\ol{B}\ol{X}\ol{A}$, are defining words, and $\len{U} < \len{T_i}$, $\len{UB} < \len{T_i}$, $\len{UBDV}<\len{T_i}$. 
 
\end{property}

\begin{property}
If $R_k \gre BAX$ and $R_s \gre \ol{X}CDE$ are not mutually inverse, if $\ol{C}X\ol{E}\ol{D}$ and $R_i \gre DK$ are not mutually inverse, and $\ol{T}_i \gre NAC$, then $\len{E} + \len{B} \geq \len{C} + \len{A}$. 

Suppose $\len{E} + \len{B} < \len{C} + \len{A}$. Then 
\begin{align*}
\ol{T}_i R_i T_i &\gre NACDK\ol{C}\ol{A}\ol{N} \\ 
&\equiv NAXB\ol{N}\cdot N\ol{B}\ol{X}CDEB\ol{N}\cdot  N\ol{B}\ol{E}KDEB\ol{N} \cdot N\ol{B}\ol{E}\ol{D}\ol{C}XB\ol{N} \cdot N\ol{B}\ol{X}\ol{A}\ol{N}.
\end{align*}
Notice that $AXB$, $\ol{X}CDE$, $KD$, $\ol{E}\ol{D}\ol{C}X$, and $\ol{B}\ol{X}\ol{A}$ are all defining words, and $\len{N} + \len{T_i}$, $\len{NB} <\len{T_i}$, and $\len{NBE} <\len{T_i}$. 
\end{property}

\begin{property}
Suppose $R_k \gre U\ol{V}$ and $\len{U} \geq \frac{13}{22} \len{R_k}$. Then the word $U$ can be found inside the word $\ol{T}_iR_iT_i \: (i = 1, 2, \dots, w)$ only as a subword of the word $R_i$. 

Property~$6$ follows from Property~$3$.
\end{property}

\begin{property}
Suppose that $R_s \gre A\ol{X}Y$ and $R_k \gre \ol{Y}\ol{Z}\ol{U}D$ are not mutually inverse. Suppose that $\len{A} < \frac{4}{11}\len{R_s}$, $\len{D} < \frac{4}{11} \len{R_k}$, and $\len{U} < \frac{2}{11}\len{R_k}$. Then $UZK$ is not a subword of any $\ol{T}_iR_iT_i \: (i = 1, 2, \dots, w)$ such that $T_i \gre ZXT_i'$, $R_i \gre R_i' U$, and $\len{U} < \frac{2}{11}\len{R_i}$. 

Suppose the opposite. Then $\len{XY} > \frac{7}{11} \len{R_s}$, $\len{ZUY} > \frac{7}{11} \len{ R_k}$, $\len{X} > \frac{5}{11} \len{R_s}$, $\len{ZU} > \frac{5}{11} \len{R_k}$, and $\len{Z} > \frac{3}{11}\len{R_k}$. There are two cases to consider. 
\begin{enumerate}
\item Suppose $\len{R_k} \leq \len{R_s}$. Then 
\[
\len{Z} + \len{X} > \frac{3}{11}\len{R_k} + \frac{5}{11}\len{R_s} \geq \frac{4}{11}\len{R_k} + \frac{4}{11}\len{R_s} > \len{A} + \len{D}.
\]
But this contradicts Property~5. 
\item Suppose $\len{R_k} - \len{R_s} = \delta > 0$. Then 
\begin{align*}
\len{Y} &< \frac{2}{11}\len{R_s} = \frac{2}{11}\left( \len{R_k} - \delta \right). \\
\len{Z} &> \frac{3}{11} \len{R_k} + \frac{2}{11} \delta. \\
\len{Z} + \len{X} &> \frac{3}{11} \len{R_k} + \frac{2}{11}\delta + \frac{5}{11}\len{R_s} = \frac{4}{11}\len{R_k} + \frac{1}{11}\delta + \frac{4}{11}\len{R_s} > \len{A} + \len{D}.
\end{align*}
But this contradicts Property~5. 
\end{enumerate}
\end{property}

\begin{property}
If $R_p \gre XYZ$, $R_k \gre U\ol{Y}V$, $p < k$ and for some way of reducing $\prod_{i=1}^{w} \ol{T}_{i} R_{i} T_i$ the subword $Y$ of the word $R_p$ is reduced by the subword $\ol{Y}$ of the word $R_k$, then the word $ZXY$ and $\ol{Y}VU$ are not mutally inverse. 

Suppose $ZXY$ and $\ol{Y}VU$ are mutually inverse. Then $UZXV \equiv 1$. We have 
\[
[p+1, k-1] \equiv \ol{T}_p \ol{Z}\ol{U}T_k,
\]
and thus we have 
\[
[p, k] \equiv \ol{T}_p XVT_k \equiv \ol{T}_p\ol{Z}\ol{U}T_k \cdot \ol{T}_k UZXVT_k \equiv [p+1, k-1].
\]
\end{property}

\begin{property}
If $R_p \gre XYZ$, $\ol{T}_{k} \gre U\ol{Y}V$, $p< k$ and for some way of reducing $\prod_{i=1}^{w} \ol{T}_{i} R_{i} T_i$ the subword $Y$ of the word $R_p$ reduces with the subword $\ol{Y}$ of the word $\ol{T}_k$, then $\len{T_p} \leq \len{U}$, and $\len{T_p} < \len{T_k}$. 
\[
\dots \ol{T}_p X\tm{MA}YZT_p \dots U\ol{Y\tm{MB}}VR_k \ol{V} Y \ol{U}
\dla{0.8}{}{MA}{MB}
\]\vspace{0.2cm}\\
We have $[p+1, k-1] \equiv \ol{T}_p \ol{Z} \ol{U}$, and 
\begin{align*}
[p, k] &\equiv \ol{T}_p XVR_kT_k \\
&\equiv \ol{T}_p \ol{Z}\ol{U} \cdot UZXY\ol{U} \cdot U\ol{Y}VR_kT_k \\
&\equiv [p+1, k-1] \: UZXY\ol{U} \cdot \ol{T}_k R_k T_k.
\end{align*}
Notice that $ZXY$ is a defining word, from which we have $\len{U} \geq \len{T_p}$, and since $Y$ is a non-empty word, thus $\len{T_k} > \len{T_p}$. 
\end{property}

\begin{property}
If $T_p \gre AB$, $\ol{T}_j \gre CD$, where $p<j$, and for some way of reducing $\prod_{i=1}^{w} \ol{T}_{i} R_{i} T_i$ the word $B[p+1, j-1]C$ is absorbed, without reducing anything, then either $[p+1, j-1] \equiv 1$, or $p+1 = j$, or there is some $k$, with $p < k < j$, such that $\len{T_k} \geq \len{T_p}$. 

We have $[p+1, j-1] \equiv \ol{B}\ol{C}$, and that 
\begin{align*}
[p, j] &\equiv \ol{B}\ol{A}R_pADR_j\ol{D}\ol{C} \\
&\equiv \ol{B}\ol{C} \cdot C\ol{A}R_pA\ol{C} \cdot CDR_j\ol{D}\ol{C} \\
&\equiv \ol{B}\ol{A}R_pAB \cdot \ol{B}DR_j\ol{D}B\cdot \ol{B}\ol{C}.
\end{align*}
If $\len{B}>\len{C}$, then we consider the penultimate presentation; if $\len{B}<\len{C}$, then we consider the final presentation. If $\len{B} = \len{C}$ and $\len{T_k} < \len{T_p}$ for all $k$, where $p < k < j$, then the presentation 
\[
[p+1, j-1] \: C\ol{A}R_pA\ol{C}\ol{T}_jR_jT_j
\]
has the same parameter $\alpha$ as the presentation $[p, j]$, but a smaller parameter $\beta$, i.e. it has smaller index. 
\end{property}
\end{proof}

\section{Solving the identity problem for $K_{2/11}$-groups}

\begin{definition}
Consider the word $W \gre \prod_{i=1}^w \ol{T}_i R_i T_i$. The number of factors of the form $\ol{T}_q R_q T_q$ in the word $W$, i.e. the number $w$, will be called the $L$-\textit{length} of the word $W$, and will be denoted by $L[W]$. 
\end{definition}

\begin{definition}
Suppose that when shortening the word $[i, j]$ there is some $R_k$, where $i \leq k \leq j$, such that when shortening with the subwords of $[i, j]$ this word loses fewer letters than $\frac{9}{22}$ times its length. Then the word $X$, which remains of $R_k$ as the result of reducing, will be called a \textit{major trace} of the word $[i, j]$.

Suppose that when reducing the word $[i, j]$ there are words $R_{k_1}$ and $R_{k_2}$, where $i \leq k_1 < k_2 \leq j$, such that the two words cancel each other. Furthermore, suppose that each of these words, when reduced with the subwords of $[i, j]$ (not counting the reductions between $R_{k_1}$ and $R_{k_2}$) has fewer letters than $\frac{4}{11}$ times its length removed. Then the result of reducing $R_{k_1}$ will produce a subword $X_1$, and of $R_{k_2}$ a subword $X_2$. Then the word $X_1X_2$ will be called a \textit{minor trace} of the word $[i, j]$. 

Suppose that when reducing the word $[i, j]$, where 
\[
i = s_0 < z_1 < s_1 < z_2 < s_2 < \dots < z_{t-1} < s_{t-1} < z_t < s_t = j, \quad t > 3,
\]
we have that the word $R_{s_0}$ reduces with the word $R_{s_1}$, the word $R_{s_1}$ reduces with the word $R_{s_2}, \dots$, and the word $R_{s_{t-1}}$ reduces with the word $R_{s_t}$; and moreover, any $R_{s_k} (k = 0, 1, 2, \dots, t)$ can be reduced (but not necessarily cancelled) to the right only with $R_{z_{k+1}}$ and $\ol{T}_{s_{k+1}}$, and to the left only with $R_{z_k}$ and $T_{s_{k-1}}$. Suppose further that when reducing the word $[i, j]$ there is no major or minor trace. Then the word which remains after reducing $[i,j]$ will be called a \textit{full trace} of the word $[i, j]$. 
\end{definition}

\begin{definition}
When reducing some word $S_1 V_1 S_2 V_2 S_3$, we will say that the highlighted subword $V_1$ reduces with the highlighted subword $V_2$ if we have that $V_1 \gre AXB$, $V_2 \gre C\ol{X}D$, where $X$ is a non-empty word, $BS_2C \equiv 1$, and that when carrying out this reduction, the word $BS_2C$ is first reduced, and then $X$ is reduced with $\ol{X}$. We will denote this reduction by the arc
\[
S_1 \tm{MA}V_1S_2V\tm{MB}_2 S_3
\dla{0.4}{}{MA}{MB}
\]
This definition naturally defines to a larger number of reductions, i.e. we have as an example the word with two arcs
\[
S_1 \tm{MA}V_1S_2V\tm{MB}_2 S_3 V\tm{MC}_3 S_4
\dla{0.4}{}{MA}{MB}
\dla{0.8}{}{MA}{MC}
\]\\
which denotes that when reducing the word $S_1 V_1 S_2 V_2 S_3 V_3 S_4$, the subword $V_1$ reduces with the highlighted word $V_2$, and the highlighted subword $V_1$ reduces with the highlighted subword $V_3$. 
\end{definition}

We will consider a number of statements regarding possible ways of reducing a word of the form $W \gre \prod_{i=1}^w \ol{T}_i R_i T_i$, where the product $\prod_{i=1}^w \ol{T}_i R_i T_i$ satisfies Properties~$1-10$, and such that $L[W] \leq m$, where $m$ is some positive integer. 

These statements will concern reductions of $R_i$ on the right. For brevity, we will not formulate the dual statements for reducing $R_i$ on the left, but all these statements are implied, and the proofs of these statements are entirely symmetrical. 

$\fA_m.$ If for any way of reducing $W$ 
\[
\dots \ol{T}_i\tm{MA}R_iT_i \dots \ol{T}_jR\tm{MB}_jT_j \dots \ol{T}_kR\tm{MC}_kT_k
\dla{0.8}{}{MA}{MB}
\dla{1.2}{}{MA}{MC}
\]\vspace{0.2cm}\\
then when reducing the word $[i+1, k-1]$ the word $R_j$ reduces with at most one $R$ to the left and one $R$ to the right. 

$\fB_m$. If for every way of reducing $W$ 
\[
\dots \ol{T}_i \tm{MA} R_i T_i \dots \ol{T}_j R\tm{MB}_j \dl{0.8}{MA}{MB} T_j \dots \ol{T\tm{MB}}_k R_k\dl{1.2}{MA}{MB} T_k \dots
\]
\vspace{0.3cm}\\
then when reducing $[i+1, k-1]$, the word $R_j$ reduces with at most one $R$. 

$\fC_m$. We cannot have the reduction
\[
\dots \ol{T}_i \tm{MA} R_i T_i \dots \ol{T}_j R\tm{MB}_j \dl{0.8}{MA}{MB} T_j \dots \ol{T\tm{MB}}_k R_k\dl{1.2}{MA}{MB} T_k \dots \ol{T}_p R\tm{MB}_p\dl{1.6}{MA}{MB} T_p \dots
\]
\vspace{0.4cm}\\
when reducing the word $W$ in any way; and we cannot have  the reduction
\[
\dots \ol{T}_i \tm{MA} R_i T_i \dots \ol{T}_j R\tm{MB}_j \dl{0.8}{MA}{MB} T_j \dots \ol{T\tm{MB}}_k R_k\dl{1.2}{MA}{MB} T_k \dots \ol{T\tm{MB}}_p \dl{1.6}{MA}{MB} R_p T_p \dots
\]
\vspace{0.4cm}\\
when reducing the word $W$ in any way. 

$\fD_m$. We cannot have the reduction
\[
\dots \ol{T}_i \tm{MA}R_i T_i \dots \ol{T}_j R_j  T\tm{MB}_j \dl{0.8}{MA}{MB}\dots
\]
\vspace{0cm}\\
when reducing the word $W$ in any way. 

$\fE_m$. We cannot have 
\[
\dots \ol{T}_i \tm{MA}R_i T_i \dots \ol{T\tm{MB}}_j\dl{0.8}{MA}{MB} R_j T_j \dots [k\tm{MB}, p]\dl{1.2}{MA}{MB} \dots
\]
\vspace{0.2cm}\\
when reducing the word $W$ in any way. 

$\fF_m$. We cannot have 
\[
\dots \ol{T}_i \tm{MA}R_i T_i \dots \ol{T}_j R\tm{MB}_j T_j \dots \ol{T}_k \tm{MC}R_k T_k \dots \ol{T}_p R\tm{MD}_p T_p \dots
\dla{0.8}{}{MA}{MB}
\dla{0.8}{}{MC}{MD}
\dla{1.6}{}{MA}{MD}
\]
\vspace{0.2cm}\\

$\fG_m$. If $R_i$ reduces with $\ol{T}_j R_j$, where $i < p < j$, then 
\[
\len{T_p} < \len{T_j} \qquad \text{and} \qquad \len{T_p} < \len{T_i}.
\]
\[
\dots \ol{T}_i \tm{MA}R_i T_i \dots \tm{MC}\ol{T\tm{MB}\dl{0.8}{MA}{MB}}_j R_j  T\tm{MD}_j \dl{0.6}{MC}{MD}\dots
\]
\vspace{0cm}\\

$\fH_m$. For all ways of reducing the word $W$, there is a trace in the resulting word.

\begin{lemma}
$(\fA_m, \fB_m, \fC_m, \dots, \fH_m) \implies \fA_{m+1}$.
\end{lemma}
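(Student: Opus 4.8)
The plan is to treat this as the key inductive step in the simultaneous induction on $m$ by which the whole family $\fA_m, \fB_m, \dots, \fH_m$ is established; at this point all eight statements at level $m$ are available, and the task is to upgrade $\fA$ to words whose $L$-length is one greater. Since $\fA_{m+1}$ restricted to words $W$ with $L[W] \le m$ is literally the already-granted statement $\fA_m$, the only genuinely new case is $L[W] = m+1$. So I would fix such a word $W \gre \prod_{i=1}^{w} \ol{T}_i R_i T_i$ with $w = m+1$ satisfying Properties~$1$--$10$, fix a way of reducing it that realizes the hypothesis of $\fA$ (the region $R_i$ reducing with both $R_j$ and $R_k$, where $i < j < k$), and show that in the induced reduction of $[i+1,k-1]$ the region $R_j$ reduces with at most one $R$ on each side. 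By the dual symmetry remarked upon before the statement of $\fA_m$, it is enough to bound the right side.

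The structural observation that makes the induction close is that the subword $[i+1,k-1]$ has $L$-length $k-i-1 \le (m+1)-2 < m$, and, being a contiguous block of the canonical product, inherits Properties~$1$--$10$ from $W$. Consequently the \emph{entire} package $\fA_m,\dots,\fH_m$ is at our disposal for $[i+1,k-1]$ and for each of its subwords $[p,q]$, so every configuration internal to $[i+1,k-1]$ is governed by the level-$m$ statements.

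I would then argue by contradiction, supposing that $R_j$ reduces with two distinct regions $R_{p_1}, R_{p_2}$ on the right. First I would invoke the non-crossing principle (of which $\fF_m$ is the relevant instance) to confine $p_1, p_2$ strictly between $j$ and $k$: since the arc $R_i$--$R_k$ encloses $R_j$, a reduction of $R_j$ with any region outside the block $[i,k]$ would produce the forbidden crossing configuration of $\fF_m$. This leaves a configuration in which $R_i$ reduces with $R_j$ on the left of $R_j$ while $R_j$ reduces with $R_{p_1}$ and $R_{p_2}$ on its right, all inside the block spanned by $R_i$--$R_k$. I would then match this to a forbidden pattern: either the triple reduction out of $R_j$ falls under $\fC_m$ (after checking, via $\fD_m$ and $\fE_m$, that the intermediate cancellations are with relators $R$, and not with some $\ol{T}$ or some spanning block $[k,p]$), or, if a combinatorial exclusion is unavailable because an intervening trace is short, the length inequalities of $\fG_m$ together with Properties~$3$, $5$ and $7$ force the portion of $R_j$ consumed by $R_i$, $R_{p_1}$ and $R_{p_2}$ to exceed the $2/11$ small-cancellation budget of $R_j$, which is impossible.

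The main obstacle is precisely this last matching: organising the case analysis so that the hypothesised double-reduction of $R_j$ is always captured by one of $\fB_m, \fC_m, \fF_m$, while simultaneously maintaining the metric estimates (the $2/11$ condition propagated through Properties~$3$--$7$) to dispose of the borderline cases where a trace between $R_j$ and $R_{p_2}$ is too short for the purely combinatorial forbidden-pattern argument to apply. Coordinating the planar (non-crossing) bookkeeping with these length budgets, and verifying at each turn that the relevant sub-configuration genuinely lies inside a block of $L$-length $\le m$ so that the inductive package may be applied, is where the care is required.
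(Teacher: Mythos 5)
Your frame is right as far as it goes: the genuinely new content is $L[W]=m+1$, the block $[i+1,k-1]$ has $L$-length at most $m$, and the level-$m$ package is available there. But the engine of the paper's proof is absent from your sketch. The argument does not proceed by assuming $R_j$ reduces with two $R$'s on the right and hunting for a forbidden pattern centred on $R_j$; it proceeds by applying $\fH_m$ to $[i+1,k-1]$ to guarantee that some trace (major, minor, or full) survives any internal reduction of that block, and then exploiting the fact that, since $R_i$ reduces with $R_k$ across the top, the whole of $[i+1,k-1]$ --- trace included --- must be completely absorbed by the flanking words $R_iT_i$ and $\ol{T}_kR_k$. The contradictions then come from Properties 6 and 7: the available reductions consume too \emph{little} of the trace-carrying relator for it to vanish. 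Your stated mechanism --- that the portions of $R_j$ consumed by $R_i$, $R_{p_1}$, $R_{p_2}$ ``exceed the $2/11$ budget'' --- is backwards: the $2/11$ condition caps each piece individually, so three such reductions consume less than $6/11$ of $R_j$, and the difficulty is precisely that this is not enough for complete absorption, forcing further reductions whose patterns are then excluded by $\fC_m$ and its dual.

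Because your contradiction hypothesis concerns only $R_j$, you cover only the easiest configuration in the paper (Case IB2 with $j<s<t$, where the trace is carried by $R_j$ itself; this is disposed of in one line via Property 6 and $\fC_m$). The bulk of the proof --- Case IB3 with its sub-cases $\alpha$, $\beta 1$--$\beta 4$, $\gamma$, and Cases II and III --- deals with traces carried by relators $R_v$ with $v\ne j$ and with minor and full traces; these must all be shown impossible before one can conclude anything about $R_j$, and your plan does not address them. Moreover, the hardest sub-cases require passing to recombined words such as $[i,s][j+1,k]$ and $[s,j-1][j+1,t]$, which are not contiguous blocks of $W$; your assertion that the relevant subwords ``inherit Properties 1--10'' by restriction does not cover these, and the paper devotes a substantial passage to re-verifying all ten properties for them before the level-$m$ statements may be invoked on them.
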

\begin{proof}
Suppose that for any way of reducing the word $W$, where we have $L[W] \leq m+1$, there are the following reductions. 
\[
\dots \ol{T}_i R_i T_i \dots \ol{T}_j R_j T_j \dots \ol{T}_k R_k T_k \dots
\]
Then $[i+1, k-1] \not\equiv 1$, and by $\fH_m$ for any reduction of $[i+1, k-1]$ there is a trace of this word. 

\noindent\textbf{Case I.} When reducing $[i+1, k-1]$, there is a major trace. \\

\noindent\textbf{Case IA.} When reducing $[i+1, k-1]$ there is a $R_v$ which reduces with at most one $R$. If $v < j$, then the word $R_v$ cannot reduce with $\ol{T}_kR_k$ and by Property~6, cannot be entirely reduced when reducing the word $[i, k]$. If $v = j$, then statement $\fA_{m+1}$ holds true. If $v>j$, then the word $R_v$ cannot reduce with $T_i$, and therefore cannot be entirely reduced when reducing the word $[i, k]$.\\ 

\noindent\textbf{Case IB.} When reducing $[i+1, k-1]$ there will be some $R_v$, which reduces with $R_s$ and $R_t$, where $s<t$. \\

\noindent\textbf{Case IB1.} $v<j$. The word $R_v$ cannot reduce with $\ol{T}_kR_k$, and by Property~6 cannot be entirely reduced when reducing the word $[i, k]$. \\

\noindent\textbf{Case IB2.} $v=j$. If $s<t<j$, then the resulting reductions contradict the dual of $\fC_m$. If $j<s<t$, then to be completely reduced in $[i, k]$ (by Property~6) $R_j$ must reduce with $\ol{T}_kR_k$, which contradicts $\fC_m$. If $s<j<t$, then statement $\fA_{m+1}$ holds. \\

\noindent\textbf{Case IB3.} $v>j$. If $s<t<v$, then for the word $R_v$ to be entirely reduced when reducing $[i, k]$, the word $R_v$ needs to reduce with $\ol{T}_kR_k$, which contradicts $\fC_m$. 

Suppose $s<v<t$. The word $R_v$ must reduce with $R_i$, from which by $\fF_m$ we have $s=j$. Furthermore $R_v$ must reduce with $\ol{T}_k$ and $R_k$ (the reduction of $R_v$ and $R_k$ can be empty). 

In this way, we have the following reduction (for convenience, some indices are changed). 
\[
\dots \ol{T}_i  \tm{MA}R_i T_i \dots \ol{T}_s \tm{MC}R\tm{MB}_s T_s \dots \ol{T}_j \tm{ME}R\tm{MD}_j T_j \dots \ol{T}_t R\tm{MF}_t T_t \dots \ol{T}\tm{MG}_k  R\tm{MH}_k T_k \dots 
\dla{0.8}{A}{MA}{MB}
\dla{0.8}{B}{MC}{MD}
\dla{1.2}{C}{MA}{MD}
\dla{0.8}{D}{ME}{MF}
\dla{1.2}{E}{ME}{MG}
\dla{1.7}{F}{ME}{MH}
\dla{2.4}{G}{MA}{MH}
\]
\vspace{1.2cm}\\
Suppose that the words $R_i$ and $R_s$ reduce by the word $A$ (this means that $R_i \gre X_1 A X_2$, and $R_s \gre Y_1 \ol{A}Y_2$ and for the given reduction the subword $A$ of $R_i$ reduces with the subword $\ol{A}$ of $R_s$), and that $R_s$ and $R_j$ reduce. \\

\noindent\textbf{Case IB3$\alpha$.} The word $R_t$ does not reduce when reducing the word $[j+1, k-1]$. 

In this case, to be entirely reduced when reducing $[j,k]$, $R_t$ will be reduced with $T_j$ (say with the word $H$) and with $\ol{T}_k$ (say with the word $J$). Let $T_i$ and $R_s$ reduce (if they reduce) by the word $K$, and the words $R_s$ and $\ol{T}_j$ (if they reduce) by the word $L$. 
\[
\dots \ol{T}_i \tm{MA}R_i \tm{MB}T_i \dots \ol{T}_s \tm{MD}R\tm{MC}_s T_s \dots \ol{T\tm{ME1}}_j \tm{MF}R\tm{ME}_j \tm{MG}T_j \dots \ol{T}_t \tm{MI}R\tm{MH}_t T_t\dots \ol{T\tm{MJ}}_k R\tm{MK}_k T_k\dots
\dla{0.6}{K}{MB}{MC}
\dla{0.6}{L}{MD}{ME1}
\dla{1.0}{A}{MA}{MC}
\dla{1.0}{B}{MD}{ME}
\dla{2.0}{}{MA}{ME}
\dla{1.0}{D}{MF}{MH}
\dla{0.6}{H}{MG}{MH}
\dla{0.6}{J}{MI}{MJ}
\dla{2.0}{}{MF}{MK}
\dla{1.6}{}{MF}{MJ}
\dla{3.0}{}{MA}{MK}
\]
\vspace{1.4cm}\\
By Property~1, we have $\len{H} \geq \frac{4}{11}\len{R_t}$. 

We will prove that $\len{L} < \frac{2}{11}\len{R_s}$ and $\len{L} < \frac{2}{11}\len{R_t}$. Indeed, otherwise $R_s$ and $\ol{R}_t$ have a common piece, containing at least $\frac{2}{11}$ letters of one of them, and therefore $R_s$ comes from $\ol{R}_t$ by a cyclic permutation. Thus,
\[
R_s \gre X_1 \ol{a} \Phi Y_1, \qquad R_t \gre X_2 \ol{\Phi} \ol{b} Y_2,
\]
where the word $\Phi$ is the shared piece of the words $R_s$ and $\ol{R}_t$, $a$ is the first letter of $R_j$, and $b$ is the final letter of $R_j$. As $\Phi Y_1 X_1 \ol{a} \gre \Phi \ol{X}_2 \ol{Y}_2 b$, we have $\ol{a} \gre b$, and hence the word $R_j$ is cyclically reduced.

By the fact that $\len{L} < \frac{2}{11}\len{R_s}$, and Property~6, to be entirely reduced in $[i, j]$ the word $R_s$ must be reduced via reduction with some $R_p$, where $i < p < j$. 

Suppose $p<s$. Then by statement $\fG_m$ and Property~10 we have $s+1 = j$. \\
\[
\dots \ol{T}_i \tm{MA}R_i \tm{MB}T_i \dots \ol{T}_p \tm{MD}R\tm{MC}_p T_p \dots \tm{MF}\ol{T\tm{ME}}_s \tm{MH}R\tm{MG}_s T_s \ol{T\tm{MI}}_j \tm{MK}R\tm{MJ}_j \tm{ML}T_j \dots \ol{T}_t R\tm{MM}_t T_t\dots \ol{T\tm{MN}}_k R\tm{MO}_k T_k\dots
\dla{2.0}{A}{MA}{MG}
\dla{0.6}{N}{MB}{MC}
\dla{1.2}{K}{MB}{MG}
\dla{0.6}{}{MD}{ME}
\dla{0.9}{}{MD}{MG}
\dla{0.6}{}{MH}{MI}
\dla{0.9}{}{MH}{MJ}
\dla{2.4}{}{MA}{MJ}
\dla{0.9}{}{MK}{MM}
\dla{2.4}{}{MK}{MO}
\dla{0.6}{H}{ML}{MM}
\dla{1.5}{}{MK}{MN}
\dla{4.0}{}{MA}{MO}
\]
\vspace{2.3cm}\\
The word $R_p$ must be reduced via reduction with $T_i$ and $\ol{T}_s$, as by $\fH_m$ the word $R_p$, when reducing $[i+1, s-1]$, can be reduced with at most one $R$. 

Suppose $R_p$ and $\ol{T}_s$ reduce with the word $P$, and $T_i$ and $R_p$ with the word $N$. We will prove that $\len{P} < \frac{2}{11}\len{R_p}$ and $\len{P} < \frac{2}{11}\len{R_t}$. Indeed, otherwise $R_p$ and $\ol{R}_t$ have a common piece, containing not less than $\frac{2}{11}$ letters of one of them. $R_p \gre X_3 \ol{c} \Phi_1 Y_3$, $R_t \gre X_4 \ol{\Phi}_1 \ol{d} Y_4$, where $c$ is the first letter of $R_s$, and $d$ is the final letter of $R_s$. As $\Phi_1 Y_3 X_3 \ol{c} \gre \Phi_1 \ol{X}_4 \ol{Y}_4 d$, we have $\ol{c} \gre d$, and the word $R_s$ is cyclically reduced. 

In this way, we have that $\len{ NU} > \frac{7}{11}\len{R_p}$ and $\len{UKA} > \frac{7}{11}\len{R_s}$, which contradicts Property~7. 

Suppose $p > s$. Then by statement $\fG_m$ and Property~10 we have $i+1 = s$. 

\[
\dots \ol{T}_i \tm{MA}R\tm{MB}_i \tm{MC}T_i \ol{T}_s \tm{MD}R\tm{ME}_s \tm{MF}T_s \dots \ol{T}_p \tm{MG}R\tm{MH}_p T_p \dots \ol{T\tm{MJ}}_j \tm{MK}R\tm{ML}_j \tm{MM}T_j \dots \ol{T}_t \tm{MN}R\tm{MO}_t T_t\dots \ol{T\tm{MP}}_k R\tm{MQ}_k T_k\dots
\dla{0.9}{A}{MA}{ME}
\dla{2.4}{}{MA}{ML}
\dla{4.0}{}{MA}{MQ}
\dla{0.6}{K}{MC}{ME}
\dla{1.0}{U}{MD}{MH}
\dla{1.4}{L}{MD}{MJ}
\dla{1.8}{B}{MD}{ML}
\dla{0.6}{P_1}{MF}{MH}
\dla{0.6}{N_1}{MG}{MJ}
\dla{0.9}{}{MK}{MO}
\dla{1.4}{}{MK}{MP}
\dla{1.8}{}{MK}{MQ}
\dla{0.6}{H}{MM}{MO}
\dla{0.6}{}{MN}{MP}
\]
\vspace{2.0cm}\\

The word $R_p$ must reduce with $T_s$ and $\ol{T}_j$, as by $\fB_m$ the word $R_p$, when reducing with $[s+1, j-1]$, can reduce with at most one $R$. 

Suppose $T_s$ and $R_p$ reduce with the word $P_1$, and $R_p$ and $\ol{T}_j$ with the word $N_1$. We will prove that $\len{N_1} < \frac{2}{11} \len{R_p}$ and $\len{N_1} < \frac{2}{11}\len{R_t}$. Indeed, otherwise $R_p$ and $\ol{R}_t$ have a common piece, containing not less than $\frac{2}{11}$ letters of one of them. $R_p \gre X_s \ol{a}\Phi_2Y_s$, $R_t \gre X_6 \ol{\Phi}_2 \ol{b} Y_6$, where $R_s \gre X_7abY_7$. As $\Phi_2 Y_5 X_5 \ol{a} \gre \Phi \ol{X}_6 \ol{Y}_6 b$, we have $\ol{a} \gre b$, i.e. the word $R_s$ is reduced. 

In this way, $\len{UP_1} > \frac{7}{11}\len{R_p}$, which contradicts Property~6. 

If when reducing $[i+1, j-1]$, the word $R_s$ reduces with two $R$, then we have a contradiction much the same way as in the case just considered. \\

\noindent\textbf{Case IB3$\beta$.} When reducing $[j+1, k-1]$ the word $R_t$ reduces with $R_p$, where $p <t$. 

By statement $\fG_m$ and Property~10 we have $t+1 = k$. 
\[
\dots \ol{T}_i \tm{MA}R_i \tm{MB}T_i \dots \ol{T}_s \tm{MC}R\tm{MD}_s T_s \dots \ol{T\tm{ME}}_j \tm{MF}R\tm{MG}_j \tm{MH}T_j \dots \ol{T}_p \tm{MI}R\tm{MJ}_p T_p \dots \ol{T\tm{MK}}_t \tm{ML}R\tm{MM}_t T_t \ol{T\tm{MN}}_k R\tm{MO}_k T_k\dots
\dla{0.9}{A}{MA}{MD}
\dla{1.6}{}{MA}{MG}
\dla{4.0}{}{MA}{MO}
\dla{0.6}{K}{MB}{MD}
\dla{0.6}{L}{MC}{ME}
\dla{0.9}{B}{MC}{MG}
\dla{1.6}{}{MF}{MM}
\dla{2.0}{}{MF}{MN}
\dla{2.4}{}{MF}{MO}
\dla{0.6}{P}{MH}{MJ}
\dla{1.4}{H}{MH}{MM}
\dla{0.6}{Q}{MI}{ML}
\dla{0.9}{S}{MI}{MM}
\dla{0.6}{J}{ML}{MN}
\]
\vspace{2.6cm}\\

Suppose $T_i$ and $R_s$ reduce with the word $K$; $R_s$ and $\ol{T}_j$ with the word $L$; $T_j$ and $R_p$ with the word $P$; and $R_p$ and $\ol{T}_t$ with the word $Q$. \\

\noindent\textbf{Case IB3$\beta$1.} When reducing $[i+1, j-1]$ the word $R_s$ does not reduce. 

Then $\len{L} > \frac{2}{11}\len{R_s}$, $\len{HS} \geq \frac{4}{11}\len{R_t}$, as otherwise we do not have Property~7, from which $\len{H} > \frac{2}{11}\len{R_t}$, and we get that $R_j$ is cyclically reduced. \\

\noindent\textbf{Case IB3$\beta$2.} When reducing $[i+1, j-1]$ the word $R_s$ reduces with $R_z$, where $z < s$. 

By statement $\fG_m$ and Property~10 we have $s+1=j$.
\[
\dots \ol{T}_i \tm{MA}R\tm{MB}_i T_i \dots \ol{T}_z \tm{MC}R_z T_z \dots \ol{T}_s \tm{MD}R\tm{ME}_s T_s \ol{T\tm{MF}}_j \tm{MG}R\tm{MH}_j \tm{MI}T_j \dots \ol{T}_p \tm{MJ}R\tm{MK}_p T_p \dots \ol{T\tm{ML}}_t \tm{MM}R\tm{MN}_t T_t \ol{T\tm{MO}}_k R\tm{MP}_k T_k
\dla{1.6}{A}{MA}{ME}
\dla{2.4}{}{MA}{MH}
\dla{1.3}{K}{MB}{ME}
\dla{0.8}{X}{MC}{ME}
\dla{0.4}{L}{MD}{MF}
\dla{0.8}{B}{MD}{MH}
\dla{1.6}{}{MG}{MN}
\dla{2.0}{}{MG}{MO}
\dla{2.4}{}{MG}{MP}
\dla{0.7}{P}{MI}{MK}
\dla{1.4}{H}{MI}{MN}
\dla{0.7}{Q}{MJ}{ML}
\dla{1.0}{S}{MJ}{MN}
\dla{0.7}{J}{MM}{MO}
\]
\vspace{1.2cm}\\
$\len{HS} \geq \frac{4}{11}\len{R_t}$, as otherwise we do not have Property~7, from which we have $\len{H} > \frac{2}{11}\len{R_t}$. Then $\len{L} < \frac{2}{11}\len{R_s}$, $\len{L} < \frac{2}{11}\len{R_t}$, as otherwise $R_j$ is cyclically reduced. 

Consider the word $[i, s][j+1, k]$. 

\[
\dots \ol{T}_i \tm{MA}R_i \tm{MB}T_i \dots \ol{T}_z \tm{MC}R_z T_z \dots \ol{T}_s \tm{MD}R\tm{ME}_s \tm{MF}T_s \dots \ol{T}_p \tm{MG}R\tm{MH}_p T_p\dots \ol{T\tm{MI}}_t \tm{MJ}R\tm{MK}_t T_t \ol{T\tm{ML}}_k R_k T_k\dots
\dla{1.6}{A}{MA}{ME}
\dla{1.2}{K}{MB}{ME}
\dla{0.9}{X}{MC}{ME}
\dla{1.6}{L}{MD}{MK}
\dla{1.4}{H}{MF}{MK}
\dla{0.6}{P}{MF}{MH}
\dla{0.6}{Q}{MG}{MI}
\dla{0.9}{S}{MG}{MK}
\dla{0.6}{J}{MJ}{ML}
\]
\vspace{0.5cm}\\

As $\len{H} > \len{L}$, we have that $R_t$ reduces with $R_s$ and $T_s$. $LT_s \gre T_j$, and hence the word $T_s$ reduces with $R_p$ by the word $P$. After reducing $[i,s][j+1, k]$, there remains of $R_s$ a word $B$ such that $\len{B} < \frac{2}{11}\len{R_s}$. 

We will prove, that the word $[i, s][j+1, k]$ satisfies all ten properties of Theorem~1. 

Properties 1-5 of the word $[i, s][j+1, k]$ are satisfied, as these properties concern the subword $\ol{T}_i R_i T_i$, which in the word $[i, s][j+1, k]$ are the same as those of $[i, k]$. Properties~6-7 of the word $[i, s][j+1, k]$ are satisfied, as these properties are consequences of Properties~3 and 5. 

Property~8 holds for the subwords $[i, s]$ and $[j+1, k]$, and hence it holds for the same subwords in the word $[i, k]$. Furthermore, in the word $[i, s][j+1, k]$ there is only one reduction: the word $R_s$ reduces with the word $R_t$ by the word $L$, but, as we have proved, $\len{L} < \frac{2}{11}\len{R_s}$ and $\len{L} < \frac{2}{11}\len{R_t}$. 

Property~9 is also enough to check for a single reduction: the reduction of $R_t$ and $T_s$. 

Let $T_j \gre HU$ and that in the word $[i,k]$ $T_j$ reduces with $R_t$ by the word $H$. Then by Property~9 we have $\len{U} \geq \len{T_t}$. Suppose that $T_s$ and $R_t$ intersect in $[i, s][j+1, k]$ by the word $H_1$. Then $T_s \gre H_1U$, but $\len{U} \geq \len{T_t}$, from which $\len{T_s} > \len{T_t}$. 

Property~10 follows, as $s+1 = j$ and the word $[k+1, p-1]\ol{T}_p\ol{P}$, both in the word $[i, k]$ and in the word $[i, s][j+1, k]$, is absorbed from the left by the same word $U$. 

Therefore, in the word $[i, s][j+1, k]$, all properties $\fA_m, \fB_m, \dots, \fH_m$ hold. 

Suppose that the words $R_z$ and $\ol{T}_s$ reduce by the word $Y$, and that $R_z$ does not reduce when reducing the word $[i+1, s-1]$. Then if $\len{Y} \geq \frac{4}{11}\len{R_z}$, then we do not have Property~7; if $\len{Y} \geq \frac{4}{11}\len{R_z}$, then $\len{P} < \frac{2}{11}\len{R_p}$ (as otherwise $R_t$ is reduced), but then $\len{Q} > \frac{5}{11}\len{R_p}$, $\len{J} > \frac{5}{11}\len{R_t}$, $\len{H} < \frac{4}{11}\len{R_t}$, and $\len{PF}<\frac{4}{11}\len{R_p}$, where when reducing $[s+1, t-1]$ the word $R_p$ is reduced by the word $F$. 

Hence, when reducing $[i+1, s-1]$ $R_z$ reduces with some $R$, and we can continue our argument by splitting into a case (which essentially does not differ from \textbf{IB3}). As the length $L$ of the word $W$ is finite, we have our contradiction. \\

\noindent\textbf{Case IB3$\beta$3.} When reducing $[i+1, j-1]$, the word $R_s$ reduces with $R_z$, where $s < z$. 

By statement $\fG_m$ and Property~10 we have $i+1 = s$. 
\[
\dots \ol{T}_i \tm{MA}R_i T_i \ol{T}_s \tm{MB}R\tm{MC}_s \tm{MD}T_s \dots \ol{T}_z \tm{ME}R\tm{MF}_z T_z \dots \ol{T\tm{MG}}_j \tm{MH}R\tm{MI}_j \tm{MJ}T_j \dots \ol{T}_p \tm{MK}R_p T_p\dots \ol{T}_t \tm{ML}R\tm{MM}_t T_t \ol{T\tm{MN}}_k R\tm{MO}_k T_k\dots
\dla{0.6}{}{MA}{MC}
\dla{2.0}{}{MA}{MI}
\dla{0.9}{}{MB}{MF}
\dla{1.4}{L}{MB}{MG}
\dla{1.6}{}{MB}{MI}
\dla{0.6}{}{MD}{MF}
\dla{0.6}{}{ME}{MG}
\dla{1.2}{}{MH}{MM}
\dla{1.6}{}{MH}{MN}
\dla{2.0}{}{MH}{MO}
\dla{1.0}{H}{MJ}{ML}
\dla{0.6}{}{ML}{MN}
\]
\vspace{1.0cm}\\
As we already know, $\len{H} > \frac{2}{11}\len{R_t}$, from which we have $\len{L} <\frac{2}{11}\len{R_s}$ and $\len{L} < \frac{2}{11}\len{R_t}$, i.e. $\len{L} < \len{H}$, and the words $R_z$ and $\ol{T}_j$ reduce. 

Consider the word $[s, j-1][j+1,t]$. 
\[
\dots \ol{T}_s \tm{MA}R_s T_s \dots \ol{T}_z \tm{MB}R_z T_z \dots \ol{T}_p \tm{MC}R_p T_p \dots \ol{T}_t R\tm{MD}_t T_t \dots 
\dla{1.8}{L}{MA}{MD}
\dla{1.2}{H}{MB}{MD}
\dla{0.6}{}{MC}{MD}
\]
\vspace{1.0cm}\\
The words $R_t$ and $R_z$ reduce inside the word $[s, j-1][j+1, t]$. It is easy to show, that the word $[s, j-1][j+1, t]$ satisfies all ten properties of Theorem~1, but $R_t$ reduces on the left with three $R$, which contradicts the dual of $\fC_m$. \\

\noindent\textbf{Case IB3$\beta$4.} When reducing the word $[i+1, j-1]$ the word $R_s$ reduces with $R_z$ and $R_q$, where $z < s < q$. 

\[
\dots \ol{T}_i \tm{MA}R_i \tm{MAA}T_i \dots \ol{T}_z \tm{MB}R_z T_z \dots \ol{T}_s \tm{MC}R\tm{MD}_s T_s \dots \ol{T}_q \tm{ME}R\tm{MF}_q T_q \dots  \ol{T}\tm{MG}_j \tm{MH}R\tm{MI}_j \tm{MJ}T_j \dots \ol{T}_p \tm{MK}R_p T_p\dots \ol{T}_t \tm{ML}R\tm{MM}_t T_t \ol{T\tm{MN}}_k R\tm{MO}_k T_k
\dla{2.4}{}{MA}{MI}
\dla{1.2}{}{MA}{MD}
\dla{1.0}{}{MAA}{MD}
\dla{0.8}{}{MB}{MD}
\dla{0.8}{}{MC}{ME}
\dla{1.0}{}{MC}{MG}
\dla{1.2}{}{MC}{MI}
\dla{2.2}{}{MH}{MO}
\dla{2.0}{}{MH}{MN}
\dla{1.6}{}{MH}{MM}
\dla{1.2}{}{MJ}{MM}
\dla{0.8}{}{MK}{MM}
\dla{0.6}{}{ML}{MN}
\]
\vspace{1.4cm}\\
We have $\len{H} > \frac{2}{11}\len{R_t}$, whence $\len{L} < \frac{2}{11}\len{R_s}$ and $\len{L}< \frac{2}{11}\len{R_t}$, i.e. $\len{L}<\len{H}$, and the word $R_q$ and $\ol{T}_j$ reduce. 

Consider the word $[s, j-1][j+1, t]$.
\[
\dots \ol{T}_s \tm{MA}R_s T_s \dots \ol{T}_q \tm{MB}R_q T_q \dots \ol{T}_p \tm{MC}R_p T_p \dots \ol{T}_t R\tm{MD}_t T_t \dots 
\dla{1.8}{}{MA}{MD}
\dla{1.2}{}{MB}{MD}
\dla{0.6}{}{MC}{MD}
\]
\vspace{1.2cm}\\

As the word $[s, j-1][j+1, t]$ satisfies all ten properties of Theorem~1, we hence have a contradiction to the dual of $\fC_m$. \\

\noindent\textbf{Case IB3$\gamma$.} When reducing $[j+1, k-1]$ the word $R_t$ reduces with $R_p$, where $p>t$. 

By statement $\fG_m$ and Property~10, we have $j+1=t$. 

\[
\dots \ol{T}_i \tm{MA}R_i \tm{MB}T_i \dots \ol{T}_s \tm{MC}R\tm{MD}_s T_s \dots \ol{T\tm{ME}}_j \tm{MF}R\tm{MG}_j \tm{MH}T_j \ol{T}_t \tm{MI}R\tm{MJ}_t T_t \dots \ol{T}_p R\tm{MK}_p T_p\dots \ol{T}\tm{ML}_k R\tm{MM}_k T_k\dots
\dla{0.9}{}{MA}{MD}
\dla{1.8}{}{MA}{MG}
\dla{0.7}{}{MB}{MD}
\dla{0.7}{}{MC}{ME}
\dla{0.9}{}{MC}{MG}
\dla{2.4}{}{MF}{MM}
\dla{2.0}{}{MF}{ML}
\dla{0.9}{}{MF}{MJ}
\dla{0.7}{}{MH}{MJ}
\dla{0.9}{}{MI}{MK}
\dla{1.3}{}{MI}{ML}
\]
\vspace{0.7cm}\\

Here we reach the same contradiction as in Case~\textbf{IB3$\beta$}, but already when considering the word $[i, j-1][t+1, k]$. \\

\noindent\textbf{Case~IC.} When reducing $[i+1, k-1]$ there is some $R_v$ which reduces with more than two $R$. 

In this case, we immediately get a contradiction to $\fC_m$ or the dual of $\fC_m$, as $R_v$, which reduces completely in $[i, k]$ (by Property~6) must reduce with $R_iT_i$ and $\ol{T}_kR_k$. \\

\noindent\textbf{Case~ID.} When reducing $[i+1, k-1]$, there is an $R_v$ which reduces with $\ol{T}_s$ or $T_p$. 

In this case, we immediately get a contradiction to $\fD_m$ or $\fE_m$. \\

\noindent\textbf{Case II.} When reducing $[i+1, k-1]$, there remains a minor trace, i.e. there are words $R_{v_1}$ and $R_{v_2}$ which reduce with each other, and that in this reduction each word loses fewer than $\frac{4}{11}$ letters of its length. \\

\noindent\textbf{Case IIA.} $v_1 < v_2 < j$. 

In this case, neither $R_{v_1}$ nor $R_{v_2}$ can reduce with $\ol{T}_kR_k$, and the minor trace must be completely reduced when reducing with $R_iT_i$. But by Property~7, this is impossible. \\

\noindent\textbf{Case IIB.} $v_1 < v_2 = j$. 
\[
\dots \ol{T}_i \tm{MA}R_i T_i \dots \ol{T}_{v_1} \tm{MB}R_{v_1} T_{v_1} \dots \ol{T}_j R\tm{MC}_j T_j \dots \ol{T}_k R\tm{MD}_k T_k\dots
\dla{1.8}{}{MA}{MD}
\dla{1.2}{}{MA}{MC}
\dla{0.8}{}{MB}{MC}
\]
\vspace{0.4cm}\\

By Property~7, the word $R_j$ should be reduced with $\ol{T}_kR_k$. By $\fC_m$ and the dual of $\fE_m$  $R_j$ cannot reduce on the left. By $\fC_m$ and $\fE_m$ $R_j$ can be reduced on the right only by a single $R$ (other than the fact that $R_j$ reduces with $\ol{T}_kR_k$), but then when reducing $[i+1, k-1]$ by $R_j$ there remains a major trace, and we have already considered this case. \\

\noindent\textbf{Case IIC.} $j = v_1 < v_2$. 
\[
\dots \ol{T}_i \tm{MA}R_i T_i \dots \ol{T}_{j} \tm{MB}R\tm{MBB}_{j} T_{j} \dots \ol{T}_{v_2} R\tm{MC}_{v_2} T_{v_2} \dots \ol{T}_k R\tm{MD}_k T_k\dots
\dla{1.8}{}{MA}{MD}
\dla{0.8}{}{MA}{MBB}
\dla{0.8}{}{MB}{MC}
\]
\vspace{0cm}\\

$R_j$ cannot reduce with $T_{v_2}[v_2+1, k-1]$, as otherwise when reducing $[i+1, k-1]$ by the words $R_j$ and $R_{v_2}$ no trace would remain. By $\fE_m$ and the dual of $\fE_m$, $\fC_m$ and the dual of $\fC_m$, and as no major trace remains of $R_j$, then when reducing $[i+1, k-1]$ the word $R_j$ reduces with $R_s$ and $R_{s_2}$, where $i < s_1 < j < s_2 < v_2$. 
\[
\dots \ol{T}_i \tm{MA}R_i T_i \dots \ol{T}_{s_1} \tm{MB}R_{s_1} T_{s_1} \dots \ol{T}_j \tm{MC}R\tm{MD}_j T_j \dots \ol{T}_{s_2} R\tm{ME}_{s_2} T_{s_2} \dots \ol{T}_{v_2} R\tm{MF}_{v_2} T_{v_2} \dots \ol{T}_k R\tm{MG}_k T_k\dots
\dla{1.8}{}{MA}{MG}
\dla{1.2}{}{MA}{MD}
\dla{1.2}{}{MC}{MF}
\dla{0.8}{}{MB}{MD}
\dla{0.8}{}{MC}{ME}
\]
\vspace{0.5cm}\\

By Property~7 the word $R_{v_2}$ must reduce with $\ol{T}_kR_k$, and in $[i+1, k-1]$ the word $R_{v_2}$ (by $\fC_m, \fE_m$, and $\fF_m$) must reduce with $R_{s_2}$ and $R_{s_3}$, where $v_2 < s_3 < k$. 

\[
\dots \ol{T}_i \tm{MA}R_i T_i \dots \ol{T}_{s_1} \tm{MB}R_{s_1} T_{s_1} \dots \ol{T}_j \tm{MC}R\tm{MD}_j T_j \dots \ol{T}_{s_2} \tm{ME}R\tm{MF}_{s_2} T_{s_2} \dots \ol{T}_{v_2} \tm{MG}R\tm{MH}_{v_2} T_{v_2} \dots \ol{T}_{s_3} R\tm{MI}_{s_3} T_{s_3} \dots \ol{T}_k R\tm{MJ}_k T_k \dots 
\dla{2.4}{}{MA}{MJ}
\dla{1.2}{}{MA}{MD}
\dla{1.2}{}{MC}{MH}
\dla{1.2}{}{MG}{MJ}
\dla{0.6}{}{MB}{MD}
\dla{0.6}{}{MC}{MF}
\dla{0.6}{}{ME}{MH}
\dla{0.6}{}{MG}{MI}
\]
\vspace{0.6cm}\\

$R_j$ and $R_{v_2}$ must be completely absorbed when reducing $[i, k]$, and therefore either $R_j$ reduces with $T_i$ (and has no reductions other than those indicated) or else $R_{v_2}$ reduces with $\ol{T}_k R_k$ (and has no reductions other than those indicated). We then arrive at a contradiction in the same way as in Case~\textbf{IB3}. \\

\noindent\textbf{Case~IID.} $j < v_1 < v_2$. 

\[
\dots \ol{T}_i \tm{MA}R_i T_i \dots \ol{T}_j R\tm{MB}_j T_j \dots \ol{T}_{v_1} \tm{MC}R_{v_1} T_{v_1} \dots \ol{T}_{v_2} R\tm{MD}_{v_2} T_{v_2} \dots \ol{T}_k R\tm{ME}_k T_k\dots
\dla{1.6}{}{MA}{ME}
\dla{0.8}{}{MA}{MB}
\dla{1.0}{}{MC}{MD}
\]
\vspace{0.5cm}\\ 

$R_{v_1}$ reduces with $R_i$, $R_{v_2}$ with $\ol{T}_kR_k$. The word $R_{v_1}$ reduces with $R_j$ and $R_{s_1}$, where $v_1 < s_1 < v_2$, and therefore $R_{v_1}$ does not reduce with $\ol{T}_kR_k$. We then arrive at a contradiction in the same way as in Case~\textbf{IIC}. \\

\noindent\textbf{Case~III.} When reducing $[i+1, k-1]$ there remains a full trace, i.e. there are numbers 
\[
i+1 = s_0 < z_1 < s_1 < z_2 < s_2 < \dots < z_{t-1} < s_{t-1} < z_t < s_t = k-1
\]
where $t > 3$, and the word $R_{s_0}$ reduces with the word $R_{s_1}$, the word $R_{s_1}$ reduces with the word $R_{s_2}$, $\dots$, and the word $R_{s_{t-1}}$ reduces with the word $R_{s_t}$; wherein $R_{s_v} \: (v = 0, 1, 2, \dots, t)$ can only reduce with $R_{z_{v+1}}$, $\ol{T}_{s_{v+1}}$, $R_{z_v}$, and $T_{s_{v-1}}$; additionally, when reducing $[i+1, k-1]$ no major nor minor trace remains. 

Then there are $R_{s_{q-1}}$, $R_{s_q}$, $R_{s_{q+1}}$ such that the following reductions occur
\begin{equation}
\small
\dots \ol{T}_{s_q-1} \tm{MA}R_{s_q-1} T_{s_q-1} \dots \ol{T}_{z_q} \tm{MB}R_{\tm{MC}z_q} T_{z_q} \dots \ol{T}_{s_q} \tm{MD}R\tm{ME}_{s_q} T_{s_q} \dots \ol{T}_{z_q+1} \tm{MF}R\tm{MG}_{z_q+1} T_{z_q+1}\dots \ol{T}_{s_q+1} R\tm{MH}_{s_q+1} T_{s_q+1} \dots
\dla{1.8}{}{MA}{ME}
\dla{0.9}{}{MA}{MC}
\dla{0.9}{}{MC}{ME}
\dla{1.6}{}{MD}{MH}
\dla{0.8}{}{MD}{MG}
\dla{1.0}{}{MF}{MH}
\end{equation}\\

Indeed, $R_{s_0}$ must reduce with $\ol{T}_{s_1}$, as otherwise from $R_{s_0}$ (by statements $\fC_m, \fD_m, \fE_m$) there remains a major trace; and $R_{s_t}$ must reduce with $T_{s_{t-1}}$, as otherwise from $R_{s_t}$ there remains a major trace. 

Consider the leftmost $R_{s_\alpha}$ which reduces with $T_{s_{\alpha-1}}$, and the rightmost $R_{s_\beta}$ in $[s_0, s_{\alpha-1}]$ which reduces with $\ol{T}_{s_{\beta+1}}$. If $\beta-2 = \alpha$, then from $R_{s_{\beta+1}}$ there remains a major trace. If $\beta-\alpha = 3$, then from $R_{s_{\beta+1}}$ and $R_{s_{\beta+2}}$ there remain minor traces. If $\beta-\alpha > 3$, and from $R_{s_{\beta+1}}, R_{s_{\beta+2}}, \dots, R_{s_{\alpha-1}}$ neither major nor minor traces remain, then we have the reductions in $(3)$. 

As when reducing $[i, k]$ the full trace in question must be completely absorbed, we have that $R_{s_q}$ reduces either with $R_iT_i$, or with $\ol{T}_kR_k$, which contradicts either the dual of $\fC_m$, or $\fC_m$. This completes the proof of Lemma~4. 
\end{proof}

\begin{lemma}
$(\fA_{m+1}, \fB_m, \fC_m, \dots, \fH_m) \implies \fB_{m+1}$.
\end{lemma}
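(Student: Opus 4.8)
The plan is to re-run the trace-based case analysis from the proof of Lemma~4, but now with the freshly-established $\fA_{m+1}$ available in place of the not-yet-proved $\fB_{m+1}$, and exploiting the one structural difference between the two configurations: in $\fB$ the word $R_i$ reduces not with a defining word $R_k$ but with the conjugator $\ol{T}_k$, which lies strictly to the left of $R_k$. Concretely, I would assume $L[W]\le m+1$ and that every reduction of $W$ exhibits the $\fB$-configuration, in which $R_i$ reduces with $R_j$ and $R_i$ reduces with $\ol{T}_k$, where $i<j<k$. Since $[i+1,k-1]$ omits the factors $i$ and $k$, its $L$-length is at most $m-1\le m$, so all of $\fB_m,\fC_m,\dots,\fH_m$ are available for it, while $\fA_{m+1}$ is available for $W$ itself. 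As $[i+1,k-1]\not\equiv 1$, statement $\fH_m$ guarantees that every reduction of $[i+1,k-1]$ leaves a trace, and the three possibilities (major, minor, full) organise the argument exactly as in Cases~I--III of Lemma~4.

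First I would pin down the shape of the surviving word. As in Lemma~4, the trace of $[i+1,k-1]$ must be absorbed entirely when it is reduced against $R_iT_i$ on the left and against $\ol{T}_k$ on the right, and Properties~6 and~7 force the survivor to be a major trace equal to $R_j$ itself: any other survivor would need $R_i$ to pass through $R_j$ and then swallow a further trace. The subcases in which some $R_v$ reduces with two $R$'s on the same side are handled by $\fA_{m+1}$ and its dual together with $\fC_m,\fE_m,\fF_m,\fG_m$ and Property~10, precisely as the corresponding subcases of Lemma~4 were dispatched by $\fC_m$ and its relatives; the finiteness of $L[W]$ again terminates the descent through the nested subcases patterned on Case~IB3.

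The one genuinely new point --- and what upgrades the conclusion from ``at most one $R$ on each side'' (all that $\fA_{m+1}$ by itself yields) to ``at most one $R$ in total'' --- is the replacement of the right anchor $R_k$ by the conjugator $\ol{T}_k$. I would argue that if $R_j$ were to reduce with a right-neighbour $R_t$, with $j<t<k$, then this reduction would leave a major or minor trace inside $[j+1,k-1]$; but $R_i$ reduces with $\ol{T}_k$, so its cancellation must engulf all of $[j+1,k-1]$, and by Properties~6 and~7 a conjugator cannot absorb such a trace, since a piece carrying at least $\frac{13}{22}$ (resp.\ more than $\frac{7}{11}$) of a defining word can sit only inside another defining word, never inside a $T$. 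This contradiction removes the right-neighbour, leaving at most the single left-neighbour reduction of $R_j$, which is exactly $\fB_{m+1}$.

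The hardest part will be the bookkeeping in the minor- and full-trace cases: one must carry the piece-length inequalities from Properties~5--7 (the $\frac{2}{11}$, $\frac{4}{11}$, $\frac{5}{11}$ and $\frac{13}{22}$ thresholds) through the same nested diagrams as in Case~IB3 of Lemma~4, checking at each stage that the $\ol{T}_k$-reach genuinely forces a contradiction rather than merely reproducing the weaker $\fA$-bound, and verifying along the way that the reassembled words of the form $[i,s][j+1,k]$ still satisfy Properties~1--10, so that the lower statements $\fB_m,\dots,\fH_m$ remain applicable to them.
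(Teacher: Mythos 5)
Your proposal is correct and follows essentially the same route as the paper, whose entire proof of this lemma is the remark that if $R_j$ reduces with more than one $R$ in $[i+1,k-1]$ one reaches a contradiction ``by the same methods as in Lemma~4''; your reconstruction via $\fH_m$, the major/minor/full trace trichotomy, and the observation that the right anchor $\ol{T}_k$ cannot absorb a large piece of a defining word (Properties~2 and~6) is precisely that method, spelled out in more detail than the original.
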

\begin{proof}
Suppose that for every way of reducing the word $W$, where $L[W] \leq m+1$, there are the following reductions
\[
\dots \ol{T}_i \tm{MA}R_i T_i \dots \ol{T}_j R\tm{MB}_j T_j \dots \ol{T\tm{MC}}_k R_k T_k \dots
\dla{0.6}{}{MA}{MB}
\dla{1.2}{}{MA}{MC}
\]
\vspace{0cm}\\

If when reducing $[i+1, k-1]$, the word $R_j$ reduces with more than one $R$, then we easily come to a contradiction by the same methods as in Lemma~4. 
\end{proof}

\begin{lemma}
$(\fA_{m+1}, \fB_{m+1}, \fC_{m+1}, \dots, \fH_m) \implies \fC_{m+1}$. 
\end{lemma}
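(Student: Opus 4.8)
The plan is to argue by contradiction following exactly the template used for $\fA_{m+1}$ in Lemma~4, now with the two freshly established facts $\fA_{m+1}$ and $\fB_{m+1}$ available alongside the level-$m$ statements $\fC_m,\fD_m,\dots,\fH_m$ and Properties~1--10. Suppose that for some way of reducing a word $W$ with $L[W]\le m+1$ one of the two forbidden reductions of $\fC_{m+1}$ actually occurs. Since for $L[W]\le m$ the assertion is precisely $\fC_m$, the only genuinely new situation is $L[W]=m+1$. In the first diagram the block $R_i$ reduces on the right with three blocks $R_j,R_k,R_p$ (with $i<j<k<p$), and in the second with $R_j,R_k$ and then with $\ol{T}_p$; I would treat the two diagrams separately but by symmetric reasoning, so I describe only the first.

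First I would exploit the outermost reduction, that of $R_i$ with $R_p$, together with the inner arcs $R_i$ with $R_j$ and $R_i$ with $R_k$. Applying the appropriate one of $\fA_{m+1}$ or $\fB_{m+1}$ to the nested pairs $(R_j,R_p)$ and $(R_k,R_p)$ forces each intermediate block $R_j$ and $R_k$ lying under the arc of $R_i$ to reduce with at most one $R$ on each side inside $[i+1,p-1]$. This step is the whole point: it removes the internal branching that would otherwise be needed to sustain three simultaneous arcs. Having bounded the branching, I would pass to the inner segment $[i+1,k-1]$, whose $L$-length is at most $m$, and invoke $\fH_m$ to produce a trace of that subword.

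The remaining work is then the same three-way split on the kind of trace as in Lemma~4: a major, a minor, or a full trace. In every case the block carrying the trace must be completely absorbed when $[i,k]$ (or $[i,p]$) is reduced, by Property~6, so it must reduce with $R_iT_i$ on one side and with $\ol{T}_kR_k$ (or $\ol{T}_pR_p$) on the other; combined with the branching bound just obtained, this contradicts one of $\fC_m$, its dual, $\fD_m$, $\fE_m$, or $\fF_m$, or else produces a length contradiction via Property~7 (the $\tfrac{2}{11}$ overlap bound) together with the cyclic-reduction argument used in Cases~IB3 and IIC of Lemma~4. Wherever those cases build an auxiliary word such as $[i,s][j+1,k]$, I would simply reuse the verification already given there that such a word satisfies all ten Properties of Theorem~1, so that the level-$m$ statements legitimately apply to it.

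The main obstacle is the same one that makes Lemma~4 delicate: for each auxiliary subword to which $\fH_m$ or $\fC_m$ is applied, one must confirm that it still satisfies Properties~1--10 and the inductive statements at level $m$, and that the triple arc interacts correctly with the branching bounds from $\fA_{m+1}$ and $\fB_{m+1}$. Because the three arcs of $\fC$ force one intermediate block to sit under two distinct outer reductions at once, the subtle point is showing that this doubly-covered block cannot be fully cancelled without either violating $\fC_m$ or creating an illegal $R$-$\ol{T}$ overlap forbidden by $\fD_m$ or $\fE_m$. Once that block is pinned down, the length bookkeeping through Property~7 closes each of the trace cases and yields the contradiction, establishing $\fC_{m+1}$.
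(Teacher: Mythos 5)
Your opening move is exactly the paper's: apply $\fA_{m+1}$ (and, for the second diagram, $\fB_{m+1}$) to the pair of arcs from $R_i$ to $R_j$ and from $R_i$ to $R_p$ (resp.\ $\ol{T}_p$), concluding that when reducing $[i+1,p-1]$ the word $R_j$ reduces with at most two $R$'s. But at that point the paper is essentially finished, and your proposal walks past the punchline. Each reduction of $R_j$ with another, non-inverse defining word removes fewer than $\frac{2}{11}$ of its letters, so the at most two reductions inside $[i+1,p-1]$ absorb less than $\frac{4}{11}\len{R_j}$; since $R_j$ must be completely absorbed (the arcs from $R_i$ to $R_k$ and to $R_p$ pass over it), more than $\frac{7}{11}\len{R_j}=\frac{14}{22}\len{R_j}>\frac{13}{22}\len{R_j}$ of it would have to cancel against $R_iT_i$, and Property~6 forbids precisely that. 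No trace, and no appeal to $\fH_m$, is needed.

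The detour you propose instead --- invoking $\fH_m$ on $[i+1,k-1]$ and rerunning the major/minor/full-trace case split of Lemma~4 --- is not merely unnecessary; as sketched it does not close. Producing a trace of $[i+1,k-1]$ and asserting that its absorption ``contradicts one of $\fC_m$, its dual, $\fD_m$, $\fE_m$, or $\fF_m$'' is the shape of an argument rather than an argument: the configuration you are trying to refute has $L$-length $m+1$, so the level-$m$ statements apply only to proper sub-configurations, and you never exhibit a sub-configuration of $L$-length at most $m$ that realises a forbidden pattern. Nor is the contradiction in this lemma located in the trace-carrying block at all; it is a direct length count on $R_j$ itself via Property~6. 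Replace your second and third paragraphs with that computation and the proof is complete (the second diagram is handled identically, with $\fB_{m+1}$ supplying the branching bound).
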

\begin{proof}
Suppose that when reducing the word $W$, where $L[W] \leq m+1$, there are the following reductions 
\[
\dots \ol{T}_i \tm{MA}R_i T_i \dots \ol{T}_j R\tm{MB}_j T_j \dots \ol{T}_k R\tm{MC}_k T_k \dots \ol{T}_p R\tm{MD}_p T_p \dots
\dla{0.6}{}{MA}{MB}
\dla{1.2}{}{MA}{MC}
\dla{1.8}{}{MA}{MD}
\]
\vspace{0.3cm}\\

or the following reductions

\[
\dots \ol{T}_i \tm{MA}R_i T_i \dots \ol{T}_j R\tm{MB}_j T_j \dots \ol{T}_k R\tm{MC}_k T_k \dots \ol{T\tm{MD}}_p R_p T_p \dots
\dla{0.6}{}{MA}{MB}
\dla{1.2}{}{MA}{MC}
\dla{1.8}{}{MA}{MD}
\]
\vspace{0.4cm}\\

Then by $\fA_{m+1}$ and $\fB_{m+1}$, $R_j$ in $[i+1, p-1]$ cannot reduce with any more than two $R$, and to be completely reduced in $[i, p]$ must reduce with $R_iT_i$ by more than $\frac{7}{11}$ of the letters of its length, which is impossible by Property~6. 
\end{proof}

\begin{lemma}
$(\fA_{m+1}, \fB_{m+1}, \fC_{m+1}, \fD_m, \dots, \fH_m) \implies \fD_{m+1}$. 
\end{lemma}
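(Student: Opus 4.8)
The plan is to argue by contradiction, following the template of Lemmas~4--6. Assume that for some way of reducing a word $W$ with $\prod_{i=1}^{w}\ol{T}_i R_i T_i$ satisfying Properties~1--10 and $L[W]\leq m+1$, the arc forbidden by $\fD_{m+1}$ occurs, so that $R_i$ reduces with $T_j$ for some $i<j$. By Definition~3 this means $R_i \gre AXB$, $T_j \gre C\ol{X}D$ with the intervening segment $S_2 \gre T_i\,[i+1,j-1]\,\ol{T}_j R_j$ first collapsing ($B\,S_2\,C\equiv 1$, hence $S_2\equiv \ol{B}\ol{C}$) before $X$ cancels with $\ol{X}$. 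In particular $\ol{T}_j R_j$ is absorbed up to a short suffix (of length at most $\len{C}\leq\len{T_j}$, passed to the $T_j$-side), so that all but a bounded tail of $R_j$ is reduced away. This absorption of $R_j$ is the event I will show is impossible.

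First I would dispose of the degenerate situations. If $[i+1,j-1]\equiv 1$ or $j=i+1$, the collapse of $\ol{T}_j R_j$ against $T_i$ and a suffix of $R_i$ is a bounded two-factor interaction controlled directly by Property~6 (a defining word cannot surrender all its letters to a single neighbour) together with $\fC_{m+1}$ and Property~7. Otherwise $[i+1,j-1]\not\equiv 1$; then, since this sub-block has $L$-length at most $m$, the hypothesis $\fH_m$ guarantees that its reduction leaves a trace. The bootstrapped statements $\fA_{m+1},\fB_{m+1},\fC_{m+1}$ of Lemmas~4--6, together with $\fE_m$, $\fD_m$ and their duals, then bound how many defining words $R_j$ may reduce with and forbid it from reducing with the $T$-words of other factors.

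The heart of the argument, and its main obstacle, is to rule out the absorption of $R_j$. Since $\ol{T}_j R_j T_j$ is reduced (Property~1), $R_j$ does not cancel at its junctions with $\ol{T}_j$ or $T_j$, so every letter of $R_j$ that disappears must be removed by reduction with material brought adjacent to it only after $\ol{T}_j$ and the trace in $[i+1,j-1]$ have themselves collapsed. Under the class $K_{2/11}$ each reduction with a distinct defining word removes fewer than $\tfrac{2}{11}$ of the letters of $R_j$, while Property~6 forbids a surviving chunk of length $\geq \tfrac{13}{22}\len{R_j}$ from lying outside a defining word, and Property~7 bounds what two cooperating pieces can remove. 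The decisive step is to combine these length inequalities with the reduction counts from $\fC_{m+1}$ and the dual of $\fC_m$ to conclude that strictly fewer than $\len{R_j}$ letters of $R_j$ can be deleted, contradicting its absorption.

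To organise the remaining configurations --- those in which $R_j$ is absorbed by reducing against $\ol{T}_j$- or $T$-material rather than directly against defining words --- I would excise the non-participating factors, using the block identities of the form $[p,k]\equiv \ol{T}_p XV T_k \equiv [p+1,k-1]\cdots$ employed throughout Properties~8--10 and Lemma~4, to form a sub-block such as $[i,s][j+1,k]$ of $L$-length at most $m$. As in Lemma~4's verification that such a sub-block inherits all ten Properties, one checks that Properties~1--10 survive the splicing; then the stage-$m$ hypotheses $\fD_m$, $\fC_m$ (or its dual) and $\fG_m$ apply to it, and, after pinning the adjacencies via $\fG_m$ and Property~10 (forcing $s+1=j$ or $i+1=s$), one reaches the same cyclic-reduction contradiction as in Cases~IB3--IID of Lemma~4. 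The one genuinely new point requiring care is to confirm that the excised sub-block still exhibits an $R$--$T$ reduction of the type forbidden by $\fD_m$, so that the induction closes.
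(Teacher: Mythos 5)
Your central idea coincides with the paper's: the forbidden arc forces $R_j$ to be swallowed by the material to its left, and the statements $\fC_{m+1}$, $\fD_m$, $\fE_m$ together with the length Properties cap how much of $R_j$ can actually be removed. The paper's entire proof is three sentences: $R_j$ must be completely reduced on the left by $R_iT_i[i+1,j-1]\ol{T}_j$; by $\fC_{m+1}$, $\fD_m$ and $\fE_m$ the most $R_j$ can reduce with is $R_s$, $T_s$ and $R_p$ with $i\leq s<p<j$; and by Property~6 these absorb at most $\frac{17}{22}$ of the letters of $R_j$. Everything in your final paragraph --- the excision of sub-blocks such as $[i,s][j+1,k]$, the re-verification of Properties~1--10 for the spliced word, the appeal to $\fG_m$ and Property~10 to pin adjacencies --- is machinery belonging to Lemma~4 that is not needed here and only obscures the short argument.

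The genuine gap is twofold. First, you allow a suffix of $R_j$ to survive and be ``passed to the $T_j$-side'' as part of $\ol{C}$. If that were possible the whole contradiction would collapse: you would then only need to absorb part of $R_j$, and a bound saying fewer than $\frac{17}{22}$ of its letters can be deleted would prove nothing. In fact it is impossible, precisely by Property~1: for a letter of $R_j$ to cancel against a letter of $T_j$, the letters strictly between them --- a suffix of $R_j$ followed by a prefix of $T_j$ --- would have to cancel among themselves, which would make the junction of $R_j$ and $T_j$ reducible. You need to state this and conclude that $R_j$ is absorbed \emph{entirely} by the left-hand material. Second, the ``decisive step'' of your third paragraph is announced but never carried out: you never identify which factors $R_j$ can actually reduce with (at most one pair $R_s$, $T_s$ and one further $R_p$, by $\fC_{m+1}$, $\fD_m$, $\fE_m$), nor perform the length count showing that these remove fewer than $\frac{17}{22}$ of $\len{R_j}$. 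Since that count is the entire content of the lemma, the proposal as written is a plan rather than a proof.
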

\begin{proof}
Suppose that when reducing the word $W$, where $L[W] \leq m+1$, we have the following reduction.
\[
\dots \ol{T}_i \tm{MA}R_i T_i \dots \ol{T}_j R_j T\tm{MB}_j \dots
\dla{0.8}{}{MA}{MB}
\]
\vspace{0cm}\\
The word $R_j$ must be completely reduced on the left by the word $R_iT_i[i+1,j-1]\ol{T_j}$. The maximal reduction which $R_j$ can have is its reduction with $R_s, T_s$, and $R_p$, where $i \leq s < p < j$ (this follows from $\fC_{m+1}, \fD_m$, and $\fE_m$). But by Property~6 these three words do not absorb more than $\frac{17}{22}$ of the letters of the word $R_i$. 
\end{proof}

\begin{lemma}
$(\fA_{m+1}, \dots, \fD_{m+1}, \fE_m, \fF_m, \fG_m, \fH_m) \implies \fE_{m+1}$. 
\end{lemma}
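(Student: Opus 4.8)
The plan is to argue by contradiction along the same lines as the proof of Lemma~4, extracting a trace from the intermediate block by means of $\fH_m$ and then collapsing the configuration to a word of $L$-length at most $m$. So I would assume that for some way of reducing a word $W$ with $L[W] \leq m+1$ the pattern forbidden by $\fE_{m+1}$ occurs, namely that $R_i$ reduces on its right with the barred conjugator $\ol{T}_j$ (some $j>i$) and, continuing past it, also reduces with the block $[k,p]$ lying further to the right. The first step is to record the constraints already imposed by the new-level statements: by $\fD_{m+1}$ the word $R_i$ cannot reduce with any positive $T$ to its right, so the only reductions available to $R_i$ on the right are with other $R$-words and with $\ol{T}_j$; and by $\fC_{m+1}$ together with $\fB_{m+1}$ the number of $R$-words with which $R_i$ may reduce on the right is bounded, while Property~6 forbids $R_i$ from being absorbed through any short overlap.

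Next I would apply $\fH_m$ to the intermediate block $[i+1, j-1]$ (and, where the argument needs it, to $[j+1, k-1]$) to produce a trace, and split into the three cases of Definition~2 exactly as in Cases~I, II, III of Lemma~4. In the \emph{major-trace} case I would isolate the $R_v$ carrying the trace and use Properties~6 and~7 to show that it cannot be absorbed unless it reduces with $R_iT_i$ on the left or with $\ol{T}_kR_k$ on the right, whereupon $\fC_{m+1}$ or its dual is violated, just as in Cases~IA--IC. The feature genuinely new to $\fE$ is the reduction of $R_i$ with $\ol{T}_j$; I would handle it by cutting $W$ at this barred conjugator and forming a derived word obtained by gluing the surviving sub-blocks (of the shape $[i, s]\,[j+1, k]$ appearing in Case~IB3$\beta$2), which collapses the part of $R_i$ cancelling against $\ol{T}_j$. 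One then checks, as is done verbatim in that case, that the derived word again satisfies all ten Properties of Theorem~1 and has $L$-length at most $m$; the reduced statement $\fE_m$ applied to it delivers the contradiction.

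The hard part will be the \emph{full-trace} case and the sub-case in which the reduction of $R_i$ with $\ol{T}_j$ is interleaved with a chain of $R$--$R$ reductions. There I expect to follow Case~III: single out the leftmost $R_{s_\alpha}$ reducing with $T_{s_{\alpha-1}}$ and the rightmost $R_{s_\beta}$ reducing with $\ol{T}_{s_{\beta+1}}$, and use the $2/11$ hypothesis through Properties~5,~6 and~7 to show that the chain cannot be fully absorbed into $R_iT_i$ and $\ol{T}_kR_k$ without forcing some $R_{s_q}$ to reduce with more $R$-words than $\fC_{m+1}$ (or its dual) permits. The only delicate estimates are the repeated common-piece computations showing that a long overlap between two $R$-words, or between an $R$-word and a barred $T$, would make one defining word a cyclic permutation of the inverse of another and is therefore excluded by the length inequalities; these are precisely the $\frac{2}{11}$-estimates already carried out in Lemma~4, so I would reuse them rather than redo them.
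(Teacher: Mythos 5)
Your proposal transplants the trace-extraction machinery of Lemma~4 (invoke $\fH_m$ on the intermediate block, split into major/minor/full trace cases), but that is not how this implication is proved, and the transplant has a genuine hole. First, the strategy collapses in the case $j = i+1$: then $[i+1, j-1] \equiv 1$, there is no intermediate block from which $\fH_m$ can produce a trace, and yet the forbidden $\fE$-configuration is perfectly possible in this case, so your case analysis never gets started there. Second, your treatment of ``the feature genuinely new to $\fE$'' --- cutting at $\ol{T}_j$, gluing sub-blocks, and then invoking $\fE_m$ on the derived word --- is not justified: you would need to show that the forbidden $\fE$-pattern survives intact in the glued word of smaller $L$-length, and nothing in your sketch establishes this.

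The actual argument is much shorter and focuses on a different word: not on $R_i$ and the traces of the intermediate blocks, but on the complete absorption of $R_j$ itself. Since $R_i$ reduces both with $\ol{T}_j$ and with the block $[k,p]$ lying beyond $R_j T_j$, everything in between --- in particular all of $R_j$ --- must be absorbed when reducing $[i,p]$. But on the left $R_j$ can only reduce with $R_i$ (the conjugator $\ol{T}_j$ separating them is consumed by $R_i$'s reduction), and this common piece is short by the $K_{2/11}$ hypothesis; on the right, exactly as in the proof of $\fD_{m+1}$, the statements $\fC_{m+1}$, $\fD_{m+1}$, $\fE_m$ bound the possible reductions of $R_j$ to at most $\frac{17}{22}$ of its letters by Property~6. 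The two contributions together cannot exhaust $R_j$, so $R_j$ cannot vanish, and the configuration is impossible. Note also that the inductive hypotheses available here include $\fE_m$ but only the already-promoted $\fA_{m+1},\dots,\fD_{m+1}$; the proof is designed to lean on the just-completed $\fD_{m+1}$ argument rather than on a fresh round of trace analysis.
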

\begin{proof}
Suppose that when reducing the word $W$, where $L[W] \leq m+1$, there is the following reduction
\[
\dots \ol{T}_i \tm{MA}R_i T_i \dots \ol{T\tm{MB}}_j R_j T_j \dots [k\tm{MC}, p] \dots
\dla{0.8}{}{MA}{MB}
\dla{1.2}{}{MA}{MC}
\]
\vspace{0cm}\\

But, as we saw in the proof of the statement $\fD_{m+1}$ , the word $R_j$ can reduce on the right by at most $\frac{17}{22}$ of its length, and besides this, $R_j$ can only reduce (on the left) by $R_i$. In this way, $R_j$ is absorbed completely when reducing $[i, p]$, and this reduction is impossible. 
\end{proof}

\begin{lemma}
$(\fA_{m+1}, \dots, \fE_{m+1}, \fF_m, \fG_m, \fH_m) \implies \fF_{m+1}$. 
\end{lemma}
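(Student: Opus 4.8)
The plan is to argue by contradiction exactly in the manner of Lemmas~4--8, assuming that the crossing configuration forbidden by $\fF_{m+1}$ is realised for some way of reducing a word $W$ with $L[W] \leq m+1$, and to extract a contradiction from the already-established statements $\fA_{m+1}, \dots, \fE_{m+1}$ together with $\fF_m$, $\fG_m$, $\fH_m$ and Properties~1--10 of Theorem~1. Since $W \equiv Q$ is equal to the identity, every factor must be absorbed during the reduction; in particular the two factors $R_i$ and $R_p$ joined by the spanning arc must cancel completely when $[i,p]$ is reduced. By Property~6 each of them must therefore lose more than $\frac{7}{11}$ of its length, so $R_i$ is forced to reduce with both $R_j$ and $R_p$, and $R_p$ with both $R_k$ and $R_i$; by $\fA_{m+1}$, $\fB_{m+1}$ and $\fC_{m+1}$ these are the only reductions available to $R_i$ on the right and to $R_p$ on the left.

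First I would pass to the inner block $[i+1,p-1]$, which contains $R_j$ and $R_k$ and has $L$-length at most $m-1$, so that $\fH_m$ applies and guarantees that a trace of $[i+1,p-1]$ survives the chosen reduction. I would then split, as in the proof of Lemma~4, into the three cases of Definition~2 according to whether this trace is major, minor, or full. In the major- and minor-trace cases the surviving factor $R_v$ (respectively the pair $R_{v_1}, R_{v_2}$) can only be cancelled inside $[i,p]$ by $R_i T_i$ on the left or $\ol{T}_p R_p$ on the right; but $R_i$ and $R_p$ are already committed to the spanning reduction, so Properties~6 and~7 show that not enough of the trace can be removed, which is the desired contradiction. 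Where the position of $R_v$ relative to $R_j$ and $R_k$ forces one further reduction, I would invoke $\fC_{m+1}$, $\fD_{m+1}$, $\fE_{m+1}$, or, crucially, $\fF_m$: the two low arcs plus the extra reduction produce a copy of the $\fF$-pattern inside a subword of strictly smaller $L$-length, which $\fF_m$ forbids. The full-trace case I would dispatch as in Case~III of Lemma~4, locating a middle link $R_{s_q}$ of the reduction chain which is forced to cancel against $R_i T_i$ or $\ol{T}_p R_p$ and hence violates $\fC_{m+1}$ or its dual.

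The main obstacle will be the bookkeeping forced by the crossing structure. Unlike the purely nested configurations handled in $\fC_{m+1}$--$\fE_{m+1}$, here the $R_i$--$R_p$ arc overlaps the region carrying the $R_k$--$R_p$ reduction, so the case split must keep track of on which side of $R_j$ and $R_k$ each surviving trace lies, and must repeatedly reduce to a smaller instance of $\fF$ (appealing to $\fF_m$) rather than to a bare length estimate. When this reduction is carried out by passing to a shortened word such as $[i,j-1][k+1,p]$, I would, exactly as in Case~IB3$\beta$2 of Lemma~4, first verify that all ten Properties of Theorem~1 are inherited by the shortened word, so that the lower-rank statements genuinely apply to it. This verification is routine, but it is the step most prone to error, and it is where I expect the argument to be most delicate.
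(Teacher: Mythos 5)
Your proposal follows essentially the same route as the paper: apply $\fH_m$ to $[i+1,p-1]$, split into major, minor, and full trace cases, derive the major and minor cases from Property~7 together with $\fC_{m+1}$ and its dual (the surviving trace would force $R_i$ or $R_p$ into a third reduction), and dispose of the full-trace case by the argument already given in Case~III of the proof of $\fA_{m+1}$. The paper's actual write-up of this lemma is much terser than your plan suggests is needed, leaning entirely on those references rather than re-verifying the ten Properties for shortened words, but the underlying argument is the one you describe.
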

\begin{proof}
Suppose that when reducing the word $W$, where $L[W] \leq m+1$, there are the following reductions
\[
\dots \ol{T}_i \tm{MA}R_i T_i \dots \ol{T}_j R\tm{MB}_j T_j \dots \ol{T}_k \tm{MC}R_k T_k \dots \ol{T}_p R\tm{MD}_p T_p \dots
\dla{0.8}{}{MA}{MB}
\dla{1.4}{}{MA}{MD}
\dla{0.8}{}{MC}{MD}
\]
\vspace{0cm}\\

\noindent\textbf{Case I.} When reducing $[i+1, p-1]$ there remains a major trace, i.e. there is some $R_v$, which reduces with a subword of $[i+1, p-1]$ with fewer letters than $\frac{9}{22}$ of its length.

In this case $R_v$, in order to be completely reduced in $[i, p]$, must reduce either with $R_{i}$, or with $R_p$; if $v = j$, then $R_v$ also reduces with $R_p$; if $v=k$, then $R_v$ reduces with $R_i$; i.e. in every case we obtain a contradiction to $\fC_{m+1}$ or the dual of $\fC_{m+1}$. \\

\noindent\textbf{Case II.} When reducing $[i+1, p-1]$ there remains a minor trace, i.e. there are $R_{v_1}$ and $R_{v_2}$, which reduce with one another, and such that each of these words loses fewer than $\frac{4}{11}$ of its letters. 

By Property~7 the cases $v_1 < v_2 < j$ and $k<v_1<v_2$ are impossible. By $\fC_{m+1}$ and the dual of $\fC_{m+1}$, the other cases are also impossible. \\

\noindent\textbf{Case III.} When reducing $[i+1, p-1]$ there remains a full trace. 

In Lemma~4, Case~\textbf{III}, it was in particular proved that if $R_i$ reduces with $R_p$, then when reducing $[i+1, p-1]$, there cannot remain a full trace. 
\end{proof}

\begin{lemma}
$(\fA_{m+1}, \dots, \fF_{m+1}, \fG_m, \fH_m) \implies \fG_{m+1}$. 
\end{lemma}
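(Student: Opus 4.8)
The plan is to run the same case analysis on the absorbed block $[i+1,j-1]$ that drives Lemma~4, but to track the lengths $\len{T_p}$ instead of extracting contradictions. I will prove $\len{T_p}<\len{T_j}$; the second inequality $\len{T_p}<\len{T_i}$ is the dual statement obtained by applying the whole argument to $W^{-1}$, in which the factors are read in reverse order and the roles of $T_i$ and $T_j$ are interchanged, so it needs no separate treatment. Since $R_i$ reduces with $\ol{T}_j R_j$, the word $[i+1,j-1]$ together with $T_i$ and a prefix of $\ol{T}_j$ is consumed by this reduction; by $\fH_m$ any reduction of $[i+1,j-1]$ leaves a trace, and this trace must be absorbed on the left by $R_iT_i$ and on the right by $\ol{T}_j R_j$. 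As in Lemma~4 I split on whether this trace is major, minor, or full.

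The statements already available at level $m+1$ do the constraining work. By $\fC_{m+1}$, $\fD_{m+1}$ and $\fE_{m+1}$ no $R_v$ inside the block can reduce with one of the outer $T$'s nor with three or more relators, and by $\fA_{m+1}$, $\fB_{m+1}$ each $R_v$ reduces with at most one relator on each side, while $\fF_{m+1}$ excludes the crossing pattern. Together these collapse the absorption of $[i+1,j-1]$ to a single staircase of reductions whose top factor $R_q$ (with $p\le q<j$) reduces directly with $\ol{T}_j$. For that top link Property~9 applies verbatim and yields $\len{T_q}<\len{T_j}$ (and $\len{T_q}\le\len{U}$ for the overlap $U$ of $\ol{T}_j$). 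In the cases where the factor reaching $\ol{T}_j$ is $R_p$ itself (so that Property~9 gives $\len{T_p}<\len{T_j}$ at once) or already spans the index $p$, we are done immediately.

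It remains to descend the staircase from $q$ to the prescribed intermediate index $p$. Here I use the gluing device of Lemma~4, Case~III: splicing out the factors of $[i+1,j-1]$ that are fully absorbed below $R_q$ produces a word of the form $[i,s][s'+1,j]$ whose $L$-length is at most $m$ and which still exhibits a relator reducing with $\ol{T}_j R_j$ over the index $p$. One checks, exactly as for the word $[i,s][j+1,k]$ in Lemma~4, that this contracted word again satisfies Properties~1--10, so the inductive hypothesis $\fG_m$ applies and gives $\len{T_p}<\len{T_j}$; Property~10 supplies the length comparisons at the intermediate $T$-absorptions of the staircase. The main obstacle is precisely this contraction step: one must organise the staircase so that $p$ always sits on the side of the reduction that reaches $\ol{T}_j$, for otherwise the $\len{T}$'s need not decrease toward $p$, and one must re-verify that Properties~1--10 survive the splicing, which is the same delicate bookkeeping performed in Lemma~4 and is where essentially all the effort lies.
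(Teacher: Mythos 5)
Your opening move is the right one -- invoke $\fH_m$ to split on whether the reduction of $[i+1,j-1]$ leaves a major, minor, or full trace, use Property~9 at the junction with $\ol{T}_j$, and close the induction by applying $\fG_m$ to a shorter word -- but the middle of your argument replaces the paper's mechanism with a ``staircase plus splicing'' device that is both unnecessary and left unverified, and that is where the proof actually lives. The key observation you are missing is that in the major-trace case the surviving relator $R_v$ must, in order to be absorbed at all, reduce with \emph{both} $T_i$ and $\ol{T}_j$. Property~9 and its dual then give $\len{T_v}<\len{T_i}$ and $\len{T_v}<\len{T_j}$ at once, and -- depending on whether $p<v$ or $v<p$ -- the configuration of $\fG_m$ is already present in the honest subword $[i,v]$ or $[v,j]$, which has $L$-length at most $m$ and inherits Properties~1--10 without any surgery. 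Chaining $\len{T_p}<\len{T_v}$ with the two inequalities above finishes the case. No contracted word $[i,s][s'+1,j]$ is needed, so the ``delicate bookkeeping'' you defer (re-verifying Properties~1--10 after splicing, and arranging that $p$ lands on the correct side of the staircase) never has to be done; as written, you have identified that verification as the crux and then not carried it out, which is a genuine gap.

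Two further points. The full-trace case should not be ``descended through'': it is simply impossible here, since the argument of Lemma~4, Case~III already shows that if $R_i$ reduces with $\ol{T}_j$ then no full trace can remain of $[i+1,j-1]$; your staircase picture conflates this excluded configuration with the major-trace one. And the minor-trace case needs its own short treatment: either one of $R_{v_1},R_{v_2}$ reduces with only one of $T_i,\ol{T}_j$ (impossible by the length count in the proof of $\fD_{m+1}$), or both reduce with both, in which case one is back in the major-trace argument. Your appeal to duality for the inequality $\len{T_p}<\len{T_i}$ is legitimate and consistent with the paper's conventions, but with the corrected major-trace argument both inequalities fall out simultaneously anyway.
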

\begin{proof}
Suppose that we have the word $W$, where $L[W] \leq m+1$, and that the following reductions take place in $W$. 
\[
\dots \ol{T}_i \tm{MA}R_i T_i \dots \tm{MC}\ol{T}_j R\tm{MB}_j T_j\tm{MD} \dots
\dla{0.8}{}{MA}{MB}
\dla{0.4}{}{MC}{MD}
\]
\vspace{0cm}

\noindent\textbf{Case I.} When reducing $[i+1, j-1]$ there remains a major trace. 

In this case, $R_v$ reduces with $T_i$ and $\ol{T}_j$. If $v = p$, then statement $\fG_{m+1}$ follows directly from Property~9. If $v \neq p$, then, as by Property~9 we have $\len{T_v} < \len{T_i}$ and $\len{T_v} < \len{T_j}$, statement $\fG_{m+1}$ follows from statement $\fG_m$ in the word $[i, v]$ or the word $[v, j]$. \\

\noindent\textbf{Case II.} When reducing $[i+1, j-1]$ there remains a minor trace. 

If $R_i$ reduces with some $R_t$, where $i < t < j$, then by the proof of $\fD_{m+1}$ we find that this case is actually impossible. 

If each of $R_{v_1}$ and $R_{v_2}$ reduces with only one of $T_i$ and $\ol{T}_j$, then by the proof of $\fD_{m+1}$ we find that this case is actually impossible. 

If $R_{v_1}$ and $R_{v_2}$ reduce with both $T_i$ and $\ol{T}_j$, then statement $\fG_{m+1}$ is proved as in Case~\textbf{I}. \\

\noindent\textbf{Case~III.} When reducing $[i+1, j-1]$ there remains a full trace. 

As a particular case of Lemma~4, Case~\textbf{III}, we have proved that if $R_i$ reduces with $\ol{T}_j$, then when reducing $[i+1, j-1]$ there cannot remain a full trace. 
\end{proof}

\begin{lemma}
$(\fA_{m+1}, \fB_{m+1}, \dots, \fG_{m+1}, \fH_m) \implies \fH_{m+1}$. 
\end{lemma}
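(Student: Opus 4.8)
The plan is to show that any free reduction of $W \gre \prod_{i=1}^{m+1}\ol{T}_i R_i T_i$ (satisfying Properties~1--10) leaves a trace, by examining the boundary relators and the cancellation pattern they generate, descending to $\fH_m$ whenever a boundary relator is completely absorbed. Since $\fH_m$ already disposes of the case $L[W] \leq m$, I may assume $L[W] = m+1$ and fix one way of reducing $W$.

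First I would look at the leftmost relator $R_1$, which by Property~1 does not cancel into its own flanking $\ol{T}_1, T_1$ and, having nothing to its left, can only be reduced on the right. If every relator with which $R_1$ reduces is not mutually inverse to the relevant cyclic conjugate, then the $K_{2/11}$ condition makes each such overlap strip off fewer than $\frac{2}{11}\len{R_1}$ letters, and by $\fC_{m+1}$ the relator $R_1$ reduces with at most two others; hence $R_1$ loses fewer than $\frac{4}{11}<\frac{9}{22}$ of its length and survives as a major trace, which (exactly as in the one-sided arguments of Lemma~4 via Property~6) completes this case.

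Next I would treat the remaining case, in which $R_1$ cancels heavily with a mutually inverse partner $R_{s_1}$; the same analysis applies to the rightmost relator $R_{m+1}$. Combining the constraints $\fA_{m+1}$ through $\fF_{m+1}$ --- at most one reducing $R$ on each side under a covering pair ($\fA_{m+1},\fB_{m+1}$), no relator reducing with three others ($\fC_{m+1}$), the exclusion of the $T$-cancellations of $\fD_{m+1},\fE_{m+1}$, and the non-crossing condition $\fF_{m+1}$ --- I would show that the relators of $W$ cancel along disjoint, non-crossing chains, each relator meeting at most two chain-neighbours together with the adjacent $R_z$ and $\ol{T}, T$ pieces. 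Tracing such a chain inward from the boundary, a chain of two relators that cancel lightly is a minor trace, while a chain with $t>3$ whose relators reduce only with their neighbours and which leaves no major or minor trace is, by Definition~2, a full trace.

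The remaining, intermediate chain lengths I would resolve by a length count built from Property~6, Property~7 and the inequalities of $\fG_{m+1}$, exactly as the analogous sub-cases are closed in Lemma~4; and when a boundary pair $R_1,R_{s_1}$ cancels completely, I would delete this internally-cancelling pair to obtain a product of $L$-length $\leq m$, invoke $\fH_m$ to produce a trace there, and verify --- using that the deleted pair reduces only internally, by the chain structure just established --- that this trace persists after reinserting the pair. The main obstacle is precisely this intermediate analysis: confirming that a chain too long to be a minor trace but leaving no major trace genuinely satisfies the neighbour-only reduction requirement of the full-trace definition, and carrying out the descent to $\fH_m$ so that the trace found in the shorter product is not absorbed when the cancelled boundary relators are restored.
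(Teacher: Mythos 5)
Your overall skeleton --- isolate a chain of mutually cancelling relators, split on its length to produce a major, minor, or full trace, and fall back on $\fH_m$ when the chain does not account for a boundary relator --- is the same as the paper's, but the two mechanisms that are supposed to close the argument are not the paper's and, as stated, do not work. Your case split at $R_1$ is governed by whether $R_1$ cancels heavily with a \emph{mutually inverse} partner; Property~8 already guarantees that two relators which reduce against one another are never mutually inverse as cyclic conjugates, so that branch is empty, while in the branch you do treat, the estimate ``at most two $R$'s by $\fC_{m+1}$, each stripping fewer than $\frac{2}{11}\len{R_1}$, hence a major trace'' ignores the possibility that $R_1$ reduces with some $\ol{T}_k$. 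Such a cancellation is not controlled by the $K_{2/11}$ condition at all (the $T_i$ are arbitrary words), and it is precisely this case that forces the appeal to $\fH_m$: the paper introduces the notion of a \emph{maximal} reduction, notes that if $R_1$ is not engaged in a maximal $R$-with-$R$ reduction then its maximal reduction is with some $\ol{T}_k$, and applies $\fH_m$ to the suffix $[k,w]$, whose trace survives in $W$ because maximality confines everything to the left of $\ol{T}_k$; the same device handles an interior relator of the chain attached on only one side. Without this maximality notion your assertion that the relators ``cancel along disjoint, non-crossing chains, each relator meeting at most two chain-neighbours'' is not justified by $\fA_{m+1}$--$\fF_{m+1}$ alone.

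Your descent to $\fH_m$ is also misplaced. You trigger it ``when a boundary pair $R_1, R_{s_1}$ cancels completely,'' but $\fC_{m+1}$, $\fD_{m+1}$, $\fE_{m+1}$ together with Property~6 already bound the total cancellation of $R_1$ strictly below its length (this is exactly how Lemma~13 later concludes that $R_1$ and $R_w$ are never completely absorbed), so the case you descend on does not occur --- and yet the descent is still needed, for the $\ol{T}_k$ case above, which your argument covers by neither branch. Moreover, excising two interior factors $\ol{T}_1R_1T_1$ and $\ol{T}_{s_1}R_{s_1}T_{s_1}$ yields a product that is no longer freely equal to the same word and for which Properties~1--10 would have to be re-verified before $\fH_m$ could be invoked; the paper only ever descends to genuine subproducts $[k,w]$ or $[i_q+1,w]$, where this is immediate. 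Finally, the chain lengths $p=2,3$, which are too short for a full trace, are exactly where the paper extracts a major or minor trace by a short argument on what $R_{i_1}$ and $R_{i_2}$ can reduce with; you defer precisely this step, together with the verification of the neighbour-only condition in the full-trace definition, so the cases that carry the content of the lemma remain open.
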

\begin{proof}
Suppose that we have the word $W$, where $L[W] \leq m+1$, and consider some particular way of reducing the word $W$. 

Consider all reductions in which $R_z$ reduces with $R_s$, where $1 \leq z < s \leq w$, and choose the maximal of them, i.e. the reductions in which no subword of the word $[1, z-1]\ol{T}_zR_z$ does not reduce with any subword of the word $R_sT_s[s+1, w]$ (except when reducing $R_z$ and $R_s$). 

In the same way, choose all maximal reductions of $R$ with $\ol{T}$ and $T$ with $R$. 

Let $R_{i_0}, R_{i_1}, \dots, R_{i_p}$ are all the $R_i$ appearing in $W$ which for the chosen reduction is affected by the maximal reduction of an $R$ with an $R$. 

If $i_0 > 1$, then $R_1$ either does not reduce, or the maximal reduction with $\ol{T}_k$, where $1 < k \leq w$, then the word $[k, w]$ has (by statement $\fH_m$) a trace, which will also be a trace for $W$. 

We will also prove that either $i_p = w$, or else $\fG_{m+1}$ holds. 

If some $R_{i_q}$ $(q = 1, 2, \dots, p-1)$ is affected in the maximal reduction of $R_z$ with $R_s$ only on one side (e.g. the left), then either $R_{i_q}$ does not reduce with it on the right -- in which case the word $[i_q+1, w]$ will be the trace of $W$; or else $R_{i_q}$ reduces maximally with $\ol{T}_k$ -- in which case  the word $[k, w]$ will be the trace of $W$. 

It remains only to prove Lemma~11 in the case of the following reductions taking place in the word $W$. 
\[
\ol{T}_1 \tm{MA}R_1 T_1 \dots \ol{T}_{i_1} \tm{MB}R\tm{MC}_{i_1} T_{i_1} \dots \ol{T}_{i_2} \tm{MD}R\tm{ME}_{i_2} T_{i_2} \dots \quad \dots \ol{T}_{i_{p-1}} \tm{MF}R\tm{MG}_{i_{p-1}} T_{i_{p-1}} \dots \ol{T}_{i_p} R\tm{MH}_{i_p} T_{i_p}
\dla{1.2}{}{MA}{MC}
\dla{1.2}{}{MB}{ME}
\dla{2.0}{}{MD}{MG}
\dla{1.2}{}{MF}{MH}
\]
\vspace{0.3cm}\\

where the indicated arcs are all maximal reductions of an $R$ with an $R$. \\

\noindent\textbf{Case~I.} $p = 1$.

In this case, either $R_1$ reduces with $\ol{T}_w$ -- in which case $R_w$ reduces only with $R_1$, or else $R_1$, in the given reduction, loses fewer than $\frac{4}{11}$ of its letters (by statements $\fC_{m+1}, \fD_{m+1}, \fE_{m+1}$). \\

\noindent\textbf{Case~II.} $p = 2, 3$. 

Either the word $R_1$ reduces with $\ol{T}_{i_1}$, or else there remains a major trace from $R_1$. Either the word $T_{i_{p-1}}$ reduces with $R_w$, or else there remains a major trace from $R_w$. But if $R_1$ reduces with $\ol{T}_{i_1}$, and $T_{i_{p-1}}$ with $R_w$, then either there remains a major trace from $R_{i_1}$, or there remains a major trace from $R_{i_2}$, or from $R_{i_1}$ and $R_{i_2}$ there remains a minor trace, depending on what $R_{i_1}$ is reduced with on the right and what $R_{i_2}$ is reduced with on the left. \\

\noindent\textbf{Case~III.} $p > 3$.

In this case, there remains a full trace (by the definition of full traces). 
\end{proof}

\begin{theorem}
If the non-empty reduced word $Q$ is equal to the identity in the group $G$, then $Q$ contains some trace as a subword. 
\end{theorem}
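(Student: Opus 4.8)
The plan is to recognise Theorem~2 as the instance $\fH_w$ of the family of statements built up in Lemmas~4--11, and to prove $\fH_m$ for every $m$ by a single induction that simultaneously maintains all eight statements $\fA_m, \fB_m, \dots, \fH_m$.

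First I would invoke Theorem~1. Since $Q$ is non-empty and equal to the identity, it has a representation $Q \equiv \prod_{i=1}^{w} \ol{T}_{i} R_{i} T_i$ (by Dyck's theorem, and choosing the representation of least index, as in Theorem~1) for which all ten Properties $1$--$10$ hold. Put $W \gre \prod_{i=1}^{w} \ol{T}_{i} R_{i} T_i$, so that $L[W] = w$ and $Q$ is precisely the word obtained from $W$ by the free reduction witnessing $Q \equiv W$. Granting $\fH_w$, the theorem is immediate: $\fH_w$ produces a trace in the word resulting from \emph{any} way of reducing $W$, and applying it to the reduction that yields $Q$ exhibits a trace as a subword of $Q$.

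The heart of the matter is then to establish $\fA_m, \fB_m, \dots, \fH_m$ for all $m \geq 1$ by induction on $m$. For the base case $m = 1$ the word $W$ has a single factor $\ol{T}_1 R_1 T_1$; statements $\fA_1$ through $\fG_1$ are vacuous, each referring to two or more distinct factors, while $\fH_1$ holds because $\ol{T}_1 R_1 T_1$ is already reduced by Property~1, so $R_1$ survives the (empty) reduction unshortened and is therefore a major trace. For the inductive step I would assume $\fA_m, \dots, \fH_m$ and invoke Lemmas~4--11 in exactly this order: Lemma~4 gives $\fA_{m+1}$; Lemma~5, using the freshly upgraded $\fA_{m+1}$ together with $\fB_m, \dots, \fH_m$, gives $\fB_{m+1}$; continuing through Lemma~11 gives $\fH_{m+1}$. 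This re-establishes all eight statements at level $m+1$ and closes the induction, so in particular $\fH_w$ holds.

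The substantive work lives entirely in Lemmas~4--11, which are already proved; for the theorem itself the only genuine subtlety is that the eight implications must be applied strictly in the order $\fA, \fB, \dots, \fH$, since each lemma consumes the already-upgraded level-$(m+1)$ statements for the letters preceding it together with the level-$m$ statements for those following it, and permuting them would break the chain. The one further point to keep in view is that the representation furnished by Theorem~1 must be the very product whose reduction produces $Q$, so that the trace guaranteed by $\fH_w$ is literally a subword of $Q$.
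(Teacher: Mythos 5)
Your proposal is correct and follows the paper's own argument essentially verbatim: invoke Theorem~1 to obtain a representation of $Q$ satisfying Properties~1--10, establish the base case $\fA_1,\dots,\fH_1$, chain Lemmas~4--11 to propagate all eight statements to every level $m$, and apply $\fH_w$ to the reduction of $\prod_{i=1}^{w}\ol{T}_iR_iT_i$ that yields the reduced word $Q$. The only difference is that you spell out the base case and the order of application more explicitly than the paper, which simply calls the base case obvious.
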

\begin{proof}
By Theorem~1, if the non-empty word $Q$ is equal to the identity, then there exists a word of the form $\prod_{i=1}^w \ol{T}_i R_i T_i$ which satisfies Properties~1-10, and such that $Q \equiv \prod_{i=1}^w \ol{T}_i R_i T_i$. 

If the word $Q$ is reduced, then after completely reducing the word $\prod_{i=1}^w \ol{T}_i R_i T_i$ in any way, the word $Q$ remains. Obviously, the statements $\fA_1$, $\fB_1, \dots$, $\fH_1$ are all true. Then by Lemmas~4-11, properties $\fA_q$, $\fB_q, \dots$, $\fH_q$ all hold for all natural numbers $q$, and in particular statement $\fH_w$ is true. 
\end{proof}

\begin{lemma}
Let $Q \equiv \prod_{i=1}^w \ol{T}_i R_i T_i$. If when reducing the product $\prod_{i=1}^w \ol{T}_i R_i T_i$ (satisfying Properties~1-10), the words $R_1$ and $R_k$ (or the words $R_1$ and $\ol{T}_k$) reduce, then $\len{T_1} + 2 \geq k$. 
\end{lemma}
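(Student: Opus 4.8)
The plan is to prove the inequality by induction on $k$, after first extracting the decisive length information from statement $\fG_w$. Since Theorem~2 together with Lemmas~4--11 establishes $\fA_q, \fB_q, \dots, \fH_q$ for every natural number $q$, in particular for $q = L[Q]$, I may apply $\fG_w$ with $i = 1$, $j = k$: both alternatives of the hypothesis, $R_1$ reducing with $R_k$ and $R_1$ reducing with $\ol{T}_k$, are instances of $R_1$ reducing with $\ol{T}_k R_k$ in the sense of $\fG_w$. This yields that \emph{every} intermediate $T_p$ with $1 < p < k$ satisfies $\len{T_p} < \len{T_1}$. In particular this settles the base case: if $\len{T_1} = 0$, there can be no index $p$ with $1 < p < k$, so $k \leq 2 = \len{T_1} + 2$. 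It also pins down the crucial asymmetry — $R_1$ is the global leftmost relator, so its left part survives into $Q$ and cannot cancel further to the left, which forces the intervening block $T_1[2,k-1]\ol{T}_k$ (together with the cancelling tails of $R_1$ and $R_k$) to collapse from the left in a controlled staircase.

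For the inductive step, assume $k \geq 3$ and write $R_1 \gre A_1 X B_1$, $R_k \gre C_1 \ol{X} D_1$ (for the $\ol{T}_k$ case one uses Property~9 instead, with $R_1 \gre XYZ$, $\ol{T}_k \gre U \ol{Y} V$, and the bound $\len{T_1} \le \len{U}$). The reduction witnessing "$R_1$ reduces with $R_k$" absorbs the word $B_1 T_1 [2,k-1] \ol{T}_k C_1$, after which $X$ meets $\ol{X}$. I examine the leftmost junction $R_1 T_1 \ol{T}_2 R_2 \cdots$: because $\len{T_2} < \len{T_1}$, the word $\ol{T}_2$ cannot consume all of $T_1$, and once the leftmost free cancellation is performed the relator $R_2$ is absorbed and the word immediately following $R_1$ becomes a proper prefix of $T_1$. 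Here Property~10 enters through its trichotomy for a span "absorbed without reducing anything": one shows that either the collapse forces adjacency ($p+1=j$) or it produces a genuinely smaller admissible product in which $R_1$ now reduces with the relator sitting in position $k-1$, and that this product still satisfies Properties~1--10. Applying the induction hypothesis to this product gives $(k-1) \leq (\len{T_1}-1) + 2$, hence $k \leq \len{T_1} + 2$.

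The main obstacle is precisely the charging that turns the \emph{count} $k-2$ into the \emph{length} $\len{T_1}$. The subtlety is that the intermediate relators need not cancel against $T_1$ at all — they may reduce with one another under the main arc, so the clean "truncate $T_1$" picture must be replaced by an analysis of the non-crossing cancellation of $B_1 T_1[2,k-1]\ol{T}_k C_1$, repeatedly invoking $\fG_w$ (all intermediate $T_p$ are strictly shorter than $T_1$) to force the cancellation to proceed left-to-right. The small-cancellation reduction constraints do the real limiting work: $\fC_w$ forbids a threefold chain $R$--$R$--$R$--$R$ of reductions, while $\fA_w$ and $\fB_w$ bound how many relators any one relator may reduce with, so the staircase absorbing the block cannot pack more than one relator per admissible "level," and Property~9 ties each level to a strict drop in $T$-length measured from $T_1$. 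Verifying the "without reducing anything" hypothesis of Property~10 on the correct sub-span, and excluding the degenerate collapse $[p+1,j-1] \equiv 1$ except where it already delivers adjacency, is where the bulk of the bookkeeping lies; once the staircase structure of depth at most $\len{T_1}$ is in place, the bound $k - 2 \leq \len{T_1}$ is immediate.
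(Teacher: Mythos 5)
Your overall strategy --- induction on $k$, with $\fG_w$, Property~9 and Property~10 supplying the strict decreases in conjugator length --- is the same as the paper's, and your base-case observation (if $\len{T_1}=0$ then $\fG_w$ forbids any intermediate index) is a legitimate variant of the paper's trivial $k=2$ base case. But the inductive step as you describe it has a genuine gap. You assert that the cancellation of the block $B_1T_1[2,k-1]\ol{T}_kC_1$ proceeds as a left-to-right staircase: that $R_2$ is absorbed at the leftmost junction and that ``the word immediately following $R_1$ becomes a proper prefix of $T_1$,'' producing a new admissible product with parameters $(k-1,\len{T_1}-1)$. Nothing in the hypotheses forces this. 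The intermediate relators need not interact with $T_1$ at all in that order: the partner of $R_1$ can be any $R_p$ with $1<p<k$, and the remaining relators can pair up among themselves under the main arc. Your own caveat about ``the clean truncate-$T_1$ picture'' acknowledges the problem, but the replacement you offer (an appeal to Property~10's trichotomy plus $\fA_w,\fB_w,\fC_w$) is not carried out, and it is precisely here that the lemma lives or dies.

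The paper resolves this differently, and more concretely. It cases on the partner $R_p$ of $R_1$ (Case IA: no such $p$, whence $k=2$ by $\fG_w$ and Property~10; Cases IB/IC: such a $p$ exists for $R_1$ or for $R_k$). Since $R_p$ must be completely absorbed in $[1,k]$, it reduces with $T_1$ and $\ol{T}_k$; if $R_p$ reduces with no further $R_q$ then $k=3$ directly, and otherwise the inductive hypothesis (or its dual) is applied not to a truncation of $T_1$ but to the sub-span $[1,p]$ or $[p,k]$ \emph{anchored at $R_p$}, giving $p\leq\len{T_p}+2$ or $\len{T_p}+2\geq k-p+1$; the adjacency conclusions $p+1=k$ or $p=2$ from $\fG_w$ and Property~10, together with $\len{T_p}<\len{T_1}$ from Property~9, then close the arithmetic. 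This is the step your proposal is missing: the induction must descend through the intermediate relator $R_p$ and its conjugator $T_p$, not through a shortened copy of $T_1$, and the adjacency forced by Property~10 is what converts the strict length drop into a drop of exactly one in the count. To repair your argument you would need to prove your two unestablished claims --- that the peeled product still satisfies Properties~1--10 and that its first conjugator is strictly shorter --- and neither is true in the generality in which you use them.
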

\begin{proof}
We prove the lemma by induction on $k$.

If $k=2$, then the lemma is trivially true, as $\len{T_1} \geq 0$. 

Suppose that the lemma is true for all $k' < k$. \\

\noindent\textbf{Case~I.} Suppose that the words $R_1$ and $R_k$ reduce. \\

\noindent\textbf{Case~IA.} Neither $R_1$ nor $R_k$ reduce with any $R_p$, where $1 < p < k$. By statement $\fG_w$ and Property~10, we have $k=2$, and the lemma is trivially true. \\

\noindent\textbf{Case~IB.} $R_1$ reduces with $R_p$, where $1 < p < k$. 
\[
\ol{T}_1 \tm{MA}R_1 \tm{MB}T_1 \dots \ol{T}_p \tm{MC}R\tm{MD}_p T_p \dots \ol{T\tm{ME}}_k R\tm{MF}_k T_k \dots
\dla{1.0}{}{MA}{MD}
\dla{1.6}{}{MA}{MF}
\dla{0.8}{}{MB}{MD}
\dla{0.8}{}{MC}{ME}
\]
\vspace{0.2cm}\\

The word $R_p$ is completely reduced when reducing $[1, k]$, and hence $R_p$ reduces with $T_1$ and $\ol{T}_k$. \\

\noindent\textbf{Case~IB1.} $R_p$ does not reduce when reducing $[2, k-1]$. Then by $\fG_w$ and Property~10, we have that $k=3$, $\len{T_1} > \len{T_p} \geq 0$, from which we have $\len{T_1} + 2 \geq k$. \\

\noindent\textbf{Case~IB2.} $R_p$ reduces with $R_q$, where $1 < q <p$. 
\[
\ol{T}_1 \tm{MA}R_1 T\tm{MB}_1 \dots \ol{T}_q \tm{MC}R_q T_q \dots \ol{T}_p \tm{MD}R\tm{ME}_p T_p \dots \ol{T\tm{MF}}_k R\tm{MG}_k T_k \dots
\dla{1.6}{}{MA}{MG}
\dla{1.0}{}{MA}{ME}
\dla{0.8}{}{MB}{ME}
\dla{0.6}{}{MC}{ME}
\dla{0.6}{}{MD}{MF}
\]\\

Then by the inductive hypothesis (of the lemma dual to Lemma~12), we have that $p \leq \len{T_p} + 2$. By $\fG_w$ and Properties~9 and 10, we have $p+1 = k$ and $\len{T_p} < \len{T_1}$, from which we have 
\[
\len{T_1} + 2 \geq \len{T_p} + 3 \geq p+1 = k. \\
\]

\noindent\textbf{Case~IB3.} $R_p$ reduces with $R_q$, where $p < q < k$. 
\[
\ol{T}_1 \tm{MA}R_1 \tm{MB}T_1 \dots \ol{T}_p \tm{MC}R\tm{MD}_p T_p \dots \ol{T}_q R\tm{ME}_q T_q \dots \ol{T\tm{MF}}_k R\tm{MG}_k T_k \dots
\dla{1.6}{}{MA}{MG}
\dla{0.9}{}{MA}{MD}
\dla{0.6}{}{MB}{MD}
\dla{0.6}{}{MC}{ME}
\dla{1.2}{}{MC}{MF}
\]
\vspace{0.2cm}\\

By the inductive hypothesis $\len{T_p} + 2 \geq k-p+1$. By $\fG_w$ and Properties~9 and 10, we have that $p=2$ and $\len{T_p} < \len{T_1}$, from which we have
\[
\len{T_1} + 2 \geq \len{T_p} + 3 \geq k-2 + 1 + 1 \geq k.\\
\]

\noindent\textbf{Case~IC.} $R_k$ reduces with $R_p$, where $1 < p < k$. The proof is identical to Case~\textbf{IB.} \\

\noindent\textbf{Case~II.} Suppose that $R_1$ reduces with $\ol{T}_k$. The proof is identical to Case~\textbf{I.} \\

This completes the proof of Lemma~12. 
\end{proof}

\begin{lemma}
Let $Q \equiv \prod_{i=1}^w \ol{T}_i R_i T_i$. If when reducing the product $\prod_{i=1}^w \ol{T}_i R_i T_i$ (satisfying Properties~1-10), the word $R_1$ reduces with $R_k$, then $\len{Q} \geq k$. 
\end{lemma}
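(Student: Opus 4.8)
The plan is to deduce Lemma~13 from Lemma~12 together with a direct inspection of the left end of the reduced word $Q$. By Lemma~12, the hypothesis that $R_1$ reduces with $R_k$ already yields $\len{T_1} + 2 \geq k$. Consequently it is enough to prove the length estimate $\len{Q} \geq \len{T_1} + 2$, after which $\len{Q} \geq \len{T_1}+2 \geq k$ gives the claim.

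To obtain $\len{Q} \geq \len{T_1}+2$, I would track which letters at the extreme left of $W \gre \prod_{i=1}^{w} \ol{T}_i R_i T_i$ survive the free reduction down to $Q$. Write $R_1 \gre AX$, where $X$ is the right-hand portion of $R_1$ consumed in its reduction with $R_k$ (together with any intervening material cancelled along the way), and $A$ is the untouched initial segment. Since $R_1$ sits at the very front of $W$, it can reduce only to the right, so both the whole of $A$ and the whole of the leading $\ol{T}_1$ are preserved. By Property~1 the word $\ol{T}_i R_i T_i$ is reduced for $i=1$, hence so is its prefix $\ol{T}_1 A$; and because $\ol{T}_1 A$ occurs at the extreme left of $W$, while the active reduction lies strictly to the right of $A$, no letter of $\ol{T}_1 A$ can ever be cancelled. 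Thus $\ol{T}_1 A$ is a prefix of $Q$, and $\len{Q} \geq \len{\ol{T}_1} + \len{A} = \len{T_1} + \len{A}$.

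It then remains to verify $\len{A} \geq 2$. The reduction of $R_1$ with $R_k$ cancels a non-empty common piece of $R_1$ and $\ol{R}_k$; by the $K_{2/11}$ condition any such piece has length strictly less than $\frac{2}{11}\len{R_1}$, so a non-empty piece forces $\len{R_1} \geq 6$. Even allowing for the boundedly many pieces $R_1$ may shed on its right (by $\fA_w$ the word $R_1$ reduces with at most one $R$ on each side), the untouched segment satisfies $\len{A} > \frac{7}{11}\len{R_1} \geq 2$. Combining the two displays gives $\len{Q} \geq \len{T_1} + \len{A} \geq \len{T_1} + 2 \geq k$, as required.

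The main obstacle is the bookkeeping of the middle paragraph: showing rigorously that the entire prefix $\ol{T}_1 A$ is immune to cancellation. This rests on the fact that in a free reduction the leftmost letters of a word are removed only by cancellation propagating inward from the right, so that once the non-cancelling junction $\ol{T}_1 \mid A$ is fixed and the surviving segment $A$ is separated from the active reduction region, nothing to its left can be affected. Formalising the phrase ``the reduction lies strictly to the right of $A$'' in terms of the reduction arcs of Definition~3 is where the argument needs the most care.
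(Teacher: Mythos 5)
Your overall architecture matches the paper's: both proofs reduce the statement to the inequality $\len{Q} \geq \len{T_1} + 2$ and then invoke Lemma~12. The difference is in the bookkeeping of the ``$+2$'', and there your argument has a misstated step. The paper obtains the two extra letters from the two ends of the product: by $\fC_w$, $\fD_w$, $\fE_w$ the word $R_1$ can reduce only with some $R_s$, $R_t$, $\ol{T}_t$ and thereby loses fewer than $\frac{17}{22}$ of its letters, and the same holds for $R_w$; hence $\ol{T}_1$ survives intact and each of $R_1$ and $R_w$ contributes at least one surviving letter. You instead try to extract both extra letters from $R_1$ alone, via the claim $\len{A} > \frac{7}{11}\len{R_1}$. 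That bound is not justified: it accounts only for the (at most two) pieces $R_1$ sheds against other $R$'s, but $R_1$ may in addition reduce with some $\ol{T}_t$, and such a reduction is not a piece controlled by the $\frac{2}{11}$ small-cancellation bound --- it can be far longer. (Your citation of $\fA_w$ is also off target; $\fA$ concerns a word $R_j$ sandwiched between two words that both reduce with $R_i$, and the statements that actually constrain the reductions of $R_1$ are $\fC_w$, $\fD_w$, $\fE_w$.)

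The step is repairable: the correct bound from the paper's toolkit is that $R_1$ loses fewer than $\frac{17}{22}$ of its letters, so $\len{A} > \frac{5}{22}\len{R_1}$; combined with your observation that a non-empty cancelling piece between $R_1$ and $R_k$ forces $\len{R_1} \geq 6$ (licensed by Property~8, which rules out the mutually inverse case in which the $K_{2/11}$ bound would not apply), this still yields $\len{A} \geq 2$. With that substitution your proof goes through, though it is a more laborious route to $\len{Q} \geq \len{T_1} + 2$ than counting one surviving letter from each of $R_1$ and $R_w$, which needs no lower bound on $\len{R_1}$ at all. Your care in the middle paragraph about why the prefix $\ol{T}_1 A$ survives free reduction --- cancellation in $R_1$ propagating only inward from the right, by Property~1 --- is correct and is exactly what underlies the paper's one-line assertion that $\ol{T}_1$ does not reduce at all.
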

\begin{proof}
By statements $\fC_w, \fD_w$, and $\fE_w$, the word $R_1$ can reduce only with $R_t, \ol{T}_t$, and $R_s$, where $s < t$. But these reductions remove fewer than $\frac{17}{22}$ of the number of letters of $R_1$. The same is true for $R_w$. It follows that when reducing $\prod_{i=1}^w \ol{T}_i R_i T_i$, the word $\ol{T}_1$ does not reduce at all, and the words $R_1$ and $R_w$ are not completely absorbed. Hence $\len{Q} \geq \len{T_1} + 2$, and by Lemma~12, we also have $\len{T_1}+2 \geq k$. 
\end{proof}

\begin{lemma}
Let $Q \equiv \prod_{i=1}^w \ol{T}_i R_i T_i$. If when reducing the product $\prod_{i=1}^w \ol{T}_i R_i T_i$ (satisfying Properties~1-10) there remains some full trace of the words $R_1, R_{s_1}, R_{s_2}$, $\dots$, $R_{s_{p-1}}, R_w$, then $p \leq 3\len{Q}$. 
\end{lemma}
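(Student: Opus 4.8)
The plan is to count surviving letters. When the product $\prod_{i=1}^w \ol{T}_i R_i T_i$ is reduced down to the reduced word $Q$, each surviving letter of $Q$ is the image of exactly one letter of the original product; in particular the remaining full trace, which I denote $F$, is a subword of $Q$, so $\len{F} \leq \len{Q}$. I would track, for each relator $R_{s_k}$ of the chain $R_1 \gre R_{s_0}, R_{s_1}, \dots, R_{s_{p-1}}, R_{s_p} \gre R_w$, the (possibly empty) segment $A_k$ of $R_{s_k}$ that survives into $F$. Since distinct occurrences $R_{s_k}$ consist of disjoint positions of the product, their surviving segments $A_0, A_1, \dots, A_p$ occupy pairwise disjoint subwords of $F$, and hence $\sum_{k=0}^{p} \len{A_k} \leq \len{F} \leq \len{Q}$.

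Given this, it suffices to bound below the number of indices $k$ for which $A_k$ is non-empty. First I would record that the two endpoint relators survive: by the reasoning of Lemma~13 neither $R_1$ nor $R_w$ is completely absorbed, since each can only reduce (on the relevant side) with words of the form $R_t$, $\ol{T}_t$, $R_s$, removing fewer than $\frac{17}{22}$ of its letters. The key step is then to show that the chain cannot contain three consecutive completely absorbed relators, i.e. that for every $k$ at least one of $A_{k}, A_{k+1}, A_{k+2}$ is non-empty. Granting this, partitioning the $p+1$ relators into blocks of three consecutive ones produces at least $\lceil (p+1)/3 \rceil \geq p/3$ non-empty segments, each contributing at least one letter, so that $p/3 \leq \sum_{k}\len{A_k} \leq \len{Q}$, which yields $p \leq 3\len{Q}$.

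The hard part will be establishing that no interior relator $R_{s_k}$ can be completely absorbed together with a neighbour, since this is exactly what forces the factor three rather than the stronger $p-1 \leq \len{Q}$. By the definition of a full trace, $R_{s_k}$ reduces to the left only with $R_{z_k}$ and $T_{s_{k-1}}$ (bridging to $R_{s_{k-1}}$) and to the right only with $R_{z_{k+1}}$ and $\ol{T}_{s_{k+1}}$ (bridging to $R_{s_{k+1}}$); the letters of $R_{s_k}$ that it actually loses to a neighbouring relator form a common piece, of length less than $\frac{2}{11}\len{R_{s_k}}$ by the $K_{2/11}$ condition, while the remaining consumption is against conjugator material. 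If $R_{s_k}$ were completely absorbed, the two bridge cancellations would meet, forcing $R_{s_{k-1}}$ and $R_{s_{k+1}}$ to cancel against a common stretch passing through the position of $R_{s_k}$; the plan is to show, using the piece bound together with the hypotheses that the trace contains no major and no minor trace (and statements $\fC_w, \fD_w, \fE_w$ controlling how each $R$ may reduce), that two such collapses cannot occur in immediate succession without producing either an over-long common piece or a minor trace. The conjugator bookkeeping — deciding exactly which letters of each $R_{s_k}$ are consumed by the bridges and which survive, as constrained by Properties~6 and~7 — is where the real work lies.
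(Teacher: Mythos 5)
Your overall architecture matches the paper's: reduce the bound to the claim that no three consecutive relators $R_{s_{m-1}}, R_{s_m}, R_{s_{m+1}}$ of the chain are all completely absorbed, then count the letters that survive into $Q$. The counting half is fine (and is exactly how the paper concludes). The gap is that the combinatorial claim itself --- the only real content of the lemma --- is left as a plan, and the plan you describe is not the one that works. You end by aiming to show that ``two such collapses cannot occur in immediate succession,'' i.e.\ that no interior relator is completely absorbed together with a neighbour; that is strictly stronger than the three-in-a-row statement you correctly identified as sufficient, and nothing in the hypotheses rules out two adjacent chain relators both being swallowed. Your route via the two bridge cancellations ``meeting'' and forcing a minor trace also fights the hypothesis rather than using it: the full trace is \emph{defined} by the absence of major and minor traces in this reduction, so a contradiction of that kind needs to be manufactured in a different word, not observed in the given one.

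The paper's proof of the key step is a short fraction count with a dichotomy. By the definition of a full trace, $R_{s_m}$ can cancel on the left only against $R_{z_m}$ and $T_{s_{m-1}}$ and on the right only against $R_{z_{m+1}}$ and $\ol{T}_{s_{m+1}}$, besides the chain cancellations with $R_{s_{m\pm 1}}$. If $R_{s_{m-1}}$ reduces on the right against the same subword that $R_{s_m}$ reduces with, and $R_{s_{m+1}}$ symmetrically on the left, then everything $R_{s_m}$ loses is lost to at most four $R$-words, each contributing a piece shorter than $\frac{2}{11}\len{R_{s_m}}$ by the $K_{2/11}$ condition, so $R_{s_m}$ loses fewer than $\frac{8}{11}$ of its letters and survives. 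Otherwise $R_{s_{m-1}}$ reduces on the right only with $R_{s_m}$, losing fewer than $\frac{2}{11}$ of its letters there, while by $\fC_w, \fD_w, \fE_w$ its reductions on the left remove fewer than $\frac{17}{22}$ of its letters; in total it loses fewer than $\frac{21}{22}$ and survives. Either way one of the three relators contributes at least one letter to $Q$. Your proposal names the right target but does not supply this argument, and the ``conjugator bookkeeping'' you defer is precisely where it lives.
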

\begin{proof}
As we already know, the words $R_1$ and $R_w$ are not completely absorbed when reducing $\prod_{i=1}^w \ol{T}_i R_i T_i$. To prove the lemma, it suffices to show that none of $R_{s_{m-1}}, R_{s_m}, R_{s_{m+1}}$, $(m = 1, 2, \dots, p-1)$ are not all simultaneously completely absorbed when reducing $\prod_{i=1}^w \ol{T}_i R_i T_i$. 
\tiny
\[
\dots \ol{T}_{s_{m-2}} R\tm{MA}_{s_{m-2}} T_{s_{m-2}} \dots \ol{T}_{s_{m-1}} \tm{MB}R\tm{MC}_{s_{m-1}} T_{s_{m-1}} \dots \ol{T}_{s_{m}} \tm{MD}R\tm{ME}_{s_{m}} T_{s_{m}} \dots \ol{T}_{s_{m+1}} \tm{MF}R\tm{MG}_{s_{m+1}} T_{s_{m+1}} \dots \ol{T}_{s_{m+2}} \tm{MH}R\tm{MI}_{s_{m+2}} T_{s_{m+2}} \dots
\dla{0.8}{}{MA}{MC}
\dla{0.8}{}{MB}{ME}
\dla{0.8}{}{MD}{MG}
\dla{0.8}{}{MF}{MI}
\]
\normalsize
\vspace{0.2cm}\\
If $R_{s_{m-1}}$ reduces on the right with the same subword as $R_{s_m}$ reduces with, and $R_{s_{m+1}}$ reduces on the left with the same subword as $R_{s_m}$, then $R_{s_m}$, when reduced, cannot lose more than $\frac{8}{11}$ of its letters, i.e. is not completely absorbed. Hence $R_{s_{m-1}}$ reduces on the right only with $R_{s}$, but on the left (by $\fC_w, \fD_w, fE_w$) it cannot lose fewer than $\frac{17}{22}$ of its letters, i.e. $R_{s_{m-1}}$ is not completely absorbed. 
\end{proof}

\begin{lemma}
Let $Q \equiv \prod_{i=1}^w \ol{T}_i R_i T_i$. If when reducing the product $\prod_{i=1}^w \ol{T}_i R_i T_i$ (satisfying Properties~1-10) the word $R_1$ and $R_k$ reduce, then $\len{T_k} \leq (d+1)\len{T_1}$, where $d$ is the length of the longest defining word. 
\end{lemma}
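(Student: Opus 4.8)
The plan is to prove this by induction on $k$, in exactly the style of Lemmas~12--14, using the bound $\len{T_1} + 2 \geq k$ of Lemma~12 as the crucial input that keeps the reduction chain joining $R_1$ to $R_k$ short. The one genuinely new ingredient is a per-step comparison of consecutive conjugators along that chain, which I would set up first and then feed into a case analysis mirroring Lemma~12.

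The per-step estimate goes as follows. Suppose $R_a$ and $R_b$ ($a<b$) reduce with one another and are consecutive in the chain, so that the block $T_a[a+1,b-1]\ol{T}_b$ cancels before the cancelled pieces of $R_a$ and $R_b$ meet. Writing $R_a \gre AXB$ and $R_b \gre C\ol{X}D$ with $X$ the cancelled piece, the definition of ``reduces with'' forces $B\,\bigl(T_a[a+1,b-1]\ol{T}_b\bigr)\,C \equiv 1$. Since $\ol{T}_aR_aT_a$ and $\ol{T}_bR_bT_b$ are reduced by Property~1, there is no cancellation at the $R_aT_a$ or $\ol{T}_bR_b$ junctions, so in the degenerate case $[a+1,b-1]\equiv 1$ this collapses to the graphical equality $BT_a \gre \ol{C}T_b$, whence $\len{T_b} = \len{T_a} + \len{B} - \len{C}$. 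As $B$ and $C$ are proper pieces of defining words, $\len{B} < d$ and $\len{C}\geq 0$, giving the one-step bound $\len{T_b}\leq \len{T_a}+d$; the non-degenerate case, where the interior block $[a+1,b-1]$ is absorbed, is precisely what the induction handles. I would dispose of the short chains $k\leq 3$ (Cases IA and IB1 of Lemma~12) directly from this estimate.

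For the inductive step I would run through the same split as Lemma~12. In each of Cases IB2, IB3, IC, and the case where $R_1$ reduces with $\ol{T}_k$, the statement $\fG_w$ together with Properties~9 and~10 collapses the chain into a tight shape (forcing $p+1=k$, or $p=2$, etc.) and guarantees that the interior relator $R_p$ feeding the recursion carries a strictly shorter conjugator, $\len{T_p}<\len{T_1}$. Applying the inductive hypothesis (or its dual) to the shorter sub-chain terminating at $R_p$ bounds $\len{T_p}$, and one further application of the one-step estimate passes from $R_p$ to $R_k$; the index collapse forced by $\fG_w$ and Property~10 is what ensures no interior cell is used twice in the recursion.

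Assembling the increments, the growth from $\len{T_1}$ to $\len{T_k}$ is at most $d$ for each cell strictly between $R_1$ and $R_k$, of which there are $k-2$; since Lemma~12 gives $k-2\leq \len{T_1}$, this yields $\len{T_k}\leq \len{T_1}+d(k-2)\leq \len{T_1}+d\,\len{T_1}=(d+1)\len{T_1}$. The main obstacle I anticipate is exactly this accumulation bookkeeping: one must show the total increase is bounded by $d$ times the number of \emph{interior cells} rather than $d$ times the number of chain-steps (the two differ by the contribution of the endpoint steps), so that the constant comes out as $(d+1)$ and not merely $(d+1)\len{T_1}+d$. This is the point at which the tightness of the chain enforced by $\fG_w$ and Property~10 — which prevents the boundary steps from contributing growth — must be invoked carefully.
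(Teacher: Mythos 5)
Your final arithmetic is exactly the paper's --- $\len{T_k} \leq d(k-2) + \len{T_1}$ combined with Lemma~12's bound $k-2 \leq \len{T_1}$ --- but the mechanism you propose for reaching it is different, and the difference is where the difficulty you yourself flag lives. The paper does not induct on $k$ or telescope conjugator lengths along a chain at all. It argues directly about how the word $\ol{T}_k$ is consumed when the block between the cancelling pieces of $R_1$ and $R_k$ is absorbed: by statement $\fG_w$ and Property~10, each interior relator $R_p$ with $1 < p < k$ cancels fewer than $d$ letters of $\ol{T}_k$ (it can contribute at most its own length), and whatever of $\ol{T}_k$ is left over after these $k-2$ contributions must cancel into $T_1$ and therefore has fewer than $\len{T_1}$ letters. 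Summing gives $\len{T_k} \leq d(k-2) + \len{T_1}$ in one line, with no base case and no propagation.

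The gap in your version is the one you name in your last paragraph, and it is genuine rather than cosmetic. Your per-step estimate $\len{T_b} \leq \len{T_a} + \len{B} - \len{C} < \len{T_a} + d$ is established only in the adjacent case $b = a+1$, and telescoping it over a chain of $t$ steps joining $R_1$ to $R_k$ yields $\len{T_1} + td$ with $t$ up to $k-1$, i.e.\ $(d+1)\len{T_1} + d$; you assert that $\fG_w$ and Property~10 ``prevent the boundary steps from contributing growth'' but do not show it, and your claim to ``dispose of the short chains $k \leq 3$ directly from this estimate'' already fails at $k=2$ with $\len{T_1} = 0$, where the one-step bound gives $\len{T_2} < d$ while the lemma demands $\len{T_2} \leq 0$. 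The paper's bookkeeping dissolves this because the $\len{T_1}$ term enters as the size of the \emph{residue} of $\ol{T}_k$ that cancels against $T_1$ --- an additive remainder, not a chain step that costs another $d$. If you want to keep your inductive framework, the fix is to replace the telescoped quantity $\len{T_{s_i}}$ by the paper's quantity ``number of letters of $\ol{T}_k$ not yet accounted for,'' charging at most $d$ of them to each interior cell and at most $\len{T_1}$ to the residue; at that point the induction evaporates and you have reproduced the paper's proof.
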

\begin{proof}
By statement $\fG_w$ and Property~10, each word $R_p$, where $1 < p <k$, loses with $\ol{T}_k$ fewer than $d$ letters, after which, there remain of $\ol{T}_k$ fewer $\len{T_1}$. But by Lemma~12, we have that $k \leq \len{T_1}+2$, i.e. $k - 2 \leq \len{T_1}$. It follows that 
\[
\len{T_k} \leq d \cdot (k-2) + \len{T_1} \leq d\cdot \len{T_1} + \len{T-1} = (d+1)\len{T_1}.
\]
\end{proof}

\begin{lemma}
Let $Q \equiv \prod_{i=1}^w \ol{T}_i R_i T_i$, where $\len{Q} = e$. If when reducing the product $\prod_{i=1}^w \ol{T}_i R_i T_i$ (satisfying Properties~1-10) there remains a full trace of the words $R_1, R_{s_1}, R_{s_2}, \dots, R_{s_{p-1}}, R_w$, then $\len{T_j} \leq e \cdot(d+1)^{3e}$, where $j = 1, 2, \dots, w$. 
\end{lemma}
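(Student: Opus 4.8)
The plan is to read off the bound by stringing together the four quantitative lemmas just proved—Lemmas~13, 14, and 15, together with the structural statement $\fG_w$—first along the full trace and then for the off-trace indices. Relabel the trace as $R_{s_0} \gre R_1, R_{s_1}, \dots, R_{s_{p-1}}, R_{s_p} \gre R_w$, so that $R_{s_m}$ reduces with $R_{s_{m+1}}$ for each $m = 0, 1, \dots, p-1$. Lemma~14 immediately gives $p \leq 3e$. Moreover, since $R_1$ reduces with $R_{s_1}$, the proof of Lemma~13 shows that $\ol{T}_1$ is untouched and that $R_1, R_w$ are not completely absorbed, whence $\len{Q} \geq \len{T_1} + 2$; thus $\len{T_1} \leq e - 2 \leq e$. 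These are the two ``endpoint'' estimates that will seed the induction.

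Next I would produce the geometric chain. For each $m$ I restrict attention to the sub-configuration $[s_m, s_{m+1}]$ and reindex it so that $R_{s_m}$ plays the role of $R_1$ and $R_{s_{m+1}}$ that of $R_k$ in Lemma~15. Because the full-trace defining condition forces $R_{s_m}$ to reduce to the left only with $R_{z_m}$ and $T_{s_{m-1}}$—both of which lie outside $[s_m, s_{m+1}]$—the factor $R_{s_m}$ is a genuine left end reducing only to the right within this subword, and $\ol{T}_{s_m}$ is untouched there. Since Properties~1--10 are local and the statements $\fA_w$--$\fH_w$ hold for every configuration with $L$-length at most $w$ (Theorem~2), all hypotheses of Lemma~15 are inherited, and it yields $\len{T_{s_{m+1}}} \leq (d+1)\len{T_{s_m}}$. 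Iterating from $m = 0$ gives $\len{T_{s_m}} \leq (d+1)^m \len{T_1} \leq (d+1)^{3e}\cdot e = e(d+1)^{3e}$ for every $m = 0, 1, \dots, p$.

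It then remains to treat an index $j$ that is not one of the $s_m$. Such a $j$ satisfies $s_m < j < s_{m+1}$ for some $m$, and since $R_{s_m}$ reduces with $R_{s_{m+1}}$ its reduction arc passes over $\ol{T}_{s_{m+1}}$, so $R_{s_m}$ reduces with $\ol{T}_{s_{m+1}} R_{s_{m+1}}$. Statement $\fG_w$ applied with $i = s_m$ and the role of $j$ in $\fG_w$ taken by $s_{m+1}$ then gives $\len{T_j} < \len{T_{s_m}} \leq e(d+1)^{3e}$. Combining this with the previous paragraph covers all $j = 1, 2, \dots, w$ and establishes the claimed bound.

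The main obstacle I anticipate is not the arithmetic—which is a routine geometric estimate once $p \leq 3e$ and $\len{T_1} \leq e$ are in hand—but the careful justification that each sub-configuration $[s_m, s_{m+1}]$ is a legitimate instance of the hypotheses of Lemma~15 after reindexing: that the restriction preserves Properties~1--10 and genuinely presents $R_{s_m}$ as a left-end factor reducing with $R_{s_{m+1}}$ with no interfering leftward cancellation. The full-trace condition (each $R_{s_k}$ reducing to the left only with $R_{z_k}$ and $T_{s_{k-1}}$) is exactly what makes this restriction clean, so the delicate point is to invoke that condition precisely; once it is granted, the chain of inequalities closes immediately.
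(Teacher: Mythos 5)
Your proposal is correct and follows essentially the same route as the paper: iterate Lemma~15 along the chain of the full trace to get $\len{T_{s_m}} \leq (d+1)^m\len{T_1} \leq e(d+1)^{3e}$ using $p \leq 3e$ from Lemma~14 and $\len{T_1} < e$, then dispose of the off-trace indices with statement $\fG_w$. The only cosmetic difference is that you bound an intermediate $\len{T_j}$ by $\len{T_{s_m}}$ (the left endpoint) where the paper uses $\len{T_{s_t}}$ (the right endpoint); both are licensed by $\fG_w$ and both are within the stated bound.
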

\begin{proof}
We first prove the lemma for the words $R_1, R_{s_1}, R_{s_1}, \dots, R_{s_{p-1}}, R_w$. 

By Lemma~15, $\len{T_{s_1}} \leq (d+1)e$, as $\len{T_1} < \len{Q} = e$. We have $\len{T_{s_2}} \leq (d+1)e^2$, $\dots$, $\len{T_{s_p}} \leq (d+1)e^p$. But by Lemma~14, $p \leq 3e$. By statement $\fG_w$, for any $u$ such that $s_{t-1} < u < s_t$, we have $\len{T_u} \leq \len{T_{s_t}}$. This proves the lemma.
\end{proof}

\begin{lemma}
Let $Q \equiv \prod_{i=1}^w \ol{T}_i R_i T_i$, let $\len{Q} = e$, and let $d$ be the length of the longest defining word. If when reducing the product $\prod_{i=1}^w \ol{T}_i R_i T_i$ (satisfying Properties~1-10) there remains a full trace, then 
\[
\Big[\normalsize\prod_{i=1}^w \ol{T}_i R_i T_i {}^{{}^\partial} \leq 27e^3(d+1)^{6e}.
\]
(This upper bound is very rough, and was only chosen for ease of writing).
\end{lemma}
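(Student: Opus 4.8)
The plan is to bound the graphical length $\len{\prod_{i=1}^w \ol{T}_i R_i T_i} = \sum_{i=1}^w \bigl(2\len{T_i} + \len{R_i}\bigr)$ by controlling three quantities separately: the length of each $R_i$, the length of each $T_i$, and the total number $w$ of factors, and then multiplying. The first two are immediate. Each $R_i$ is a defining word, so $\len{R_i} \le d$; and by Lemma~16 every $T_i$ satisfies $\len{T_i} \le e(d+1)^{3e}$, since the hypothesis that a full trace remains is exactly the hypothesis of that lemma. Using $d \le (d+1)^{3e} \le e(d+1)^{3e}$ (valid as $e\ge 1$, $d\ge 1$), this gives the per-factor bound $2\len{T_i} + \len{R_i} \le 2e(d+1)^{3e} + d \le 3e(d+1)^{3e}$. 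It therefore remains only to bound $w$.

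For this I would exploit the chain structure of the full trace. By hypothesis its main words are $R_1 = R_{s_0}, R_{s_1}, \dots, R_{s_{p-1}}, R_{s_p} = R_w$ with $1 = s_0 < s_1 < \dots < s_p = w$, and each consecutive pair $R_{s_{k-1}}, R_{s_k}$ reduces with its successor. Applying Lemma~12 to the subproduct $[s_{k-1}, s_k]$, relabelled so that $s_{k-1}$ becomes the first index, gives $s_k - s_{k-1} + 1 \le \len{T_{s_{k-1}}} + 2$, that is $s_k - s_{k-1} \le \len{T_{s_{k-1}}} + 1 \le e(d+1)^{3e} + 1$. Summing over the $p$ segments and invoking $p \le 3e$ from Lemma~14 yields
\[
w = s_p = 1 + \sum_{k=1}^p (s_k - s_{k-1}) \le 1 + 3e\bigl(e(d+1)^{3e} + 1\bigr) \le 7 e^2 (d+1)^{3e},
\]
where the final inequality absorbs the lower-order terms $1$ and $3e$ into multiples of $e^2(d+1)^{3e}$ using $e \ge 1$ and $(d+1)^{3e} \ge 1$.

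Combining the two estimates then gives
\[
\len{\prod_{i=1}^w \ol{T}_i R_i T_i} \le w \cdot 3e(d+1)^{3e} \le 7e^2(d+1)^{3e} \cdot 3e(d+1)^{3e} = 21 e^3 (d+1)^{6e} \le 27 e^3 (d+1)^{6e},
\]
which is the desired (deliberately wasteful) bound. The only genuinely delicate point is the segment-wise use of Lemma~12: one must check that the statements $\fA_q, \dots, \fH_q$ and Property~10, which its proof invokes for the whole word, remain available on the subproduct $[s_{k-1}, s_k]$, and that $R_{s_{k-1}}$ really does reduce with $R_{s_k}$. The first holds because all $\fA_q, \dots, \fH_q$ were established (in the proof of Theorem~2) for every $q$, and Properties~1--10 are inherited by any sub-range of indices; the second is precisely the defining chain property of a full trace. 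Everything after this observation is routine arithmetic, and the gap between $21$ and $27$ merely reflects the crudeness the author explicitly permits.
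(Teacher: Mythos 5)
Your proposal is correct and follows essentially the same route as the paper: bound each factor $\ol{T}_i R_i T_i$ by $2e(d+1)^{3e}+d$ via Lemma~16, bound $w$ by combining Lemmas~14, 12, and 16 (your segment-by-segment application of Lemma~12 along the chain of the full trace is exactly how the paper's bound $w \leq 3e\{2+e(d+1)^{3e}\}$ arises, just spelled out more explicitly), and multiply. The arithmetic checks out, and landing at $21e^3(d+1)^{6e}$ rather than the paper's $27e^3(d+1)^{6e}$ is immaterial given the deliberately crude target.
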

\begin{proof}
Indeed, we have $\len{\ol{T}_i R_i T_i} \leq 2e(d+1)^{3e}+d$ by Lemma~16, and that $w \leq 3e\{ 2 + e(d+1)^{3e} \}$ by Lemmas~14, 12, and 16. From this we have
\begin{align*}
\Big[\normalsize\prod_{i=1}^w \ol{T}_i R_i T_i {}^{{}^\partial} &\leq \{ 2e(d+1)^{3e} + d\} 3e \{ 2+ e\{ d+1 \}^{3e} \} \\
&\leq \{ 2e(d+1)^{3e}+d\}^2 \cdot 3e \\
&\leq \{ 3e(d+1)^{3e}\}^2 \cdot 3e \leq 27e^3(d+1)^{6e}.
\end{align*}
\end{proof}

\begin{theorem}
There exists an algorithm for solving the identity problem in every group from the class $K_{2/11}$. 
\end{theorem}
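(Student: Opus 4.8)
The plan is to assemble the structural results just proved into a generalised Dehn algorithm. First I would reduce the identity problem to the special case of deciding, for a single reduced word $Q$ over the alphabet $(a_1^{\pm1}, \dots, a_n^{\pm1})$, whether $Q = 1$ in $G$: indeed $X = Y$ in $G$ if and only if the free reduction of $X\ol{Y}$ equals the identity, so it suffices to decide $Q = 1$ for reduced $Q$, the empty word being trivially the identity.

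The core of the algorithm rests on the preceding theorem: if a non-empty reduced word $Q$ equals the identity, then $Q$ contains a trace. I would split according to the three kinds of trace. A \emph{major} trace is a subword of $Q$ which is what remains of some $R_k$ after losing fewer than $\frac{9}{22}$ of its letters, hence a contiguous subword of (a cyclic conjugate of) $R_k$ of length exceeding $\frac{13}{22}\len{R_k} > \frac12 \len{R_k}$; a \emph{minor} trace $X_1X_2$ exposes, in each of $X_1$ and $X_2$, more than $\frac{7}{11} > \frac12$ of a defining word. In either case $Q$ contains a subword $S$ that is strictly more than half of some cyclic conjugate $ST$ of a defining word; since $M$ is symmetrised, $ST = 1$ in $G$, so $S = \ol{T}$ with $\len{\ol T} < \len S$. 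Replacing $S$ by $\ol T$ and freely reducing yields a strictly shorter word equal to $Q$ in $G$. I would iterate this \emph{Dehn reduction} until no such substitution is possible; as the length strictly decreases at each step, the process terminates in a reduced word $Q'$ with $Q' = Q$ in $G$, $\len{Q'} \le \len Q$, and containing no subword which is more than half of any defining word.

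It then remains to decide whether this $Q'$ equals the identity, and here the \emph{full}-trace case is the genuine obstacle — precisely the situation in which ordinary Dehn reduction fails. I would argue as follows. Each product $\prod_{i=1}^{w}\ol{T}_iR_iT_i$ of conjugates of defining words equals the identity in $G$, so if the finite search below exhibits such a product whose free reduction is graphically $Q'$, then $Q' = 1$. Conversely, suppose $Q'$ is non-empty and $Q' = 1$. By Theorem~1 we may choose a representation $Q' \equiv \prod_{i=1}^{w}\ol{T}_iR_iT_i$ satisfying Properties~1--10, and by the preceding theorem every reduction of this product leaves a trace. Since $Q'$ contains no subword exceeding half of a defining word, no major or minor trace can occur, so the trace is full; Lemma~17 then bounds the unreduced length of the product by $27e^3(d+1)^{6e}$, where $e = \len{Q'}$ and $d$ is the length of the longest defining word. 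Hence, if $Q' = 1$, some product of conjugates of defining words of total length at most this computable bound reduces freely to $Q'$.

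The algorithm is therefore: after Dehn reduction to $Q'$, declare $Q = 1$ if $Q'$ is empty; otherwise enumerate all products $\prod_{i=1}^{w}\ol{T}_iR_iT_i$ whose unreduced length is at most $27e^3(d+1)^{6e}$ (finitely many, since the alphabet is finite, each $R_i$ ranges over finitely many defining words, and both $w$ and every $\len{T_i}$ are thereby bounded), freely reduce each, and declare $Q = 1$ precisely when one of them is graphically $Q'$. The discussion above shows this procedure is correct and always halts. The main obstacle is the completeness of the finite search in the full-trace case; this is exactly what Lemma~17 together with the trace theorem secures, and it is the reason the whole apparatus of Properties~1--10 and the inductive statements $\fA_m, \dots, \fH_m$ was developed.
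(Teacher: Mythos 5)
Your overall strategy coincides with the paper's: a generalised Dehn reduction followed, in the terminal non-empty case, by a finite search over representations $\prod_{i=1}^{w}\ol{T}_iR_iT_i$ of length at most $27e^3(d+1)^{6e}$, justified by the trace theorem and Lemma~17. The major-trace and full-trace cases are handled correctly. But there is a genuine gap in your treatment of minor traces, and it breaks the completeness of your finite search.

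You assert that a minor trace $X_1X_2$ exposes in each of $X_1$ and $X_2$ more than $\frac{7}{11} > \frac12$ of a defining word, and on that basis you reduce the minor-trace case to the single-relator substitution $S \to \ol{T}$ with $\len{S} > \len{T}$. This is a miscalculation. By the definition of a minor trace, $X_1$ is what remains of $R_{k_1}$ after \emph{all} cancellation: it loses fewer than $\frac{4}{11}$ of its letters to the surrounding subwords of $[i,j]$, but it additionally loses up to $\frac{2}{11}$ of its letters in the mutual cancellation with $R_{k_2}$ (this is exactly the $K_{2/11}$ bound for non-mutually-inverse defining words). Hence $\len{X_1} > \frac{5}{11}\len{R_{k_1}}$ only, which need not exceed $\frac12\len{R_{k_1}}$, and the substitution $\beta$) need not apply. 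Consequently your terminal word $Q'$, although free of subwords exceeding half a defining word, may still admit a representation whose only trace is minor; your inference ``no major or minor trace can occur, so the trace is full'' then fails, Lemma~17 does not apply, and the bounded enumeration may miss the witnessing representation, causing the algorithm to answer incorrectly. This is precisely why the paper's algorithm includes the third operation $\gamma$): replacing $S_1S_2$ by $T_1T_2$ whenever $S_1X\ol{T}_1$ and $\ol{X}S_2\ol{T}_2$ are defining words and $\len{S_1S_2} > \len{T_1T_2}$. Here $X$ is the piece cancelled between $R_{k_1}$ and $R_{k_2}$, and since each $S_i$ retains more than $\frac{5}{11}$ while each $T_i$ (the part lost to the outside) is less than $\frac{4}{11}$ of the corresponding relator, the combined replacement strictly shortens the word even though neither half does so alone. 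Restoring operation $\gamma$) to your reduction phase repairs the argument; without it the proof is incomplete.
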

\begin{proof}
To prove the theorem we use a generalisation of M. Dehn's algorithm \cite{10}, which is as follows. 

For a given word $Q$, we apply as far as possible the following three operations:
\begin{enumerate}[label=\greek*)]
\item Free reduction.
\item Replacing $S$ by $T$, if $S\ol{T}$ is a defining word of the group $G$, and $\len{S} > \len{T}$. 
\item Replacing $S_1S_2$ by $T_1T_2$, if $S_1X\ol{T}_1$ and $\ol{X}S_2\ol{T}_2$ is a defining word of the group $G$ and $\len{S_1S_2} > \len{T_1T_2}$. 
\end{enumerate}
If we have $Q = 1$ in $G$, then either this process terminates in the empty word, or it ends in some word $Q_\varepsilon$, where $\len{Q_\varepsilon} = e$, such that the length of the presentation of $Q_\varepsilon$ in the form $\prod_{i=1}^w \ol{T}_i R_i T_i {}^{{}^\partial}$ is no greater than $27e^3(d+1)^{6e}$ letters, where $d$ is the length of the longest defining word of $G$. 

By Theorem~2 we conclude that there is an algorithm which solves the word problem for every group $G$ in the class $K_{2/11}$.
\end{proof}

\bibliography{makbib} 
\bibliographystyle{amsalpha}

\end{document}